\theoremstyle{plain}
\newtheorem{theorem}{Theorem}[chapter]
\newtheorem{lemma}[theorem]{Lemma}
\newtheorem{corollary}[theorem]{Corollary}
\newtheorem{proposition}[theorem]{Proposition}
\theoremstyle{definition}
\newtheorem{definition}[theorem]{Definition}
\newtheorem{remark}[theorem]{Remark}
\newcommand{\Adm}{\mathcal{A}dm}
\newcommand{\Admm}{\mathcal{A}dm_+}
\newcommand{\eps}{\varepsilon}
\newcommand{\heps}{\mathbb{H}}
\newcommand{\MM}{\mathbb{M}}
\newcommand{\diam}{\mathrm{diam}}
\newcommand{\scclass}{\mathcal{H}_{seq}}
\newcommand{\scfunclass}{\mathcal{H}}
\newcommand{\Subadd}{\mathrm{Subadd}}
\newcommand{\mdist}{\mathrm{Dist_m}}
\newcommand{\kdist}{\mathrm{Dist_K}}
\newcommand{\tro}{\tilde\rho}
\newcommand{\bro}{\bar\rho}
\newcommand{\sala}{\mathcal{A}}
\newcommand{\salb}{\mathcal{B}}
\newcommand{\aut}{\mathrm{Aut}}
\newcommand{\mdr}{\mathfrak{D}}
\newcommand{\mdrinf}{\mdr_\infty}
\newcommand{\Ury}{\mathbb{U}}
\newcommand{\MatrixDist}{\mathrm{mdist}}
\newcommand{\av}{\mathrm{av}}
\DeclareMathOperator{\essinf}{essinf}
\newcommand\blfootnote[1]{%
  \begingroup
  \renewcommand\thefootnote{}\footnote{#1}%
  \addtocounter{footnote}{-1}%
  \endgroup
}
\title{Dynamics of metrics in measure spaces and scaling entropy}
\author{A.~M.~Vershik, G.~A.~Veprev, P.~B.~Zatitskii}
\newcommand{\Addresses}{{
  \bigskip
 \footnotesize

  A.~M.~Vershik
  
  \textsc{St.~Petersburg Department of Steklov Institute of Mathematics, St.~Petersburg, Russia;}\par\nopagebreak
\textsc{St.~Petersburg State University,
St.~Petersburg, Russia;}\par\nopagebreak
\textsc{Institute for Information Transmission Problems, Moscow, Russia.}\par\nopagebreak
  \textit{E-mail}: \texttt{avershik@pdmi.ras.ru}

  \bigskip

    G.~A.~Veprev
    
    \textsc{St.~Petersburg State University,
St.~Petersburg, Russia;}\par\nopagebreak
\textsc{University of Geneva, Geneva, Switzerland.}\par\nopagebreak
  \textit{E-mail}: \texttt{georgii.veprev@gmail.com}

  \bigskip

  P.~B.~Zatitskii (corresponding author), 
  
  \textsc{St.~Petersburg Department of Steklov Institute of Mathematics, St.~Petersburg, Russia;}\par\nopagebreak
\textsc{University of Cincinnati, Cincinnati, OH, USA.
}\par\nopagebreak
  \textit{E-mail}: \texttt{pavelz@pdmi.ras.ru}

}}
\date{}
\begin{document}

\selectlanguage{english}

\maketitle
\newpage
\thispagestyle{empty}

{\bf Supported by an RSF grant (project 21-11-00152).}
\vfill
\Addresses

\newpage

\begin{abstract}
This survey is dedicated to a new direction in the theory of dynamical systems: the dynamics of metrics in measure spaces and new (catalytic) invariants of transformations with invariant measure. A space equipped with a measure and a metric naturally consistent with each other (a metric triple, or an $mm$\nobreakdash-space) automatically determines the notion of its entropy class, thus allowing one to construct a theory of scaling entropy for dynamical systems with invariant measure, which is different and more general compared to the Shannon--Kolmogorov theory. This possibility was hinted at by Shannon himself, but the hint went unnoticed. The classification of metric triples in terms of matrix distributions presented in this paper was proposed by M.~Gromov and A.~Vershik. We describe some corollaries obtained by applying this theory.

A brief overview of the paper is presented in the first chapter.

\blfootnote{\textbf{Keywords:} metric triple, $mm$-entropy, matrix distributions, catalytic invariants, scaling entropy of ergodic transformations.}

\blfootnote{AMS 2020 Mathematics Subject Classification. Primary 28C15, 28D05, 37A05, 37A35}
\end{abstract}

\setcounter{tocdepth}{1}
\tableofcontents

\chapter{The category of metric measure spaces. Historical overview and brief summary}
\label{sec1}
\section{Metric measure spaces}
\subsection{Measures and metrics: general considerations}

The joint consideration of a measure and a metric in one space has a long tradition. However, it was, of course,  preceded by a lengthy period of formation of the concepts of metric (and topological) spaces, metrization (F.~Hausdorff, P.~Ury\-sohn, and others), and the corresponding notions of measure spaces (A.~Lebes\-gue, J.~von~Neumann, A.~Kolmogorov, V.~Rokhlin, and others, 1900--1940).
In 1930--1950, both structures were already considered simultaneously (J.~Oxtoby, S.~Ulam, A.~D.~Alexandrov, and others), but still as quite separate entities. It is worth noting that some mathematicians, such as N.~Bourbaki (see his {\it Integration} volume, 1960s), held the view that measure spaces as a separate structure do not exist at all, and there are only various procedures for integrating functions. Such a one-sided position led N.~Bourbaki to ignore certain branches of mathematics, such as ergodic theory, the theory of $\sigma$-algebras, probability concepts, etc. For example, the lack of understanding that the theory of integration is one for all metric spaces hindered the development of  measure theory in functional spaces. On the other hand, many combinatorial constructions in measure theory and ergodic theory were not in demand and were not used in classical analysis due to the existence of a separating wall between them. Ignoring the structure itself and the category of measure spaces  is the reason why the most substantial and useful part of measure theory --- the geometry of various configurations of $\sigma$\nobreakdash-subalgebras (measurable partitions) --- remains little known and insufficiently developed.

\subsection{Metric triples and $mm$-spaces}

A new period began with the works of M.~Gromov, summarized in his book~\cite{Gro}. The book presents, in particular (Chapter~$3\frac{1}{2}$), 
a systematic study of the so-called
$mm$\nobreakdash-spaces, i.\,e., spaces equipped with both a metric and a measure.
An important viewpoint, expressed by M.~Gromov and simultaneously by A.~Vershik in~\cite{V02mccme,V02mp}, was as follows: in contrast to  the classical approach, which deals with various Borel measures on a~fixed complete metric space, they proposed to study various metrics on a fixed measure space (Lebesgue--Rokhlin space). In~\cite{Gro}, this approach was called ``reversed definition of mm spaces.'' This viewpoint was consistently pursued in~\cite{Vep20a, Vep20b, Vep21, Vep22, VPZ13a, VPZ14, Z14a, Z14b, Z15a,Z15b,PZ11, PZ15} and is presented in this survey. Namely, a theory of {\it
metric triples} $(X,\mu,\rho)$ --- space, measure, metric --- was constructed.
Here, the metric space $(X,\rho)$ was assumed to be complete and separable, while the measure space
$(X,\mu)$ was assumed to be a Lebesgue--Rokhlin space (in the case of continuous measure, it is isomorphic mod~0 to the interval $[0,1]$ with the Lebesgue measure, or to a~countable product of two-point spaces with the Haar measure), the structures of these spaces being naturally consistent with each other (for details, see Section~\ref{Sec_two_structures}). 
Gromov proved a classification theorem for such triples with respect to the group of measure-preserving isometries, while Vershik provided a version of this theorem with an explicit description of 
invariants of metric triples, namely, so-called {\it matrix distributions}, which are measures on the space of distance matrices. For more details, see~\cite{Gro, V98, V02ras} and Chapter~2 of this survey.

 One of the main advantages of this new focus was the ability to consider the {\it dynamics of metrics} with respect to the group of measure-preserving  automorphisms, which opened up a new source of invariants,  related to this dynamics,  for dynamical systems with invariant measure, such as {\it scaling entropy}. This is the primary topic of this survey; a brief summary of the results is provided in subsequent sections of the first chapter.
   
Transferring the center of gravity from metric to measure, on the one hand, simplifies the study of the measure-metric pair --- since it is well known that, up to a measure-preserving isomorphism, there exists a unique complete separable space with continuous measure defined on the whole $\sigma$-algebra of sets: namely, this is the unit interval with the Lebesgue measure, or, equivalently, a countable product of two-point spaces with the Haar measure. Therefore, in fact we may regard the entire possible arsenal of $mm$-structures as a set of metrics on a~universal measure space~$(X,\mu)$. However, we must first acknowledge that the metric~$\rho$ is merely a class of a.\,e.\ coinciding measurable functions of two variables satisfying the well-known axioms almost everywhere (i.\,e., mod~0). Such an object was called an {\it almost metric}. However, there is a
{\it correction theorem} (Theorem~\ref{th_ispr}), which allows us always to find a set of full measure on which this almost metric can be corrected to become a true semimetric. Here {\it the only consistency requirement for a metric and a measure  is that the metric is separable}, meaning that the $\sigma$-algebra of sets generated by all balls of positive radius is dense in the $\sigma$-algebra of mod~0 classes of measurable sets (see Theorem~\ref{ThRadonMeasure}). 
{\it Triples $(X, \mu, \rho)$ --- space, measure, metric --- satisfying the separability property  {\rm mod~0} are said to be admissible (or just metric triples)}; they are the object of study in the second chapter. In Theorems~\ref{Th_adm_cr} and~\ref{Matr_dop}, we present numerous equivalent formulations of the admissibility property. Among others, an important criterion for the admissibility of a triple $(X, \mu, \rho)$ is that the so-called $\eps$-entropy of this triple is finite for every positive~$\eps$, where the $\eps$-entropy is the logarithm of the number of balls of radius~$\eps$ that cover the entire space except for a set of measure at most~$\eps$.

Identifying  almost metrics coinciding a.\,e., {\it we may always assume that the metric space is complete} (i.\,e., in common terminology,  a Polish space). Indeed, if the space is not complete, we can consider its completion and extend the measure to this completion; the difference between the completion  and the original space will be of measure zero.

For metric triples, classical theorems usually stated under very modest assumptions can be generalized to very general statements. For instance, an elaboration of the well-known Luzin's theorem on the continuity of a measurable function is the following theorem: {\it any two admissible metrics are topologically equivalent (i.\,e., homeomorphic to each other) on a set of measure arbitrarily close to~$1$} (see Theorems \ref{th_two_metrics} and \ref{th_Lusin}).

\subsection{Classification of metric triples (and $mm$-spaces) and matrix distributions}
\label{sec113}

In Section~\ref{SecClass}, we will outline proofs of the classification theorem for metric triples, so here we provide only the precise formulation of the theorem about matrix distributions and their characterization. As for the possibility of classifying metric spaces, see~\cite{M}.

Consider the set of metric triples $(X,\mu,\rho)$ where $\mu$ is a nondegenerate continuous measure (i.\,e., its support coincides with~$X$). Recall that we assume that the metric space $(X,\rho)$ is complete and separable and $(X,\mu)$ is a Lebesgue space with continuous measure. 

The classification of $mm$-spaces with respect to measure-preserving isometries was provided by M.~Gromov~\cite{Gro} and A.~Vershik~\cite{V04}. In Gromov's formulation, a complete invariant is a set of naturally consistent random matrices  (for each positive integer~$n$, we consider the distances between $n$ points chosen randomly and independently according to the given distribution). Actually, the proof relies on the method of moments and the Weierstrass approximation theorem.
Vershik's proof involves the notion of the matrix distribution of a metric (or, more generally, of a measurable function of several variables), see below.

\begin{theorem}[matrix distribution as an invariant of metric triples]
A complete system of invariants of a metric triple $(X,\mu,\rho)$ with nondegenerate measure with respect to the group of all measure-preserving almost isometries is provided by the \textbf{matrix distribution} $\mdrinf = \mdrinf(X,\mu,\rho)$, that is, the probability measure on the space of infinite distance matrices that is the image of the Bernoulli measure $(X^{\infty},\mu^{\infty})$ under the map $\{x_i\}_i\mapsto \{\rho(x_i,x_j)\}_{i,j}$.
\end{theorem}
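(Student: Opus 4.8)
The plan is to establish the two halves of the claim separately: first that the matrix distribution is an invariant (the routine direction), and then that it is complete (the substantial direction), for which I would use a reconstruction argument based on the law of large numbers for i.i.d.\ samples.

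First I would verify invariance. Suppose $T\colon(X,\mu,\rho)\to(Y,\nu,\sigma)$ is a measure-preserving almost isometry, i.e.\ a mod-0 measure isomorphism with $\sigma(Tx,Tx')=\rho(x,x')$ for $\mu\times\mu$-almost every $(x,x')$. The countable power $T^{\infty}\colon X^{\infty}\to Y^{\infty}$ then carries $\mu^{\infty}$ to $\nu^{\infty}$, and by Fubini the identity $\sigma(Tx_i,Tx_j)=\rho(x_i,x_j)$ holds simultaneously for all $i,j$ on a set of full $\mu^{\infty}$-measure. Hence $T^{\infty}$ intertwines the two sampling maps $\{x_i\}\mapsto\{\rho(x_i,x_j)\}$ and $\{y_i\}\mapsto\{\sigma(y_i,y_j)\}$ up to a null set, and pushing measures forward yields $\mdrinf(X,\mu,\rho)=\mdrinf(Y,\nu,\sigma)$.

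The real content is completeness, which I would obtain by showing that the original triple can be reconstructed, up to isomorphism, from $\mdrinf$-almost every distance matrix. There are two ingredients: genericity and reconstruction. For genericity, I appeal to the strong law of large numbers for the sample $(x_1,x_2,\dots)$: for $\mu^{\infty}$-almost every sequence the empirical measures $\frac1n\sum_{i\le n}\delta_{x_i}$ converge weakly to $\mu$, and since $\mu$ is nondegenerate the sample is dense in $X$. For reconstruction, I define a map $\Phi$ on the space of infinite matrices $\MatrixSpace$: given $r=(r_{ij})$, equip $\mathbb{N}$ with the pseudometric $r$, pass to the metric quotient and its completion $(\widehat X_r,\widehat\rho_r)$, and set $\mu_r$ to be the weak limit of $\frac1n\sum_{i\le n}\delta_{[i]}$. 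For a sample $\{x_i\}$ drawn from the original triple, the isometric identification $i\mapsto x_i$ matches $(\widehat X_r,\widehat\rho_r)$ with $(X,\rho)$ by density of the sample and carries the empirical limit to $\mu$, so $\Phi(r)\cong(X,\mu,\rho)$ for $\mdrinf$-a.e.\ $r$.

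Finally I would conclude. The map $\Phi$ depends only on the matrix $r$, not on which triple produced it, so if $(X,\mu,\rho)$ and $(X',\mu',\rho')$ share the same matrix distribution $\mdrinf$, then for $\mdrinf$-a.e.\ $r$ the single reconstructed triple $\Phi(r)$ is simultaneously isomorphic to both, whence $(X,\mu,\rho)\cong(X',\mu',\rho')$. The main obstacle is precisely the reconstruction step: one must check that $\mu_r$ is well defined $\mdrinf$-almost everywhere and that $\Phi$ is measurable, and—more delicately—that the weak limit on $\widehat X_r$ really equals the image of $\mu$ rather than some other measure supported on the completion. Here the correction theorem (Theorem~\ref{th_ispr}) is needed to make $\rho(x_i,x_j)$ simultaneously meaningful along a full-measure set of sequences, while admissibility (separability mod~0) guarantees that the dense sample determines the topology, so that no extra mass escapes to the completion. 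The $\mdrinf$-a.s.\ constancy of the isomorphism class of $\Phi(r)$ follows from this reconstruction directly, or alternatively from the Hewitt--Savage zero--one law, since $\mu^{\infty}$ is ergodic under the simultaneous action of the infinite symmetric group permuting rows and columns.
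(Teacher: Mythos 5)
Your proposal is correct and takes essentially the same route as the paper's proof: the invariance direction is routine, and completeness is established by reconstructing the triple from $\mdrinf$-almost every distance matrix, using nondegeneracy plus the law of large numbers to get a.s.\ density of the sample, completeness of $(X,\rho)$ to identify the completion of the sampled pseudometric space with $X$, and the law of large numbers again to recover the measure. The only cosmetic difference is in how the measure is pinned down --- the paper recovers the measures of balls (and finite intersections of balls) centered at sample points as limiting frequencies and then invokes density of the ball algebra in the $\sigma$-algebra, whereas you take the weak limit of the empirical measures (Varadarajan's theorem); these are the same ergodic-theorem argument in slightly different packaging, and your side remark that the correction theorem is needed is actually superfluous, since $\rho$ is a genuine pointwise-defined metric in this setting.
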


The proof of the theorem relies on the pointwise ergodic theorem and properties of completion of metric spaces.

In connection with this proof, the following question arises: how to describe matrix distributions as measures on the set of distance matrices? This question is discussed in detail in Section~\ref{Sec232}. It turns out that these measures can be described using a special notion of {\it simplicity of a measure}. This notion, as well as related considerations, is of general nature and  can be applied not only to the classification of metrics, but also to the classification of arbitrary measurable functions of several variables (see \cite{Ro,V02,XV1,XV}). Matrix distributions give rise to a whole range of problems in measure theory and learning theory; in particular,
 in this paper we consider the problem of reconstructing a metric and a measure in a space from randomized tests in which an important role is played by entropy and spectra.

\subsection{Spectral equivalence of metric triples}
\label{sec_spect}

Let us state an important problem with applications in learning theory and spectral graph theory which arises simultaneously with the definition of the matrix distribution. To the matrix distribution of an admissible metric we associate the collection of the spectra of the principal minors of random distance matrices. Recall that it is a system of interlacing sequences of real numbers 
$$
\{\lambda_1^n(\omega) \geq \lambda_2^n(\omega) \geq\dots \geq \lambda_n^n(\omega) \}, \qquad n=1,2,\dots.
$$
This system can be regarded as a random infinite triangular matrix. {\it Does this random matrix determine the original matrix distribution and, thus, the original metric?} A similar question is  to what extent the spectrum of an operator which naturally arises when considering a geometric object determines the object itself. The well-known Mark Kac's question ``Can one hear the shape of a drum?''  is precisely of  this kind: can a Riemannian manifold be uniquely recovered from the spectrum of the Laplace operator? 
In general, the answer to this question is negative. Another example from the early history of ergodic theory is whether the spectrum of the Koopman operator constructed from a~measure-preserving transformation determines the transformation itself?  The most meaningful negative answer is the discovery of Shannon--Kolmogorov entropy, which is a nonspectral invariant of transformations. In a sense, our question corresponds to these two examples, and the answer is likely to be negative as well. However, the situation in our case is entirely different. For example, the answer to a~similar question about measures on infinite symmetric (or Hermitian) matrices invariant under the orthogonal (unitary) group instead of the symmetric group is positive, because in this case the spectrum is a complete invariant even in the finite-dimensional case. Numerical experiments could be of use here. We  know only one experimental work~\cite{Bog}, done at the request of the first author, which showed a strong dependence of the set of random spectra on the dimensions of the spheres~$S^n$. It is of interest to study the nature of those metrics that are uniquely determined by the system of spectra; but in the general case, the answer is likely to be negative. It seems fruitful to study random spectra of matrix distributions (see~\cite{VP23}, as well as~\cite{KG00}).
On the other hand, there are many papers studying the spectra of distance matrices of specific metric spaces (and incidence matrices of graphs), see, e.\,g.,~\cite{HR22}.

\subsection{The cone of metric triples and the Urysohn universal  metric measure space}

The set of summable admissible metrics on a space with fixed measure is a~cone in the space $L^1(X^2,\mu^2)$ of functions of two variables, and we consider a~natural norm for which it is a complete normalized cone, see Section~\ref{sec214}. It is natural to study specific classes of metrics as subsets of this cone, and regard numerical characteristics of metric triples as functions on this cone. In particular, we regard $\eps$-entropy as a function on this cone or on bundles over it.

The properties of the map $(X,\mu,\rho) \rightarrow \mdrinf(X,\mu,\rho)$  from the cone of admissible metrics to the space of measures on distance matrices are described in detail in Section~\ref{SecClass}. It is appropriate to mention how the problems under consideration are related to the Urysohn  space. The Urysohn universal metric space $(\Ury,\rho_U)$ has become a popular object of study in recent years (after being forgotten for half a century). In \cite{V98, V02mccme}, a somewhat remarkable fact was proved:  in the space of all possible metrics on a countable set equipped with the weak topology, a dense $G_\delta$ subset consists of metrics whose completion yields a~space isomorphic to the Urysohn universal space. Apparently, this typicality of Urysohn spaces is preserved also in the context of admissible triples: {\it the collection of all (semi)metric triples $(X,\mu,\rho)$ for which the (semi)metric space $(X,\rho)$ is isometric to the Urysohn space is typical  in the weak topology of the cone of admissible metrics, i.\,e., it is a dense $G_\delta$ set in the space of all metric triples.} It follows that the set of metric triples such that, additionally, the measure is continuous and nondegenerate and the metric is Urysohn is also typical in the space of all metric triples. Nothing is known about Borel probability measures on the Urysohn  space, we do not know even characteristic examples of such measures; however, there is no doubt that they will be found in further research.

 \section{Metric invariants of dynamics in ergodic theory}
 
 Let us see how admissible metrics can be used in the theory of dynamical systems. Unfortunately, the impressive success of ergodic theory in the second half of the last century suffered from one drawback, which has only recently been truly felt: for various reasons, ergodic constructions rarely used metrics in phase spaces. Moreover, efforts were made to exclude metrics from consideration even in cases where their usefulness was evident. Nowadays, it has become clear that using metrics often allows one to define new measure invariants of dynamical systems. Namely, the metric is used in a nontrivial way in some construction, and the answer resulting from this construction is independent of the initial metric. The first example of such an invariant is scaling entropy, originally defined by A.~Vershik and further investigated by the authors in subsequent papers. Below we briefly explain the meaning of this invariant. It is described in detail in Chapter~\ref{sec3}, which contains a series of results obtained in recent years and constituting a new direction in ergodic theory.

\subsection{Scaling entropy of dynamical systems}

We explore the simplest possibility of using a metric. Namely, to an admissible metric we associate its usual $\eps$-entropy, i.\,e., a germ of a function in $\eps$. By averaging the metric, as is customary in the theory of transformations with invariant measures, we obtain a sequence of such germs of $\eps$-entropy. The main observation, stated as a conjecture in \cite{V10a, V10b} and proved in \cite{Z15a} using properties of metric triples, is that the asymptotics of the entropy (if it exists) as an equivalence class of growing (with  the number of averages) sequences of $\eps$-entropies does not depend on the choice of the initial metric, and thus this asymptotics becomes a new invariant of the dynamical system (see Section~\ref{sec311}). It is this invariant that was called the {\it 
scaling entropy of an automorphism} (see \cite{V10a, V10b}). In this definition, it was silently assumed that the equivalence class of sequences of $\eps$-entropies does not depend on $\eps$ provided that $\eps$ is sufficiently small. This is indeed the case in many examples.
For instance, the class $\{cn\}$ corresponds to positive Kolmogorov entropy, and the class of constant (in~$n$) sequences corresponds to discrete spectrum. As proved by Ferenczi and Park \cite{FP} and Zatitskiy~\cite{Z15b}, any equivalence class of growing sequences between these two monotone asymptotics can be realized for some automorphism (see Section~\ref{section_scaling_entropy_sequence_values}). However, only a few of them have been encountered so far. Computing the scaling entropy for specific automorphisms is a challenging task, which has been performed only for some examples.

\subsection{Comparison with classical entropy}

On the one hand, scaling entropy is a significant generalization of the notion of entropy and the Shannon--Kolmogorov entropy theory to cases where the Kolmogorov entropy is zero. However, the important issue here is the very notion of entropy of $mm$-spaces, i.\,e.,  of metric measure spaces. The presence of a~group of automorphisms suggests the idea of averaging metrics under the action of a~(e.\,g.,~amenable) group. The mention of C.~Shannon's name here is not coincidental. Apparently, over all these years, no one paid attention to and worked on deciphering Appendix 7 of the famous Shannon's paper \cite{Sh} on the foundations of information theory and its applications. In this appendix, Shannon suggests, in a very concrete and not immediately generalizable form,  the same idea that was formulated 60 years later (!) in \cite{V10a, V10b}: one should study the asymptotics of the {\it usual entropy of a metric measure space (i.\,e., the entropy of a metric triple)} for successive averages of the metric under an automorphism or a group of automorphisms. It appears that neither A.~N.~Kolmogorov (see~\cite{Kol}) nor his numerous followers paid enough attention to the fact that the entropy of an automorphism can be computed as the {\it asymptotic entropy of a metric measure space} (and the result does not depend on the metric, see Section~\ref{sec311}). It is true that Shannon, as A.~N.~Kolmogorov after him, was only interested in processes that transmit information (K-processes). The fact that the definition remains meaningful also for arbitrary automorphisms remained unnoticed until very recently. In our terms, Shannon considers metric triples that are finite fragments of a stationary process equipped with the Hamming semimetric; this is not always convenient, but the model of averages is universal. The subsequent development of the theory by Kolmogorov and his followers Rokhlin and Sinai somewhat concealed the generality of Shannon's idea, which is partly justified since the focus  at that time was on systems with positive entropy (hyperbolic, chaotic, etc.). It is worth noting that papers on {\it topological entropy}, which appeared shortly after those on metric entropy, would look much more natural within the framework of the theory of metric triples.

\subsection{Stability and instability}

In~\cite{Vep20a} (see also Section~\ref{section_nonstable_example}), an important step was taken in the study of scaling entropy: the realization that the answer to the question about the existence of a~universal equivalence class for the growth of entropies for averaged  metrics can be negative for specially constructed metric triples, i.\,e., the asymptotic behavior may significantly depend on $\eps$. Therefore, the definition of an equivalence class must involve functions of two variables: $n$ (the number of the average we take) and $\eps$. The final definition of scaling entropy involves such a~coarser entropy equivalence class for a given automorphism. See Section~\ref{sec311}. Presumably, automorphisms for which the asymptotics of entropies of averages depends on $\eps$ are typical. The definition given here appears to be the most general among all possible definitions related to the growth of entropies of automorphisms or groups of automorphisms (in the amenable case). It would be interesting to extend the theory of scaling entropy to nonamenable groups, see Section~\ref{section_groups}.
Note that similar generalizations of classical entropy theory have been proposed earlier, e.\,g., Kirillov--Kushnirenko entropy (sequential entropy, see~\cite{Kush}), Katok--Thouvenot slow entropy (see~\cite{KT}), Ferenczi's measure-theoretic complexity (see~\cite{F}). However, the theory of metric triples potentially contains also non-entropy invariants of dynamical systems; the key question is how to express them in terms of numerical invariants of metric triples, for instance, in terms of matrix distributions.

\subsection{Further use of metrics}

The introduction of scaling entropy is only the first step as concerns the use of metrics in ergodic theory. Moreover, this step could be achieved using a~sequence of Hamming semimetrics (and their averages) defined on cylinders of growing length in the symbolic presentation of the automorphism. Scaling entropy uses only the coarsest invariant of the metric~--- the asymptotics of the number of $\eps$-balls that almost cover the measure space. The next step should consist in using measures to study more complicated characteristics of sequences of metric spaces, and to single out those properties  that are invariants of an automorphism or  a group of automorphisms. Apparently, this problem has not been studied, and it is likely that behind such a study lies an intricate and interesting combinatorics of how continuous dynamics is approximated by finite constructions. Anyway, at first glance this approach significantly differs from the usual approximation theories.
The question about invariants of non-Bernoulli K-automorphisms discovered by D.~Ornstein remains unresolved for over 50 years. It is reasonable to assume that a geometric approach to the analysis of these invariants based on using metrics may help to advance in this direction. This is indicated by the characteristic suggested by the first author~--- the secondary entropy of a filtration (see~\cite{V15UMN}).

\subsection{Functions of several variables as a source of dynamic invariants}

Another general idea is that the familiar technique of functional analysis --- replacing the study of certain objects with the study of functions on these objects~--- has been used in the theory of dynamical systems in a very limited way (Koopman's idea): to a dynamical system
$\{T_g: g\in G\}$ one associates the group of operators $\{ U_g: f\mapsto U_g(f)(\,\cdot\,)=f(g^{-1}\,\cdot\,)\}$, where $f$ belongs to a~certain space of functions of one variable. This is the essence of the spectral theory of dynamical systems, which provides spectral invariants for the system. However, the same technique can be used for functions of several variables, for example, for the space of functions of two variables, but not arbitrary ones, but, say, metrics. Thus, we open up an entirely new way of constructing invariants of dynamical systems. In more detail, using the theory of admissible metric triples described in Chapter~\ref{sec2}, given a dynamical system with an invariant measure, we can associate to it a group of operators in the space of metric triples. Thus, earlier we actually embedded the group of measure-preserving automorphisms into the group of transformations of metric triples and verified that some invariants of triples (the asymptotics of the entropies of averages) do not depend on the metric and, therefore, become invariants of the original group of automorphisms of the measure space.

 On the other hand, using metrics allows one to introduce new notions into the development of certain classical results. One of  them is the tnotion of {\it ``virtual continuity'' of a measurable function of several variables}, which is related to the problem of  defining the restriction of such a~function  to elements of zero measure  of a measurable partition. This problem has an automatic positive solution (mod~0) for any measurable function of one variable, this is the meaning of  Rokhlin's theorem (see \cite{Ro1}). However, functions of two or more variables, in general, cannot be restricted to elements of a measurable partition. The restriction exists for so-called {\it virtually continuous functions}, whose definition is purely measure-theoretic (see \cite{VPZ14} and Definition~\ref{def_virt_cont} in Section~\ref{Sec_part}). In particular, any admissible metric is virtually continuous and, therefore, has such a restriction; thus, it turns almost every element of any measurable partition into an $mm$-space. Virtual continuity does not rely on any local properties of functions (such as smoothness etc.)\ and provides a new understanding of extension theorems for functions of several variables and Sobolev embedding theorems (see~\cite{VPZ13b}).

\subsection{General setting of the  metric isomorphism problem taking metric into account: catalytic invariants}
\label{section_catalyst}
Consider the isomorphism problem for measure-preserving automorphisms, i.\,e., the conjugacy problem in the group of classes of measure-preserving automorphisms coinciding mod~0. We will first consider it in a somewhat extended setting, namely, under the assumption that the group of automorphisms acts in a Lebesgue space~$X$ with continuous measure~$\mu$, on which we will also consider various admissible metrics~$\rho$. In this space, we fix an ergodic automorphism~$T$ and an admissible metric $\rho$. Let us consider a kind of partition function of metrics, more precisely, a normalized series in powers of  $z\in [0,1)$, and assume that it converges (literally or in a generalized sense):
 $$ \Omega_T(\rho,z)\equiv \Omega_T(z)=(1-z)\sum_{n=0}^{\infty} z^n \rho(T^n x, T^n y),\quad z\in [0,1). $$
Thus, we regard the function $\Omega_T(\,\cdot\,)$ of~$z$ as a metric that is a deformation of the metric
$\rho$ (for $z=0$) under the action of~$T$.

Note that for a fixed $z$, every term is the result of applying the operator $z\cdot U_T\otimes U_T$, acting on the cone of admissible metrics (on the space of functions of two variables). Correspondingly, the sum of the series  is the operator 
$$(Id - z\cdot U_T \otimes U_T)^{-1}.$$

Consider the function $\Omega_T(z)$ in a neighborhood of $z=1$. It is convenient to set $z=1-\delta$, then
 $$ \Omega_{T}(\rho,z)=\delta \sum_{n=0}^{\infty} (1-\delta)^n \rho(T^n x, T^n y), \quad 1>\delta \geq 0.$$

We will be interested in the behavior of the deformation $\Omega_T(\,\cdot\,)$  in a neighborhood of $\delta=0$, that is, $z=1$.

Assume that to every complete separable metric measure space $(X,\mu,\rho)$ we have associated a certain invariant (with respect to the isometries of the metric space)  $\Phi_{\rho}(\eps)$ whose values are  real functions of an argument common for all metric spaces, which we denote by $\eps$ (an example of  $\Phi_{\rho}(\eps)$ is the function  equal to the logarithm of the number of balls of radius $\eps$ that cover almost the whole space). Now we assume that there is a deformation  $\Omega_{T}(\rho,z)$ of metric spaces and consider the functions $\Phi_{\rho}(z,\eps)$ of two real variables $z\in [0,1)$ and $\eps>0$.
Finally, we introduce equivalence classes of functions of two variables depending on $\eps$ and on the deformation parameter $z$. Namely, first we form equivalence classes of functions of $z$ for a fixed $\eps$. If it turns out that such a class does not depend on $\eps$, then it is taken as an invariant. If the classes differ for different $\eps$, then we form coarser classes, combining all classes for different $\eps$. Anyway, these classes are actually associated with the family of metrics $\Omega_{T}(\rho,z)$ determined by the partition function $\Omega$. We say that the equivalence class of the function of two variables $\Phi_{\rho}(z,\varepsilon)$ is a {\it catalytic invariant} of the automorphism $T$ if it does not depend on the initial metric $\rho=\rho_0$ and thus is associated to the automorphism $T$ alone\footnote{The term ``catalytic'' (from ``catalyst'') is chosen because the definition of invariants of measure-preserving automorphisms is related to invariants of metrics, which are not part of the definition of automorphisms; however, the meaning of the invariant becomes clear if we use the metric, although, of course, it can be computed --- albeit quite intricately --- without using the metric.}. For more details, see~\cite{V23}.

In cases where the equivalence class depends on the initial metric, we obtain a~natural equivalence relation  on the initial metrics (considering classes of metrics with the same invariant) and can speak about a {\it relative catalytic invariant for a fixed class of initial metrics}. Relative invariants are also of interest for classification problem, which in this case is more detailed than the usual  measure-theoretic isomorphism problem.

The main question here is what invariants of the metric (with respect to the isometries) can be used as productively as the entropy of a metric measure space. This question remains open.

Another possibility for expanding the idea of catalytic invariants is to consider not only invariants of the metric space itself, but also {\it flag}-type invariants of metric spaces $X_1 \supset X_2 \supset ... \supset X_n$ in the same sense as above. There are certainly prospects here, but they require a more detailed study of metric spaces themselves and their invariants with respect to the isometries of $X_1$.

\chapter{Metric triples}
 \label{sec2}   
    \section{Metric triples and admissibility}
        \subsection{Measurable semimetrics, almost metrics, and correction theorems}
                             
            The classical approach to studying mm-spaces $(X, \mu, \rho)$, where $X$ is a space with measure $\mu$ and metric $\rho$, is typically based on investigating various Borel measures $\mu$ on a fixed metric space $(X,\rho)$. However, as mentioned in Chapter~\ref{sec1}, we consider the topic of metric measure spaces  from a relatively new perspective. This viewpoint appears to have been first introduced in the papers~\cite{V02mccme} and~\cite{V02mp}. We start with a fixed measure space $(X,\sala,\mu)$ (Lebesgue--Rokhlin space, a standard probability space with  continuous measure, i.\,e., isomorphic to the interval $[0,1]$ with the Lebesgue measure) and explore various metrics $\rho$ on this space. The condition that connects the topological and measurable structures is the \emph{measurability of the metric $\rho$} as a function of two variables; such metrics will be referred to as \emph{measurable}. Furthermore, we have to consider natural generalizations of metrics --— \emph{semimetrics}\footnote{A semimetric is a non-negative symmetric function of two variables that vanishes on the diagonal and satisfies the triangle inequality. In  literature, the term pseudometric is also commonly used for such functions.}.
          
            In the context of the chosen approach to studying metrics (and semimetrics) as measurable functions on $(X^2, \mu^2) = (X \times X, \mu \times \mu)$, the concept of {\it almost metric} naturally arises --- a function for which the defining metric relations are satisfied almost everywhere (not necessarily everywhere).

            \begin{definition}
                A measurable non-negative function $\rho$ on $(X^2,\mu^2)$ is called an \emph{almost metric} (or metric mod 0) on $(X,\mu)$ if
                \begin{enumerate}
                \item $\rho(x,y) = \rho(y,x)$ for $\mu^2$-almost all pairs $(x,y) \in X^2$;
                \item $\rho(x,z) \leq \rho(x,y) + \rho(y,z)$ for $\mu^3$-almost all triples $(x,y,z) \in X^3$.
                \end{enumerate}
            \end{definition}

            For instance, if a sequence of measurable metrics converges in measure or almost everywhere, the limit can a priori turn out to be only an almost metric. We present the following theorem on correction of almost metrics, see~\cite{PZ11}.

            \begin{theorem}[Correction Theorem]
            \label{th_ispr}
                If $\rho$ is an almost metric on $(X,\mu)$, then there exists a semimetric $\tilde \rho$ on $(X,\mu)$ such that the equality $\rho = \tilde \rho$ holds $\mu^2$-almost everywhere.
            \end{theorem}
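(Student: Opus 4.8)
The plan is to realize the corrected semimetric as a \emph{precise representative} of $\rho$ obtained by differentiating it along a refining sequence of finite partitions, exploiting the fact that \emph{averaging} converts the mod~$0$ triangle inequality into an honest numerical inequality between averages. Since $(X,\mu)$ is a Lebesgue space, I fix an increasing sequence of finite measurable partitions $\mathcal P_1\le\mathcal P_2\le\cdots$ whose join generates the whole $\sigma$-algebra mod~$0$ and all of whose atoms have positive measure (e.g.\ the dyadic partitions under an isomorphism with $[0,1]$). For atoms $I,J$ I set $A(I,J)=\frac{1}{\mu(I)\mu(J)}\int_{I\times J}\rho\,d\mu^2$, and I write $P_n(x)$ for the $\mathcal P_n$-atom containing $x$. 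The candidate is
\[
\tilde\rho(x,y)=\limsup_{n\to\infty} A\big(P_n(x),P_n(y)\big),\qquad \tilde\rho(x,x):=0.
\]

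The key step is that each $A$ already satisfies the triangle inequality for \emph{all} triples of atoms. Indeed, integrating the mod~$0$ inequality $\rho(x,z)\le\rho(x,y)+\rho(y,z)$ over $I\times J\times K$ and using Fubini to integrate out the unused variable in each term gives $A(I,K)\le A(I,J)+A(J,K)$, with no exceptional set, because the null set of bad triples disappears under integration; symmetry of $A$ follows from the mod~$0$ symmetry of $\rho$ in the same way. Passing to $\limsup$ preserves both: from $a_n\le b_n+c_n$ one gets $\limsup a_n\le\limsup b_n+\limsup c_n$, so $\tilde\rho(x,z)\le\tilde\rho(x,y)+\tilde\rho(y,z)$ for \emph{every} triple $x,y,z$, and $\tilde\rho$ is everywhere symmetric. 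Setting the diagonal to $0$ is harmless: the diagonal is $\mu^2$-null, and the zeroed terms appear only by relaxing an inequality (on the left they lower the left-hand side, on the right they are dominated by a nonnegative remainder), so all triangle inequalities survive. Thus $\tilde\rho$ is a genuine, a~priori extended-real-valued, semimetric on all of $X$, with no mod~$0$ caveats.

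It remains to check that $\tilde\rho=\rho$ almost everywhere and to exclude infinite values. For the former, $A\big(P_n(x),P_n(y)\big)$ is exactly the conditional expectation $\mathbb E\big[\rho\,\big|\,\mathcal P_n\times\mathcal P_n\big](x,y)$, so by the martingale (Lebesgue differentiation) theorem it converges to $\rho(x,y)$ for $\mu^2$-almost every $(x,y)$; at such points $\limsup$ equals the limit, giving $\tilde\rho=\rho$ a.e. For finiteness I use that ``$\tilde\rho(x,y)<\infty$'' is an equivalence relation by the triangle inequality; since it holds for a.e.\ pair (agreeing with $\rho<\infty$ a.e.), a Fubini argument shows that one measurable equivalence class $X_0$ has full measure. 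On $X_0$ the semimetric $\tilde\rho$ is finite, and composing with a measurable retraction $r\colon X\to X_0$ (the identity on the co-null $X_0$, constant off it) produces a finite genuine semimetric that still equals $\rho$ a.e.

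The main obstacle is precisely the passage from ``mod~$0$'' to ``everywhere'': a ternary a.e.\ relation cannot in general be salvaged by merely restricting $\rho$ to a co-null product set, since a shortcut through the null corruption set can destroy the triangle inequality, and no slice-based or shortest-path formula recovers the correct values in examples such as the discrete metric. The averaging trick is what resolves this, because integration annihilates the null bad set and $\limsup$ transports the resulting honest inequalities to an everywhere-defined representative; the only remaining care is the routine finiteness and diagonal bookkeeping handled above. If one assumes, as in the summable cone of Section~\ref{sec214}, that $\rho\in L^1(X^2,\mu^2)$, then the conditional expectations and their a.e.\ convergence are immediate.
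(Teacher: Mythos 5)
Your construction is at bottom the same as the paper's. The paper identifies $(X,\mu)$ with the circle and corrects $\rho$ by $\tilde\rho(x,y)=\limsup_{T\to0^+}T^{-2}\int_0^T\int_0^T\rho(x+t,y+s)\,dt\,ds$: averaging annihilates the exceptional null sets, the $\limsup$ transports the resulting honest inequalities to every pair, and Lebesgue differentiation recovers $\rho$ almost everywhere. You replace translation averaging by conditional expectations over a refining sequence of finite partitions, so the everywhere triangle inequality comes from integrating over positive-measure atoms rather than from the fact that the translate of a null set by an \emph{arbitrary} vector is still null, and Lebesgue differentiation becomes L\'evy's martingale convergence theorem. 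That variant is legitimate and in some respects tidier: the approximants are simple functions indexed by a discrete parameter, so measurability of the $\limsup$ is automatic, and no group structure on $(X,\mu)$ is invoked.

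There is, however, one step that genuinely fails under the stated hypotheses: an almost metric is only assumed measurable and non-negative, and without $\rho\in L^1(X^2,\mu^2)$ the convergence $\mathbb{E}[\rho\mid\mathcal{P}_n\times\mathcal{P}_n]\to\rho$ a.e.\ can be false; your closing sentence treats summability as a convenience, but it is exactly where this step lives or dies. Concretely, let $(q_k)$ enumerate the dyadic rationals in $[0,1]$, let $f(x)=\sum_k 8^{-k}|x-q_k|^{-1}$ (finite a.e.\ by Borel--Cantelli, yet $\int_I f\,d\mu=\infty$ for every dyadic interval $I$), and set $\rho(x,y)=f(x)+f(y)$ for $x\ne y$. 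This $\rho$ is an almost metric, finite a.e., but every average $A(I,J)$ is $+\infty$, so your $\tilde\rho$ is identically $+\infty$ off the diagonal and does not agree with $\rho$ a.e.; your finiteness/retraction argument never gets off the ground, because it presupposes $\tilde\rho=\rho$ a.e. (The same example defeats the paper's circle formula as literally written; the survey's sketch is silent on this point and defers to~\cite{PZ11}.) The repair is short and uses only tools you already have: $\rho_M=\min(\rho,M)$ is again an almost metric, since $\min(a+b,M)\le\min(a,M)+\min(b,M)$ for $a,b\ge0$, and it is bounded, hence integrable; your argument corrects it to a semimetric $\tilde\rho_M$ with $\tilde\rho_M=\rho_M$ a.e.; a pointwise supremum of semimetrics is a semimetric, so $\tilde\rho:=\sup_{M\in\mathbb{N}}\tilde\rho_M$ is a $[0,\infty]$-valued semimetric equal to $\sup_M\min(\rho,M)=\rho$ a.e.; and now your equivalence-class/retraction argument removes the residual infinite values. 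With this truncation step inserted, your proof is complete.
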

            
            The proof of the correction theorem is based on two steps: identify the space $(X,\mu)$ with the circle $\mathbb{S}$ equipped with the Lebesgue measure $m$ and apply Lebesgue's differentiation theorem. For an almost metric $\rho$ on $(\mathbb{S},m)$, as its correction, one can take the function
            $$
            \tilde{\rho}(x,y) = \limsup_{T \to 0^+}\frac{1}{T^2}\int_0^T\int_0^T \rho(x+t,y+s)\,dt\,ds, \quad x,y \in \mathbb{S},
            $$
            which coincides with $\rho$ almost everywhere and is a semimetric.

            Theorem~\ref{th_ispr} allows us to further restrict our consideration to measurable semimetrics, rather than almost metrics. In particular, this theorem implies that the set of measurable semimetrics is closed with respect to convergence in measure and almost everywhere.

            The development of correction ideas for functions of several variables can be found in~\cite{P16}.

        \subsection{Admissibility. Relation between measurable and metric structures}
            \label{Sec_two_structures}
            In the future, we will mostly work with so-called admissible semimetrics and metrics. One of the definitions of admissibility is separability on a subset of full measure.

            \begin{definition}
            A measurable (semi)metric $\rho$ on a measure space $(X,\sala, \mu)$ is called \emph{admissible} if there exists a subset $X_0\subset X$ such that $\mu(X_0)=1$ and the (semi)metric space $(X_0,\rho)$ is separable. The triple $(X,\mu,\rho)$ will also be referred to as an \emph{admissible} (semi)metric triple, or simply a \emph{metric triple}.
            \end{definition}
            
            The relation between the structures of a measurable and a metric space on $X$ is illustrated by the following statements. An admissible metric on $(X, \sala, \mu)$ generates the Borel sigma-algebra $\salb$, and the measure $\mu$ turns out to be Borel.

            \begin{theorem}[see~\cite{VPZ14}]
            \label{ThRadonMeasure}
            If $\rho$ is an admissible metric on the space $(X,\sala,\mu)$, then the measure $\mu$ is a Radon measure on the metric space $(X,\rho)$. The Borel sigma-algebra $\salb$ generated by the metric $\rho$ on $X$ is a subalgebra of the original sigma-algebra $\sala$ and is dense in it.
            \end{theorem}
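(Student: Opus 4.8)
The plan is to treat the three assertions—containment $\salb\subseteq\sala$, the Radon property, and density—in that order, deriving the first and third from a single construction of a good countable family of balls, and the second from Ulam's theorem. First I would establish $\salb\subseteq\sala$. Since $\rho$ is $\mu^2$-measurable, Fubini's theorem produces a set $G$ of full $\mu$-measure such that the section $\rho(x,\cdot)$ is $\sala$-measurable for every $x\in G$. Intersecting with the full-measure separability set $X_0$ and using that a subspace of a separable metric space is separable, I choose a countable $\rho$-dense set $D=\{x_n\}\subseteq X_0\cap G$. Then each ball $B(x_n,q)=\{y:\rho(x_n,y)<q\}$ with $q\in\mathbb{Q}_{>0}$ lies in $\sala$, and since every $\rho$-open subset of $X_0\cap G$ is a countable union of such balls, the Borel $\sigma$-algebra $\salb$ is generated mod~$0$ by the countable family $\{B(x_n,q)\}$ and hence is contained in $\sala$.

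Next, for the Radon property I would prove tightness. Invoking the reduction to the complete case described in the paragraph preceding the theorem—so that $(X,\rho)$ may be assumed Polish, the added points forming a $\mu$-null set—I would run Ulam's argument directly on $X$: for each $n$ cover $X$ by countably many balls of radius $1/n$ (separability), select a finite subfamily whose union has measure $>1-\eps\,2^{-n}$ (countable additivity, using measurability of balls from the previous step), and set $K_\eps$ equal to the intersection over $n$ of the closures of these finite unions. Then $K_\eps$ is totally bounded and closed, hence compact by completeness, with $\mu(K_\eps)>1-\eps$. This gives inner regularity by compact sets; outer regularity and inner regularity by closed sets hold automatically for a finite Borel measure on a metric space, so $\mu$ is Radon.

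Finally, for density I would reuse the family $\{B(x_n,q)\}$ and invoke Rokhlin's theory of measurable partitions. This family separates the points of $X_0\cap G$: if $\rho(x,y)=\delta>0$, then choosing $x_n$ with $\rho(x_n,x)<\delta/3$ and a rational $q\in(\delta/3,2\delta/3)$ places $x$ inside and $y$ outside $B(x_n,q)$. Thus the countably generated sub-$\sigma$-algebra $\salb$ separates points mod~$0$ on the Lebesgue space $(X,\sala,\mu)$. By Rokhlin's theorem a countably generated point-separating sub-$\sigma$-algebra coincides with $\sala$ mod~$0$; equivalently, its measure algebra exhausts that of $\sala$, which is precisely the asserted density.

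The main obstacle I anticipate is the density step, whose content is genuinely measure-theoretic rather than metric: one must know that separating points mod~$0$ forces a countably generated sub-$\sigma$-algebra to fill out the entire Lebesgue space, and this relies essentially on the standard Lebesgue–Rokhlin structure of $(X,\mu)$. A secondary technical point is the handling of completeness in the tightness step: since admissibility supplies separability only on a full-measure set, one must justify the passage to the completion and verify that the inclusion stays measurable, so that the balls used in Ulam's argument remain $\sala$-measurable and the compact sets produced genuinely carry the measure.
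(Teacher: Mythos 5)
Your first and third steps are sound and follow the paper's own route: measurability of balls as (almost every) sections of the measurable set $\{(x,y)\colon\rho(x,y)<R\}$ --- your Fubini set $G$ makes the paper's ``cross sections'' remark precise --- separability mod~0 to write open sets as countable unions of rational balls plus a null set, and, for density, exactly the maximal property of the Lebesgue $\sigma$-algebra (Rokhlin's theorem on countably generated separating subalgebras) applied to the family $\{B(x_n,q)\}$. The genuine gap is in the Radon step. The ``reduction to the complete case'' that you invoke is not available at this point in the development: the assertion that the completion $Y$ of $(X_0,\rho)$ differs from $X_0$ only by a null set is not an innocuous normalization but is essentially the statement being proved. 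Concretely, $Y\setminus X_0$ is null for the extended measure if and only if $X_0$ contains Borel subsets of $Y$ of measure arbitrarily close to $1$, and by Ulam's theorem on the Polish space $Y$ this holds if and only if $X_0$ contains \emph{compact} subsets of measure arbitrarily close to $1$; since a subset of $X_0$ is compact in $Y$ exactly when it is $\rho$-compact, justifying the reduction already amounts to producing the $\rho$-compact sets of nearly full measure whose existence is the tightness you are trying to establish. If instead you run Ulam's argument on $X$ itself, without the reduction, your sets $K_\eps$ are closed and totally bounded but need not be compact, because $(X,\rho)$ need not be complete. The paper's introductory remark that one ``may always assume the metric space is complete'' is a \emph{consequence} of this theorem (equivalently, of the generalized Luzin theorem), not a tool one may use to prove it.

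What is missing is the point at which the Lebesgue--Rokhlin structure of $(X,\mu)$ enters the tightness argument, and it cannot be dispensed with: as written, your Radon step uses only separability and countable additivity, but under those hypotheses alone the statement is false. For instance, let $M\subset[0,1]$ have full outer and zero inner Lebesgue measure, carry the Euclidean metric, and carry the trace measure $\nu(B\cap M)=\lambda(B)$ for Borel $B$; then $\nu$ is a Borel probability measure on a separable metric space, yet every compact subset of $M$ is a measurable subset of a set of inner measure zero, hence null, so $\nu$ is not tight --- and, accordingly, $(M,\nu)$ is not a Lebesgue space. The standard repair, and the route the paper points to (deriving the Radon property from a Luzin-type theorem), is: take a mod~0 isomorphism $\phi\colon[0,1]\to X$, observe that $\phi$ followed by the inclusion $X_0\hookrightarrow Y$ is measurable (this uses your Step 1), and apply the classical Luzin theorem for maps into separable metric spaces to obtain compact sets $C\subset[0,1]$ with $\lambda(C)>1-\eps$ on which this map is continuous; then $\phi(C)$ is a $\rho$-compact subset of $X$ with $\mu(\phi(C))>1-\eps$, which is tightness, and only after that does your completion/Ulam computation become legitimate. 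In short, what you defer as ``a secondary technical point'' is in fact the entire content of the Radon assertion.
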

            The inclusion $\salb\subset \sala$ follows from the fact that open balls of radius $R$ are cross sections of the set $\{(x,y) \in X\times X\colon \rho(x,y)<R\}$, hence they belong to~$\sala$. Due to separability (mod 0), any open set can be represented as a countable union of balls, and therefore it also lies in $\sala$. The density of $\salb$ in $\sala$ arises from the maximal property of the Lebesgue sigma-algebra. For further details, we refer to~\cite{VPZ14}.

            A consequence of the Radon property of a measure is the following somewhat unexpected fact, that demonstrates in a sense the universality of an admissible metric triple.

            \begin{theorem}[see~\cite{VPZ14}]
            \label{th_two_metrics}
            If $\rho_1$ and $\rho_2$ are two admissible metrics on a measure space $(X,\mu)$, then for any $\eps>0$, there exists a subset $X_0 \subset X$ such that $\mu(X_0) > 1-\eps$ and the topologies induced by the metrics $\rho_1, \rho_2$ on $X_0$ coincide.
            \end{theorem}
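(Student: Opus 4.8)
The plan is to read the statement as a two-sided Luzin theorem for the set-theoretic identity map, and to feed it the regularity supplied by Theorem~\ref{ThRadonMeasure}. View $\mathrm{id}\colon X\to X$ as a map from $(X,\rho_1)$ to $(X,\rho_2)$. It is $\mu$-measurable in the sense required by Luzin's theorem: by Theorem~\ref{ThRadonMeasure} the metric Borel $\sigma$-algebra $\salb_2$ lies in $\sala$ and is dense in it, so $\sala$ is, mod~$0$, the $\mu$-completion of both $\salb_1$ and $\salb_2$; consequently the preimage under $\mathrm{id}$ of any $\rho_2$-Borel set (namely that set itself) is $\sala$-measurable, i.e.\ measurable with respect to the completed $\rho_1$-Borel structure. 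Moreover $(X,\rho_2)$ is separable (being admissible) and $\mu$ is a Radon measure on $(X,\rho_1)$, again by Theorem~\ref{ThRadonMeasure}. These are exactly the hypotheses of Luzin's theorem for maps into a separable metric space.

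First I would apply that theorem to $\mathrm{id}\colon(X,\rho_1)\to(X,\rho_2)$ to obtain, for the given $\eps>0$, a set $X_1$ with $\mu(X_1)>1-\eps/2$ on which $\mathrm{id}$ is $\rho_1$-to-$\rho_2$ continuous. Since the hypotheses of Theorem~\ref{ThRadonMeasure} hold symmetrically for $\rho_1$ and $\rho_2$, I would apply the same argument to $\mathrm{id}\colon(X,\rho_2)\to(X,\rho_1)$ and get a set $X_2$ with $\mu(X_2)>1-\eps/2$ on which $\mathrm{id}$ is $\rho_2$-to-$\rho_1$ continuous. Setting $X_0=X_1\cap X_2$ gives $\mu(X_0)>1-\eps$, and, because continuity is inherited under restriction of the domain to a subset and of the codomain to any set containing the image, both $\mathrm{id}\colon(X_0,\rho_1)\to(X_0,\rho_2)$ and its inverse are continuous. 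Thus the identity is a homeomorphism of $(X_0,\rho_1)$ onto $(X_0,\rho_2)$, i.e.\ the two subspace topologies on $X_0$ coincide, as claimed.

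The only substantial ingredient is the metric-valued Luzin theorem, and its proof is precisely where the separability and Radon hypotheses are consumed; I would argue it directly. For each $n$ cover $(X,\rho_2)$ by countably many $\rho_2$-balls of radius $1/n$ (possible by separability), pull them back to a countable $\sala$-measurable partition of $X$, and use inner regularity of $\mu$ with respect to $\rho_1$ to approximate the pieces from inside by $\rho_1$-closed sets, discarding only a small measure; keeping finitely many of them leaves all but a small set covered by finitely many disjoint $\rho_1$-closed pieces, each sent by $\mathrm{id}$ into a single $1/n$-ball of $\rho_2$. Collapsing each piece to the corresponding center yields a $\rho_1$-continuous map $g_n$ with $\rho_2(\mathrm{id}(x),g_n(x))\le 1/n$, and intersecting over $n$ with geometrically decreasing measure losses produces a set on which $\mathrm{id}$ is a uniform $\rho_2$-limit of the $g_n$, hence continuous. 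I expect the main obstacle to be exactly the step where finitely many closed pieces suffice at each scale: this is where separability---equivalently, the finiteness of the $\eps$-entropy of an admissible triple---is indispensable, since without a finite, jointly closed cover one cannot control the full modulus of continuity. Everything else (measurability of the identity, the symmetric roles of $\rho_1$ and $\rho_2$, and the passage from one-sided to two-sided continuity) is routine once Theorem~\ref{ThRadonMeasure} is in hand.
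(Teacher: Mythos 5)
Your proof is correct, but it takes a more laborious route than the one the paper indicates. The paper states this theorem as ``a consequence of the Radon property of a measure'' (Theorem~\ref{ThRadonMeasure}, following~\cite{VPZ14}), and the efficient way to draw that consequence exploits inner regularity by \emph{compact} sets, not merely closed ones: since the sum of two admissible metrics is admissible (see the $\eps$-entropy subadditivity remark in Section~\ref{section_eps_entropy_definition}), one applies the Radon property to the single metric $\rho_1+\rho_2$ to get a set $K$ with $\mu(K)>1-\eps$ that is compact in $\rho_1+\rho_2$; then each identity map $(K,\rho_1+\rho_2)\to(K,\rho_i)$, $i=1,2$, is a continuous bijection from a compact space onto a Hausdorff space, hence a homeomorphism, so both induced topologies on $K$ coincide with the $(\rho_1+\rho_2)$-topology and therefore with each other. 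Compactness thus turns a one-sided continuity statement into the two-sided conclusion for free. You work with closed sets instead, which forces you to prove a Luzin-type theorem for metric-space-valued maps, apply it twice (once in each direction), and intersect the resulting sets; each step of your argument (countable ball cover from separability mod~0, extension of inner regularity from the metric Borel algebra to $\sala$ via completion, finitely many disjoint closed pieces, continuity of the locally constant approximants, uniform convergence) is sound, so the proof stands. One structural point deserves credit: in the paper the generalized Luzin theorem (Theorem~\ref{th_Lusin}) is \emph{deduced from} the present theorem, so invoking it here would be circular; you correctly avoid this by proving your metric-valued Luzin statement directly from Theorem~\ref{ThRadonMeasure}. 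The price is redoing work that the compactness trick renders unnecessary; the benefit is that your argument is self-contained and yields, as a by-product, a Luzin theorem for maps into an arbitrary admissible metric, which is slightly more general than the scalar version stated in the paper.
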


            From this theorem, we can deduce the following generalized Luzin's theorem.

            \begin{theorem}[Generalized Luzin's Theorem]
            \label{th_Lusin}
            Let $(X,\mu,\rho)$ be a metric triple, and let $f$ be a measurable function on $(X,\mu)$. Then for any $\varepsilon>0$, there exists a subset $X_0 \subset X$ such that $\mu(X_0) > 1-\varepsilon$ and the function $f$ is continuous on $(X_0,\rho)$.
            \end{theorem}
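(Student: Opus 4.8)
The plan is to reduce the statement to Theorem~\ref{th_two_metrics} by manufacturing a second admissible metric with respect to which $f$ is continuous by construction, and then transporting that continuity back to $\rho$ on a large set. Concretely, I would first pass to the full-measure subset where $f$ is finite (discarding a null set is harmless) and define, on that subset,
$$
\rho_2(x,y) := \rho(x,y) + |f(x) - f(y)|.
$$
Since $\rho$ is a measurable metric and $f$ is measurable, the map $(x,y)\mapsto |f(x)-f(y)|$ is $\mu^2$-measurable, so $\rho_2$ is a measurable, symmetric function that vanishes exactly on the diagonal and satisfies the triangle inequality; hence $\rho_2$ is a genuine measurable metric (no correction is needed, as both summands are honest (semi)metrics). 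By construction $f$ is $1$-Lipschitz, hence continuous, with respect to $\rho_2$ on the whole space: if $\rho_2(x_n,x)\to 0$ then $|f(x_n)-f(x)|\le \rho_2(x_n,x)\to 0$.

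The step I expect to be the crux is verifying that $\rho_2$ is \emph{admissible}, i.\,e.\ separable on a set of full measure. Here I would exploit that separability of metric spaces is hereditary. Let $X_1\subset X$ be a full-measure set on which $(X_1,\rho)$ is separable, which exists because $\rho$ is admissible. The map $\iota\colon X_1 \to (X_1,\rho)\times(\mathbb{R},|\cdot|)$, $x\mapsto (x,f(x))$, is an isometry onto its image when the product is given the sum metric $\bigl((x,a),(y,b)\bigr)\mapsto \rho(x,y)+|a-b|$, precisely because that metric pulls back to $\rho_2$. A product of two separable metric spaces is separable, and every subspace of a separable metric space is separable (separable metric spaces are second countable, and second countability passes to subspaces); therefore $\iota(X_1)$, and thus $(X_1,\rho_2)$, is separable. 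Hence $\rho_2$ is an admissible metric on $(X,\mu)$.

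With both $\rho$ and $\rho_2$ admissible, I would invoke Theorem~\ref{th_two_metrics} for the pair $\rho,\rho_2$: for the given $\eps>0$ there is a set $X_0\subset X$ with $\mu(X_0)>1-\eps$ on which the topologies induced by $\rho$ and $\rho_2$ coincide. Since $f$ is continuous on $(X,\rho_2)$, its restriction is continuous on $(X_0,\rho_2)$ in the subspace topology; as this subspace topology equals the one induced by $\rho$, the function $f$ is continuous on $(X_0,\rho)$, which is exactly the assertion.

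Beyond the admissibility verification I anticipate no real difficulty, and even that reduces to recognizing the embedding $x\mapsto(x,f(x))$ into the separable product $(X,\rho)\times\mathbb{R}$. The only mild care needed is the initial excision of the null set where $f$ could be infinite; equivalently, one may replace $f$ by $\arctan\circ f$ in the definition of $\rho_2$, which is bounded, preserves continuity, and leaves the separability argument intact, afterward recovering continuity of $f$ itself since $\arctan$ is a homeomorphism of $\mathbb{R}$ onto its image.
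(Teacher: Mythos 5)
Your proof is correct and follows essentially the same route as the paper, which deduces Theorem~\ref{th_Lusin} from Theorem~\ref{th_two_metrics}: constructing the auxiliary admissible metric $\rho_2(x,y) = \rho(x,y) + |f(x)-f(y)|$ (with admissibility checked via the graph embedding into the separable product $(X,\rho)\times\mathbb{R}$) is exactly the natural way to perform that deduction, and your handling of possible infinite values of $f$ closes the only minor loose end.
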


            We mention that in~\cite{BKP} an alternative approach to these results is presented. Theorem~\ref{ThRadonMeasure} concerning the Radon property of a measure is derived from the generalized Luzin's theorem.

            We conclude the paragraph with the following remark. With an admissible semimetric $\rho$ we associate a partition $\xi_\rho$ into sets of zero diameter: points $x,y \in X$ belong to the same element of the partition $\xi_\rho$ if and only if $\rho(x,y) = 0$. Admissibility of the semimetric $\rho$ guarantees that the partition $\xi_\rho$ is measurable. Thus, an admissible semimetric $\rho$ induces an admissible metric on the quotient space $X/\xi_\rho$.

        \subsection{Metrics and partitions}
        \label{Sec_part}

    Let $\pi\colon(X,\mu)\to (Y,\nu)$ be a measurable mapping that transforms $\mu$ to $\nu$. Let~$\xi$ be a partition into preimages of points under $\pi$. A classical result by V. A. Rokhlin (see~\cite{Ro1}) states the existence and uniqueness (mod~0) of conditional measures~$\mu_y$ on the elements of $\pi^{-1}(y)$ for $\nu$-almost every $y \in Y$, such that the measure $\mu$ is the integral of the measures $\mu_y$ with respect to $\nu$: $\mu = \int_Y \mu_y\ d\nu(y)$. For a measurable function $f$ defined on the space $(X,\mu)$, a system of restrictions $f_y = f|_{\pi^{-1}(y)}$ on the spaces $(\pi^{-1}(y), \mu_y)$, $y \in Y$, can be associated. For $\nu$-almost every $y \in Y$, the restriction $f_y$ is a measurable function. When replacing the function $f$ with an equivalent function (coinciding almost everywhere with respect to $\mu$), the traces of $f_y$ for $\nu$-almost every $y\in Y$ are replaced with $\mu_y$-equivalent traces. Thus, the restrictions of a measurable function to the elements of a measurable partition are well-defined. A similar question for multi-variable functions was raised by A. Vershik: is it possible to consistently define the restrictions of a given almost everywhere defined multi-variable function to the elements of a measurable partition? The answer, in general, is negative.

    However, for the so-called virtually continuous multi-variable functions, it is possible to provide a consistent definition of restrictions to the elements of a measurable partition. We present one of the possible definitions for virtually continuous functions.

    \begin{definition} \label{def_virt_cont}
    A measurable function $f$ on $(X^2,\mu^2)$ is called \emph{properly virtually continuous} if there exists a subset $X'\subset X$ of full measure and an admissible metric $\rho$ on $(X,\mu)$ such that $f$ is continuous on $X'\times X'$ with respect to the metric $\rho\times \rho$. A measurable function $f$ on $(X^2,\mu^2)$ is called \emph{virtually continuous} if it coincides with some properly virtually continuous function $\mu^2$-almost everywhere.
    \end{definition}

    A simple example of a non-virtually continuous function is the indicator function of the set "above the diagonal": the function $\chi_{\{x<y\}}$ on $X^2$, where $X = [0,1]$.

    \begin{theorem}
    If two proper virtually continuous functions $f$ and $g$ on $(X^2,\mu^2)$ coincide $\mu^2$-almost everywhere, then there exists a subset $X_0\subset X$ of full measure such that $f$ and $g$ coincide on the square $X_0^2$.

    If $\xi$ is a partition into preimages of points under a measurable mapping $\pi\colon (X,\mu) \to (Y,\nu)$, and $\{\mu_y\}_{y \in Y}$ is the corresponding system of conditional measures, then for $\nu$-almost every $y \in Y$, the restrictions of $f$ and $g$ to $\pi^{-1}(y)\times \pi^{-1}(y)$ coincide $\mu_y\times \mu_y$-almost everywhere.
    \end{theorem}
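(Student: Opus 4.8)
The plan is to reduce everything to a single admissible metric with respect to which both $f$ and $g$ become genuinely continuous, and then to run a separability-plus-Fubini argument on the open null set where they differ. Let $\rho_f$ (with full-measure set $X_f'$) and $\rho_g$ (with $X_g'$) be the admissible metrics witnessing proper virtual continuity of $f$ and $g$ in the sense of Definition~\ref{def_virt_cont}. First I would form $\rho := \rho_f + \rho_g$ and check that it is again admissible: on $X' := X_f' \cap X_g'$ both $(X',\rho_f)$ and $(X',\rho_g)$ are separable, hence second countable, and the topology of $\rho$ is exactly the join of the two (its balls are squeezed between $B_{\rho_f}(x,r/2)\cap B_{\rho_g}(x,r/2)$ and $B_{\rho_f}(x,r)\cap B_{\rho_g}(x,r)$), so it is second countable, i.e.\ separable mod~$0$. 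Since $\rho \geq \rho_f$ and $\rho \geq \rho_g$ pointwise, the $\rho\times\rho$-topology refines both product topologies, so the continuous versions of $f$ and $g$ are both $\rho\times\rho$-continuous on $X'\times X'$.

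The heart of the argument is the first assertion. Working with these continuous versions, consider $U := \{(x,y)\in X'\times X' : f(x,y) \neq g(x,y)\}$. Since $f - g$ is $\rho\times\rho$-continuous, $U$ is open in $X'\times X'$, and since $f = g$ holds $\mu^2$-almost everywhere, $\mu^2(U) = 0$. By separability, $(X',\rho)$ has a countable base of balls (which are measurable by the cross-section argument behind Theorem~\ref{ThRadonMeasure}), so $U = \bigcup_k A_k \times B_k$ for countably many balls $A_k, B_k$. For each $k$ we have $\mu(A_k)\,\mu(B_k) = \mu^2(A_k\times B_k) \leq \mu^2(U) = 0$, so at least one factor is null; choose such a null factor $N_k \in \{A_k, B_k\}$ and set $X_0 := X'\setminus \bigcup_k N_k$, a set of full measure. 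If some $(x,y)\in X_0\times X_0$ lay in $U$, it would lie in some $A_k\times B_k$; but whichever of $A_k, B_k$ was chosen as $N_k$ has been deleted, so $x\notin A_k$ or $y\notin B_k$ — a contradiction. Hence $(X_0\times X_0)\cap U = \emptyset$, i.e.\ $f = g$ on $X_0\times X_0$.

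For the second assertion I would feed $X_0$ into the disintegration $\mu = \int_Y \mu_y \, d\nu(y)$ provided by Rokhlin's theorem. Integrating $\mu(X\setminus X_0) = 0 = \int_Y \mu_y(\pi^{-1}(y)\setminus X_0)\, d\nu(y)$ shows $\mu_y(\pi^{-1}(y)\cap X_0) = 1$ for $\nu$-almost every $y$; for such $y$ the square $(\pi^{-1}(y)\cap X_0)^2$ has full $\mu_y\times\mu_y$-measure in the fiber. Because $f$ and $g$ are proper virtually continuous, their fiberwise restrictions $f_y$ and $g_y$ are (for $\nu$-a.e.\ $y$) the traces of the continuous versions on $\pi^{-1}(y)\cap X'$; on the full-measure subsquare $(\pi^{-1}(y)\cap X_0)^2$ these traces coincide because $f = g$ everywhere on $X_0\times X_0$. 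Therefore $f_y = g_y$ holds $\mu_y\times\mu_y$-almost everywhere, as claimed.

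The main obstacle I anticipate is not the fiberwise step, which is a routine disintegration argument once the first part is in hand, but the passage from ``$f = g$ almost everywhere'' to ``$f = g$ on a full-measure square.'' This is exactly where admissibility is indispensable: without separability one could not decompose the open null set $U$ into countably many measurable rectangles, and the product-measure dichotomy ``$\mu(A_k) = 0$ or $\mu(B_k) = 0$'' — together with the observation that deleting the null factors removes all of $U$ from $X_0\times X_0$ — would collapse. A secondary technical point requiring care is the verification that $\rho_f + \rho_g$ is admissible and that continuity is preserved under the finer topology, so that $f - g$ is genuinely $\rho\times\rho$-continuous on a common set $X'\times X'$ before the covering argument is applied.
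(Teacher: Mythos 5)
Your proof is correct. Note that the survey states this theorem without proof, deferring to \cite{VPZ13b} and \cite{VPZ14}, so there is no in-text argument to compare against; judged on its own terms, your argument is complete and uses only tools the survey itself provides. The three moves are sound: (i) reduction to a single admissible metric $\rho=\rho_f+\rho_g$ and a single full-measure set (admissibility of the sum also follows, even more directly, from the survey's estimate $\heps_{2\eps}(X,\mu,\rho_1+\rho_2)\le \heps_\eps(X,\mu,\rho_1)+\heps_\eps(X,\mu,\rho_2)$); (ii) the rectangle decomposition of the open null disagreement set $U$ with deletion of one null factor per rectangle; (iii) the disintegration step, which is indeed routine once (ii) is in hand, since proper virtually continuous functions are genuine pointwise-defined functions and their fiber restrictions are literal restrictions. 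Two points deserve slightly more care, though at the same level of rigor the survey itself adopts rather than beyond it. First, in Definition~\ref{def_virt_cont} the continuity set and the set on which the witnessing metric is separable are a priori \emph{different} full-measure sets, so your $X'$ should be the intersection of all four; with that adjustment your join-of-topologies/second-countability argument is exactly right. Second, balls of a measurable metric, being cross sections of a $\mu^2$-measurable set, are guaranteed $\mu$-measurable only for almost every center, so the countable dense set of ball centers should be chosen inside the co-null set of good centers (the survey makes the same silent assumption in its sketch of Theorem~\ref{ThRadonMeasure}). Finally, step (ii) admits an equivalent, slightly slicker packaging: by Theorem~\ref{ThRadonMeasure} the measure $\mu$ is Radon on the separable space $(X',\rho)$, hence $\mathrm{supp}\,\mu$ has full measure, and an open $\mu^2$-null set is disjoint from $\mathrm{supp}\,\mu\times\mathrm{supp}\,\mu=\mathrm{supp}(\mu^2)$; so one may take $X_0=X'\cap\mathrm{supp}\,\mu$ outright --- your rectangle argument is in effect the proof of this fact.
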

    
    The presented theorem allows giving a proper definition of restrictions of a virtually continuous function as restrictions of its equivalent proper virtually continuous function.

    \begin{proposition}
    An admissible semimetric $\rho$ on a measure space $(X,\mu)$ is a properly virtually continuous function.
    \end{proposition}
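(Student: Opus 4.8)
The plan is to produce a single admissible metric that witnesses the proper virtual continuity of $\rho$, exploiting the fact that the triangle inequality already makes $\rho$ continuous with respect to itself. The starting observation is that any semimetric is $1$-Lipschitz with respect to itself: two applications of the triangle inequality give
$$
|\rho(x,y)-\rho(x',y')|\le \rho(x,x')+\rho(y,y')
$$
for all points $x,y,x',y'$. Hence $\rho$, viewed as a function of two variables, is continuous with respect to the product topology generated by $\rho$, and indeed with respect to any topology finer than the $\rho$-topology. (By the correction theorem, Theorem~\ref{th_ispr}, we may assume $\rho$ is a genuine semimetric, so that this inequality holds everywhere.) The only obstruction to taking $\rho$ itself as the witnessing metric in Definition~\ref{def_virt_cont} is that $\rho$ is merely a semimetric: it need not separate points, hence is not a metric.

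To remove this obstruction I would fix an auxiliary genuine admissible metric $d$ on $(X,\mu)$ that separates points --- for instance the pullback of the Euclidean metric of $[0,1]$ under a mod~0 isomorphism $(X,\mu)\cong([0,1],\mathrm{Leb})$, which is separable and measurable and therefore admissible --- and set $\sigma=\rho+d$. Then $\sigma$ is symmetric, satisfies the triangle inequality, is measurable as a sum of measurable functions, and separates points because $d$ does; thus $\sigma$ is a genuine measurable metric. The one point that genuinely requires argument is the admissibility (separability) of $\sigma$. Letting $X_0$ and $X_1$ be the full-measure sets on which $\rho$ and $d$, respectively, are separable, I would observe that the diagonal embedding $z\mapsto(z,z)$ realizes $(X_0\cap X_1,\sigma)$ isometrically as a subset of the product $(X_0\cap X_1,\rho)\times(X_0\cap X_1,d)$ equipped with the sum metric, since the subspace metric on the diagonal is exactly $\rho+d=\sigma$. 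This product is separable, being a finite product of separable metric spaces, and subspaces of separable metric spaces are separable; hence $(X_0\cap X_1,\sigma)$ is separable and $\sigma$ is an admissible metric.

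It then remains to combine the two ingredients. Since $\rho\le\sigma$ pointwise, the self-Lipschitz estimate upgrades to
$$
|\rho(x,y)-\rho(x',y')|\le \sigma(x,x')+\sigma(y,y'),
$$
so $\rho$ is $1$-Lipschitz, and in particular continuous, with respect to $\sigma\times\sigma$ on $X'\times X'$, where $X'=X_0\cap X_1$ has full measure. With the admissible metric $\sigma$ and the full-measure set $X'$ in hand, $\rho$ satisfies Definition~\ref{def_virt_cont} and is therefore properly virtually continuous.

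I expect the only real subtlety --- the step I would write out rather than merely assert --- to be the admissibility of $\sigma=\rho+d$, which rests on the permanence of separability under finite products and passage to subspaces; the rest is the two-line triangle-inequality estimate. One technical caveat worth stating explicitly is that the isomorphism furnishing $d$ is defined only mod~0, so $d$, and consequently $\sigma$, are genuine metrics only on the full-measure set $X'$; since admissibility is itself a mod~0 property, this costs nothing, but flagging it avoids confusion between the witnessing metric $\sigma$ and the semimetric $\rho$ whose continuity we are establishing.
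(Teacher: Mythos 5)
Your proposal is correct, and its core step --- the two-application triangle-inequality estimate showing that $\rho$ is $1$-Lipschitz with respect to itself --- is precisely the paper's entire argument: the paper simply remarks that an admissible semimetric is ``trivially continuous on $X\times X$ with respect to the semimetric $\rho\times\rho$'' and stops. Where you genuinely go beyond the paper is in noticing that Definition~\ref{def_virt_cont} requires the witness to be an admissible \emph{metric}, while $\rho$ itself is only a semimetric; the paper elides this point (implicitly one either passes to the quotient $X/\xi_\rho$, on which $\rho$ induces an admissible metric, or observes that continuity with respect to the coarser semimetric topology implies continuity with respect to any finer admissible metric topology). Your fix --- taking $\sigma=\rho+d$ with $d$ an auxiliary admissible metric, so that $\sigma$ separates points, dominates $\rho$, and hence makes $\rho$ $1$-Lipschitz for $\sigma\times\sigma$ --- closes this gap cleanly. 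One simplification: the admissibility of $\sigma$, which you single out as the delicate step, does not need the separability-of-products-and-subspaces argument; it is immediate from the paper's own observation in Section~\ref{section_eps_entropy_definition} that admissible semimetrics form a cone, via the entropy estimate $\heps_{2\eps}(X,\mu,\rho+d)\le\heps_\eps(X,\mu,\rho)+\heps_\eps(X,\mu,d)$. In short: same key idea, with your version being the more scrupulous one about the letter of the definition, at the cost of an admissibility verification that the paper's earlier machinery already provides.
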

    Indeed, an admissible semimetric $\rho$, considered as a function of two variables, is trivially continuous on $X\times X$ with respect to the semimetric $\rho\times \rho$. Therefore, restrictions of $\rho$ to the elements of a measurable partition $\xi$ are well-defined. Moreover, for $\nu$-almost every $y\in Y$, the restriction of the semimetric $\rho$ to $\pi^{-1}(y)$ becomes an admissible semimetric on $(\pi^{-1}(y),\mu_y)$.

    For more details on virtually continuous functions, we refer to~\cite{VPZ13b} and~\cite{VPZ14}. In these papers, relations between the concept of virtual continuity and classical questions of analysis are discussed, such as theorems regarding traces of functions in Sobolev spaces, nuclear operators and their kernels. Virtual continuity also appears in dual-type theorems like the Monge–Kantorovich duality theorems, see papers~\cite{VPZ14}, \cite{BKP}, and~\cite{B22}.

    \subsection{The cone of summable admissible semimetrics and m-norm}  
    \label{sec214}
    In the following, we will focus our study on \emph{summable admissible semimetrics}, i.\,e. admissible semimetrics $\rho$ with a finite integral
    $$
    \int\limits_{X\times X} \rho(x,y) \, d\mu(x) \, d \mu(y) < \infty.
    $$
    For a fixed measure space $(X,\mu)$, let $\Adm(X,\mu)$ denote the set of all summable admissible semimetrics on $(X,\mu)$. In Section~\ref{section_eps_entropy_definition}, we will show that this set forms a convex cone in $L^1(X^2,\mu^2)$. The group $\aut(X,\mu)$ of automorphisms of the space $(X,\mu)$ acts on the cone $\Adm(X,\mu)$ by translations. The dynamics of metrics studied in Chapter~\ref{sec3} is essentially the study of this action. Orbits of this action consist of pairwise isomorphic metrics. In the future, we will discuss these orbits and metric averages over them. In particular, we will investigate the behavior of invariants of automorphisms that arise when studying this action. To each metric $\rho$ from $\Adm(X,\mu)$ corresponds a stabilizer, which is a group of measure-preserving isometries mod~0 of the space $(X,\rho)$.
    
    The cone $\Adm(X,\mu)$ is not closed in $L^1(X^2,\mu^2)$, and its closure is the set  of all (not necessarily admissible) summable semimetrics. When working with admissible semimetrics, it is convenient to use another norm induced in $L^1$ by the cone of all summable semimetrics. This norm was introduced in~\cite{VPZ13a}.
    
    \begin{definition}\label{def_mnorm}
    For a function $f \in L^1(X^2,\mu^2)$ we define its  m-norm (finite or infinite) as follows:
    $$
    \|f\|_{m} = \inf \left\{\|\rho\|_{L^1(X^2,\mu^2)}\colon \rho \text{ is a semimetric on } (X,\mu), \ |f| \leq \rho \ \ \mu^2\text{-a.\,e.}\right\}.
    $$
    We denote by $\MM(X,\mu)$ the subspace of $L^1(X^2,\mu^2)$ that contains the functions with a finite m-norm.
    \end{definition}
    Clearly, the m-norm dominates the standard norm in $L^1(X^2,\mu^2)$, implying that convergence in the m-norm implies convergence in $L^1(X^2,\mu^2)$.
    
    It is also evident that $\Adm(X,\mu) \subset \MM(X,\mu)$. In Section~\ref{sec_convergenceinmnorm}, we will discuss the properties of the cone of admissible metrics and the m-norm. We will show that the space $\MM(X,\mu)$ and the cone $\Adm(X,\mu)$ are complete with respect to the m-norm.
    
    Alongside the cone $\Adm(X,\mu)$, we will also consider its subset --- the cone $\Admm(X,\mu)$, consisting of summable admissible metrics. It forms a dense subset in $\Adm(X,\mu)$.

    \section{Epsilon entropy of a metric triple}
        \subsection{Epsilon entropy and characterization of admissibility} \label{section_eps_entropy_definition}

        One of the simplest functional characteristics describing a metric triple is the notion of epsilon-entropy that goes back to Shannon (see Chapter~\ref{sec1}).
\begin{definition}\label{definition_epsilon_entropy}
Let $\rho$ be a measurable semimetric on $(X,\mu)$ and $\eps>0$. The $\eps$-entropy $\heps_\eps(X,\mu,\rho)$ of the semimetric triple $(X,\mu,\rho)$ is defined as $\log k$, where $k$ is the minimal number (or infinity) for which the space $X$ can be represented as a union $X = X_0 \cup X_1\cup \dots \cup X_k$ of measurable sets, such that $\mu(X_0)<\eps$ and $\diam_{\rho}(X_j)<\eps $ for all $j = 1,\dots, k$. For $\eps\geq 1$, we set $\heps_\eps(X,\mu,\rho)=0$.
\end{definition}
Sometimes, it is important to consider a ``non-diagonal'' variant of this notion with two parameters, $\eps$ and $\delta$: in this case, the condition $\mu(X_0)<\eps$ is replaced by the condition $\mu(X_0)<\delta$. In~\cite{VL23}, such entropy is referred to as mm-entropy.

The property of a measurable semimetric to be admissible can be easily described in terms of its epsilon entropy.
\begin{lemma}
A measurable semimetric is admissible if and only if its $\eps$-entropy is finite for any $\eps>0$.
\end{lemma}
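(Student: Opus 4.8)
The plan is to prove both implications directly from the definitions, using the fact that balls are measurable cross-sections (as in Theorem~\ref{ThRadonMeasure}) together with a summable choice of scales.

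For the forward direction (admissibility $\Rightarrow$ finite $\eps$-entropy) I would start from a full-measure subset $X_0$ on which $(X_0,\rho)$ is separable, and fix a countable $\rho$-dense sequence $\{x_i\}_{i\ge 1}\subset X_0$. For a given $\eps>0$ the balls $B(x_i,\eps/3)$ cover $X_0$, and each is measurable because it is a cross-section of the measurable set $\{(x,y)\colon \rho(x,y)<\eps/3\}$. Since $\mu(X_0)=1$ and the cover is countable, continuity of measure yields an $N$ with $\mu\bigl(\bigcup_{i\le N}B(x_i,\eps/3)\bigr)>1-\eps$. Disjointifying, i.e.\ setting $Y_j=B(x_j,\eps/3)\setminus\bigcup_{i<j}B(x_i,\eps/3)$ for $j\le N$ and collecting everything else into the exceptional set, produces a decomposition into at most $N$ pieces of diameter $\le 2\eps/3<\eps$ together with a set of measure $<\eps$. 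Hence $\heps_\eps(X,\mu,\rho)\le \log N<\infty$.

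For the converse the key idea is to run the hypothesis along \emph{summable} scales. I would apply finiteness of the $\eps$-entropy with $\eps=2^{-n}$ to obtain, for each $n$, a decomposition $X=X_0^{(n)}\cup X_1^{(n)}\cup\dots\cup X_{k_n}^{(n)}$ with $\mu(X_0^{(n)})<2^{-n}$ and $\diam_\rho(X_j^{(n)})<2^{-n}$. Because $\sum_n \mu(X_0^{(n)})<\infty$, the Borel--Cantelli lemma shows that $N=\limsup_n X_0^{(n)}$ is a null set; set $X_0=X\setminus N$, so $\mu(X_0)=1$. Every $x\in X_0$ then lies in only finitely many exceptional sets, hence for all large $n$ it belongs to some good piece $X_j^{(n)}$. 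To exhibit separability I would build an explicit countable set: for each $n$ and each $j\le k_n$ with $X_j^{(n)}\cap X_0\neq\varnothing$, pick a point $x_{n,j}$ in that intersection, and let $D=\{x_{n,j}\}$. Given $x\in X_0$ and $\delta>0$, choose $n$ with $2^{-n}<\delta$ and $x\notin X_0^{(n)}$; then $x\in X_j^{(n)}$ for some good $j$, the point $x_{n,j}$ exists, and $\rho(x,x_{n,j})\le \diam_\rho(X_j^{(n)})<2^{-n}<\delta$. Thus $D$ is $\rho$-dense in $X_0$, so $(X_0,\rho)$ is separable and $\rho$ is admissible.

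The main subtlety lies in the converse, specifically in passing from ``at each fixed scale the space is almost totally bounded'' to ``a single full-measure set is separable simultaneously at all scales.'' This is exactly what the summable choice $\eps=2^{-n}$ combined with Borel--Cantelli provides: it lets us discard one common null set that handles every scale at once, whereas a non-summable choice such as $\eps=1/n$ would not permit this. Everything else (measurability of balls, disjointification, continuity of measure) is routine, and the argument works verbatim for semimetrics, since only diameters and balls are used.
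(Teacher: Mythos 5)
Your proof is correct. The survey states this lemma without proof (it is imported from~\cite{VPZ13a}), so there is no in-paper argument to compare against; both of your implications are the natural ones and are carried out soundly. In the forward direction: countable dense set, measurability of balls as cross-sections, continuity of measure, disjointification, giving $\heps_\eps \le \log N$ since the pieces have diameter at most $2\eps/3<\eps$. In the converse: decompositions at the summable scales $2^{-n}$, Borel--Cantelli to remove one common null set, and a countable net chosen inside the sets $X_j^{(n)}\cap X_0$ --- this last intersection is exactly the right move, since it guarantees the dense set lies \emph{inside} $X_0$, as the definition of admissibility requires.

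Two minor remarks, neither of which is a gap. First, measurability of the balls $B(x_i,\eps/3)$ as cross-sections of $\{(x,y)\colon\rho(x,y)<\eps/3\}$ holds, on the completed (Lebesgue) $\sigma$-algebra, only for almost every center; this is the same convention the paper itself uses in the sketch of Theorem~\ref{ThRadonMeasure}, and it is harmless here: discard the null set of centers with non-measurable sections and choose your dense sequence in the remaining full-measure subset of $X_0$, which is still separable. Second, your closing claim that a non-summable choice such as $\eps=1/n$ ``would not permit this'' is accurate for the Borel--Cantelli route, but summability is a convenience rather than a necessity: with arbitrary scales $\eps_n\to 0$ one can pick one point in every good piece, let $D$ be the resulting countable set, and check that $\big\{x\colon \inf_{d\in D}\rho(x,d)=0\big\}$ has full measure, from which separability on a full-measure set again follows. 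Your choice of $2^{-n}$ simply makes the argument cleanest.
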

Using this simple description, it is easy to understand that the sum of two admissible semimetrics is again an admissible semimetric. Indeed, the epsilon entropy of the sum of semimetrics can be estimated as follows:
$$
\heps_{2\eps}(X,\mu,\rho_1+\rho_2) \leq \heps_\eps(X,\mu,\rho_1) + \heps_\eps(X,\mu,\rho_2).
$$
Consequently, the set of all admissible semimetrics on the space $(X,\mu)$ and also the set $\Adm(X,\mu)$ are \emph{convex cones}.

We can see from the definition that for a fixed metric triple $(X,\mu, \rho)$, the function $\eps \mapsto \heps_{\eps}(X,\mu, \rho)$ is non-increasing, piecewise constant, and left-continuous. Let $\heps_{\eps+}(X,\mu, \rho) = \lim_{\delta \to 0^+} \heps_{\eps+\delta}(X,\mu, \rho)$. Then the functions $\heps_{\eps+}(X,\mu, \rho)$ and $\heps_{\eps}(X,\mu, \rho)$ are lower and upper semicontinuous on the cone $\Adm(X,\mu)$ respectively. Moreover, the following estimate holds.
\begin{lemma}\label{lemma_semicontinuity}
Let $\rho_1, \rho_2 \in \Adm(X, \mu)$ such that $||\rho_1 - \rho_2||_m < \frac{\delta^2}{4}$. Then for any $\eps > 0$,
$$
\heps_{\eps + \delta}(X,\mu, \rho_1) \le \heps_{\eps}(X,\mu, \rho_2).
$$
\end{lemma}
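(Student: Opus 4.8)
The plan is to reduce the statement to a concentration property of the dominating semimetric furnished by the m-norm. By Definition~\ref{def_mnorm}, the hypothesis $\|\rho_1-\rho_2\|_m<\delta^2/4$ yields a summable semimetric $\sigma$ on $(X,\mu)$ with $|\rho_1-\rho_2|\le\sigma$ holding $\mu^2$-almost everywhere and $\int_{X^2}\sigma\,d\mu^2<\delta^2/4$; by the Correction Theorem~\ref{th_ispr} I may take $\sigma$ to be a genuine semimetric, so that the triangle inequality holds for all triples. Since $\rho_1\le\rho_2+\sigma$ a.e., the obvious monotonicity $\heps_\eps(\rho)\le\heps_\eps(\rho')$ for $\rho\le\rho'$ together with the two-scale analogue of the subadditivity estimate stated after Definition~\ref{definition_epsilon_entropy}, namely $\heps_{\eps+\delta}(\rho_2+\sigma)\le\heps_\eps(\rho_2)+\heps_\delta(\sigma)$, reduces everything to proving the single \emph{concentration} fact
\[
\heps_\delta(X,\mu,\sigma)=0,
\]
i.e. that all of $X$ except a set of measure $<\delta$ can be covered by one set of $\sigma$-diameter $<\delta$. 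The two covers are then combined at the very end.

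The heart of the proof is this concentration claim, in which the constant $\delta^2/4$ enters sharply. First I would fix thresholds $t<\delta/2$ and $s<1/2$ and apply Markov's inequality: since $\int_{X^2}\sigma\,d\mu^2<\delta^2/4$, the set of far pairs satisfies $\mu^2(\{\sigma\ge t\})<\frac{\delta^2/4}{t}$. Writing $\phi(x)=\mu(\{y:\sigma(x,y)\ge t\})$, Fubini gives $\int_X\phi\,d\mu=\mu^2(\{\sigma\ge t\})$, so the set of bad centers $W=\{x:\phi(x)\ge s\}$ has $\mu(W)<\frac{\delta^2/4}{ts}$. The key geometric step is an overlap argument: if $x,y\notin W$, then each near-ball $\{z:\sigma(x,z)<t\}$ and $\{z:\sigma(y,z)<t\}$ has measure $>1-s$, hence (as $s<1/2$) they meet in positive measure, and any common point $z$ gives $\sigma(x,y)\le\sigma(x,z)+\sigma(z,y)<2t$. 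Thus $X\setminus W$ has $\sigma$-diameter $\le 2t<\delta$, and taking it as the single covering set and $W$ as the exceptional set proves $\heps_\delta(\sigma)=0$, provided $\mu(W)<\delta$.

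The main obstacle is that the two requirements $t<\delta/2$ and $s<1/2$ force $ts<\delta/4$, so the naive bound only gives $\mu(W)<\frac{\delta^2/4}{ts}>\delta$: the estimate is exactly critical. This is resolved by using the strictness of the hypothesis. Writing $\int_{X^2}\sigma\,d\mu^2=\delta^2/4-\eta$ with $\eta>0$, the bound becomes $\mu(W)<\frac{\delta^2/4-\eta}{ts}$, which tends to $\delta-\frac{4\eta}{\delta}<\delta$ as $t\uparrow\delta/2$ and $s\uparrow\tfrac12$ (so $ts\uparrow\delta/4$). Hence, choosing $t$ and $s$ close enough to their suprema, one simultaneously achieves $s<\tfrac12$, $2t<\delta$, and $\mu(W)<\delta$, exactly as needed. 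I expect this calibration to be the delicate point of the argument.

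Finally I would assemble the cover for $\rho_1$ directly. Starting from an optimal cover $X=X_0\cup X_1\cup\dots\cup X_k$ for $\rho_2$ at scale $\eps$, so that $\mu(X_0)<\eps$, $\diam_{\rho_2}(X_j)<\eps$, and $\log k=\heps_\eps(\rho_2)$, I set $Y_j=X_j\setminus W$ and take $X_0\cup W$ as the new exceptional set. Its measure is $<\eps+\delta$, there are at most $k$ pieces, and for $x,y\in Y_j$ one has $\rho_1(x,y)\le\rho_2(x,y)+\sigma(x,y)\le\diam_{\rho_2}(X_j)+2t<\eps+\delta$, whence $\diam_{\rho_1}(Y_j)<\eps+\delta$; therefore $\heps_{\eps+\delta}(\rho_1)\le\log k=\heps_\eps(\rho_2)$. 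The only remaining gap is that $\rho_1\le\rho_2+\sigma$ and the ensuing diameter bounds are a priori valid only mod~0, whereas $\diam$ is a genuine supremum. This standard point is closed as elsewhere in the paper: by the Correction Theorem and the virtual-continuity results of Section~\ref{Sec_part} one passes to a full-measure subset on which the relevant pointwise inequalities hold for all pairs, absorbing its null complement into the exceptional set, which changes none of the measure or diameter estimates above.
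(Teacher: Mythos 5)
Most of your proof is sound and follows the natural route for this lemma: extract from Definition~\ref{def_mnorm} a dominating semimetric $\sigma$ with $\int_{X^2}\sigma\,d\mu^2<\delta^2/4$, show that off a set $W$ of measure $<\delta$ one has $\sigma<\delta$ for \emph{all} pairs, and intersect $X\setminus W$ with an optimal $\eps$-cover for $\rho_2$; the constants come out exactly right. Two remarks on that part. The appeal to Theorem~\ref{th_ispr} is unnecessary: the infimum in Definition~\ref{def_mnorm} already runs over genuine semimetrics. More importantly, your concentration step is made artificially critical: integrating the everywhere-valid triangle inequality $\sigma(x,y)\le\sigma(x,z)+\sigma(z,y)$ over $z$ gives $\sigma(x,y)\le u(x)+u(y)$ for all $x,y$, where $u(x)=\int_X\sigma(x,z)\,d\mu(z)$; Markov's inequality at the single threshold $\delta/4$ then gives $\mu(\{u\ge\delta/4\})<\delta$, and $\sigma<\delta/2$ everywhere off this set --- with room to spare, no calibration $t\uparrow\delta/2$, $s\uparrow\tfrac12$, and no use of the strictness of the hypothesis.

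The genuine gap is the final mod~0 step, which you correctly flag but close with an argument that fails. It is \emph{not} true that an inequality valid $\mu^2$-a.e.\ on $X^2$ can be made to hold for all pairs on $X_0\times X_0$ for some full-measure $X_0$, and neither Theorem~\ref{th_ispr} nor the virtual-continuity results of Section~\ref{Sec_part} provide such a set: if $N\subset X^2$ is the graph of an irrational rotation of the circle, then $N$ is $\mu^2$-null, yet $(X_0\times X_0)\cap N\neq\emptyset$ for every full-measure $X_0$. Consequently, from $\rho_1\le\rho_2+\sigma$ a.e.\ you only obtain that $\rho_1<\eps+2t$ holds \emph{almost everywhere} on each square $Y_j\times Y_j$, i.e.\ a bound on the essential supremum, whereas Definition~\ref{definition_epsilon_entropy} requires a true diameter. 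The correct repair uses precisely the two hypotheses your argument never touched: that $\rho_1$ satisfies the triangle inequality everywhere and that $\rho_1$ is admissible. Fix a piece $A=Y_j$ of positive measure and let $G\subset A^2$ be the full-measure symmetric set where $\rho_1<\eps+2t$. By criterion~4 of Theorem~\ref{Th_adm_cr} applied to the restriction of $\rho_1$ to $(A,\mu|_A)$ (restrictions of admissible semimetrics are admissible), after discarding from $A$ a null set we may assume that every point $x\in A$ satisfies $\mu\bigl(A\cap B_{\rho_1}(x,r)\bigr)>0$ for all $r>0$ and that its slice $\{z\in A\colon (x,z)\notin G\}$ is null. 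Then for any two remaining points $x,y$ and any $r>0$ one can pick $z\in A\cap B_{\rho_1}(x,r)$ with $(y,z)\in G$ (the first set has positive measure, the second condition excludes only a null set), whence $\rho_1(x,y)\le\rho_1(x,z)+\rho_1(z,y)<r+\eps+2t$; letting $r\to0$ gives $\rho_1(x,y)\le\eps+2t<\eps+\delta$. Thus each $Y_j$ is trimmed by a null set to have true $\rho_1$-diameter $<\eps+\delta$, and the trimmed null sets are absorbed into the exceptional set without changing its measure. With this substitution your proof is complete; note that this is also the only place where the admissibility of $\rho_1$, assumed in the lemma, actually enters.
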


Another way to define an entropy of a metric triple is by approximation of its measure by discrete measures in the Kantorovich metric.
\begin{definition}\label{definition_epsilon_entropy_K}
Let $(X,\mu,\rho)$ be a metric triple with a finite first moment (i.\,e., $\rho$ is summable on $(X^2,\mu^2)$), and let $\eps>0$. Define
$$
\heps_\eps^K(X,\mu,\rho) = \inf\big\{
H(\nu) \colon d_K(\mu,\nu)<\eps
\big\},
$$
where the infimum is taken over all discrete measures $\nu$ on the metric space $(X,\rho)$, $H(\nu)$ is the Shannon entropy, and~$d_K$ is the Kantorovich distance between measures on $(X,\rho)$ (see~\cite{Kan42, V04zap, V13}).
\end{definition}

The following statements provide two-sided estimates on the given definitions of $\eps$-entropy.

\begin{lemma}\label{lemma_Kepsentropy1}
Let $(X,\mu,\rho)$ be a metric triple with a finite first moment. Let $0< \delta < \eps$ be such that for any subset $A \subset X$, if $\mu(A)<\delta$, then
\begin{equation}\label{eq11}
\int_{A\times X} \rho\, d(\mu\times \mu) < \eps - \delta.
\end{equation}
Then 
$$
\exp\Big(\heps_\eps^K(X,\mu,\rho)\Big) \leq \exp\Big(\heps_\delta (X,\mu,\rho)\Big)  + 1.
$$
\end{lemma}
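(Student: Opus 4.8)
The plan is to exploit that $\heps_\eps^K(X,\mu,\rho)$ is an \emph{infimum}: it suffices to exhibit a single discrete probability measure $\nu$ with $d_K(\mu,\nu)<\eps$ and with at most $k+1$ atoms, where $k=\exp(\heps_\delta(X,\mu,\rho))$. Indeed, a measure supported on at most $k+1$ points has Shannon entropy $H(\nu)\le\log(k+1)$, whence $\exp(H(\nu))\le k+1=\exp(\heps_\delta)+1$; taking the infimum over all admissible $\nu$ then gives the asserted inequality. So the whole proof reduces to a good choice of $\nu$ together with a good coupling realizing (an upper bound for) $d_K(\mu,\nu)$.

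First I would fix a partition realizing $\heps_\delta$, namely $X=X_0\cup X_1\cup\dots\cup X_k$ with $\mu(X_0)<\delta$ and $\diam_\rho(X_j)<\delta$ for $j=1,\dots,k$. In each cell of positive measure I pick a representative point $x_j\in X_j$ and give it the mass $\mu(X_j)$. Transporting $\mu|_{X_j}$ entirely onto the single atom $x_j$ costs $\int_{X_j}\rho(x,x_j)\,d\mu(x)\le\diam_\rho(X_j)\,\mu(X_j)<\delta\,\mu(X_j)$, so the total contribution of the ``good'' cells to the coupling I build this way is at most $\delta\sum_{j\ge 1}\mu(X_j)\le\delta$.

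The crux is the exceptional set $X_0$, whose $\rho$-diameter may be infinite, so its transport cost cannot be controlled through a diameter bound. Here I would invoke the hypothesis \eqref{eq11}. Set $g(y)=\int_{X_0}\rho(x,y)\,d\mu(x)$; applying \eqref{eq11} to $A=X_0$ (legitimate since $\mu(X_0)<\delta$) gives $\int_X g\,d\mu=\int_{X_0\times X}\rho\,d(\mu\times\mu)<\eps-\delta$. As $\mu$ is a probability measure, $g$ cannot satisfy $g\ge\eps-\delta$ almost everywhere, so there is a point $x_0\in X$ (not necessarily in $X_0$) with $g(x_0)<\eps-\delta$. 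I add the atom $x_0$ carrying mass $\mu(X_0)$ and transport all of $\mu|_{X_0}$ onto it, at cost exactly $g(x_0)<\eps-\delta$. The resulting measure $\nu=\mu(X_0)\,\delta_{x_0}+\sum_{j=1}^k\mu(X_j)\,\delta_{x_j}$ is a discrete probability measure with at most $k+1$ atoms, and the coupling just described yields $d_K(\mu,\nu)<\delta+(\eps-\delta)=\eps$ (a short case check, e.g.\ when some cells or $X_0$ have measure zero, confirms the inequality is strict). With the reduction above, this completes the argument.

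The main obstacle is exactly the treatment of $X_0$: a naive target point chosen inside $X_0$ need not have small transport cost, and it is the \emph{averaged} condition \eqref{eq11}, rather than any pointwise or diametric estimate, that produces a usable target $x_0$. The budget then splits cleanly: the cheap $\delta$-budget pays for the $k$ small-diameter cells, while the $(\eps-\delta)$-budget furnished by \eqref{eq11} pays for the single expensive transport of the small-mass set $X_0$, and the count of atoms ($k+1$) is what converts the covering bound $\heps_\delta$ into the Shannon-entropy bound on $\heps_\eps^K$.
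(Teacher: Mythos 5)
Your proof is correct and follows essentially the same route as the paper's: both take the partition realizing $\heps_\delta$, place atoms of mass $\mu(X_j)$ at representatives $x_j\in X_j$, pay at most $\delta$ for the small-diameter cells, and handle $X_0$ by averaging the transport cost $\int_{X_0}\rho(x,x_0)\,d\mu(x)$ over $x_0\sim\mu$ to find a good target point via hypothesis~\eqref{eq11}. The only difference is cosmetic bookkeeping (you split the budget as $\delta+(\eps-\delta)$ while the paper bounds the combined cost at once), so there is nothing to add.
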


\begin{lemma}\label{lemma_Kepsentropy2}
Let $(X,\mu,\rho)$ be a metric triple with a finite first moment. Then for any $\eps>0$, the following estimate holds
\begin{equation}\label{eq10}
\heps_{2\eps}(X,\mu,\rho) \leq \frac{1}{\eps} \big(\heps^K_{\eps^2}(X,\mu,\rho)+1\big).
\end{equation}
\end{lemma}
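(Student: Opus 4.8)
The plan is to start from a near-optimal discrete approximation of $\mu$ in the Kantorovich metric and to convert its atoms into centers of small balls that together cover almost all of $X$. Fix $\eps>0$ and choose a discrete measure $\nu=\sum_i p_i\delta_{a_i}$ with $d_K(\mu,\nu)<\eps^2$ and $H(\nu)$ within $\eta$ of $\heps^K_{\eps^2}(X,\mu,\rho)$; at the end I let $\eta\to 0$. By definition of the Kantorovich distance there is a coupling $\pi$ on $X\times X$ with marginals $\mu$ and $\nu$ and $\int\rho\,d\pi<\eps^2$. Since $\nu$ is purely atomic, $\pi$ disintegrates elementarily over the second coordinate as $\pi=\sum_i p_i\,(\pi_i\times\delta_{a_i})$, where $\pi_i(\cdot)=\pi(\,\cdot\,\times\{a_i\})/p_i$ is the conditional law of the source given that it is transported to $a_i$, so that $\mu=\sum_i p_i\pi_i$.

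The candidate cover consists of the open balls $B_i=\{x:\rho(x,a_i)<\eps\}$, each of $\rho$-diameter at most $2\eps$. Two quantities must be controlled: how much mass escapes because it is transported a long distance, and how many balls I am allowed to keep. For the first, Markov's inequality applied to $\int\rho\,d\pi<\eps^2$ gives $\pi(\{\rho\ge\eps\})<\eps$, i.e.\ the coupling moves all but an $\eps$-fraction of the mass a distance less than $\eps$. For the second, I discard the atoms of small weight: set the threshold $M=\frac1\eps\bigl(H(\nu)+1\bigr)$ and keep only $S=\{i:p_i\ge e^{-M}\}$. Since the kept weights sum to at most $1$, we get $|S|\le e^{M}$, which is exactly the bound I want on the number of balls.

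To estimate the excluded measure, let $U=X\setminus\bigcup_{i\in S}B_i$. Writing $\mu(U)=\pi(U\times X)=\sum_i p_i\pi_i(U)$ and splitting the sum over $i\in S$ and $i\notin S$, the discarded part is bounded by the total weight of small atoms $\sum_{i\notin S}p_i$, while for $i\in S$ a point of $U$ in particular lies outside $B_i$, so $\pi_i(U)\le\pi_i(\{\rho(\cdot,a_i)\ge\eps\})$ and the kept part is bounded by $\pi(\{\rho\ge\eps\})<\eps$. The weight of the small atoms is controlled by entropy: from $H(\nu)=\sum_i p_i\log(1/p_i)\ge M\sum_{p_i<e^{-M}}p_i$ I obtain $\sum_{i\notin S}p_i\le H(\nu)/M<\eps$. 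Hence $\mu(U)<2\eps$, so $U$ serves as the exceptional set $X_0$ and the balls $\{B_i\}_{i\in S}$ realize a cover giving $\heps_{2\eps}(X,\mu,\rho)=\log k\le\log|S|\le M=\frac1\eps\bigl(H(\nu)+1\bigr)$. Letting $H(\nu)\to\heps^K_{\eps^2}(X,\mu,\rho)$ yields the claim.

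The genuine content is the simultaneous calibration of the threshold $M$: it must be small enough that the entropy bound $H(\nu)/M$ on the discarded mass stays below $\eps$, yet large enough that the number $e^M$ of surviving atoms does not exceed $\exp\bigl(\tfrac1\eps(\heps^K_{\eps^2}+1)\bigr)$. The choice $M=\frac1\eps(H(\nu)+1)$ is what makes both happen at once, and it is the source of the factor $1/\eps$ and the additive $1$. The remaining points are routine: the balls $B_i$ are measurable as cross-sections of $\{\rho<\eps\}$ (Theorem~\ref{ThRadonMeasure}); the possibility that $\nu$ has infinitely many atoms is precisely what the thresholding disposes of; and the strict inequality $\diam_\rho(B_i)<2\eps$ is arranged by shrinking the radius by an arbitrarily small amount, with the Markov threshold adjusted accordingly.
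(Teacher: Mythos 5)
Your proof is correct and follows essentially the same route as the paper's: pick a near-optimal discrete approximation $\nu$, use an entropy pigeonhole to select at most $\exp\bigl(\tfrac{1}{\eps}(H(\nu)+1)\bigr)$ atoms carrying mass at least $1-\eps$, then use the transport plan together with Markov's (Chebyshev's) inequality to show that the $\eps$-balls around the surviving atoms cover all but $2\eps$ of the measure. The only cosmetic difference is that you obtain the atom-selection step by thresholding the weights at $e^{-M}$ with $M=\tfrac{1}{\eps}(H(\nu)+1)$, whereas the paper isolates it as a separate statement (Lemma~\ref{lement}) proved by sorting the weights and arguing by contradiction; the content is identical.
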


We present the proofs of Lemmas~\ref{lemma_Kepsentropy1} and \ref{lemma_Kepsentropy2} in Appendix~\ref{App_Kepsentropy_proofs}.

The following theorem from~\cite{VPZ13a} provides several equivalent reformulations of admissibility.
\begin{theorem}[Equivalent Conditions for Admissibility]
\label{Th_adm_cr}
Let $\rho$ be a measurable semimetric on $(X,\mu)$. The following statements are equivalent:
\begin{enumerate}
    \item The semimetric $\rho$ is admissible.
    \item For any $\eps>0$, the epsilon entropy $\heps_\eps(X,\mu,\rho)$ is finite.
    \item The measure $\mu$ can be approximated by discrete measures in the Kantorovich metric $d_K$; in other words, for any $\eps>0$, the epsilon entropy $\heps^K_\eps(X,\mu,\rho)$ is finite.
    \item For $\mu$-almost every $x \in X$ and for any $\eps>0$, the ball of radius $\eps$ in the semimetric $\rho$ centered at $x$ has positive measure.
    \item For any subset $A \subset X$ of positive measure, the essential infimum of the function $\rho$ on $A\times A$ is zero.
\end{enumerate}
\end{theorem}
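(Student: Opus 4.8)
The plan is to prove the equivalences as a cycle through the three ``entropy'' conditions (1)--(3) and the two ``local'' conditions (4)--(5), attaching each statement by one cheap arrow and one substantive arrow. The equivalence $(1)\Leftrightarrow(2)$ is exactly the preceding lemma characterizing admissibility by finiteness of $\heps_\eps$, so I import it directly. For $(2)\Leftrightarrow(3)$ I would invoke the two-sided comparison between $\heps_\eps$ and $\heps^K_\eps$: Lemma~\ref{lemma_Kepsentropy2} gives $\heps_{2\eps}\le\frac1\eps(\heps^K_{\eps^2}+1)$, so finiteness of the Kantorovich entropy forces finiteness of the covering entropy, while Lemma~\ref{lemma_Kepsentropy1} supplies the converse. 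Since $\heps^K_\eps$ presupposes a finite first moment, I first reduce to the bounded metric $\rho\wedge 1=\min(\rho,1)$, which is admissible exactly when $\rho$ is, has the same $\heps_\eps$ for $\eps\le 1$, and is automatically summable; summability then yields, by absolute continuity of the integral, the hypothesis~\eqref{eq11} needed to apply Lemma~\ref{lemma_Kepsentropy1}. This settles the mutual equivalence of (1), (2), (3).

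It remains to tie in the local conditions, which I would do via the cycle $(1)\Rightarrow(5)\Rightarrow(2)$ together with a side equivalence $(4)\Leftrightarrow(5)$. For $(1)\Rightarrow(5)$: after the Correction Theorem~\ref{th_ispr} I may assume $\rho$ is a genuine semimetric, and admissibility gives a full-measure separable, hence second countable, subspace. Given $A$ with $\mu(A)>0$, the support of $\mu|_A$ is nonempty (otherwise $A$ is covered by countably many null balls); picking $x$ in this support, each set $A\cap B(x,\eps/2)$ has positive measure and its square carries $\rho<\eps$, so $\essinf_{A\times A}\rho\le\eps$ for every $\eps$, i.e.\ (5). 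For $(5)\Rightarrow(4)$ I argue by contraposition: if (4) fails there is a rational $\eps$ and a positive-measure $A$ on which every ball $B(x,\eps)$ is null, so by Fubini the pair-set $\{(x,y)\in A\times A:\rho(x,y)<\eps\}$ is $\mu^2$-null and $\essinf_{A\times A}\rho\ge\eps$, contradicting (5). For the reverse $(4)\Rightarrow(5)$ I combine measurability with the triangle inequality: if (5) fails with $\essinf_{A\times A}\rho=c>0$, choose by (4) a positive-measure $B\subseteq A$ on which $x\mapsto\mu(B(x,c/2))\ge\eta>0$; then $\Psi(y)=\mu\{x\in B:\rho(x,y)<c/2\}$ integrates to at least $\eta\mu(B)>0$, yet whenever $\Psi(y)>0$ two such centres lie within $c$ of each other, producing a positive-measure $c$-close pair-set inside $B\times B$ and contradicting $\essinf_{B\times B}\rho\ge c$; hence $\Psi=0$ a.e., a contradiction.

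The substantive step, and the main obstacle, is closing the loop with $(5)\Rightarrow(2)$, that is, manufacturing finite covering entropy out of the local smallness of distances; I would prove the contrapositive. Suppose $\heps_{\eps_0}=\infty$. By a greedy exhaustion (or Zorn) select a maximal family $\{D_i\}$ of pairwise disjoint positive-measure sets of diameter $<\eps_0$; being disjoint and of positive measure it is countable, so if $\mu(\bigcup_i D_i)>1-\eps_0$ then finitely many $D_i$ already cover more than $1-\eps_0$ by sets of diameter $<\eps_0$, contradicting $\heps_{\eps_0}=\infty$. Hence $A:=X\setminus\bigcup_i D_i$ satisfies $\mu(A)\ge\eps_0>0$, and maximality forbids any positive-measure subset of $A$ of diameter $<\eps_0$. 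If $\essinf_{A\times A}\rho$ were below $\eps_0/2$, Fubini would produce a positive-measure set of $x\in A$ with $\mu(A\cap B(x,r))>0$ for some $r<\eps_0/2$, and $A\cap B(x,r)$ has diameter at most $2r<\eps_0$, the forbidden object. Thus $\essinf_{A\times A}\rho>0$ and (5) fails. The delicate points are the bookkeeping of radii versus diameters (open versus closed balls, the constant $\eps_0/2$) and checking that maximality of the disjoint family genuinely excludes \emph{every} small-diameter subset of $A$; once these are pinned down the argument is short, and together with the cheap arrows it fuses all five conditions into one equivalence class.
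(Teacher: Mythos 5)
Your proposal is correct, and it is substantially more complete than what the survey itself supplies: the paper gives no proof of Theorem~\ref{Th_adm_cr}, saying only that the equivalence of (1)--(3) ``has already been discussed above'' (namely, $(1)\Leftrightarrow(2)$ is the unproved lemma preceding the theorem, and $(2)\Leftrightarrow(3)$ comes from Lemmas~\ref{lemma_Kepsentropy1} and~\ref{lemma_Kepsentropy2}, proved in Appendix~\ref{App_Kepsentropy_proofs}), and delegating conditions (4) and (5) wholesale to~\cite{VPZ13a}. For (1)--(3) you follow the paper's route exactly. Everything you add on top of that --- the part the survey omits --- checks out: the support argument on the full-measure separable subspace for $(1)\Rightarrow(5)$ (a point of $A$ all of whose balls meet $A$ in positive measure exists, else $A$ is covered by countably many null basic balls); the Fubini contrapositive for $\neg(4)\Rightarrow\neg(5)$; the double-counting via $\Psi(y)$ for $(4)\Rightarrow(5)$, where the positive-measure set $S_y\times S_y\subset\{\rho<c\}$ does indeed contradict $\essinf_{B\times B}\rho\ge c$; and the maximal-disjoint-family contrapositive for $(5)\Rightarrow(2)$, in which the countability of the family, the absorption of the uncovered remainder into $X_0$, the exclusion by maximality of every small-diameter positive-measure subset of $A$, and the radius-versus-diameter factor of $2$ are all handled correctly. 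You thus obtain a self-contained proof where the paper offers a citation.

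One caveat concerns $(2)\Leftrightarrow(3)$. Truncating to $\rho\wedge1$ proves finiteness of $\heps^K_\eps$ for the triple with semimetric $\rho\wedge1$, not for $\rho$ itself, and the two are not interchangeable: for a non-summable admissible semimetric, condition (3) in its literal reading can genuinely fail. (Sketch: partition $[0,1]$ with Lebesgue measure into intervals $I_n$ of length $\sim c/(n\log^2 n)$, let $\rho$ vanish on each $I_n\times I_n$ and equal $|2^m-2^n|$ on $I_m\times I_n$ for $m\ne n$; any discrete $\nu$ with $d_K(\mu,\nu)<\eps$ must retain mass at least $\sim c/(2n\log^2 n)$ inside $I_n$ for all large $n$, since exporting it costs at least $2^{n-1}$ per unit mass, and coarsening to the partition $\{I_n\}$ then shows $H(\nu)=\infty$.) So condition (3) has to be read with the finite-first-moment convention already built into Definition~\ref{definition_epsilon_entropy_K}; under that convention $\rho$ is summable, your truncation becomes unnecessary, and absolute continuity of the integral yields hypothesis~\eqref{eq11} for $\rho$ directly, which is exactly how the paper's two lemmas are set up. This is a defect of the theorem's formulation rather than of your argument, but your write-up should state explicitly which reading it establishes.
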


The equivalence of the first three statements of the theorem has already been discussed above. The fifth statement of the theorem is convenient to use as a criterion for verifying the non-admissibility of a measurable semimetric.

We present another characterization of the admissibility of a semimetric from~\cite{VPZ13a}. It is given in terms of pairwise distances between a random sequence of points.

\begin{theorem}\label{Matr_dop}
Let $\rho$ be a measurable semimetric on $(X,\mu)$. Let $(x_n)_{n=1}^\infty$ be a random sequence of points chosen independently with respect to the measure $\mu$.   
\begin{enumerate} 
        \item If the metric~$\rho$ is admissible, then for any positive constant $\eps$, the probability of the following event tends to zero as $n$ goes to infinity: 
        \begin{multline}\label{eq_matrixdist}
            \text{there exists an index set $I \subset \{1, 2, \ldots, n\}$ of size}\\
            \text{at least $\eps n$, such that $\rho(x_i, x_j) > \eps$ for any distinct $i,j \in I$.}
        \end{multline}

        \item If the metric $\rho$ is not admissible, then there exists a positive constant $\eps$ such that the probability of the event~\eqref{eq_matrixdist} tends to one. 
    \end{enumerate}
\end{theorem}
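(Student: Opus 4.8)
The plan is to reduce both implications to the law of large numbers applied to indicator functions of suitable sets, exploiting the equivalent characterizations of admissibility already collected in Theorem~\ref{Th_adm_cr}. For part~(1) I would invoke the finiteness of the $\eps$-entropy (condition~2), and for part~(2) the essential-infimum criterion (condition~5). In both cases, once the right characterization is in hand, the probabilistic statement follows by counting how sample points distribute among a fixed finite family of sets.

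For part~(1), fix $\eps>0$. Since $\rho$ is admissible, $\heps_{\eps/2}(X,\mu,\rho)<\infty$, so there is a finite decomposition $X=X_0\cup X_1\cup\dots\cup X_k$ with $\mu(X_0)<\eps/2$ and $\diam_\rho(X_j)<\eps/2$ for $j\geq 1$. The key combinatorial observation is that any two sample points lying in the same piece $X_j$ (with $j\geq 1$) are at distance less than $\eps$, and hence cannot both belong to an $\eps$-separated index set $I$; therefore such an $I$ contains at most one index per piece, plus the indices landing in $X_0$, giving $|I|\leq k+N_0$, where $N_0=\#\{i\le n:x_i\in X_0\}$. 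By the law of large numbers $N_0/n\to\mu(X_0)<\eps/2$ almost surely, so $P(N_0\geq \eps n/2)\to 0$; and once $n$ is large enough that $k<\eps n/2$, the existence of an $\eps$-separated set of size at least $\eps n$ forces $N_0\geq \eps n/2$. Thus the probability of event~\eqref{eq_matrixdist} is bounded by $P(N_0\geq \eps n/2)\to 0$.

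For part~(2), non-admissibility is, by condition~(5) of Theorem~\ref{Th_adm_cr}, equivalent to the existence of a set $A$ with $p:=\mu(A)>0$ and $c:=\essinf_{A\times A}\rho>0$; in particular $\rho\geq c$ holds $\mu^2$-almost everywhere on $A\times A$. I would then set $\eps=\tfrac12\min(p,c)$ and take $I=\{i\le n:x_i\in A\}$ as the witnessing index set. Two things must be checked. First, that $I$ is $\eps$-separated: for each fixed pair $i<j$, the conditional law of $(x_i,x_j)$ given $x_i,x_j\in A$ is $\mu_A\times\mu_A$, where $\mu_A=\mu|_A/\mu(A)$, which is absolutely continuous with respect to $\mu^2$ on $A\times A$, so $\rho(x_i,x_j)\geq c>\eps$ holds conditionally with probability one; a union bound over the finitely many pairs shows that with probability one all distances between distinct sampled points inside $A$ exceed $\eps$. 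Second, that $I$ is large enough: by the law of large numbers $|I|/n\to p$, so $P(|I|\geq \eps n)\to 1$ since $\eps\le p/2<p$. Combining the two shows that the probability of event~\eqref{eq_matrixdist} tends to one.

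The routine ingredient in both directions is the law of large numbers; the only step requiring genuine care is the measure-theoretic transfer in part~(2), namely verifying that the almost-everywhere lower bound $\rho\geq c$ with respect to $\mu^2$ survives passage to the conditional distributions of the sampled pairs, so that the finite union bound over the $\binom{n}{2}$ pairs indeed yields probability-one separation. The choice of a single constant $\eps=\tfrac12\min(p,c)$ that simultaneously controls both the separation threshold and the target size is the other place where one must be slightly attentive.
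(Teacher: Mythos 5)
Your proof is correct. There is nothing in the paper to compare it against: the survey states Theorem~\ref{Matr_dop} without proof, citing~\cite{VPZ13a}, so your argument has to stand on its own — and it does. Part~(1) is sound: a decomposition realizing $\heps_{\eps/2}(X,\mu,\rho)<\infty$ forces any $\eps$-separated index set to contain at most one index per piece of diameter $<\eps/2$, so the event~\eqref{eq_matrixdist} forces at least $\eps n - k > \eps n/2$ sample points into the exceptional set $X_0$ with $\mu(X_0)<\eps/2$, which the law of large numbers excludes with probability tending to one; the bound $|I|\le k+N_0$ survives even if the pieces overlap, since the assignment of indices of $I\setminus\{i\colon x_i\in X_0\}$ to pieces must be injective. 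Part~(2) is also sound, and you correctly identified the only delicate point: the $\mu^2$-a.e.\ bound $\rho\ge c$ on $A\times A$ transfers to the sample because the joint law of $(x_i,x_j)$ for $i\ne j$ is exactly $\mu\times\mu$, so the exceptional set $\{(x,y)\in A\times A\colon \rho(x,y)<c\}$ is hit with probability zero, and a union bound over the $\binom{n}{2}$ pairs gives almost sure separation at level $c\ge 2\eps>\eps$, while $|I|/n\to p>\eps$ handles the size. The one caveat worth recording is that your proof is exactly as self-contained as Theorem~\ref{Th_adm_cr}: the negation of its condition~(5) is where the entire content of non-admissibility enters, and that equivalence is itself quoted by the survey from~\cite{VPZ13a} without proof. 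Since that theorem precedes Theorem~\ref{Matr_dop} and is established in the source independently of the random-sequence characterization, there is no circularity, and your reduction is legitimate; but anyone wanting a fully self-contained argument would still have to prove the implication ``not admissible $\Rightarrow$ there exists $A$ of positive measure with $\essinf_{A\times A}\rho>0$,'' which is the genuinely nontrivial step hidden behind your first sentence of part~(2).
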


In conclusion of this section, we present another theorem that allows estimating the epsilon entropy of a metric triple through the epsilon entropies of its random finite subspaces.
\begin{theorem}\label{th_eps_ent_diskr}
Let $(X, \mu, \rho)$ be a metric triple, and let $\{x_k\}_{k=1}^\infty$ be a sequence that is random with respect to the measure $\mu^\infty$. 
Let $(X_n, \mu_n, \rho_n)$ be a (random) finite metric triple, where $X_n = \{x_1, \ldots, x_n\}$, $\mu_n$ is the uniform measure on $X_n$, and $\rho_n(x_i, x_j) = \rho(x_i, x_j)$.
Then, almost surely:
\begin{enumerate}
    \item The lower estimate for $\eps$-entropy of $\rho$ holds:
    $$
    \limsup\limits_n\mathbb{H}_{\eps}(X_n, \mu_n, \rho_n) \leq \mathbb{H}_{\eps}(X, \mu, \rho);
    $$
    \item The upper estimate for $\eps$-entropy of $\rho$ holds:
    $$
    \liminf\limits_n\mathbb{H}_{\eps}(X_n, \mu_n, \rho_n) \geq \mathbb{H}_{\eps+}(X, \mu, \rho).
    $$
\end{enumerate}
\end{theorem}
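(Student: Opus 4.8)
The plan is to obtain both estimates from the strong law of large numbers (SLLN) applied to the empirical measures $\mu_n=\frac1n\sum_{i=1}^n\delta_{x_i}$, with the first inequality (the $\limsup$ bound) being an essentially immediate transfer of an optimal finite covering and the second (the $\liminf$ bound) requiring a compactness argument based on the Radon property of $\mu$ (Theorem~\ref{ThRadonMeasure}). For the first statement I would fix $\eps>0$ and take an optimal partition $X=X_0\sqcup X_1\sqcup\dots\sqcup X_k$ realizing $\mathbb{H}_\eps(X,\mu,\rho)=\log k$, so that $\mu(X_0)<\eps$ and $\diam_\rho(X_j)<\eps$. Applying the SLLN to the single indicator $\chi_{X_0}$, almost surely $\mu_n(X_0)\to\mu(X_0)<\eps$, hence $\mu_n(X_0)<\eps$ for all large $n$. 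For such $n$ the sets $X_j\cap X_n$ cover the good part of $X_n$ and satisfy $\diam_{\rho_n}(X_j\cap X_n)\le\diam_\rho(X_j)<\eps$, so they form an admissible covering of $(X_n,\mu_n,\rho_n)$ with $k$ pieces. Thus $\mathbb{H}_\eps(X_n,\mu_n,\rho_n)\le\log k$ eventually, which yields $\limsup_n\mathbb{H}_\eps(X_n,\mu_n,\rho_n)\le\mathbb{H}_\eps(X,\mu,\rho)$ almost surely.

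For the second statement I would first reduce to the claim that \emph{for each fixed $\eps'>\eps$, almost surely $\mathbb{H}_{\eps'}(X,\mu,\rho)\le\liminf_n\mathbb{H}_\eps(X_n,\mu_n,\rho_n)$}. Granting this, one intersects the corresponding almost sure events over a countable sequence $\eps'_m\downarrow\eps$ and uses $\mathbb{H}_{\eps+}(X,\mu,\rho)=\sup_m\mathbb{H}_{\eps'_m}(X,\mu,\rho)$ to finish. To prove the claim, I would assume the right-hand side equals $\log L<\infty$ (otherwise nothing to prove) and pass to a subsequence $n_k$ along which $\mathbb{H}_\eps(X_{n_k},\mu_{n_k},\rho_{n_k})\le\log L$; this supplies partitions of $X_{n_k}$ into a bad set $B^{(k)}$ with $|B^{(k)}|<\eps n_k$ and at most $L$ good pieces $A^{(k)}_1,\dots,A^{(k)}_L$ of $\rho$-diameter $<\eps$.

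The heart of the argument is to convert these finite partitions into a covering of all of $X$ by pieces of diameter $<\eps'$. I would set $s=(\eps'-\eps)/4$, fix (by the Radon property) a compact $K$ with $\mu(X\setminus K)<\eta$ for a small $\eta$ to be chosen, and cover $K$ by finitely many balls $B(z_1,s/2),\dots,B(z_M,s/2)$. Fattening each good piece to $A^{(k),+}_j=\{x:\rho(x,A^{(k)}_j)<s\}$, the triangle inequality gives $\diam_\rho(A^{(k),+}_j)\le\eps+2s<\eps'$, and the complement of $\bigcup_j A^{(k),+}_j$ is exactly $E^{(k)}=\{x:\rho(x,G^{(k)})\ge s\}$, where $G^{(k)}$ is the set of good sample points. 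It then remains to bound $\mu(E^{(k)})$: for $x\in E^{(k)}\cap K$ the ball $B(z_i,s/2)$ containing $x$ can meet no good point (else $\rho(x,G^{(k)})<s$), so $E^{(k)}\cap K$ lies in the union of those balls free of good points; that union contains only bad samples, hence its $\mu_{n_k}$-measure is at most $|B^{(k)}|/n_k<\eps$. Since the union is a subfamily of the \emph{fixed finite} collection $\{B(z_i,s/2)\}$, applying the SLLN simultaneously to the $2^M$ possible subfamily-unions upgrades this to $\mu(E^{(k)}\cap K)<\eps+\eta$ for all large $k$, so $\mu(E^{(k)})<\eps+2\eta<\eps'$ once $\eta<(\eps'-\eps)/2$. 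Thus $X$ is covered by at most $L$ pieces of diameter $<\eps'$ plus the exceptional set $E^{(k)}$ of measure $<\eps'$, giving $\mathbb{H}_{\eps'}(X,\mu,\rho)\le\log L$.

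The main obstacle is precisely this second estimate, and within it the control of the bad samples: a priori the few bad points could shield a large-measure region of $X$ from every good point, and the fattening would then fail to cover most of $\mu$. The device that resolves this is to route the whole covering through one fixed finite family of small balls adapted to a compact $K$, so that the SLLN can be invoked uniformly over finitely many subfamilies and the good-point-free region is shown to carry only $\eps+o(1)$ mass. A second, more elementary, point to get right is the diameter bookkeeping: one must fatten the already small-diameter pieces by a fraction of the gap $\eps'-\eps$ rather than replace them by balls of radius $\eps$, which would lose the usual factor of two and ruin the estimate. These two features are exactly what force $\mathbb{H}_{\eps+}$, rather than $\mathbb{H}_\eps$, to appear on the right-hand side.
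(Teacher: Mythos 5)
Your proof is correct. For context: the survey states Theorem~\ref{th_eps_ent_diskr} without proof (the result is imported from~\cite{VPZ13a}), so there is no in-paper argument to compare against; what follows assesses your argument on its own terms. Part~1 is the routine transfer you describe: the optimal partition exists because admissibility makes $\heps_\eps(X,\mu,\rho)$ finite, and the SLLN for the single indicator $\chi_{X_0}$ makes that partition an admissible covering of the sample spaces for all large $n$. Part~2 is where the content lies, and your two devices are sound: fattening the good pieces by $s=(\eps'-\eps)/4$ keeps their diameters below $\eps'$, and, crucially, the exceptional set is funneled through a fixed, deterministic, finite family of balls, so the SLLN can be applied simultaneously to the finitely many subfamily unions even though the relevant subfamily $U^{(k)}$ is sample-dependent; likewise the randomness of the subsequence $n_k$ is harmless, since almost sure convergence of $\mu_n(V)$ to $\mu(V)$ along the full sequence passes to every subsequence, however chosen. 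Two small points worth making explicit in a final write-up: (i) the balls $B(z_i,s/2)$ and the fattened sets $A_j^{(k),+}$ (finite unions of balls centered at sample points) must lie in the original $\sigma$-algebra for the entropy definition to apply --- this is exactly what Theorem~\ref{ThRadonMeasure} supplies; (ii) you do not actually need the Radon property: admissibility (separability mod~0) already gives countably many balls of radius $s/2$ whose union has full measure, and finitely many of them capture measure $1-\eta$, which can replace the compact $K$ and makes the argument rest only on the separability axiom. Your reduction to a countable sequence $\eps'_m\downarrow\eps$, and the observation that a finite $\liminf$ of logarithms of integers is attained along a subsequence, are both correct, and the appearance of $\heps_{\eps+}$ rather than $\heps_\eps$ in the second inequality is indeed forced for exactly the reason you give.
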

It should be noted that Theorem~\ref{th_eps_ent_diskr} essentially provides an estimate of the epsilon entropy of a metric triple in terms of its matrix distribution, see Section~\ref{Sec231}.

\subsection{Convergence in the cone of admissible semimetrics}\label{sec_convergenceinmnorm}

In this section, we present a series of results from~\cite{VPZ13a}, describing the properties of the space $\MM(X,\mu)$ and the cone of admissible semimetrics $\Adm(X,\mu)$, equipped with the m-norm and the norm from $L^1(X^2,\mu^2)$.

\begin{lemma}
The space $\MM(X,\mu)$ is complete in the m-norm.
\end{lemma}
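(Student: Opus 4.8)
The plan is to run the standard Riesz--Fischer completeness argument, adapting each step so that the dominating objects remain genuine semimetrics. Let $\{f_n\}$ be a Cauchy sequence in the m-norm. Since the m-norm dominates $\|\cdot\|_{L^1(X^2,\mu^2)}$, the sequence is also Cauchy in $L^1(X^2,\mu^2)$, so by completeness of $L^1$ there is a limit $f \in L^1(X^2,\mu^2)$ with $f_n \to f$ in $L^1$; passing to a subsequence we may assume $f_n \to f$ $\mu^2$-almost everywhere as well. It then remains to show that $f \in \MM(X,\mu)$ and that $f_n \to f$ in the m-norm.

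First I would extract a rapidly Cauchy subsequence $\{f_{n_k}\}$ with $\|f_{n_{k+1}} - f_{n_k}\|_{m} < 2^{-k}$. By the definition of the m-norm, for each $k$ I can choose a semimetric $\rho_k$ on $(X,\mu)$ with $|f_{n_{k+1}} - f_{n_k}| \le \rho_k$ $\mu^2$-a.e.\ and $\|\rho_k\|_{L^1(X^2,\mu^2)} < 2^{-k+1}$. The central construction is the family of tail sums
$$
R_\ell = \sum_{j \ge \ell} \rho_j,
$$
which converge in $L^1(X^2,\mu^2)$ since $\sum_j \|\rho_j\|_{L^1} < \infty$, and which satisfy $\|R_\ell\|_{L^1} \le \sum_{j \ge \ell} \|\rho_j\|_{L^1} \to 0$ as $\ell \to \infty$.

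Each $R_\ell$ is an $L^1$-limit of the partial sums $\sum_{j=\ell}^{N}\rho_j$, which are finite sums of semimetrics and hence semimetrics; since the class of measurable semimetrics is closed under $L^1$-convergence (the defining relations pass to the limit a.e.\ along a subsequence), $R_\ell$ is an almost metric, and by the Correction Theorem~\ref{th_ispr} it agrees $\mu^2$-a.e.\ with a genuine semimetric $\tilde R_\ell$. For any $k > \ell$, the telescoping identity $f_{n_k} - f_{n_\ell} = \sum_{j=\ell}^{k-1}(f_{n_{j+1}} - f_{n_j})$ together with the triangle inequality for $|\cdot|$ gives $|f_{n_k} - f_{n_\ell}| \le \sum_{j=\ell}^{k-1} \rho_j \le R_\ell$ a.e.; letting $k \to \infty$ and using $f_{n_k} \to f$ a.e.\ yields $|f - f_{n_\ell}| \le R_\ell = \tilde R_\ell$ $\mu^2$-a.e. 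Hence $\|f - f_{n_\ell}\|_{m} \le \|\tilde R_\ell\|_{L^1} = \|R_\ell\|_{L^1} \to 0$, so the subsequence converges to $f$ in the m-norm. A Cauchy sequence possessing a convergent subsequence converges to the same limit, so $f_n \to f$ in the m-norm, and $\|f\|_{m} \le \|f - f_{n_1}\|_{m} + \|f_{n_1}\|_{m} < \infty$ shows $f \in \MM(X,\mu)$.

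The step I expect to require the most care is the verification that the tail sums $R_\ell$ may legitimately serve as dominating semimetrics: one must check that the countable sum of semimetrics converges (guaranteed by summability of the $L^1$-norms), that the limit retains the semimetric axioms mod~$0$ (closedness of semimetrics under $L^1$-convergence), and that the Correction Theorem can be invoked to replace the almost metric $R_\ell$ by an honest semimetric $\tilde R_\ell$ without disturbing the a.e.\ domination $|f - f_{n_\ell}| \le R_\ell$. Once these semimetric bookkeeping points are secured, the remaining estimates are routine.
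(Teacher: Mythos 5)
Your proof is correct. The survey states this lemma without proof (it is imported from \cite{VPZ13a}), and your argument is exactly the standard Riesz--Fischer completeness scheme adapted to the m-norm --- rapidly Cauchy subsequence, summable dominating semimetrics, tail sums legitimized as semimetrics via a.e.\ convergence and the Correction Theorem~\ref{th_ispr} --- which is essentially the argument of the cited source, so there is nothing to add.
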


\begin{lemma}
Let a sequence of summable semimetrics $\rho_n$ on $(X,\mu)$ converges to a function $\rho$ in the m-norm. If for each $\eps>0$, for sufficiently large $n$, the estimate $\heps_\eps(X,\mu,\rho_n)<+\infty$ holds, then the function $\rho$ is an admissible semimetric.
\end{lemma}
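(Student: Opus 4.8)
The plan is to verify the two defining properties of an admissible semimetric in turn: first that the limit $\rho$ is a genuine measurable semimetric, and then that its $\eps$-entropy is finite for every $\eps>0$, which by the equivalence of admissibility with finiteness of all $\eps$-entropies (Theorem~\ref{Th_adm_cr}, conditions~1 and~2) is exactly what is required.

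For the first part I would note that convergence in the m-norm dominates convergence in $L^1(X^2,\mu^2)$, which in turn implies convergence in measure. Since the class of measurable semimetrics is closed under convergence in measure (a consequence of the Correction Theorem~\ref{th_ispr}, as remarked just after it), the limit $\rho$ is again a measurable semimetric. Moreover, being an m-limit, $\rho$ has finite m-norm, so $\|\rho\|_{L^1}\le\|\rho\|_m<\infty$ and $\rho$ is summable; it therefore only remains to control the $\eps$-entropy.

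For the second part, fix $\eps>0$ and choose any $\delta$ with $0<\delta<\eps$. Applying the hypothesis at the radius $\eps-\delta>0$ yields an index $N_1$ with $\heps_{\eps-\delta}(X,\mu,\rho_n)<+\infty$ for all $n\ge N_1$, and m-norm convergence yields an index $N_2$ with $\|\rho-\rho_n\|_m<\delta^2/4$ for all $n\ge N_2$. Fixing any $n\ge\max(N_1,N_2)$ and invoking the semicontinuity estimate of Lemma~\ref{lemma_semicontinuity} with the pair $(\rho,\rho_n)$ at radius $\eps-\delta$ gives
$$
\heps_\eps(X,\mu,\rho)=\heps_{(\eps-\delta)+\delta}(X,\mu,\rho)\le\heps_{\eps-\delta}(X,\mu,\rho_n)<+\infty.
$$
As $\eps>0$ was arbitrary, every $\eps$-entropy of $\rho$ is finite, so $\rho$ is admissible.

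The delicate point — and the step I would treat most carefully — is the invocation of Lemma~\ref{lemma_semicontinuity}, which as stated requires both semimetrics to lie in $\Adm(X,\mu)$. Here neither is yet known to be admissible: $\rho$ is precisely what we are trying to certify, and the hypothesis controls $\heps_\eps(\rho_n)$ only at individual radii (with threshold depending on the radius), so a single fixed $\rho_n$ need not be admissible. The resolution is that the semicontinuity estimate is fundamentally a covering comparison: from an $(\eps-\delta)$-economical cover for $\rho_n$ together with the bound $\|\rho-\rho_n\|_m<\delta^2/4$, one manufactures an $\eps$-economical cover for $\rho$ of the same cardinality, and this construction uses only the finiteness of $\heps_{\eps-\delta}(\rho_n)$, not full admissibility of either semimetric. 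I would therefore either apply the argument proving Lemma~\ref{lemma_semicontinuity} verbatim to the measurable semimetrics at hand (observing that the $\Adm$ hypothesis is inessential for the one-sided inequality actually used), or, equivalently, first record that the bound $\heps_{\eps+\delta}(\rho_1)\le\heps_\eps(\rho_2)$ persists for arbitrary measurable semimetrics whenever the right-hand side is finite, and then apply it as above.
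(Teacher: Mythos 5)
The paper itself states this lemma without proof (it is imported from~\cite{VPZ13a}), so your argument can only be compared with the natural one; your overall architecture is indeed that natural route: m-convergence implies $L^1$-convergence and hence convergence in measure, so by the Correction Theorem~\ref{th_ispr} the limit is (mod~0) a summable measurable semimetric; then finiteness of $\eps$-entropy is transferred from $\rho_n$ to $\rho$, and the criterion ``admissible $\iff$ all $\eps$-entropies finite'' finishes. The genuine gap is in your resolution of the transfer step: the claim that the estimate $\heps_{\eps+\delta}(X,\mu,\rho_1)\le\heps_{\eps}(X,\mu,\rho_2)$ of Lemma~\ref{lemma_semicontinuity} ``persists for arbitrary measurable semimetrics whenever the right-hand side is finite'' is \emph{false}. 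Take $X=[0,1]$ with Lebesgue measure, $\rho_2(x,y)=\frac{1}{3}$ for $x\ne y$, and $\rho_1(x,y)=\frac{2}{3}$ if $x-y\in\mathbb{Q}\setminus\{0\}$, $\rho_1(x,y)=\frac{1}{3}$ if $x-y\notin\mathbb{Q}$; both are genuine measurable semimetrics. Then $|\rho_1-\rho_2|$ vanishes $\mu^2$-a.e., so $\|\rho_1-\rho_2\|_m=0<\delta^2/4$ for every $\delta>0$, and $\heps_{0.4}(X,\mu,\rho_2)=0$; yet $\heps_{0.5}(X,\mu,\rho_1)=\infty$, because a set of $\rho_1$-diameter less than $\frac{1}{2}$ contains no two points at rational difference and is therefore null (Steinhaus). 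With $\eps=0.4$, $\delta=0.1$ this contradicts your claimed inequality. The root cause is exactly what the $\Adm$ hypothesis is for: the m-norm domination $|\rho_1-\rho_2|\le D$ holds only almost everywhere, and passing from ``almost every pair in $X_j$ is $\rho_1$-close'' to ``every pair in a full-measure subset of $X_j$ is $\rho_1$-close'' requires separability mod~0 of $\rho_1$; outside $\Adm$ this upgrade is impossible --- the example shows $\eps$-entropy is not even a mod-0 invariant there. And in your application it is precisely $\rho_1=\rho$, the semimetric not yet known to be admissible, that occupies the slot where admissibility is needed, so the hypothesis cannot be waved away.

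Your strategy is nevertheless salvageable, because a \emph{weakened} transfer inequality does hold for arbitrary measurable semimetrics and finiteness is all you need. With $D$ a semimetric dominating $|\rho_1-\rho_2|$ a.e., $\|D\|_{L^1}<\delta^2/4$, set $f(x)=\int_X D(x,y)\,d\mu(y)$, $A=\{f>\delta/4\}$ (so $\mu(A)<\delta$), and let $G\subset X^2$ be the symmetric co-null set where $\rho_1\le\rho_2+D$. For each covering set $X_j$ with $\diam_{\rho_2}(X_j)<\eps$ put $X_j'=X_j\setminus A$ and $Y_j=\{x\in X_j'\colon (x,z)\in G \text{ for a.e. } z\in X_j'\}$; for $x,y\in Y_j$ choose a common midpoint $z\in X_j'$ with $(x,z),(z,y)\in G$ and estimate through $z$:
$$
\rho_1(x,y)\le \rho_2(x,z)+\rho_2(z,y)+D(x,z)+D(z,y)< 2\eps+\delta,
$$
using $D(u,v)\le f(u)+f(v)$. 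This gives $\heps_{2\eps+\delta}(X,\mu,\rho_1)\le\heps_{\eps}(X,\mu,\rho_2)$ for all measurable semimetrics --- note the factor $2$, which is precisely the loss your ``same-cardinality cover'' sketch overlooks, and which the counterexample shows cannot be removed. Running your scheme with this inequality closes the proof: given $\eps>0$, apply it with $\eps''=\eps/4$ and $\delta=\eps/2$, choosing $n$ so large that $\heps_{\eps/4}(X,\mu,\rho_n)<\infty$ and $\|\rho-\rho_n\|_m<\eps^2/16$; then $\heps_{\eps}(X,\mu,\rho)\le\heps_{\eps/4}(X,\mu,\rho_n)<\infty$, and the admissibility criterion of Theorem~\ref{Th_adm_cr} applies.
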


\begin{corollary}
The limit of a sequence of admissible semimetrics in the m-norm is an admissible semimetric. The cone $\Adm(X,\mu)$ of admissible semimetrics is closed and complete in the m-norm.
\end{corollary}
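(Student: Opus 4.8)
The plan is to obtain the corollary as a formal consequence of the two preceding lemmas, so that the work reduces to verifying that their hypotheses are automatically satisfied here.

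First I would establish the opening assertion. Let $\rho_n$ be admissible semimetrics converging to some $\rho$ in the m-norm. Because each $\rho_n$ is admissible, the entropy characterization of admissibility (the lemma asserting that a measurable semimetric is admissible iff its $\eps$-entropy is finite for every $\eps>0$, see also Theorem~\ref{Th_adm_cr}) gives $\heps_\eps(X,\mu,\rho_n)<+\infty$ for every $\eps>0$ and every $n$. Hence the entropy hypothesis of the limit-admissibility lemma --- namely, that $\heps_\eps(X,\mu,\rho_n)<+\infty$ for sufficiently large $n$ --- holds trivially, for all $n$. Applying that lemma to the m-norm-convergent sequence $\rho_n\to\rho$ yields directly that $\rho$ is an admissible semimetric. (In particular the lemma also guarantees that $\rho$ is a genuine semimetric, a point otherwise secured by the fact that the m-norm dominates the $L^1(X^2,\mu^2)$-norm and that the set of summable semimetrics is closed under $L^1$-convergence.)

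Next, closedness of the cone $\Adm(X,\mu)$ in the m-norm is precisely a restatement of this assertion: every m-norm limit of a sequence drawn from $\Adm(X,\mu)$ again lies in $\Adm(X,\mu)$. For completeness I would take a Cauchy sequence $\rho_n$ in $(\Adm(X,\mu),\|\cdot\|_m)$. By the completeness of $\MM(X,\mu)$ in the m-norm (the first lemma of this subsection), the sequence converges in the m-norm to some $\rho\in\MM(X,\mu)$. By the closedness just established, $\rho\in\Adm(X,\mu)$, so the Cauchy sequence converges within the cone itself; hence $\Adm(X,\mu)$ is complete.

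There is no serious obstacle in this argument, since the corollary is essentially the conjunction of the two preceding lemmas. The only points meriting care are that the m-norm limit is genuinely a semimetric --- handled either by the $L^1$-closedness of the class of summable semimetrics or, more economically, by the limit-admissibility lemma itself, whose conclusion already includes this --- and the observation that the finiteness-of-entropy hypothesis, which in the limit-admissibility lemma is only required for large $n$, is automatic once every $\rho_n$ is assumed admissible.
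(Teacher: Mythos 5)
Your proposal is correct and matches the paper's intended derivation: the corollary is stated there as an immediate consequence of the two preceding lemmas (completeness of $\MM(X,\mu)$ and the limit-admissibility lemma), combined with the entropy characterization of admissibility, which is exactly how you argue. The points you flag --- that the limit is a genuine semimetric and that the entropy hypothesis holds for all $n$ rather than just large $n$ --- are handled the same way, so nothing further is needed.
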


The following lemma states that the limit in $L^1(X^2,\mu^2)$ of a sequence of admissible semimetrics with uniformly bounded $\eps$-entropies is an admissible semimetric.
\begin{lemma}
Let $M \subset \Adm(X,\mu)$ be such that for each $\eps>0$, the set $\{\heps_\eps(X,\mu,\rho)\colon \rho \in M\}$ is bounded. Then the closure of the set $M$ with respect to the norm of the space $L^1(X^2,\mu^2)$ lies within the cone $\Adm(X,\mu)$.
\end{lemma}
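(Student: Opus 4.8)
The plan is to show that any $\rho$ in the $L^1(X^2,\mu^2)$-closure of $M$ is an admissible semimetric by verifying the entropy criterion (item~2 of Theorem~\ref{Th_adm_cr}): that $\heps_\delta(X,\mu,\rho)<\infty$ for every $\delta>0$. Fix an approximating sequence $\rho_n\in M$ with $\|\rho_n-\rho\|_{L^1}\to 0$. First I would check that $\rho$ is (a representative of) a measurable semimetric: passing to a subsequence, $\rho_n\to\rho$ $\mu^2$-a.e., so nonnegativity, symmetry, and the triangle inequality survive in the limit mod~$0$, making $\rho$ an almost metric, which the Correction Theorem~\ref{th_ispr} replaces by an a.e.-equal genuine semimetric. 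I work from now on with this corrected representative; this is essential, because the argument below applies the triangle inequality to \emph{fixed} triples, which requires it to hold pointwise rather than merely a.e. Since admissibility depends only on the $\mu^2$-class, it remains to bound the entropy of this representative.

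Fix $\delta>0$ and set $\eps=\delta/4$. By hypothesis $C:=\sup_{\rho'\in M}\heps_\eps(X,\mu,\rho')<\infty$; put $k=\lceil e^{C}\rceil$. For every $n$ choose, as in Definition~\ref{definition_epsilon_entropy}, a partition $X=X_0^n\sqcup X_1^n\sqcup\dots\sqcup X_k^n$ with $\mu(X_0^n)<\eps$ and $\diam_{\rho_n}(X_j^n)<\eps$ for $j\ge 1$ (disjointifying a covering only decreases diameters). Let $G_n=\{(x,y)\in X^2:|\rho-\rho_n|(x,y)>\eps\}$; by Markov's inequality $\eta_n:=\mu^2(G_n)\le\eps^{-1}\|\rho-\rho_n\|_{L^1}\to 0$. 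The key observation is that on each $X_j^n$ one has $\rho_n<\eps$, so $\rho(x,y)\ge 2\eps$ forces $(x,y)\in G_n$; thus all ``$\rho$-bad'' pairs inside a piece are confined to the $\mu^2$-small set $G_n$.

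The main step, and the principal obstacle, is to convert each small-$\rho_n$-diameter piece into a small-$\rho$-diameter set: the diameter is a supremum, whereas $L^1$-closeness only controls bad pairs on average, so one cannot simply keep the pieces $X_j^n$. I would resolve this by a centered ball plus a Fubini argument. Discard every piece with $\mu(X_j^n)<\eps/k$ into the exceptional set, adding at most $\eps$ to the excluded mass. On each surviving piece $A=X_j^n$ (so $\mu(A)\ge\eps/k$), Fubini gives $\int_A \mu\{y\in A:(x,y)\in G_n\}\,d\mu(x)=\mu^2(G_n\cap A^2)\le\eta_n$, so by Markov there is a center $x_0\in A$ whose bad-neighbour set $\{y\in A:(x_0,y)\in G_n\}$ has measure at most $2\eta_n/\mu(A)\le 2k\eta_n/\eps$. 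The measurable ball $B=\{y\in A:\rho(x_0,y)<2\eps\}$ then has $\rho$-diameter below $4\eps=\delta$ by the triangle inequality, while $A\setminus B$ consists only of bad neighbours of $x_0$, whence $\mu(A\setminus B)\le 2k\eta_n/\eps$.

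Summing the ball-misses over the at most $k$ surviving pieces contributes $\le 2k^2\eta_n/\eps$, which is $<\eps$ once $n$ is large. Collecting the three sources of discarded mass --- $\mu(X_0^n)<\eps$, the small pieces $<\eps$, and the ball-misses $<\eps$ --- yields a covering of $X$ by at most $k$ sets of $\rho$-diameter $<\delta$ outside a set of measure $<3\eps<\delta$, so $\heps_\delta(X,\mu,\rho)\le\log k\le C<\infty$. As $\delta>0$ was arbitrary, $\rho$ is admissible. I would close by emphasizing where the hypothesis is used: the uniform bound keeps $k$ independent of $n$, which both prevents the final entropy estimate $\log k$ from blowing up and keeps the threshold $\eps/k$ and the error $2k^2\eta_n/\eps$ under control; this is exactly what $L^1$-convergence alone cannot supply, consistent with the fact that $\Adm(X,\mu)$ is not $L^1$-closed and that the m-norm semicontinuity of Lemma~\ref{lemma_semicontinuity} does not apply in the weaker $L^1$ topology.
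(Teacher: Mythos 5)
Your proof is correct. Note that the survey itself states this lemma without proof, quoting it from~\cite{VPZ13a}, so there is no in-paper argument to compare against; judged on its own merits, your argument is a sound and self-contained verification of the entropy criterion (item~2 of Theorem~\ref{Th_adm_cr}). The two-stage structure is the right one: the Correction Theorem~\ref{th_ispr} upgrades the a.e.\ limit, which is a priori only an almost metric, to a genuine semimetric, and you correctly flag that this is exactly what licenses the pointwise triangle inequality used later. The heart of the matter is the step you single out as the principal obstacle: a small $\rho_n$-diameter piece need not have small $\rho$-diameter, since diameter is a supremum while $L^1$-closeness controls pairs only on average. Your Fubini--Markov selection of a center $x_0$ with few bad neighbours, followed by passing to the $\rho$-ball $B=\{y\in A:\rho(x_0,y)<2\eps\}$, resolves this correctly, and the price (discarding pieces of measure below $\eps/k$, plus the ball-misses of total mass $O(k^2\eta_n/\eps)$) is kept under control precisely by the uniform bound $k$, as you emphasize. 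The argument even yields the quantitative conclusion $\heps_\delta(X,\mu,\rho)\le\log k$, i.e.\ a semicontinuity-type bound on the closure, which is the content of the second assertion of Corollary~\ref{cor1}.

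Two cosmetic repairs. First, with $\eps=\delta/4$ your balls satisfy only $\diam_\rho(B)\le 4\eps=\delta$, whereas Definition~\ref{definition_epsilon_entropy} demands strict inequality; take $\eps=\delta/5$ and nothing else changes. Second, $\log k=\log\lceil e^C\rceil$ can slightly exceed $C$, so the last display should conclude $\heps_\delta\le\log(e^C+1)<\infty$, which is all that is needed. You should also add one line observing that $\rho$, being an $L^1(X^2,\mu^2)$-limit, is summable, so that it lies in the cone $\Adm(X,\mu)$ of \emph{summable} admissible semimetrics and not merely in the set of admissible ones. Finally, for comparison: a slightly shorter route would verify criterion~5 of Theorem~\ref{Th_adm_cr} instead --- for any set $A$ of positive measure, some piece $X_j^n$ meets $A$ in measure at least $(\mu(A)-\eps)/k$, and on that intersection the average of $\rho$ is at most $\eps$ plus an $L^1$ error, forcing $\essinf_{A\times A}\rho=0$ --- but that argument loses the quantitative entropy bound that your construction provides.
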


\begin{theorem}
Let a sequence of summable semimetrics $\rho_n$ converges to an admissible semimetric $\rho$ in $L^1(X^2,\mu^2)$. Then $\rho_n$ converges to $\rho$ in the m-norm.
\end{theorem}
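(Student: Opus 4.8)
\section*{Proof proposal}

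The plan is to realize m-convergence directly from its definition: for every $\eta>0$ I want to produce, for all large $n$, a summable semimetric $\sigma_n$ with $|\rho_n-\rho|\le\sigma_n$ $\mu^2$-a.e.\ and $\|\sigma_n\|_{L^1}\le\eta$, since $\|\rho_n-\rho\|_m\le\|\sigma_n\|_{L^1}$ by Definition~\ref{def_mnorm}. The workhorse throughout is the elementary observation that for any measurable $b\ge0$ the function $\sigma_b(x,y):=(b(x)+b(y))\mathbf{1}_{x\ne y}$ is a semimetric with $\|\sigma_b\|_{L^1}=2\int_X b\,d\mu$; this is the device that converts \emph{additive} (``potential-type'') bounds into genuine semimetric majorants. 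I will also use repeatedly that, since $\rho_n\to\rho$ in $L^1(X^2,\mu^2)$, the family $\{\rho_n\}$ is uniformly integrable, so $\sup_n\int_E\rho_n\to0$ as $\mu^2(E)\to0$; this will supply the uniformity in $n$.

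The construction proceeds by three reductions, each shrinking the problem at the cost of an $O(\eps)$ or ``tail'' error that is controlled independently of $n$. First, I truncate to bounded metrics: for a semimetric $d$ and a base point $o$ the triangle inequality gives $(d(x,y)-T)^+\le(d(x,o)-T/2)^++(d(o,y)-T/2)^+$, so $\|d-d^{(T)}\|_m\le 2\int_X(d(x,o)-T/2)^+\,d\mu$ with $d^{(T)}=\min(d,T)$; choosing a good base point and using $\int_X\rho(\cdot,o)\,d\mu<\infty$ together with uniform integrability makes $\|\rho-\rho^{(T)}\|_m$ and $\sup_n\|\rho_n-\rho_n^{(T)}\|_m$ small as $T\to\infty$. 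Second, I quantize using admissibility of $\rho$: by finite $\eps$-entropy (Theorem~\ref{Th_adm_cr}) I cover $X$ by finitely many blocks $Y_1,\dots,Y_k$ of $\rho$-diameter $<\eps$ together with a leftover $Z$ of measure $<\eps$, on which the bounded metrics contribute at most $2T\mu(Z)$ to the m-norm (majorant $\sigma_{T\mathbf{1}_Z}$). Third, on the finite union $\bigcup Y_j$ I pass to block-averages via the conditional expectation $P$ over the partition $\{Y_i\times Y_j\}$: one checks that $P$ sends a bounded semimetric to a block-constant semimetric, and the block-constant arrays live on the \emph{finite} quotient $\{1,\dots,k\}$, where the m-norm and the weighted $\ell^1$-norm are two norms on a finite-dimensional space and are therefore equivalent. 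Hence $\|P\rho_n^{(T)}-P\rho^{(T)}\|_m\le C(k)\,\|\rho_n^{(T)}-\rho^{(T)}\|_{L^1}\to0$, the constant $C(k)$ being harmless because $k$ is fixed \emph{before} sending $n\to\infty$.

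The main obstacle is the leftover residual $(\mathrm{Id}-P)\rho_n^{(T)}$ on the blocks, i.e.\ the within-block oscillation of $\rho_n$, which is \emph{not} pointwise small even though $\rho$ has small block-diameter. Here the essential point is that m-smallness cannot be deduced from $L^1$-smallness alone: a difference of semimetrics of vanishing $L^1$-norm can have m-norm bounded away from zero if it is supported on an ``antipodal'' configuration (on the circle, the averaging inequality below forces $\|\mathbf{1}_{\{d>1/2-\gamma\}}\|_m\ge 1/2$ for every $\gamma$, since every point lies in the support with its antipode). What rescues the argument is precisely the semimetric structure of the $\rho_n$: the triangle inequality, after averaging one variable over a block, yields $\rho_n(x,y)\le R_n^{(j)}(x)+R_n^{(j)}(y)$ with $R_n^{(j)}(x)=\frac{1}{\mu(Y_j)}\int_{Y_j}\rho_n(x,w)\,d\mu(w)$, so the block residual is dominated by an additive semimetric $\sigma_{b}$ whose generating function has $L^1$-norm controlled by the average within-block $\rho_n$-distances; these converge to the corresponding averages for $\rho$, which are $O(\eps)$ by the choice of blocks. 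I therefore expect the quantifier bookkeeping of this step to be the delicate part: one fixes $\eta$, then $\eps$ and $T$ (hence $k$ and the blocks), and only afterwards lets $n\to\infty$, so that every $n$-dependent term is of the equivalent-norm or uniform-integrability type and vanishes, leaving $\limsup_n\|\rho_n-\rho\|_m=O(\eps)+o_T(1)$, which is arbitrarily small. The global averaging bound $d(x,y)\le\int_X d(x,\cdot)\,d\mu+\int_X d(y,\cdot)\,d\mu$, valid for every summable semimetric $d$, is the structural identity underlying both the obstruction and its resolution.
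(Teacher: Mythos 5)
Your proposal concerns a statement that this survey quotes from~\cite{VPZ13a} without reproducing its proof, so there is no in-paper argument to compare against line by line. Judged on its own merits, your strategy is correct, and it is very close in spirit to the technique the paper does display in Appendix~\ref{App_th_triples} for Theorem~\ref{th_comp_triples}: truncation justified by uniform integrability, a partition of $X$ into sets of small $\rho$-diameter supplied by finite $\eps$-entropy (Theorem~\ref{Th_adm_cr}), reduction of the ``main'' part to a finite-dimensional space of block-constant arrays, and domination of all errors by semimetrics of the additive form $(b(x)+b(y))\chi_{\{x\ne y\}}$. Your quantifier bookkeeping (fix $\eta$, then $T$, then $\eps$ and the blocks, and only then let $n\to\infty$) is the right one, and your central observation --- that the triangle inequality for $\rho_n$, averaged over a block, converts the within-block oscillation of $\rho_n$ into an additive majorant whose $L^1$-mass is controlled by block-averaged distances, which converge to the corresponding $\rho$-averages and are therefore $O(\eps)$ --- is precisely the point that makes the theorem true where naive $L^1$-to-$m$ comparison fails.

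Three details should be written out, all of which are closed by devices you already use. First, and most substantively: your displayed inequality $\rho_n(x,y)\le R_n^{(j)}(x)+R_n^{(j)}(y)$ controls the residual $(\mathrm{Id}-P)\rho_n^{(T)}$ only on the diagonal blocks $Y_j\times Y_j$, whereas on an off-diagonal block $Y_i\times Y_j$, $i\ne j$, the deviation of $\rho_n^{(T)}$ from its block average is also not pointwise small; there you need the two-sided bound $\bigl|\rho_n^{(T)}(x,y)-\mathrm{avg}_{Y_i\times Y_j}\rho_n^{(T)}\bigr|\le R_n^{(i)}(x)+R_n^{(j)}(y)$, obtained by averaging the quadrilateral inequality $|\rho_n^{(T)}(x,y)-\rho_n^{(T)}(x',y')|\le\rho_n^{(T)}(x,x')+\rho_n^{(T)}(y,y')$ over $(x',y')\in Y_i\times Y_j$; the single function $b_n(x)=R_n^{(i)}(x)$ for $x\in Y_i$ then yields one majorant $\sigma_{b_n}$ for the entire residual, with $\limsup_n\|\sigma_{b_n}\|_{L^1}\le 2\eps$ exactly as in your computation. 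Second, $P$ applied to a semimetric is not literally a semimetric: its value on a diagonal block, hence on the diagonal itself, is the positive number $\mathrm{avg}_{Y_j^2}\rho_n^{(T)}$. This is harmless, because all your argument uses is that block-constant symmetric arrays form a finite-dimensional space on which the $m$-norm is finite (any such array is dominated $\mu^2$-a.e.\ by $\max_{i,j}|c_{ij}|\cdot\chi_{\{x\ne y\}}$, the exact diagonal being $\mu^2$-null) and hence equivalent to the $L^1$-norm --- but note the constant depends on $\min_j\mu(Y_j)$ as well as on $k$, so null blocks should be discarded into $Z$ beforehand. Third, in the truncation step a single base point $o$ good for all $\rho_n$ simultaneously is not available from $L^1$-convergence; either choose $o=o_n$ per $n$, or average the base-point bound over $o$, which gives $\|(\rho_n-T)^+\|_m\le 2\int_{X^2}(\rho_n-T/2)^+\,d\mu^2$, and then uniform integrability supplies the required uniformity in $n$.
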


\begin{corollary}\label{Cor_top_coinc}
On the cone $\Adm(X,\mu)$, the topology induced by the m-norm coincides with the topology induced by the standard norm from $L^1(X^2,\mu^2)$.
\end{corollary}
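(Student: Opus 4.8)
The plan is to reduce the statement to a comparison of convergent sequences. Both the m-norm and the $L^1$-norm restrict to genuine metrics on $\Adm(X,\mu)$: since $\Adm(X,\mu)\subset\MM(X,\mu)$ and $\MM(X,\mu)$ is a linear subspace of $L^1(X^2,\mu^2)$, the difference $\rho-\sigma$ of two elements of the cone lies in $\MM(X,\mu)$ and hence has finite m-norm, so $d_m(\rho,\sigma)=\|\rho-\sigma\|_m$ and $d_1(\rho,\sigma)=\|\rho-\sigma\|_{L^1(X^2,\mu^2)}$ are both well defined and finite on $\Adm(X,\mu)$. Consequently the two topologies in question are the subspace topologies inherited from two normed spaces, so they are metrizable, and it suffices to show that the identity map of $\Adm(X,\mu)$ is a homeomorphism between them; for first-countable topologies this reduces to checking sequential continuity in both directions.

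First I would treat the easy direction. Because the m-norm dominates the $L^1$-norm, we have $\|\rho-\sigma\|_{L^1(X^2,\mu^2)}\le\|\rho-\sigma\|_m$ for all $\rho,\sigma\in\Adm(X,\mu)$, so the identity map from the m-norm topology to the $L^1$-topology is $1$-Lipschitz, hence continuous. This already shows that the $L^1$-topology is no finer than the m-topology.

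For the reverse direction I would appeal directly to the theorem immediately preceding this corollary. Let $\rho_n,\rho\in\Adm(X,\mu)$ with $\|\rho_n-\rho\|_{L^1(X^2,\mu^2)}\to0$. Since we are comparing topologies on the cone, the limit $\rho$ is by hypothesis an admissible semimetric, so the assumptions of that theorem hold verbatim, and it yields $\|\rho_n-\rho\|_m\to0$. Thus $L^1$-convergence inside $\Adm(X,\mu)$ forces m-convergence, the identity map in the opposite direction is sequentially continuous, and, combined with the first step, the two topologies coincide.

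The one point to keep in mind is that $\Adm(X,\mu)$ is not $L^1$-closed, so an $L^1$-Cauchy sequence drawn from the cone may converge to a non-admissible semimetric; but such a sequence has no limit in $\Adm(X,\mu)$ and is therefore irrelevant to the comparison of the subspace topologies. In short, the corollary is an immediate formal consequence of the preceding theorem together with the trivial domination of norms; all the genuine difficulty --- producing, for an $L^1$-convergent sequence with admissible limit, majorizing semimetrics whose $L^1$-norms tend to zero --- sits inside that theorem and not in the corollary itself.
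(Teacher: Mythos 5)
Your proof is correct and follows exactly the route the paper intends: the easy inclusion comes from the m-norm dominating the $L^1$-norm, and the reverse inclusion is an immediate application of the theorem stated just before the corollary (that $L^1$-convergence of summable semimetrics to an admissible semimetric implies m-norm convergence). Your preliminary observations --- finiteness of $\|\rho-\sigma\|_m$ on the cone, reduction to sequential continuity, and the irrelevance of the cone not being $L^1$-closed --- are accurate and only make explicit what the paper leaves implicit.
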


\subsection{Compactness and precompactness in the cone of admissible semimetrics}

According to Corollary~\ref{Cor_top_coinc}, a set of summable admissible semimetrics is compact in the m-norm if and only if it is compact in $L^1(X^2,\mu^2)$. The theorems from~\cite{VPZ13a} presented in this section provide a criterion for the precompactness of the family of admissible semimetrics in the m-norm.
\begin{theorem}\label{lemmcomp}
A set $M \subset \Adm(X,\mu)$ is precompact in the m-norm if and only if\textup:
\begin{enumerate}
    \item \textup(uniform integrability\textup) the set $M$ is uniformly integrable on $(X^2,\mu^2)$\textup;
    \item \textup(uniform admissibility\textup) for any $\eps>0$, there exist $k \geq 0$ and a partition of $X$ into sets $X_j$, $j=1,\dots,k$, such that for each semimetric $\rho \in M$, there exists a set $A \subset X$ with $\mu(A)<\eps$ and $\diam_{\rho}(X_j\setminus A) < \eps$ for all $j=1,\dots,k$.
\end{enumerate}
\end{theorem}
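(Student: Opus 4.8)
The plan is to reduce the whole statement to ordinary precompactness in $L^1(X^2,\mu^2)$, which is legitimate by Corollary~\ref{Cor_top_coinc}, and then to prove the two implications separately. For necessity, suppose $M$ is precompact. Then it is relatively compact in $L^1(X^2,\mu^2)$, and every relatively compact subset of $L^1$ is uniformly integrable: a finite $\eta$-net reduces the estimate of $\int_B\rho$ over a set $B$ of small measure to the uniform integrability of the finitely many net functions plus a uniform error $\eta$. This gives condition~(1) immediately, so the real content of necessity is condition~(2).

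For condition~(2) I would first record an averaging observation: if $\sigma$ is a semimetric with $\int_{X^2}\sigma\,d\mu^2<\delta$, then by Fubini the set $G=\{x:\int_X\sigma(x,y)\,d\mu(y)<\sqrt\delta\}$ satisfies $\mu(X\setminus G)<\sqrt\delta$, and averaging the triangle inequality $\sigma(x,x')\le\sigma(x,y)+\sigma(x',y)$ over $y$ gives $\sigma(x,x')<2\sqrt\delta$ for $x,x'\in G$; thus an $L^1$-small semimetric has small diameter off a set of small measure. Now, given $\eps>0$, choose $\delta$ with $\sqrt\delta<\eps/4$ and a finite $\delta$-net $\rho_1,\dots,\rho_N$ for $M$ in the m-norm. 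Each $\rho_i$ is admissible, hence admits a partition $\{X^{(i)}_j\}_j$ and a set $A_i$ with $\mu(A_i)<\eps/2$ and $\diam_{\rho_i}(X^{(i)}_j\setminus A_i)<\eps/2$. Let $\mathcal P=\{Y_l\}$ be the common refinement. For $\rho\in M$ choose $\rho_i$ with $\|\rho-\rho_i\|_m<\delta$, take a semimetric $\sigma\ge|\rho-\rho_i|$ with $\int\sigma\,d\mu^2<\delta$, and let $G$ be as above; then $A:=A_i\cup(X\setminus G)$ has $\mu(A)<\eps$, and for $x,y\in Y_l\setminus A$ the inequality $\rho(x,y)\le\rho_i(x,y)+\sigma(x,y)<\eps/2+\eps/2$ yields $\diam_\rho(Y_l\setminus A)<\eps$. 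This is exactly uniform admissibility for the single partition $\mathcal P$.

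For sufficiency, assume (1) and (2); I would show $M$ is totally bounded in $L^1(X^2,\mu^2)$ by approximating each $\rho$ by the conditional expectation $E_m\rho$ onto the fixed finite product partition $\mathcal P_m\otimes\mathcal P_m$, where $\mathcal P_m$ is the partition given by uniform admissibility with parameter $1/m$. On each good rectangle $(X_i\setminus A)\times(X_j\setminus A)$ the triangle inequality bounds the oscillation of $\rho$ by $2/m$, so off the exceptional set $(A\times X)\cup(X\times A)$, of measure $<2/m$, the function $\rho$ is $2/m$-close to a cellwise-constant function; since $E_m$ is an $L^1$-contraction, $\|\rho-E_m\rho\|_{L^1}\le 2\inf_g\|\rho-g\|_{L^1}$ over cellwise-constant $g$. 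The main obstacle is that the natural cellwise value can be large exactly on cells that are mostly exceptional, so the bad-part contribution is not controlled by uniform integrability by itself. I would remove this obstacle by truncation: using (1), pick $T$ with $\|\rho-\rho\wedge T\|_{L^1}<\eps$ uniformly and run the estimate for $\rho\wedge T$, whose cellwise values are bounded by $T$ and whose bad-part contribution is then at most $4T/m$.

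Finally, for fixed $T$ the family $\{E_m(\rho\wedge T):\rho\in M\}$ lies in a fixed finite-dimensional subspace and is $L^1$-bounded, hence totally bounded; a finite net there, combined with the uniform truncation error $\eps$ and the uniform approximation error $\sup_\rho\|\rho\wedge T-E_m(\rho\wedge T)\|_{L^1}\le(2+4T)/m$, produces a finite $\eps'$-net for $M$ with $\eps'=O(\eps+T/m)$. As $\Adm(X,\mu)$ is complete in the m-norm, total boundedness is equivalent to precompactness, which closes the argument. The two genuinely nontrivial points are the averaging observation that turns $L^1$-smallness of a semimetric into smallness of its diameter (for necessity) and the truncation step taming mostly-exceptional cells (for sufficiency).
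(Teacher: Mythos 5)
Your necessity argument is sound: the ``averaging observation'' (an $L^1$-small semimetric has small diameter off a set of small measure) is exactly the mechanism behind Lemma~\ref{lemma_semicontinuity}, which the paper's own argument (see the proof of Theorem~\ref{th_comp_triples} in the Appendix, which explicitly follows the scheme of Theorem~\ref{lemmcomp}) invokes rather than reproves, and your common-refinement construction correctly yields condition~(2). The bulk of the sufficiency direction (truncation via uniform integrability, conditional expectation onto $\mathcal{P}_m\otimes\mathcal{P}_m$, the oscillation bound on good rectangles) is also correct as a proof of total boundedness of $M$ \emph{in $L^1(X^2,\mu^2)$}.

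The gap is in your very last step. From $L^1$-total boundedness you conclude m-norm precompactness by appealing to Corollary~\ref{Cor_top_coinc} and to completeness of $\Adm(X,\mu)$ in the m-norm; this inference is invalid. Total boundedness is a uniform, not a topological, property, so coincidence of the two topologies on the cone does not transfer it from one norm to the other, and what completeness would upgrade to precompactness is m-norm total boundedness, which you never established. The danger is concrete, because $\Adm(X,\mu)$ is not closed in $L^1$: its $L^1$-closure consists of all summable semimetrics. A sequence of admissible semimetrics converging in $L^1$ to a non-admissible semimetric is totally bounded in $L^1$, yet has no m-convergent subsequence (an m-limit would lie in the m-closed cone $\Adm(X,\mu)$ and coincide a.e.\ with the $L^1$-limit), so it is not m-precompact; such a family of course violates condition~(2), which shows that condition~(2) must be used a second time precisely at this step, and your argument does not use it there. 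The repair is short and uses results stated in Section~\ref{sec_convergenceinmnorm}: condition~(2) gives a uniform bound $\heps_\eps(X,\mu,\rho)\le \log k(\eps)$ for all $\rho\in M$, hence by the lemma of that section the $L^1$-closure of $M$ lies in $\Adm(X,\mu)$; this closure is $L^1$-compact by your total boundedness, and since $L^1$-convergence of semimetrics to an \emph{admissible} limit implies m-norm convergence (the theorem preceding Corollary~\ref{Cor_top_coinc}), every sequence in $M$ has an m-convergent subsequence, i.e.\ $M$ is m-precompact. By contrast, the paper's proof avoids the detour entirely by building the net directly in the m-norm: its cellwise-constant comparison semimetric $\bar\rho$ is chosen so that $|\rho-\bar\rho|$ is dominated pointwise by the explicit semimetric $\bigl(2\eps+\chi_B(x)+\chi_B(y)-\chi_B(x)\chi_B(y)\bigr)\chi_{\{x\ne y\}}$ of small integral, which is exactly the m-norm control that your conditional-expectation net lacks.
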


It is worth noting that, according to the Dunford--Pettis theorem, uniform integrability is a criterion for precompactness in the weak topology in $L^1$.

\begin{corollary}\label{cor1}
If $M\subset \Adm(X,\mu)$ is precompact in the m-norm, then its closure in $L^1(X^2,\mu^2)$ coincides with the closure in the m-norm and belongs to $\Adm(X,\mu)$. Moreover, for any $\eps>0$, the supremum $\sup\{\heps_\eps(X,\mu,\rho)\colon \rho \in M\}$ is finite.
\end{corollary}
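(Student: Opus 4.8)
The plan is to separate the statement into two assertions: first, that the $L^1(X^2,\mu^2)$-closure and the m-norm closure of $M$ coincide and lie in $\Adm(X,\mu)$; second, that $\sup\{\heps_\eps(X,\mu,\rho)\colon\rho\in M\}<\infty$ for every $\eps>0$. The key ingredients are the domination of the $L^1$-norm by the m-norm, the m-closedness of $\Adm(X,\mu)$ (the corollary preceding Corollary~\ref{Cor_top_coinc}), the coincidence of topologies on $\Adm(X,\mu)$ (Corollary~\ref{Cor_top_coinc}), and the explicit uniform admissibility condition in the precompactness criterion (Theorem~\ref{lemmcomp}).

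For the closure assertion I would first note that m-convergence implies $L^1$-convergence, so the m-closure $\overline{M}^m$ is contained in the $L^1$-closure $\overline{M}^{L^1}$. For the reverse inclusion I would use precompactness: since $M$ is precompact in the m-norm, $\overline{M}^m$ is compact in the m-norm. Because the identity map from $(\Adm(X,\mu),\|\cdot\|_m)$ to $(\Adm(X,\mu),\|\cdot\|_{L^1})$ is continuous (again by domination), it carries the m-compact set $\overline{M}^m$ to an $L^1$-compact, hence $L^1$-closed, set. As $M\subseteq\overline{M}^m$ and $\overline{M}^m$ is $L^1$-closed, we obtain $\overline{M}^{L^1}\subseteq\overline{M}^m$, so the two closures agree. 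Finally, since every element of $\overline{M}^m$ is an m-limit of a sequence in $M\subset\Adm(X,\mu)$, and $\Adm(X,\mu)$ is closed in the m-norm, this common closure lies in $\Adm(X,\mu)$.

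For the entropy bound I would read off condition (2) of Theorem~\ref{lemmcomp} directly. Uniform admissibility provides, for each $\eps>0$, an integer $k$ and a fixed partition $X=\bigcup_{j=1}^k X_j$ such that for every $\rho\in M$ there is an exceptional set $A$ with $\mu(A)<\eps$ and $\diam_\rho(X_j\setminus A)<\eps$ for all $j$. Taking $X_0=A$ and the pieces $X_j\setminus A$ as the covering sets shows $\heps_\eps(X,\mu,\rho)\le\log k$ for every $\rho\in M$, which is precisely the desired uniform bound. Alternatively, one may invoke that $\heps_\eps$ is upper semicontinuous on $\Adm(X,\mu)$ and that an upper semicontinuous function is bounded above on the m-compact set $\overline{M}^m$.

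The only genuinely delicate point is the equality of the two closures; everything reduces to the observation that the m-norm dominates the $L^1$-norm, so that compactness in the finer m-topology forces the m-closure already to be closed in the coarser $L^1$-topology. Once this is in place, both the inclusion in $\Adm(X,\mu)$ and the uniform entropy bound follow immediately from the results quoted above.
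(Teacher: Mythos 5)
Your proof is correct, and the entropy bound is obtained exactly as intended: condition (2) of Theorem~\ref{lemmcomp} (uniform admissibility) supplies, for each $\eps$, one partition $X_1,\dots,X_k$ that works for every $\rho\in M$ up to an exceptional set of measure less than $\eps$, whence $\heps_\eps(X,\mu,\rho)\le\log k$ uniformly over $M$. Where you genuinely diverge from the paper is the coincidence of the two closures. The paper's route, following the chain of results in Section~\ref{sec_convergenceinmnorm}, is quantitative: from the uniform entropy bound, the lemma stating that entropy-bounded subsets of $\Adm(X,\mu)$ have their $L^1$-closure inside $\Adm(X,\mu)$ places the $L^1$-closure in the cone, and then the theorem asserting that $L^1$-convergence of semimetrics to an \emph{admissible} limit automatically upgrades to m-convergence identifies the two closures. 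You instead argue by pure point-set topology: precompactness together with completeness of $\MM(X,\mu)$ (equivalently, of the closed cone $\Adm(X,\mu)$) makes $\overline{M}^{m}$ m-compact; since the m-norm dominates the $L^1$-norm, the identity map into $L^1$ is continuous, so $\overline{M}^{m}$ is $L^1$-compact, hence $L^1$-closed, and therefore contains $\overline{M}^{L^1}$. Your argument is more economical, needing only norm domination and completeness rather than the entropy machinery, but it is tied to the precompact situation; the paper's chain proves standing facts that hold with no compactness hypothesis (entropy-bounded sets have admissible $L^1$-closure; $L^1$-limits in $\Adm(X,\mu)$ are m-limits), and the corollary is then read off. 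Both derivations are valid; just note explicitly that your step ``precompact implies m-compact closure'' silently uses completeness of the m-norm, which the paper supplies in Section~\ref{sec_convergenceinmnorm}, and that your alternative route to the entropy bound via upper semicontinuity of $\heps_\eps$ on the m-compact set $\overline{M}^{m}$ also works, since that function is finite on $\Adm(X,\mu)$.
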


It turns out that for convex sets consisting of admissible semimetrics, the last statement from Corollary~\ref{cor1} is also sufficient for precompactness in the m-norm.
\begin{theorem}
A convex set $M \subset \Adm(X,\mu)$ is precompact in the m-norm if and only if the supremum $\sup\{\heps_\eps(X,\mu,\rho)\colon \rho \in M\}$ is finite for any $\eps>0$.
\end{theorem}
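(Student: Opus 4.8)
The necessity is already contained in Corollary~\ref{cor1}, so the plan is to prove the sufficiency: assuming $M$ is convex and $\sup\{\heps_\eps(X,\mu,\rho)\colon\rho\in M\}<\infty$ for every $\eps>0$, I would deduce m-norm precompactness by verifying the two conditions of the criterion in Theorem~\ref{lemmcomp}, namely uniform integrability and uniform admissibility. The essential feature of the hypothesis is that for each scale the finite bound $\sup_{\rho\in M}\heps_{\eps}(X,\mu,\rho)=:\log K(\eps)$ controls the \emph{number} of cells needed to witness admissibility \emph{uniformly} in $\rho$: every $\rho\in M$ admits a partition $X=X_0^\rho\cup X_1^\rho\cup\dots\cup X_k^\rho$ with $k\le K(\eps)$, $\mu(X_0^\rho)<\eps$, and $\diam_\rho X_j^\rho<\eps$. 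It is also convenient to record at the outset that, by the lemma stating that the $L^1$-closure of a family with uniformly bounded $\eps$-entropies lies in $\Adm(X,\mu)$, together with Corollary~\ref{Cor_top_coinc}, the m-norm and the $L^1$-norm induce the same topology on the relevant portion of the cone; hence m-norm precompactness of $M$ is equivalent to its $L^1$-precompactness, and once conditions (1) and (2) hold Theorem~\ref{lemmcomp} yields the conclusion directly.

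For uniform integrability (condition (1)) I would control the contribution of large values of $\rho$ using the uniform $\eps$-entropy bound, together with convexity, which guarantees the same bound for every average of members of $M$. The delicate point is that the $\eps$-entropy constrains the fine geometry at scale $\eps$ but not directly the magnitude of $\rho$; this step therefore requires combining the uniform entropy bound with the first-moment control through the Kantorovich-entropy estimates of Lemmas~\ref{lemma_Kepsentropy1} and~\ref{lemma_Kepsentropy2}, which translate smallness of $\int_{A\times X}\rho$ over sets $A$ of small measure into the discrete-measure description of $\heps^K$ and back. Convexity is used precisely so that this translation applies simultaneously across the whole family and its convex combinations.

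For uniform admissibility (condition (2)) the task is to replace the $\rho$-dependent partitions above by a \emph{single} partition $\{X_j\}_{j=1}^{k}$ serving all $\rho\in M$ up to a $\rho$-dependent exceptional set of measure $<\eps$. I would argue by contradiction: if no common partition exists at some scale $\eps_0$, one extracts a sequence $\rho_n\in M$ defeating an exhausting family of candidate partitions. Here convexity enters decisively. Once (1) is established, the Dunford--Pettis theorem makes $\{\rho_n\}$ relatively weakly compact in $L^1(X^2,\mu^2)$; passing to a weak limit $\rho_*$ and invoking Mazur's theorem together with the convexity of $M$, one obtains $\rho_*$ in the $L^1$-closure of $M$, which by the closure lemma lies in $\Adm(X,\mu)$. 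Thus $\rho_*$ is itself admissible and possesses a genuine finite partition at scale $\eps_0$, while the convex combinations furnished by Mazur converge to $\rho_*$ in $L^1$, hence in the m-norm; the stability estimate of Lemma~\ref{lemma_semicontinuity} then propagates the partition of $\rho_*$ to these nearby convex combinations, contradicting the choice of the $\rho_n$.

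The main obstacle is exactly this uniformization step in condition (2): extracting one finite partition that works for every member of $M$. The per-metric partitions have a uniformly bounded number of cells but are otherwise unrelated, so weak compactness alone (which convexity and Dunford--Pettis readily provide) cannot suffice; the crux is to upgrade weak limits to m-norm control, and convexity is the structure that makes this possible, by producing an admissible limiting semimetric whose own partition can be transferred back along the m-norm stability of $\eps$-entropy.
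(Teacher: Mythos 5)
Your treatment of necessity is fine (it is exactly Corollary~\ref{cor1}), but the sufficiency argument has a fatal gap at condition~(1) of Theorem~\ref{lemmcomp}: uniform integrability does \emph{not} follow from convexity together with uniformly bounded $\eps$-entropies, so no version of your first step can succeed. The Kantorovich lemmas cannot supply it, because the tail bound~\eqref{eq11} is a \emph{hypothesis} of Lemma~\ref{lemma_Kepsentropy1}, not a conclusion; nothing in the entropy data controls the mass of $\rho$ carried by sets of small measure. Concretely, on $(X,\mu)=([0,1],\mathrm{Leb})$ choose pairwise disjoint sets $A_n$ with $\mu(A_n)=2^{-n}$, put $\rho_n(x,y)=2^n\,|\chi_{A_n}(x)-\chi_{A_n}(y)|$, and let $M$ be the convex hull of $\{\rho_n\}_{n\ge 1}$; then $M$ is a convex subset of $\Adm(X,\mu)$. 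For $\sigma=\sum_i c_i\rho_{n_i}\in M$, the complement of $\bigcup_i A_{n_i}$ together with all cells $A_{n_i}$ whose height $c_i 2^{n_i}$ is less than $\eps/3$ forms a single set of $\sigma$-diameter $<\eps$; the remaining cells have measures that are \emph{distinct} powers of $2$, so keeping the $N=\lceil\log_2(2/\eps)\rceil$ largest of them and putting the rest into $X_0$ discards measure at most $2^{-N}<\eps$. Hence $\heps_\eps(X,\mu,\sigma)\le\log(N+1)$ uniformly over $M$. Nevertheless $\int_{\{\rho_n>R\}}\rho_n\,d\mu^2=2(1-2^{-n})\ge 1$ whenever $2^n>R$, so $M$ is not uniformly integrable; since a totally bounded subset of $L^1$ is uniformly integrable and the m-norm dominates the $L^1$-norm, $M$ is not precompact in the m-norm. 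Thus convexity plus entropy bounds cannot yield condition~(1); in fact this shows the sufficiency direction is only correct when uniform integrability is carried as an additional standing hypothesis (compare condition~1 of Theorems~\ref{lemmcomp} and~\ref{th_comp_triples} and the formulation in~\cite{VPZ13a}), in which case it is an assumption at your disposal rather than something to be derived.

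Independently of this, your argument for condition~(2) does not close. The $\rho_n$ are chosen to defeat the candidate partitions, but Mazur's theorem gives m-norm convergence only along \emph{convex combinations} of the $\rho_n$, and Lemma~\ref{lemma_semicontinuity} transfers the partition of the weak limit $\rho_*$ only to semimetrics that are m-close to $\rho_*$. So what you actually obtain is that certain convex combinations admit good partitions, which contradicts nothing: those combinations are not among the elements chosen to defeat anything, and the $\rho_n$ themselves are only weakly close to $\rho_*$. Weak closeness never upgrades to m-norm closeness except when the set is already norm precompact, which is the very thing being proved, so the scheme is circular. Where convexity genuinely enters is elsewhere: one averages finitely many elements \emph{inside} $M$, say $\sigma=\frac1N\sum_{n\le N}\rho_n\in M$, applies the uniform entropy bound to $\sigma$, and uses a Chebyshev-type counting argument (summing $\int_{Y_j\times Y_j}\rho_n\,d\mu^2\le N\eps\,\mu(Y_j)^2$ over the pieces $Y_j$ of a partition adapted to $\sigma$) to conclude that a single partition with a bounded number of pieces serves all but a small proportion of $\rho_1,\dots,\rho_N$; combined with compactness of the family of partitions into a bounded number of pieces, this is what produces the common partition required in condition~(2).
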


The mentioned criteria for precompactness have an important application in dynamics. They are used to prove the criterion on the discreteness of the spectrum of a measure-preserving transformation, see Theorem~\ref{th_diskr_spectr}.

\section{Metric triples: classification, matrix distribution, and distance between triples}
\label{SecClass}

So far, when speaking about a metric triple, we had in mind a metric on a fixed standard probability space $(X,\mu)$. In this section, we temporarily deviate from this paradigm and discuss metric triples without fixing a specific standard probability space with a continuous measure.

Two summable metric triples $(X_1,\mu_1,\rho_1)$ and $(X_2,\mu_2,\rho_2)$ are isomorphic if there exists an isomorphism of measure spaces $(X_1,\mu_1)$ and $(X_2,\mu_2)$ that maps the metric $\rho_1$ to the metric $\rho_2$ (mod 0). In this section, we will also discuss the classification of admissible metric triples up to isomorphisms. Returning to a fixed space $(X,\mu)$ and the cone $\Admm(X,\mu)$ of admissible metrics on it, we are essentially talking about orbits under the action of the group of automorphisms $\aut(X,\mu)$. As mentioned in the first chapter, it was proven independently by M. Gromov (see~\cite{Gro}) and A.~Vershik (see~\cite{V98}) that the classification problem of metric triples is ``smooth'' in the sense that there exists a complete system of invariants characterizing an admissible metric triple up to isomorphisms, the matrix distribution. This complete system of invariants turned out to be not only simple and intuitive but also became a significant tool for studying metric spaces.

In Subsection~\ref{S_triples_dist} we present two  natural ways to quantitatively measure the distinction between non-isomorphic metric triples: we introduce two distances and discuss their properties.

\subsection{Classification of metric spaces with measure and matrix distribution}
\label{Sec231}

In this section, we provide a detailed exposition of what was briefly mentioned in Section~\ref{sec113} of Chapter~\ref{sec1}.

For a fixed metric triple $(X,\mu,\rho)$ and a natural number $n$, let us choose randomly and independently $n$ points $x_1,\dots, x_n$ in the space $X$ according to the distribution $\mu$. Consider the distance matrix between these points, $(\rho(x_i,x_j))_{i,j=1}^n$, which is a random distance matrix on $n$ points. Let $\mdr_n = \mdr_n(X,\mu,\rho)$ denote the resulting distribution on the space of $n\times n$ square matrices. The measure $\mdr_n$ is the image of the measure $\mu^n$ (defined on $X^n$) under the mapping $F_n\colon X^n \to M_n$,
 $$
 F_n \colon (x_1,\dots,x_n) \mapsto (\rho(x_i,x_j))_{i,j=1}^n.
 $$
\begin{definition}
 The measure $\mdr_n$ is called the \emph{matrix distribution of dimension $n$} of the metric triple $(X,\mu,\rho)$.
\end{definition} 

The following properties of finite-dimensional matrix distributions follow directly from the definition.
\begin{remark}
For a fixed metric triple $(X,\mu,\rho)$, the distributions $\mdr_n$ satisfy the following properties:
\begin{enumerate}
    \item The distribution $\mdr_n$ is concentrated on the space of distance matrices of size $n\times n$, denoted by $R_n$.
    \item The distribution $\mdr_n$ is invariant under the action of the symmetric group $S_n$ by simultaneous permutations of rows and columns.
    \item The distribution $\mdr_n$ is the projection of the distribution $\mdr_{n+1}$ onto $n\times n$ matrices formed by the first $n$ rows and columns.
\end{enumerate}
\end{remark}

The symbol $\mdrinf = \mdrinf(X,\mu,\rho)$ denotes the projective limit of finite-dimensional distributions $\mdr_n$, a distribution on the space $M_\infty$ of $\mathbb{N}\times\mathbb{N}$ matrices. This measure is concentrated on the space of infinite distance matrices (symmetric matrices with non-negative real coefficients satisfying the triangle inequality), denoted as $R_\infty$. Moreover, the measure $\mdrinf$ is invariant under the action of the group $S^\infty$, which simultaneously permutes the columns and rows of matrices (briefly, the infinite diagonal symmetric group). The measure $\mdrinf$ is the image of the Bernoulli measure $\mu^\infty$ (defined on $X^\infty$) under the mapping $F_\infty\colon X^\infty \to M_\infty$,
\begin{equation}\label{eq010301}
 F_\infty \colon (x_j)_{j=1}^\infty \mapsto (\rho(x_i,x_j))_{i,j=1}^\infty.
\end{equation}
A random matrix with respect to the distribution $\mdrinf$ is the distance matrix $(\rho(x_i,x_j))_{i,j=1}^\infty$, where the sequence of points $(x_i)_{i=1}^\infty$ in the space $X$ is chosen randomly and independently according to the distribution $\mu$.

\begin{definition}[see~\cite{V98}]
The measure $\mdrinf$ is called the \emph{matrix distribution of the metric triple} $(X,\mu,\rho)$.
\end{definition}

A theorem stating that the matrix distribution completely determines a metric triple up to isomorphism was established by M.~Gromov and A.~Vershik independently.

M.~Gromov: The set of measures $\mdr_n$, $n\in \mathbb{N},$ forms a complete system of invariants for the metric triple $(X,\mu,\rho)$. That is, the necessary and sufficient condition for the equivalence of two triples is the coincidence of the corresponding measures $\mdr_n$ for all $n$.

A.~Vershik: The measure $\mdrinf$ on the space of infinite distance matrices is a complete invariant for the metric triple $(X,\mu,\rho)$. In other words, two admissible non-degenerate metrics are isomorphic if and only if their matrix distributions coincide.

The equivalence of the two conclusions is obvious. The first statement was proved by M.~Gromov and the proof is based on rather specialized analytical considerations. After the theorem was communicated to A.~Vershik, he provided an entirely different proof based on a simple ergodic theorem (Borel's strong law of large numbers for a sequence of independent random variables). The analysis of both proofs is given in the book~\cite{Gro}. We will present the second proof. Both proofs were obtained in the late 1990s and were published: the first in~\cite{Gro}, the second in~\cite{V02mccme}.

\begin{theorem}[M.~Gromov, A.~Vershik]\label{th_iso}
Two metric triples $(X_1,\mu_1,\rho_1)$ and $(X_2,\mu_2,\rho_2)$ are isomorphic if and only if the matrix distributions $\mdrinf(X_1,\mu_1,\rho_1) $ and  $\mdrinf(X_2,\mu_2,\rho_2) $ of these triples coincide.
\end{theorem}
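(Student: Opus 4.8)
The plan is to prove the two directions separately, with essentially all of the work in the ``if'' direction.

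For the easy direction, suppose $\Theta\colon(X_1,\mu_1)\to(X_2,\mu_2)$ is a measure-preserving isomorphism with $\rho_2(\Theta x,\Theta y)=\rho_1(x,y)$ for $\mu_1^2$-almost all $(x,y)$. Then the product map $\Theta^\infty\colon X_1^\infty\to X_2^\infty$ sends $\mu_1^\infty$ to $\mu_2^\infty$ and, outside a $\mu_1^\infty$-null set (the exceptional pairs occur in only countably many pairs of coordinates, each of $\mu_1^\infty$-measure zero), intertwines the two coordinate-distance maps $F_\infty$. Pushing forward, the two matrix distributions $\mdrinf$ coincide, so I would spend no real effort here.

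For the hard direction, assume $\mdrinf(X_1,\mu_1,\rho_1)=\mdrinf(X_2,\mu_2,\rho_2)=:\Theta$. The strategy is \emph{reconstruction}: I would show that, for a $\mu^\infty$-generic sequence $(x_i)_{i\ge1}$, the whole triple $(X,\mu,\rho)$ can be recovered from the single distance matrix $r=(\rho(x_i,x_j))_{i,j}$, by a procedure depending on $r$ alone. Two ingredients make this work, and both are ``Borel's strong law''-type statements. First, density: by admissibility (criterion~(4) of Theorem~\ref{Th_adm_cr}, balls of positive radius about $\mu$-almost every center have positive measure) together with the Radon property from Theorem~\ref{ThRadonMeasure}, I would fix a countable family of positive-measure balls and apply the second Borel--Cantelli lemma to the independent events $\{x_i\in B\}$, concluding that almost surely $\{x_i\}$ is $\rho$-dense in a set of full measure. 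Second, convergence of empirical measures: applying the strong law of large numbers to $\frac1n\sum_{i\le n}f(x_i)\to\int f\,d\mu$ over a countable convergence-determining family of bounded continuous $f$ (available since $(X,\rho)$ is Polish and $\mu$ is Radon), the empirical measures $\frac1n\sum_{i\le n}\delta_{x_i}$ converge weakly to $\mu$ almost surely.

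Given these, I would construct the isomorphism by coupling. Since the two matrix distributions coincide, I can choose a joint law on $X_1^\infty\times X_2^\infty$ with marginals $\mu_1^\infty$ and $\mu_2^\infty$ supported on pairs $((x_i),(y_i))$ with identical distance matrices, $\rho_1(x_i,x_j)=\rho_2(y_i,y_j)$ for all $i,j$ (disintegrate $\mu_1^\infty$ and $\mu_2^\infty$ over the common image $\Theta$ and glue). For a coupling-generic pair, $x_i\mapsto y_i$ is a bijective isometry between two dense full-measure samples, hence extends uniquely to an isometry $\phi\colon X_1\to X_2$ of the completed metric spaces. Finally $\phi$ carries the empirical measure of $(x_i)$ exactly onto that of $(y_i)$, and these converge weakly to $\mu_1$ and $\mu_2$; continuity of pushforward under $\phi$ then forces $\phi_*\mu_1=\mu_2$, so $\phi$ is an isomorphism of metric triples.

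The main obstacle I anticipate is not the bookkeeping but the two genericity statements, and especially arranging that density and weak convergence of empirical measures hold \emph{simultaneously} on a single full-measure event, so that $\phi$ is at once well-defined (isometric on a dense set) and measure-preserving. This is exactly where admissibility and the Radon property enter, and where Vershik's appeal to the pointwise ergodic theorem and the strong law replaces Gromov's method of moments. A secondary point is the measurability of the coupling and of the reconstruction, which I would control by fixing a countable basis of balls and a countable convergence-determining family, so that every operation on $r$ is countable.
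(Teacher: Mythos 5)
Your proposal is correct and is at bottom the same proof as the paper's (Vershik's): necessity by pushforward, sufficiency by showing that a $\mu^\infty$-generic sample determines the triple, with the strong law of large numbers (the "simple ergodic theorem" of the paper) supplying both the density of the sample and the recovery of the measure. Two points of execution differ. For measure recovery, you use weak convergence of the empirical measures $\frac{1}{n}\sum_{i\le n}\delta_{x_i}\to\mu$ over a countable convergence-determining family plus continuity of pushforward under the isometry $\phi$; the paper instead recovers the measure of each finite intersection of balls centered at sample points as the limiting frequency of sample points falling in that intersection, and then invokes the density of the ball algebra in the $\sigma$-algebra of measurable sets. For identifying the two triples, the paper leaves this implicit (each triple is isomorphic to the common reconstruction built from an $\mdrinf$-generic matrix), whereas you make it explicit by coupling $\mu_1^\infty$ and $\mu_2^\infty$ over the common matrix distribution and extending $x_i\mapsto y_i$ isometrically; this buys a clean resolution of the simultaneity concern you raise (all genericity events hold on a single full-measure set of the coupling) at the price of a disintegration argument. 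One small remark: the isometric extension of $x_i\mapsto y_i$ maps the closure of the sample, i.e. $\mathrm{supp}(\mu_1)$, onto $\mathrm{supp}(\mu_2)$, and these coincide with $X_1$, $X_2$ only under the non-degeneracy hypothesis that the paper imposes throughout this section; since isomorphism of metric triples is mod~0, restricting to supports is harmless, but it should be said.
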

\begin{proof}
The necessity of the condition is evident. To prove sufficiency, we will show that the metric triple $(X,\mu,\rho)$ is uniquely determined by its matrix distribution.

Recall that we can assume that the metric space $(X,\rho)$ is complete, and the measure $\mu$ is non-degenerate, i.\,e., its support coincides with the entire space (there are no non-empty open sets of measure zero). From this and the ergodic theorem, it follows that almost any sequence of points with respect to $\mu^{\infty}$ is dense in $(X,\rho)$. Consequently, the space that is the closure (more precisely, the completion) of almost any sequence is the entire $X$. It remains to reconstruct the measure on the space. Using the same ergodic theorem, the measure of any ball, and even the intersection of a finite set of balls centered at points of our sequence, can be uniquely reconstructed as the density of points of this sequence lying in such an intersection. As it is known, the algebra of sets spanned by the set of all (or almost all) balls of arbitrary radius in a separable metric space is dense in the algebra of all measurable sets. Therefore, the value of the measure on this algebra uniquely determines the measure on the entire space $X$. Thus, we have reconstructed the metric triple from its matrix distribution.
\end{proof}

\begin{remark}
The conclusion of Theorem~\ref{th_iso} fails if we replace admissible metrics with admissible semimetrics. A counterexample can be provided by an admissible semimetric that does not distinguish pairs of points and a metric obtained from it by a factorization that identifies points with zero distance.

Also, the conclusion of Theorem~\ref{th_iso} fails if we abandon the condition of metric admissibility. Metrics obtained from the aforementioned semimetrics by adding a constant have equal matrix distributions but are not isomorphic.
\end{remark}

\subsection{Characterization of matrix distributions}
\label{Sec232}
The Gromov--Vershik Theorem~\ref{th_iso} exhaustively classifies metric triples in terms of matrix distributions, i.\,e., probability measures on the set $R_\infty$ of infinite distance matrices. However, a question that is essential for the final resolution of classification problems and remains open is to describe possible values of the invariants, in this case, to describe {\it which measures can be matrix distributions of mm-spaces}. It was noted earlier that the distributions must be invariant and ergodic with respect to the action of the diagonal symmetric group. But this condition is not sufficient. It should be mentioned that the set of all invariant measures on the set of infinite symmetric matrices (not necessarily distance matrices) with respect to the diagonal group was found by D.~Aldous (\cite{Au}), and its intersection with the ergodic measures on $R_\infty$ is much broader than the set of matrix distributions. The problem we consider consists of the classification of metrics as measurable functions of two variables with respect to the simultaneous action on both variables by a group of transformations that preserve the measure. Under certain conditions, this is precisely the property that distinguishes matrix distributions from a wider class of measures on infinite matrices.

Let us note the following obvious property of metrics $\rho(\cdot,\cdot)$ as measurable functions of two variables. A measurable symmetric function of two variables is called {\it pure} if the mapping $x \mapsto \rho(x, \cdot)$ is injective mod 0, i.\,e., for almost all pairs $x \neq x'$, the corresponding one-variable functions $f(x,\cdot)$ and $f(x',\cdot)$ are not almost everywhere equal. It is evident that a metric is a pure function (a semimetric is not necessarily). It is not difficult to prove the following property.
\begin{lemma} 
The following two properties of a metric triple $(X,\mu,\rho)$ are equivalent:
\begin{enumerate}
    \item The group of $\mu$-preserving isometries mod 0 on the space $(X,\rho)$ is trivial (consists of the identity transformation). In this case, we will say that the admissible metric is \emph{irreducible}.
    \item The map $F_\infty$ from $(X^\infty,\mu^\infty)$ to $M_\infty$ is an isomorphism onto its image, see~\eqref{eq010301}.
\end{enumerate}
\end{lemma}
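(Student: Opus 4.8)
The plan is to recast the statement as an assertion about injectivity: since $X^\infty$ with $\mu^\infty$ is a standard probability space, the map $F_\infty$ is an isomorphism onto its image (mod~0) precisely when it is injective on a set of full $\mu^\infty$-measure, the measurability of the inverse being then automatic by the Lusin--Souslin theorem. So I would prove the equivalence between triviality of the group of $\mu$-preserving isometries and almost-sure injectivity of $F_\infty$, treating the two implications separately.

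For the implication (2)$\Rightarrow$(1) I would argue by contraposition. Given a nontrivial $\mu$-preserving isometry $S$ of $(X,\rho)$ (mod~0), consider the diagonal map $S^\infty\colon (x_j)_j \mapsto (Sx_j)_j$ on $X^\infty$. It preserves $\mu^\infty$, and since $\rho(Sx_i,Sx_j)=\rho(x_i,x_j)$ we get $F_\infty\circ S^\infty = F_\infty$ mod~0. Because $S\neq \mathrm{id}$ on a set of positive measure, for $\mu^\infty$-a.e.\ $\mathbf{x}$ infinitely many coordinates are moved (independence of the coordinates), so $S^\infty\mathbf{x}\neq \mathbf{x}$ almost surely. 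Thus $F_\infty$ identifies the distinct points $\mathbf{x}$ and $S^\infty\mathbf{x}$ and fails to be injective mod~0.

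The substantive direction is (1)$\Rightarrow$(2). I would fix the full-measure set $A\subset X^\infty$ of sequences $\mathbf{x}=(x_i)$ that are dense in $(X,\rho)$ (possible since $\mu$ is nondegenerate) and for which the empirical frequencies $\tfrac1N\#\{j\le N:\rho(x_j,x_i)<r\}$ converge to $\mu(B(x_i,r))$ for all $i$ and all rational $r$ simultaneously; this is a countable family of conditions, each holding almost surely by the pointwise ergodic theorem, exactly as in the reconstruction argument used to prove Theorem~\ref{th_iso}. Suppose $\mathbf{x},\mathbf{y}\in A$ satisfy $F_\infty(\mathbf{x})=F_\infty(\mathbf{y})$, i.e.\ $\rho(x_i,x_j)=\rho(y_i,y_j)$ for all $i,j$. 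Then $x_i\mapsto y_i$ is an isometry between two dense subsets and extends uniquely to an isometry $S$ of the complete space $(X,\rho)$ with $Sx_i=y_i$.

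The main obstacle, and the heart of the proof, is to show that this $S$ is measure-preserving, so that triviality of the isometry group can be invoked. For this I would compare frequencies: since $\rho(x_j,x_i)=\rho(y_j,y_i)$ for all $i,j$, the empirical counts computing $\mu(B(x_i,r))$ and $\mu(B(y_i,r))$ coincide, whence $\mu(B(x_i,r))=\mu(B(y_i,r))=\mu\bigl(S^{-1}B(y_i,r)\bigr)$, and the same matching holds for finite intersections of such balls. As these generate the measure algebra densely in a separable metric space, $S_*\mu=\mu$, so $S$ is a $\mu$-preserving isometry mod~0. By hypothesis~(1), $S=\mathrm{id}$ mod~0; being a continuous map that equals the identity on a dense full-measure set, $S$ is the identity everywhere, hence $y_i=Sx_i=x_i$ for all $i$ and $\mathbf{x}=\mathbf{y}$. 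This establishes injectivity of $F_\infty$ on $A$ and completes the argument.
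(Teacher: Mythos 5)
Your proof is correct in substance, and strictly speaking there is no proof in the paper to compare it against: the authors state this lemma without proof, remarking only that ``it is not difficult to prove'' it. Your argument is the natural filling-in, and it follows the route the surrounding text clearly intends: the reduction of ``isomorphism onto its image'' to almost-sure injectivity is the standard fact for standard Borel spaces; the direction (2)$\Rightarrow$(1), via the diagonal map $S^\infty$, the a.e.\ identity $F_\infty\circ S^\infty=F_\infty$, and the observation that some (indeed infinitely many) coordinates move almost surely, is sound; and the substantive direction (1)$\Rightarrow$(2) reuses precisely the ergodic/law-of-large-numbers reconstruction that the paper employs in its proof of Theorem~\ref{th_iso}: from a coincidence of distance matrices one builds an isometry $S$ of the complete space sending one dense random sequence to the other, shows $S$ preserves $\mu$ by matching empirical frequencies, and then uses triviality of the group of $\mu$-preserving isometries mod~0 together with nondegeneracy of $\mu$ (a full-measure set is dense, so $S=\mathrm{id}$ a.e.\ forces $S=\mathrm{id}$ everywhere by continuity) to conclude $\mathbf{x}=\mathbf{y}$.

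One point needs repair, though the fix is a single sentence. You define the full-measure set $A$ using only the single-ball frequency conditions, but the measure-preservation step requires that $\mu$ and $S_*\mu$ agree on \emph{finite intersections} of balls centered at points of the sequence; the convergence of the empirical counts for such intersections is an additional countable family of almost-sure events that must be built into $A$ from the start, exactly as in the paper's proof of Theorem~\ref{th_iso} (``and even the intersection of a finite set of balls''). This is not pedantry: two Borel probability measures on a metric space that agree on all balls need not coincide (a classical example of Davies), so agreement on single balls alone, even with centers ranging over a dense set, does not yield $S_*\mu=\mu$. What does work is the $\pi$-$\lambda$ argument applied to the $\pi$-system of finite intersections of balls $B(y_i,r)$, $r$ rational, which generates the Borel $\sigma$-algebra because $\{y_i\}$ is dense. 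With $A$ so augmented, the matching of counts (which, as you note, is automatic since the counts are functions of the distance matrix alone) passes to the limit and gives $S_*\mu=\mu$, completing your proof.
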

Thus, the \emph{matrix distribution of an irreducible metric} is an isomorphic image of the Bernoulli measure.
Thus, our task is to describe isomorphic images of
Bernoulli measures under the mapping $F_\infty$. These images, as measures on distance matrices, will be referred to as \emph{simple measures}. The internal description of simple measures is associated with a more detailed consideration of sigma-subalgebras on which the measures are defined, and we will not dwell on this here. In~\cite{XV}, such a description is given for a similar case, namely for not necessarily symmetric functions of several variables. On the other hand, in~\cite{V22}, for measures on enumerations of partially ordered sets, a concept equivalent to the concept of measure dimension on matrices is introduced; simplicity corresponds to dimension one. In connection with Aldous's theorem, it is relevant to mention that the absence of the notion of dimension (simplicity) makes it difficult to understand why invariant measures in this theorem split into two different classes; the distinction is in which sigma-subalgebras the measures are defined on. A detailed overview of the relevant concepts and their applications will be addressed in another paper.

In conclusion of the section, we present a theorem that provides an entropy-based description of matrix distributions of metric triples.

Let $\eps>0$. For a finite distance matrix of size $n\times n$, we define its $\eps$-entropy as the $\eps$-entropy of a $n$-point space with the uniform measure and the metric defined by this matrix. We will say that an infinite distance matrix $A=(a_{i,j})_{i,j=1}^\infty$ is \emph{entropy admissible} if for each $\eps>0$ the $\eps$-entropies of its corner minors $n\times n$, i.\,e. matrices $(a_{i,j})_{i,j=1}^n$, are uniformly bounded with respect to $n$. We say that a matrix $A$ is \emph{summable} if there exists a finite limit $\lim\limits_{n\to \infty} \frac{1}{n^2}\sum\limits_{i=1}^n\sum\limits_{j=1}^n a_{i,j}$.

\begin{theorem}[Entropy characteristic of matrix distributions of admissible metrics]
The matrix distribution $\mdrinf(X,\mu,\rho)$ of a summable metric triple $(X,\mu,\rho)$ is a measure on $R_\infty$ that is ergodic with respect to simultaneous permutations of rows and columns and is concentrated on summable entropy admissible matrices. Conversely, any ergodic measure $D$ that is concentrated on summable entropy admissible matrices is the matrix distribution of some summable metric triple $(X,\mu,\rho)$.
\end{theorem}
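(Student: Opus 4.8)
The plan is to treat the two implications separately, deriving the forward direction from the finite-approximation results already in hand and reserving the real work for the converse, which runs the reconstruction from the proof of Theorem~\ref{th_iso} in reverse.

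\emph{Forward direction.} Write $\mdrinf = (F_\infty)_*\mu^\infty$. Since $\rho$ is a genuine metric every sampled matrix is a distance matrix, so concentration on $R_\infty$ is automatic. The map $F_\infty$ conjugates the coordinate-permutation action of $S^\infty$ on $(X^\infty,\mu^\infty)$ with the simultaneous row--column permutation action on $R_\infty$; as the Bernoulli measure $\mu^\infty$ is ergodic for the former action (Hewitt--Savage), its pushforward $\mdrinf$ is ergodic for the latter, which is the ergodicity claim. Concentration on summable matrices is the assertion that $\frac1{n^2}\sum_{i,j\le n}\rho(x_i,x_j)$ has an almost sure limit: this is a $V$-statistic, and the law of large numbers identifies the limit with $\int_{X^2}\rho\,d\mu^2<\infty$, finite by summability. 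Concentration on entropy admissible matrices is precisely the first inequality of Theorem~\ref{th_eps_ent_diskr}: admissibility of $\rho$ gives $\heps_\eps(X,\mu,\rho)<\infty$ for every $\eps>0$, so almost surely $\limsup_n\heps_\eps(X_n,\mu_n,\rho_n)\le\heps_\eps(X,\mu,\rho)<\infty$, i.e. the corner-minor entropies are uniformly bounded for each fixed $\eps$.

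\emph{Converse direction.} Fix a $D$-generic matrix $A=(a_{ij})$, where ``generic'' will collect finitely many almost sure properties (existence of the summability limit, uniform boundedness of the corner entropies, and the convergence statements below). The matrix defines a semimetric $d_A(i,j)=a_{ij}$ on $\mathbb N$; let $(X_A,\rho_A)$ be the completion of its metric quotient, into which the points $1,2,\dots$ embed as a dense sequence $\bar 1,\bar 2,\dots$. To build the measure I would show that the empirical measures $\mu_n=\frac1n\sum_{i\le n}\delta_{\bar i}$ converge weakly to a Borel probability measure $\mu_A$ on $X_A$. Tightness of this family is exactly what entropy admissibility buys: for each $\eps$ the uniform bound on the $\eps$-entropies of the corner minors says that, uniformly in $n$, all but an $\eps$-fraction of $\bar 1,\dots,\bar n$ lie in a bounded number of $\eps$-balls, so the $\mu_n$ cannot escape and subsequential weak limits exist; the same bound passes to the limit and yields $\heps_\eps(X_A,\mu_A,\rho_A)<\infty$ for all $\eps$, whence $\rho_A$ is admissible, while the summability limit gives $\int\rho_A\,d\mu_A^2<\infty$.

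The hard part is to upgrade subsequential convergence to genuine convergence, to see that the limit $(X_A,\mu_A,\rho_A)$ does not depend on $A$, and to verify that $D$ is its matrix distribution. For convergence and identification of the limit I would fix a centre and a radius $r$ and study the ball-indicator averages $\frac1n\#\{k\le n: a_{ik}<r\}$; conditioning on the $S^\infty$-invariant (tail) data and applying the reverse martingale / ergodic theorem for the exchangeable array $A$ shows these averages converge and pin down $\mu_A(B(\bar i,r))$, which on a dense collection of centres and radii determines $\mu_A$. Ergodicity of $D$ then forces $(X_A,\mu_A,\rho_A)$ to be the same triple for $D$-almost every $A$, since otherwise $A\mapsto$ isomorphism class would be a nonconstant invariant function. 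Finally, by construction the reconstructing sequence $\bar 1,\bar 2,\dots$ is $\mu_A^\infty$-generic, so its distance matrix $A$ is a typical sample of $\mdrinf(X_A,\mu_A,\rho_A)$; as this holds for $D$-a.e.\ $A$, the measures $D$ and $\mdrinf(X_A,\mu_A,\rho_A)$ coincide. I expect the delicate ergodicity/exchangeability argument establishing full convergence and the $\mu_A^\infty$-genericity of the reconstructing sequence to be the main obstacle, the remaining steps being the reconstruction already used in Theorem~\ref{th_iso} together with the finite-approximation bounds of Theorem~\ref{th_eps_ent_diskr}.
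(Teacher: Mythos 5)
Your forward direction is sound and is essentially the paper's own argument (law of large numbers for the averages, Theorem~\ref{th_eps_ent_diskr} for entropy admissibility, Hewitt--Savage for ergodicity). The converse, however, has a genuine gap at its foundation: the claim that ``tightness of this family is exactly what entropy admissibility buys'' is false. Entropy admissibility provides, for each $n$, a covering of all but an $\eps$-fraction of $\bar 1,\dots,\bar n$ by $K_\eps$ balls of radius $\eps$, but these balls move with $n$, whereas Prokhorov tightness requires fixed compact sets serving all $n$ simultaneously. Concretely: split $\mathbb{N}$ into consecutive blocks $G_1,G_2,\dots$ whose sizes grow fast enough that $|G_1\cup\dots\cup G_g|\le g^{-2}|G_1\cup\dots\cup G_{g+1}|$, set $a_{ij}=0$ when $i,j$ lie in the same block, and $a_{ij}=|\log g-\log h|$ for $i\in G_g$, $j\in G_h$. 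This matrix is summable with $\av(A)=0$ (at every stage at most two blocks carry all but a vanishing fraction of the points, and consecutive blocks are at distance $\log(1+1/g)\to 0$) and entropy admissible (for each $\eps$, two $\eps$-balls cover all but an $\eps$-fraction at all but finitely many stages), yet $X_A$ is a locally finite set of block-points sitting at positions $\log g$ on a half-line, every compact subset of $X_A$ is finite, and $\mu_n$ puts mass tending to $1$ on the two newest block-points, which march off to infinity. So the $\mu_n$ have no weakly convergent subsequence at all, and everything after step 3 of your converse has no object to apply to. Note also that the extra ``generic'' properties you list do not repair this: in the example the summability limit exists, the corner entropies are uniformly bounded, and the ball-frequency averages $\frac1n\#\{k\le n: a_{ik}<r\}$ all converge. (The example does not contradict the theorem --- its empirical matrix distributions converge to $\delta_0$, so it is not generic for any ergodic $D$ --- but it does refute your inference.)

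The missing ingredient is that tightness must be extracted from the exchangeability and ergodicity of $D$ itself, not from properties of the individual matrix; proving such a statement (for $D$-a.e.\ $A$ and every $\eps>0$ there exist finitely many \emph{fixed} indices whose $\eps$-balls asymptotically capture frequency at least $1-\eps$) is essentially the whole difficulty of the converse, and your outline is silent exactly there. This is also where your route genuinely diverges from the paper's, and why the paper's route avoids the problem: instead of fixing the ambient space $X_A$, the paper forms the abstract finite triples $(X_n,\mu_n,\rho_n)$ from the corner minors and applies the precompactness criterion in the $\mdist$-metric on isomorphism classes of triples (Theorem~\ref{th_comp_triples}), whose hypotheses --- uniform integrability plus uniform $\eps$-entropy bounds --- are exactly summability and entropy admissibility and are isomorphism-invariant, hence blind to the drift that kills your empirical measures (in the block example the finite triples happily $\mdist$-converge, namely to a one-point space). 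A subsequential $\mdist$-limit $(X,\mu,\rho)$ therefore always exists, $\mdist$-convergence implies weak convergence of matrix distributions, and the ergodic theorem identifies the weak limit of the empirical matrix distributions of a $D$-generic matrix with $D$ itself, giving $\mdrinf(X,\mu,\rho)=D$. If you want to keep the fixed-space reconstruction in the spirit of Theorem~\ref{th_iso}, you must first prove the exchangeability-based tightness lemma above; as written, the argument collapses at its first step.
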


We comment on the proof of the theorem. The summability and entropy admissibility of $\mdrinf$-almost every matrix follows from the law of large numbers and Theorem~\ref{th_eps_ent_diskr}. Conversely, let $A=(a_{i,j})_{i,j=1}^\infty$ be a summable entropy admissible distance matrix (almost every point from the support of measure $D$). For each $n$, let $(X_n,\mu_n,\rho_n)$ be a $n$-points semimetric space defined by the matrix $(a_{i,j})_{i,j=1}^n$ and equipped with a uniform measure. The summability and entropy admissibility conditions of the matrix $A$ allow us to prove that the sequence of semimetric triples $(X_n,\mu_n,\rho_n)$ is precompact with respect to the special metric $\mdist$, see Definition~\ref{def_mdist} and Theorem~\ref{th_two_metrics2} below. Let an admissible semimetric triple $(X,\mu,\rho)$ be a limit point of this sequence of triples. The matrix distribution $\mdrinf(X,\mu,\rho)$ is the weak limit of a subsequence of matrix distributions $\mdrinf(X_n,\mu_n,\rho_n)$, thus it turns out to be equal to the original measure $D$ due to its ergodicity.

\subsection{Two metrics on metric triples}
\label{S_triples_dist}

Let $(X_1,\mu_1,\rho_1)$ and $(X_2,\mu_2,\rho_2)$ be two semimetric triples. We consider two ways to measure the distance between them, both in the spirit of the Gromov--Hausdorff distance. The first one is to embed both measure spaces into a space $(X,\mu)$ and minimize the distance between the semimetrics in the $m$-norm.
\begin{definition}\label{def_mdist}
The distance $\mdist$ between two semimetric triples $(X_1,\mu_1,\rho_1)$ and $(X_2,\mu_2,\rho_2)$ is defined as the infimum over all possible couplings $(X,\mu)$ of the measure spaces $(X_1,\mu_1)$ and $(X_2,\mu_2)$ (with projections $\psi_1 \colon X \to X_1$, $\psi_2 \colon X \to X_2$) of the $m$-distances between the semimetrics $\rho_1\circ \psi_1$ and $\rho_2\circ \psi_2$ on the space $(X,\mu)$:
\begin{align*}
\mdist\Big((X_1,\mu_1,\rho_1),&(X_2,\mu_2,\rho_2)\Big) 
=\\
&=\inf\Big\{\|\rho_1\circ \psi_1 - \rho_2\circ \psi_2\|_m \colon \psi_{1,2} \colon (X,\mu) \to (X_{1,2}, \mu_{1,2})\Big\}.
\end{align*}
\end{definition}

The second natural way to measure the distance between semimetric triples is to isometrically embed both semimetric spaces into a semimetric space $(X,\rho)$ and minimize the distance between the measures, for instance, using the Kantorovich metric $d_K$.
\begin{definition}
The distance $\kdist$ between two semimetric triples $(X_1,\mu_1,\rho_1)$ and $(X_2,\mu_2,\rho_2)$ is defined as the infimum over all possible isometric embeddings $\phi_1\colon (X_1,\rho_1) \to (X,\rho)$ and $\phi_2\colon (X_2,\rho_2) \to (X,\rho)$ of the Kantorovich distances on the space $(X,\rho)$ between the measures $\phi_1(\mu_1)$ and $\phi_2(\mu_2)$:
\begin{align*}
\kdist\Big((X_1,\mu_1,\rho_1),&(X_2,\mu_2,\rho_2)\Big) =\\
=&\inf\Big\{d_K(\phi_1(\mu_1),\phi_2(\mu_2))\colon \phi_{1,2}\colon (X_{1,2},\rho_{1,2}) \to (X,\rho)\Big\}.
\end{align*}
\end{definition}

The following theorem states that the two distances described above are equivalent.
\begin{theorem}\label{th_two_metrics2}
For any integrable semimetric triples $(X_1,\mu_1,\rho_1)$ and $(X_2,\mu_2,\rho_2)$, the following inequalities hold:
\begin{align*}
\kdist\Big((X_1,\mu_1,\rho_1),(X_2,\mu_2,\rho_2)\Big) &\leq \mdist\Big((X_1,\mu_1,\rho_1),(X_2,\mu_2,\rho_2)\Big),\\
\mdist\Big((X_1,\mu_1,\rho_1),(X_2,\mu_2,\rho_2)\Big)
&\leq 2 \kdist\Big((X_1,\mu_1,\rho_1),(X_2,\mu_2,\rho_2)\Big).
\end{align*}
\end{theorem}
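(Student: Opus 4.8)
The plan is to prove the two inequalities separately, each by transforming an optimal configuration for one distance into an admissible configuration for the other. The essential point is that the $m$-norm is defined via domination by a semimetric, while $\kdist$ uses the Kantorovich distance in a common semimetric space; both are infima over ``embeddings'', so in each direction I take a near-optimal embedding realizing one quantity and build from it a competitor for the other.

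For the first inequality, $\kdist \le \mdist$, I would start with a coupling $(X,\mu)$ with projections $\psi_1,\psi_2$ realizing (up to $\eps$) the value of $\mdist$, so that $\|\rho_1\circ\psi_1 - \rho_2\circ\psi_2\|_m$ is small. Writing $\tilde\rho_i = \rho_i\circ\psi_i$ for the two pulled-back semimetrics on $(X,\mu)$, I want to produce a single semimetric space into which both $(X_i,\rho_i)$ embed isometrically and in which the pushforwards of $\mu_1,\mu_2$ are close in $d_K$. The natural construction is to take, on the disjoint union $X_1\sqcup X_2$, a semimetric that restricts to $\rho_1$ and $\rho_2$ on each piece and sets cross-distances using a semimetric $\sigma$ on $(X,\mu)$ dominating $|\tilde\rho_1-\tilde\rho_2|$ with $\|\sigma\|_{L^1}$ close to $\|\tilde\rho_1-\tilde\rho_2\|_m$ (such $\sigma$ exists by Definition~\ref{def_mnorm}). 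Concretely one glues via $d(a,b)=\inf_{x\in X}(\rho_1(a,\psi_1 x)+\sigma(x)+\rho_2(\psi_2 x,b))$ for $a\in X_1$, $b\in X_2$; the coupling of $\mu_1,\mu_2$ induced by $\mu$ (sending $x\mapsto(\psi_1 x,\psi_2 x)$) then has transport cost bounded by $\int \sigma\,d\mu$, giving $d_K(\phi_1(\mu_1),\phi_2(\mu_2))\le \|\sigma\|_{L^1}$, and letting $\eps\to 0$ yields the bound.

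For the second inequality, $\mdist \le 2\,\kdist$, I would take isometric embeddings $\phi_1,\phi_2$ into a common $(X,\rho)$ realizing $\kdist$ up to $\eps$, together with an optimal Kantorovich coupling $\pi$ of $\phi_1(\mu_1)$ and $\phi_2(\mu_2)$ whose cost $\int \rho\,d\pi$ approximates $\kdist$. This coupling $\pi$, pulled back, is exactly a coupling $(\tilde X,\tilde\mu)$ of $(X_1,\mu_1)$ and $(X_2,\mu_2)$ with projections $\Psi_1,\Psi_2$. On this coupling the function $|\rho_1\circ\Psi_1-\rho_2\circ\Psi_2|$ can be dominated, via the triangle inequality in $(X,\rho)$, by the sum of the $\rho$-displacements at the two pairs of coupled points; integrating and using symmetry of the bound gives $\|\rho_1\circ\Psi_1-\rho_2\circ\Psi_2\|_m \le 2\int\rho\,d\pi$, whence $\mdist\le 2\kdist$ after $\eps\to 0$. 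The factor $2$ enters precisely because bounding a difference of distances by displacements of endpoints costs two transport terms.

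The main obstacle I anticipate is the first direction: verifying that the glued cross-distance $d$ on $X_1\sqcup X_2$ is genuinely a semimetric (the triangle inequality across the three pieces must be checked, and one must confirm the restrictions to $X_1,X_2$ remain $\rho_1,\rho_2$ rather than being shortcut by paths through the other copy) and that the resulting space is admissible with the pushforward measures behaving correctly. The Kantorovich-to-$m$-norm passage in the second direction is comparatively routine once the optimal coupling is in hand, since one is only estimating an $L^1$ quantity by a transport integral; the delicate modeling work is in setting up the amalgamated semimetric space and ensuring measurability and admissibility are preserved throughout.
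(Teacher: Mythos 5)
Your second inequality is handled correctly and is essentially the paper's own argument: pull back the optimal Kantorovich plan $\pi$ to get a coupling, and dominate $|\rho_1\circ\Psi_1-\rho_2\circ\Psi_2|$ by the displacement function $D\bigl((x_1,x_2),(y_1,y_2)\bigr)=\rho(x_1,x_2)+\rho(y_1,y_2)$ off the diagonal, whose integral is $2\int\rho\,d\pi$. One caution there: the $m$-norm requires domination by a \emph{semimetric}, not merely by an $L^1$ function, so you must check that this $D$ is a semimetric (it is, trivially); your remark that one is ``only estimating an $L^1$ quantity'' glosses over exactly the feature that legitimizes the $m$-norm bound.

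The genuine gap is in the first inequality. In your gluing formula $d(a,b)=\inf_{x\in X}\bigl(\rho_1(a,\psi_1 x)+\sigma(x)+\rho_2(\psi_2 x,b)\bigr)$ the symbol $\sigma(x)$ is meaningless: $\sigma$ is a semimetric, a function of two variables. This is not a slip you can patch by reinserting the second argument, because the two-variable gluing $d(a,b)=\inf_{x,x'}\bigl(\rho_1(a,\psi_1 x)+\sigma(x,x')+\rho_2(\psi_2 x',b)\bigr)$ fails the very triangle inequality you flag as the main obstacle: taking $\tilde\rho_1=\tilde\rho_2$ and $\sigma=0$, the infimum decouples and all cross-distances vanish (the coupling projections are essentially onto), which would force $\rho_1\equiv 0$. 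What the construction actually needs is a \emph{one-variable} link cost $f$ with the splitting property $|\tilde\rho_1(x,y)-\tilde\rho_2(x,y)|\le f(x)+f(y)$ almost everywhere, together with the bound $\int_X f\,d\mu\le\|\tilde\rho_1-\tilde\rho_2\|_m+\delta$. With such an $f$, the cross-distance $d(a,b)=\essinf_{x}\bigl(\rho_1(a,\psi_1 x)+f(x)+\rho_2(\psi_2 x,b)\bigr)$ does satisfy all triangle inequalities (the across-pieces check reduces precisely to the splitting property combined with the triangle inequalities of $\rho_1$ and $\rho_2$), and the coupling $x\mapsto(\psi_1 x,\psi_2 x)$ has transport cost at most $\int_X f\,d\mu$, giving $\kdist\le\mdist$. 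The paper extracts this $f$ from the dominating semimetric $D\ge|\tilde\rho_1-\tilde\rho_2|$ by setting $f=D(\,\cdot\,,x_0)$: the triangle inequality for $D$ yields the splitting, and choosing the base point $x_0$ so that $\int_X D(x,x_0)\,d\mu(x)\le\int_{X^2}D\,d\mu^2$ (possible since the left-hand side, averaged over $x_0$, equals the right-hand side) yields the integral bound. This extraction of a one-point function from the dominating semimetric is the key idea missing from your proposal; without it the amalgamated space cannot be built and the first inequality does not follow.
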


The proof of Theorem~\ref{th_two_metrics2} (as well as the subsequent Theorem~\ref{th_comp_triples}) is provided in Appendix~\ref{App_th_triples}.

\begin{remark}
The functions $\kdist$ and $\mdist$ are metrics on the set of equivalence classes of admissible integrable metric triples. In particular, they are equal to zero if and only if two admissible metric triples are isomorphic. The convergence of a sequence of admissible metric triples in either of these metrics implies weak convergence of their matrix distributions.

The set of all admissible semimetric triples is complete with respect to $\kdist$ and $\mdist$ distances.

The functions $\kdist$ and $\mdist$ are semimetrics on the set of equivalence classes of admissible integrable semimetric triples. For instance, the distance between an admissible semimetric triple and a metric triple obtained from it by factorization over the sets of zero diameters is zero.
\end{remark}

The following theorem is an analog of Theorem~\ref{lemmcomp} and provides a criterion for the precompactness of a family of admissible semimetric triples in the $\mdist$-metric (and $\kdist$-metric).

\begin{theorem}\label{th_comp_triples}
A set $M = \{(X_i,\mu_i,\rho_i) : i \in I\}$ of summable admissible semimetric triples is precompact in the $\mdist$-metric (and $\kdist$-metric) if and only if\textup:
\begin{enumerate}
    \item the semimetrics $\rho_i$ are uniformly integrable:
    $$
    \lim_{R \to \infty} \sup_{i\in I} \int_{\rho_i > R} \rho_i \, d\mu_i^2  = 0;
    $$
    
    \item for any $\eps>0$, the epsilon entropies are uniformly bounded:
    $$
    \sup_{i \in I} \heps_\eps(X_i,\mu_i, \rho_i) < \infty. 
    $$
\end{enumerate}
\end{theorem}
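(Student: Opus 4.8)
The plan is to reduce everything to one of the two distances, dispatch the necessity of the conditions by a soft semicontinuity argument, and concentrate the real work on sufficiency, where I would establish total boundedness and then appeal to completeness.

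By Theorem~\ref{th_two_metrics2} the distances $\mdist$ and $\kdist$ are bi-Lipschitz equivalent, so a family is precompact in one exactly when it is precompact in the other; I would carry out the estimates for whichever is convenient. For the \emph{necessity} of conditions~1 and~2 I would exploit that both functionals are controlled along $\mdist$. Cover the precompact set $M$ by finitely many $\mdist$-balls of small radius centered at triples $T^{(1)},\dots,T^{(L)}$. For any $(X_i,\mu_i,\rho_i)\in M$ choose a coupling $(X,\mu)$ with projections $\psi_i,\psi^{(l)}$ realizing the distance to the nearest center, so that $\|\rho_i\circ\psi_i-\rho^{(l)}\circ\psi^{(l)}\|_m$ is small. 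Since the pushforward of $\mu^2$ under $\psi_i\times\psi_i$ equals $\mu_i^2$, both the distribution of the entries and the $\eps$-entropy of the realized semimetric coincide with those of the original triple. Then Lemma~\ref{lemma_semicontinuity}, applied on the coupling with a net of mesh $<\delta^2/4$, yields $\heps_{\eps+\delta}(X_i,\mu_i,\rho_i)\le \max_l \heps_{\eps}(T^{(l)})<\infty$, which is condition~2; and since the $m$-norm dominates the $L^1$-norm, each $\rho_i$ has a distribution that is $L^1$-close to one of finitely many integrable functions $\rho^{(l)}$, so the standard fact that an $L^1$-neighborhood of a finite (hence uniformly integrable) set is uniformly integrable gives condition~1.

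For \emph{sufficiency} I would show directly that conditions~1 and~2 make $M$ totally bounded and then invoke the completeness of the space of admissible triples under $\mdist$ (the Remark following Theorem~\ref{th_two_metrics2}) to obtain a genuine limit point; the construction is in the spirit of the measured Gromov--Hausdorff precompactness theorem. Fix the scales $\eps_m=2^{-m}$. By condition~2 there is a uniform bound $K_m=\lceil\exp(\sup_i\heps_{\eps_m}(X_i,\mu_i,\rho_i))\rceil$ on the number of pieces needed to partition each $X_i$ into sets of diameter $<\eps_m$ after deleting a set of measure $<\eps_m$. Choosing one representative per piece produces a finite subspace $N_i^m\subset X_i$ of at most $K_m$ points carrying the induced masses, and I would claim that $\kdist\big((X_i,\mu_i,\rho_i),N_i^m\big)\to 0$ as $m\to\infty$ uniformly in $i$: the kept mass is transported a distance $<\eps_m$, while the deleted mass has measure $<\eps_m$ and its transport cost is controlled by the uniform integrability of condition~1.

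The finite data of $N_i^m$ is a distance matrix in $\mathbb{R}^{K_m\times K_m}$ together with a probability vector on at most $K_m$ atoms; truncating distances at a level $R=R(m)$ supplied by condition~1 places this data in a compact set at the cost of an arbitrarily small $\kdist$-perturbation. For each fixed $m$ I would pass to a subsequence along which the number of atoms stabilizes and the truncated matrices and mass vectors converge, producing a limit finite triple $F^m$; a diagonal argument gives a single subsequence with $N_i^m\to F^m$ for every $m$. Since $N_i^m$ and $N_i^{m'}$ are both uniformly close to $(X_i,\mu_i,\rho_i)$, the $F^m$ form a $\kdist$-Cauchy sequence and hence converge to an admissible triple $F$, and a three-term estimate shows the subsequence itself converges to $F$, so $M$ is precompact. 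The main obstacle is precisely the uniform finite-net approximation of the third paragraph: one must show that conditions~1 and~2 \emph{together} bound $\kdist\big((X_i,\mu_i,\rho_i),N_i^m\big)$ uniformly in $i$, the delicate point being the Kantorovich transport of the small-measure exceptional set across possibly large distances, for which uniform integrability is exactly the right hypothesis, together with the compatibility of the distance-truncation step with the $\kdist$ estimate. An alternative route through weak precompactness of the matrix distributions $\mdrinf(X_i,\mu_i,\rho_i)$ would reach the same conclusion, but it would additionally have to rule out loss of ergodicity in the weak limit, a difficulty that the completeness argument circumvents.
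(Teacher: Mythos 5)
Your proposal is correct in substance, and its two halves relate differently to the paper's proof. The necessity argument is essentially the paper's: a finite $\mdist$-net, realization on a coupling, Lemma~\ref{lemma_semicontinuity} for the entropies, and transfer of integrability from the finitely many centers. One slip: the ``standard fact'' you invoke --- that a fixed $L^1$-neighborhood of a finite set of integrable functions is uniformly integrable --- is false as stated (perturbations $f + n\chi_{A_n}$ with $\mu(A_n)=\delta/n$ stay at $L^1$-distance $\delta$ from $f$ but carry mass $\delta$ at arbitrarily high levels). What is true, and what the paper uses, is the quantified version: a net of radius $\delta$ yields $\limsup_{R\to\infty}\sup_i\int_{\rho_i>R}\rho_i\,d\mu_i^2 \le C\delta$, and since nets exist at every scale $\delta>0$ and the left-hand side does not depend on $\delta$, it is zero. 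Your argument repairs itself the same way, since you may take the net radius as small as you like, but as written condition~1 does not follow from a single fixed cover.

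For sufficiency you take a genuinely different route. The paper proves total boundedness directly and globally: after truncating via condition~1 it invokes Lemma~\ref{lem11}, which produces \emph{one} finite partition $\xi$ of a single Lebesgue space $(X,\mu)$ together with homomorphisms $\psi_i$ realizing all the triples simultaneously (matching the probability vectors of the partitions through a finite net in the simplex), and then replaces each $\tro_i=\rho_i\circ\psi_i$ by a semimetric constant on the cells of $\xi\times\xi$, at $m$-cost less than $6\eps$; these replacements lie in a bounded finite-dimensional set, so an explicit finite $7\eps$-net drops out with no subsequence extraction and no appeal to completeness. You instead discretize each triple separately into finite spaces $N_i^m$, compactify the resulting matrix-plus-mass data by truncation, extract convergent subsequences by a diagonal argument, and invoke completeness of the space of triples (the Remark after Theorem~\ref{th_two_metrics2}) to produce limits --- the classical Gromov-type precompactness scheme. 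Both work: yours gives sequential relative compactness directly but leans on the completeness statement, whereas the paper's is constructive and self-contained. Your identification of the crux is also correct: uniform integrability, in the form of uniform absolute continuity ($\int_{A\times X_i}\rho_i\,d\mu_i^2$ small whenever $\mu_i(A)$ is small, uniformly in $i$), is exactly what pays for transporting the deleted mass of measure $<\eps_m$ across possibly large distances, and the same mechanism controls the truncation of the finite distance matrices.
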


A slightly different version of Theorem~\ref{th_comp_triples} can be found in~\cite{GPW}, where various distances between metric triples and their properties are considered; see also~\cite{St}.

In~\cite{GadKrish}, the authors investigate Lipschitz properties of mappings that associate the finite-dimensional matrix distribution $\mdr_n$, $n\geq 1$, to a metric triple. To do this, they introduce a specific distance on the space of square matrices $M_n$ of dimension $n$, and the distance between distributions is defined using the Prokhorov metric, which metrizes the weak topology. Let us formulate a result of a similar kind for infinite-dimensional matrix distributions $\mdrinf$.

It should be noted that if $\rho$ is a summable semimetric on the space $(X,\mu)$, then the random distance matrix $M$ is almost surely summable, meaning it has a finite mean:
$$
\av(M) := \lim_{n \to \infty} \frac{1}{n^2} \sum_{i=1}^n\sum_{j=1}^n M_{i,j} = \int_{X^2}\rho\, d\mu^2.
$$
For two summable distance matrices $A$ and $B$, we define a distance between them as follows:
$$
\MatrixDist(A,B) = \inf\big\{\av(D)\colon |A_{i,j}-B_{i,j}|\leq D_{i,j},\, \forall \, i,j \in \mathbb{N}\big\}, 
$$
where the infimum is taken over all possible summable distance matrices $D$ that element-wise majorize the difference between matrices $A$ and $B$. We note the similarity between the definition of the semimetric $\MatrixDist$ and the $m$-norm (see Definition~\ref{def_mnorm}).

Using the distance $\MatrixDist$ on the space of summable distance matrices, we can use the corresponding Kantorovich distance between matrix distributions.
\begin{theorem}
The Kantorovich distance between matrix distributions of two metric triples does not exceed the distance $\mdist$ between these triples.
\end{theorem}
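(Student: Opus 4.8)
The plan is to convert a near-optimal coupling of the two measure spaces into an explicit coupling of the two matrix distributions, and then to bound the transportation cost pointwise via the law of large numbers.

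First I would fix a coupling $(X,\mu)$ of $(X_1,\mu_1)$ and $(X_2,\mu_2)$ with projections $\psi_1,\psi_2$, and set $\tro_1=\rho_1\circ\psi_1$, $\tro_2=\rho_2\circ\psi_2$ on $(X,\mu)$. The key point is that a single random sequence in $X$ simultaneously generates samples from both matrix distributions: if $(w_j)_{j=1}^\infty$ is chosen independently according to $\mu$, then, since $\psi_k$ pushes $\mu$ to $\mu_k$, the sequence $(\psi_k(w_j))_j$ has law $\mu_k^\infty$, so the infinite matrices
$$
A=\big(\tro_1(w_i,w_j)\big)_{i,j}=\big(\rho_1(\psi_1(w_i),\psi_1(w_j))\big)_{i,j},
\qquad
B=\big(\tro_2(w_i,w_j)\big)_{i,j}
$$
are distributed according to $\mdrinf(X_1,\mu_1,\rho_1)$ and $\mdrinf(X_2,\mu_2,\rho_2)$ via the maps $F_\infty$ of~\eqref{eq010301}. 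The joint law of $(A,B)$ is therefore a coupling of the two matrix distributions, whence the Kantorovich distance is at most $\mathbb{E}\,\MatrixDist(A,B)$.

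Next I would estimate this expectation. Given $\eta>0$, the definition of the $m$-norm lets me pick a semimetric $\sigma$ on $(X,\mu)$ with $|\tro_1-\tro_2|\le\sigma$ holding $\mu^2$-a.e.\ and $\|\sigma\|_{L^1(X^2,\mu^2)}\le\|\tro_1-\tro_2\|_m+\eta$; by the Correction Theorem~\ref{th_ispr} I may take $\sigma$ to be a genuine semimetric on a full-measure set. Putting $D=\big(\sigma(w_i,w_j)\big)_{i,j}$, almost surely $D$ is an infinite distance matrix, and since only countably many index pairs occur, almost surely $|A_{i,j}-B_{i,j}|\le D_{i,j}$ for all $i,j$. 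Thus $D$ is admissible in the infimum defining $\MatrixDist(A,B)$, giving $\MatrixDist(A,B)\le\av(D)$. By the law of large numbers quoted just before the definition of $\MatrixDist$ --- the $n$ diagonal entries contribute $O(1/n)$ and disappear in the limit --- almost surely $\av(D)=\int_{X^2}\sigma\,d\mu^2=\|\sigma\|_{L^1(X^2,\mu^2)}$.

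Combining these gives $\MatrixDist(A,B)\le\|\tro_1-\tro_2\|_m+\eta$ almost surely, hence $\mathbb{E}\,\MatrixDist(A,B)\le\|\tro_1-\tro_2\|_m+\eta$; letting $\eta\to0$ and taking the infimum over all couplings and projections bounds the Kantorovich distance by $\mdist$. The main thing to handle with care is the passage from the $\mu^2$-a.e.\ domination $|\tro_1-\tro_2|\le\sigma$ to an almost sure pointwise domination of every entry of the random matrices, and the verification that $D$ really lies in the class of summable distance matrices over which $\MatrixDist$ is taken. Both rest on the Correction Theorem together with a countable-union-of-null-sets argument, and this is precisely where admissibility and the full-measure correction enter.
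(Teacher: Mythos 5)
Your proof is correct. Note that the paper states this theorem without proof (it appears at the end of Section~\ref{S_triples_dist}, and the appendix proves only Theorems~\ref{th_two_metrics2} and~\ref{th_comp_triples}), so there is no argument of the paper's to compare against; what you wrote supplies the missing proof, and it is clearly the argument the definitions are designed for. Your three ingredients --- pushing a single i.i.d.\ sample from a near-optimal coupling $(X,\mu)$ through both projections to obtain a coupling of the two matrix distributions, dominating the entrywise differences by the sampled matrix $D$ of a semimetric $\sigma$ witnessing the $m$-norm (Definition~\ref{def_mnorm}), and evaluating $\av(D)=\int_{X^2}\sigma\,d\mu^2$ almost surely --- use exactly the facts the paper sets up immediately before stating the theorem: the law-of-large-numbers remark on summability of random distance matrices and the noted similarity between $\MatrixDist$ and the $m$-norm. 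You also correctly identify and handle the one delicate step, upgrading $\mu^2$-a.e.\ domination and the a.e.\ semimetric axioms to statements holding simultaneously for all of the countably many sampled pairs and triples, via Theorem~\ref{th_ispr} and a countable union of null sets. The only technicality left implicit is the measurability of $\MatrixDist$ needed to integrate it against your coupling; since you bound it pointwise a.s.\ by the measurable (indeed a.s.\ constant) quantity $\av(D)$, this causes no difficulty at the paper's level of rigor.
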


        \subsection{The Urysohn universal metric space}

    An outstanding discovery made by P.~S.~Urysohn (1898--1924) and published in his posthumous paper of 1927 \cite{U1} was the construction of a universal complete separable metric space. This space is now referred to as Urysohn's space. It is the unique up to isometry complete separable metric space $(\Ury,\rho_U)$ possessing two properties: universality, meaning that any separable metric space can be isometrically embedded into $\Ury$, and homogeneity, which means that for any two isometric compact subsets of the space $\Ury$ and any isometry between them, there exists an extension of this isometry to an isometry of the entire space onto itself. After being largely forgotten for many years, starting from the 2000s this space has become a subject of study for many mathematicians. We highlight one of the many results. One of the significant properties of Urysohn's space is its ``typicality'' in the following sense.

\begin{theorem}[\cite{V04}]\label{th_ury}
Consider the set $\mathrm{M}$ of all metrics on a countable set {\emph{N}} (for example, on the set of natural numbers) and equip it with the natural weak topology. Then, an everywhere dense $G_{\delta}$ subset (i.\,e., ``typical'') in $\mathrm{M}$ consists of metrics for which the completion of the set {\emph{N}} with respect to this metric is isometric to the Urysohn space $(\Ury,\rho_U)$.
\end{theorem}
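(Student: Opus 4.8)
The plan is to reduce the statement to a standard characterization of the Urysohn space by an extension property, and then verify that this property carves out a dense $G_\delta$ set. Recall that a complete separable metric space is isometric to $(\Ury,\rho_U)$ if and only if it is \emph{finitely injective}: for every finite metric space $F$ and every one-point metric extension $F\cup\{p\}$, every isometric embedding of $F$ into the space extends to an isometric embedding of $F\cup\{p\}$. I would first translate this into an \emph{approximate} extension property on the countable set $N$: a metric $\rho$ on $N$ has this property if for every finite $S\subset N$, every vector $a=(a_s)_{s\in S}$ of positive reals satisfying $|a_s-a_{s'}|\le\rho(s,s')\le a_s+a_{s'}$, and every $\delta>0$, there is an $n\in N$ with $|\rho(n,s)-a_s|<\delta$ for all $s\in S$. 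A routine completeness-and-diagonalization argument shows that $(N,\rho)$ has the approximate extension property precisely when its completion is finitely injective, i.e.\ isometric to $\Ury$; this is the first step.

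Next I would realize the approximate extension property as a countable intersection of open dense sets. Enumerate the countably many triples $(S,a,\delta)$ with $S\subset N$ finite, $a\in\mathbb{Q}_{>0}^{S}$, and $\delta\in\mathbb{Q}_{>0}$, and set
$$ E_{S,a,\delta}=\{\rho\in\mathrm{M}:\ a \text{ is not } (\delta/2)\text{-admissible for } \rho|_S,\ \text{or}\ \exists\,n\in N,\ |\rho(n,s)-a_s|<\delta\ \forall s\in S\}, $$
where $(\delta/2)$-admissibility means the constraints $|a_s-a_{s'}|\le\rho(s,s')+\delta/2$ and $\rho(s,s')\le a_s+a_{s'}+\delta/2$ hold. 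The admissibility hypothesis is a conjunction of non-strict inequalities on the finitely many values $\rho(s,s')$, hence a closed condition; its negation together with the open existential conclusion makes $E_{S,a,\delta}$ open in the weak topology. Approximating real admissible vectors by rational ones shows that $\bigcap_{S,a,\delta}E_{S,a,\delta}$ is exactly the set of metrics with the approximate extension property, so this set is $G_\delta$.

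The heart of the argument is density of each $E_{S,a,\delta}$. Given $\rho_0\in\mathrm{M}$ and a basic neighborhood constraining finitely many pairs to within $\eps$, I would choose a ``spare'' index $n\in N$ occurring in none of those pairs and in none of $S$. If $a$ is not $(\delta/2)$-admissible for $\rho_0|_S$, then $\rho_0$ already lies in $E_{S,a,\delta}$; otherwise I perturb $a$ to a genuinely admissible vector $a'$ with $|a'_s-a_s|<\delta/2$ and redefine the distances from $n$ by the one-point (Katetov) extension $\rho(n,m)=\min_{s\in S}\big(a'_s+\rho_0(s,m)\big)$, leaving all other distances equal to $\rho_0$. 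This keeps $\rho$ a metric, agrees with $\rho_0$ on every constrained pair (so $\rho$ stays in the neighborhood), and gives $\rho(n,s)=a'_s$ for $s\in S$, whence $|\rho(n,s)-a_s|<\delta$ and $\rho\in E_{S,a,\delta}$. Since $\mathrm{M}$ is closed in $\mathbb{R}_{\ge 0}^{N\times N}$ and hence a Baire space, the intersection $\bigcap_{S,a,\delta}E_{S,a,\delta}$ is a dense $G_\delta$, and by the first step it is exactly the set of metrics whose completion is $\Ury$.

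The main obstacle I expect is this density step: checking that the Katetov formula $\min_{s}(a'_s+\rho_0(s,\cdot))$ is $1$-Lipschitz and yields a genuine metric extension realizing $\rho(n,s)=a'_s$ (which uses the admissibility of $a'$ and positivity of the $a'_s$), together with the elementary but necessary lemma that every $(\delta/2)$-admissible vector lies within $\delta/2$ of an exactly admissible one. The translation between approximate extension on $N$ and finite injectivity of the completion, and the appeal to Urysohn's characterization of $\Ury$ by finite injectivity, are standard, and I would cite them rather than reprove them.
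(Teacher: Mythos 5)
Your proposal is correct and follows essentially the same route as the proof in \cite{V04} to which the paper defers: Vershik's argument likewise characterizes the Urysohn space by an approximate one-point extension (``universality'') property of distance matrices, writes the set of universal matrices as a countable intersection of open sets indexed by finite sets, rational admissible vectors, and rational tolerances, and proves density by adjusting a spare row/column of the matrix --- exactly your Katetov-extension step. One small repair: $\mathrm{M}$ is not closed in $\mathbb{R}_{\ge 0}^{N\times N}$ (strict positivity off the diagonal is not a closed condition), but it is a $G_\delta$ subset of the closed cone of semimetrics, hence Polish and therefore a Baire space, so your appeal to the Baire category theorem still goes through.
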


The proof of this fact refines the construction of the Urysohn space: the space is constructed using an inductive process that defines a distance matrix (Urysohn matrix), see~\cite{V04}. Surely, by defining a probabilistic Borel non-degenerate continuous measure $\mu$ on the Urysohn space, we obtain a metric triple $(\Ury,\mu,\rho_U)$. This raises a question: how large is the part of the space of all metric triples $\Admm(X,\mu)$ on a Lebesgue space $(X,\mu)$ that consists of spaces that are isomorphic to the Urysohn space?

Consider the weak topology on the space of metric triples. To do this, we will identify each metric triple with its matrix distribution (a probability measure on the space $R_\infty$ of distance matrices). The topology on the matrix distributions is the weak topology on the space of measures on $R_\infty$, and it induces the weak topology on the triples. According to the aforementioned Theorem~\ref{th_ury}, the set of \emph{Urysohn} matrices (defining a metric space whose completion is the Urysohn space) is a dense $G_\delta$ set in $R_\infty$. Hence, the set of probability measures concentrated on Urysohn matrices is a dense $G_\delta$ subset of the space of probability measures on $R_\infty$. It seems that the set of matrix distributions concentrated on Urysohn matrices is a dense $G_\delta$ subset of the set of matrix distributions of metric triples.

A question of what type of metric spaces forms a typical set in the space of triples with respect to the norm topology is open.

A natural and highly important question is how to define non-degenerate continuous measures on the Urysohn space. Are there any distinguished measures among them, like the Wiener measure in the space $C(0,1)$ of continuous functions on $[0,1]$ (notably, this space is universal but not homogeneous)?
This question is open and is related to another significant question: whether there exists any distinguished structure in the space $\Ury$. Recall (see~\cite{CV2005}) that a structure of an abelian continuous (not locally compact) group can be introduced in $\Ury$, although, unfortunately, not in a unique way.
Each non-degenerate measure on $\Ury$ corresponds to a matrix distribution, which allows us to pose the following question: to find matrix distributions concentrated on Urysohn matrices.
In fact, this problem reduces to constructing an ergodic measure concentrated on the set of Urysohn distance matrices and invariant under the infinite symmetric group.

\chapter{Dynamics on admissible metrics}

\label{sec3}

\section{Scaling entropy}

We assume the reader to be familiar with the basics of the classical entropy theory (see, e.\,g., papers~\cite{Kolm58,KSF,OW87,Ro67}). As we mentioned above, we study a new variant of entropy theory based on the dynamics of admissible metrics. In fact, the usage of a metric was mentioned in the paper by Shannon but was never developed further and has been forgotten. The introduction of a metric helps to determine new properties of an automorphism and allows one to extend Kolmogorov's theory to zero entropy automorphisms. Similar ideas were proposed in papers~\cite{F,KT}, see also a survey~\cite{KanKWei}. Following the definition by A. Vershik, we formulate the theory of scaling entropy, the beginnings of which were stated in papers~\cite{V10a, V10b, V11}. 

\subsection{Definition of scaling entropy. Asymptotic classes}

\label{sec311}
Let $T$ be an automorphism of a standard probability space $(X,\mu)$. Recall that for a summable admissible semimetric $\rho \in \Adm(X,\mu)$ we denote by $T^{-1}\rho$ its translation by the automorphism $T$ and by $T_{av}^n\rho$ the average of its first $n$ translations:
$$ T_{av}^n\rho(x,y) = \frac{1}{n} \sum\limits_{i =0}^{n-1} \rho(T^i x, T^i y). $$
In this section, we study the asymptotic behavior of the sequence of epsilon-entropies of semimetric triples $(X,\mu,T_{av}^n\rho)$. Recall that the epsilon-entropy $\heps_\eps(X,\mu,\rho)$ of a semimetric triple is the logarithm of the minimal possible number of balls of radius $\eps$ that cover the whole space~$X$ up a set of measure less than~$\eps$ (see Definition~\ref{definition_epsilon_entropy}). Since we are only interested in the asymptotics of such functions we will consider them up to asymptotic equivalence in the following sense.

For two sequences $h = \{h_n\}$ and $h' = \{h'_n\}$ of nonnegative numbers we write $h \preceq h^\prime$ if $h_n=O(h^\prime_n)$ and we write $h_n \asymp h^\prime_n$ if both relations $h\preceq h^\prime$ and $h^\prime \preceq h$ hold, in this case we say that the sequences~$h$ and~$h^\prime$ are \emph{equivalent}. 

\begin{definition}\label{definition_asymp_class}
    For two functions $\Phi,\Psi \colon \mathbb{R}_+\times \mathbb{N} \to \mathbb{R}_+$ we write $\Phi \preceq \Psi$ if for any $\eps>0$ there exists $\delta>0$ such that
    $$
    \Phi(\eps, n) \preceq \Psi(\delta, n), \qquad n \to \infty. 
    $$
    In this case, we say that $\Psi$ \emph{asymptotically dominates}~$\Phi$
    
    We call $\Phi$ and $\Psi$ \emph{equivalent} and write $\Phi \asymp \Psi$ if $\Psi \preceq \Phi \preceq \Psi$. We denote by~$[\Phi]$ the equivalence class of a given function $\Phi$ with respect to relation~$\asymp$ and call it \emph{the asymptotic class} of~$\Phi$. Relation~$\preceq$ extends naturally to equivalence classes and forms a partial order on the set of asymptotic classes. 
\end{definition}

Let us note that the equivalence class of a function~$\Phi$ is in fact defined in two steps: first, for a fixed~$\eps$  we consider the family of all sequences equivalent to $\Phi(\eps, n)$ when $n$ goes to infinity and then we identify all functions $\Phi$ with a given least upper bound of such families when $\eps$ goes to zero (see Section~\ref{section_scentropy_values}). Hence  the class~$[\Phi]$ can be viewed as a germ of the asymptotics of $\Phi$ at~$(0, +\infty)$.

For a given semimetric $\rho \in \Adm(X,\mu)$ we consider a function $\Phi_\rho \colon \mathbb{R}_+\times \mathbb{N} \to \mathbb{R}_+$ defined as follows 
\begin{equation}\label{Phi_metr}
\Phi_\rho(\eps, n) = \heps_\eps(X,\mu, T_{av}^n\rho).
\end{equation}

The following theorem states that the asymptotic class of the function $\Phi_\rho$ does not depend on the choice  of admissible metric~$\rho$, or, more generally, generating semimetric. A semimetric $\rho$ on $(X,\mu)$ is called \emph{generating} for~$T$ (or $T$-generating) if its translations by the action of $T$ separate points mod 0 that is for some subset $X_0\subset X$ of full measure for any $x,y \in X_0$ there is some~$n$ such that $T^n\rho(x,y)>0$. Clearly, any metric forms a generating semimetric. Note that the definition of a generating semimetric is a direct analog of a generating partition in classical entropy theory (see.~\cite{Ro67}). The existence of countable and finite generating partitions was studied in~\cite{Ro67, Kr}. The following theorem, proved in~\cite{Z15a}, naturally corresponds to the Kolmogorov--Sinai theorem which states that Kolmogorov entropy does not depend on a generating partition. 

\begin{theorem}\label{theorem_class_invariance}
Let $\rho_1, \rho_2 \in \Adm(X,\mu)$ be $T$-generating semimetrics. Then the classes $[\Phi_{\rho_1}]$ and $[\Phi_{\rho_2}]$ coincide.
\end{theorem}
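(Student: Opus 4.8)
The plan is to prove the two one-sided relations $\Phi_{\rho_1}\preceq\Phi_{\rho_2}$ and $\Phi_{\rho_2}\preceq\Phi_{\rho_1}$; by symmetry it suffices to establish the first. Unwinding Definition~\ref{definition_asymp_class}, I must show that for every $\eps>0$ there is a $\delta>0$ with $\heps_\eps(X,\mu,T_{av}^n\rho_1)=O\big(\heps_\delta(X,\mu,T_{av}^n\rho_2)\big)$ as $n\to\infty$. The engine of the argument is that the $m$-norm is $T$-invariant (a translate of a semimetric is a semimetric of the same $L^1$-norm), so averaging cannot amplify approximation errors: $\|T_{av}^n f\|_m\le\frac1n\sum_{i=0}^{n-1}\|T^i f\|_m=\|f\|_m$. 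Hence any $m$-norm approximation of $\rho_1$ yields an approximation of $T_{av}^n\rho_1$ that is uniformly good in $n$, and Lemma~\ref{lemma_semicontinuity} converts $m$-closeness into a comparison of $\eps$-entropies.

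The combinatorial heart is an \emph{approximation lemma} that I would isolate: since $\rho_2$ is $T$-generating, for every $\eta>0$ there exist a finite window $N$, a constant $C>0$, and an admissible semimetric $\hat\rho\le C\sum_{i=-N}^{N}T^i\rho_2$ with $\|\rho_1-\hat\rho\|_m<\eta$. Granting this, fix $\eps>0$ and apply the lemma with $\eta=\eps^2/16$. Then $\|T_{av}^n\rho_1-T_{av}^n\hat\rho\|_m<\eps^2/16$ for all $n$, so Lemma~\ref{lemma_semicontinuity} gives $\heps_\eps(T_{av}^n\rho_1)\le\heps_{\eps/2}(T_{av}^n\hat\rho)$. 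Next, $T_{av}^n\hat\rho\le R_n:=C\sum_{i=-N}^{N}\frac1n\sum_{l=0}^{n-1}T^{l+i}\rho_2$, a genuine semimetric, and comparing index windows shows $\|R_n-C(2N+1)T_{av}^n\rho_2\|_m=O(N^2/n)$. By monotonicity of $\heps_\eps$ in the semimetric (a larger semimetric only makes small diameters harder) and a second use of Lemma~\ref{lemma_semicontinuity}, for all large $n$ one has $\heps_{\eps/2}(T_{av}^n\hat\rho)\le\heps_{\eps/2}(R_n)\le\heps_{\eps/4}\big(C(2N+1)T_{av}^n\rho_2\big)$. Finally I unwind the scaling, keeping in mind that in Definition~\ref{definition_epsilon_entropy} the single parameter $\eps$ governs both the radius and the discarded-measure thresholds: multiplying the semimetric by $c:=C(2N+1)$ only rescales the radius, while enlarging the measure threshold can only decrease the entropy, so $\heps_{\eps/4}(c\,T_{av}^n\rho_2)\le\heps_{\eps/(4c)}(T_{av}^n\rho_2)$. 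Setting $\delta=\eps/(4c)$ yields $\heps_\eps(T_{av}^n\rho_1)\le\heps_\delta(T_{av}^n\rho_2)$ for all large $n$, which is precisely $\Phi_{\rho_1}\preceq\Phi_{\rho_2}$.

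The main obstacle is the approximation lemma, where the generating hypothesis is used. I would build the metric $\sigma=\sum_{i\in\mathbb{Z}}2^{-|i|}T^i\rho_2$: the series converges in the $m$-norm and its limit is admissible because $\Adm(X,\mu)$ is $m$-complete, while the $T$-generating property forces $\sigma$ to separate points mod~$0$, so $\sigma$ is a metric. Now $\rho_1$ and $\sigma$ are both admissible, so by Theorem~\ref{th_two_metrics} (with the Radon/inner-regularity of Theorem~\ref{ThRadonMeasure}) and the generalized Luzin theorem (Theorem~\ref{th_Lusin}) there is a compact set $K$ with $\mu(K)>1-\eps$ on which the two metrics induce the same topology, hence on which the identity is uniformly continuous; this gives a linear bound $\rho_1\le\alpha+(\diam_{\rho_1}(K)/\beta)\,\sigma$ on $K\times K$. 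Truncating to the finite window $\sigma_N=\sum_{|i|\le N}2^{-|i|}T^i\rho_2\le\sum_{|i|\le N}T^i\rho_2$ (so $\|\sigma-\sigma_N\|_m$ is as small as desired) and cutting off the contribution of $X\setminus K$ using the summability, hence uniform integrability, of $\rho_1$ produces the desired $\hat\rho$. The delicate points are the passage from the local, set-$K$ linear domination to a global $m$-norm estimate, and the handling of the semimetric (rather than metric) case for $\rho_1$, which I would treat by factoring through the metric quotient $X/\xi_{\rho_1}$; everything else reduces to bookkeeping with the monotonicity, scaling, and semicontinuity of the $\eps$-entropy and the $T$-invariance of the $m$-norm.
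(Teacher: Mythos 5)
Your outer loop is correct and is essentially the paper's own machinery: the $T$-invariance of the $m$-norm, the semicontinuity Lemma~\ref{lemma_semicontinuity}, the boundary-term comparison between averages of translates and $T_{av}^n\rho_2$ (the paper's Lemma~\ref{lemma_asymp_average}), and the monotonicity/rescaling of $\heps_\eps$ are exactly the ingredients used in Appendix~\ref{section_exp_scaling_entropy} to prove Lemma~\ref{lem_asymp_ineq} (in its exponential form, Lemma~\ref{lem_asymp_ineq_exp}). The genuine gap is your \emph{approximation lemma} --- the only step where the generating hypothesis enters --- and your sketch does not prove it. The Luzin-type theorems (Theorems~\ref{th_two_metrics}, \ref{th_Lusin}) give you uniform continuity of the identity on a large compact $K$, hence only a modulus-of-continuity estimate $\rho_1\le\alpha+c_K(\alpha)\,\sigma$ on $K\times K$ with an unavoidable additive constant $\alpha>0$ (a homeomorphism of compacta is uniformly continuous, not Lipschitz). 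From this one-sided, local bound there is no routine passage to what you actually need: a semimetric $\hat\rho$ with $\hat\rho\le C\sum_{|i|\le N}T^i\rho_2$ a.e.\ \emph{and} $\|\rho_1-\hat\rho\|_m<\eta$. The candidates implicit in your sketch all fail: $\min(\rho_1,\alpha+c_K\sigma)$ need not satisfy the triangle inequality; $c_K\sigma_N$ alone is not $m$-close to $\rho_1$ (nothing prevents $\sigma$ from being large where $\rho_1$ is small); the term $\alpha\chi_{\{x\ne y\}}$ can never be absorbed into $C\sigma_N$ (or $C\sigma$), since by item~5 of Theorem~\ref{Th_adm_cr} an admissible semimetric has essential infimum zero on every set of positive measure; and the shortest-path-through-$K$ construction collapses back to $\rho_1$, because the bound $\rho_1\le\alpha+c_K\sigma$ on $K\times K$ is compatible with the triangle inequality for $\rho_1$. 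So the step you yourself flag as ``delicate'' is not bookkeeping; it is the heart of the theorem, and it is missing.

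For comparison, the paper closes exactly this hole by a different mechanism: it proves that the semimetrics dominated by $C\sigma$ are \emph{dense} in $(\Adm(X,\mu),\|\cdot\|_m)$, using the structure theorem of~\cite{Z15a} (every summable admissible semimetric is an $m$-limit of semimetrics dominated by finite sums of cut semimetrics) and then approximating each cut semimetric $|\chi_A(x)-\chi_A(y)|$ in the $m$-norm by $|f(x)-f(y)|$, where $f$ is an $L^1$-approximation of $\chi_A$ that is Lipschitz with respect to the \emph{metric} $\sigma$ --- this Lipschitz approximation of indicators, not Luzin continuity, is where the generating hypothesis is converted into pointwise domination; the conclusion then follows from the $m$-closedness of the class of semimetrics satisfying the desired entropy inequality. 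Two further repairs your write-up would need in any case: (i) Theorem~\ref{th_two_metrics} is stated for admissible \emph{metrics}, and your proposed fix of factoring through $X/\xi_{\rho_1}$ is not available, because $\sigma$ does not descend to that quotient (points at $\rho_1$-distance zero may have positive $\sigma$-distance); one should instead compare the admissible metrics $\rho_1+\sigma$ and $\sigma$; (ii) the finite window $N$ is an unnecessary extra demand that makes your lemma strictly harder --- with the weights $2^{-|i|}$ one has $\|T_{av}^n\sigma-\bigl(\sum_i 2^{-|i|}\bigr)T_{av}^n\rho_2\|_m\le \tfrac{2}{n}\|\rho_2\|_{L^1}\sum_i 2^{-|i|}|i|=O(1/n)$, so domination by $C\sigma$ (full sum) serves your step~4 just as well, and this is in effect how the paper proceeds.
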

The proof of Theorem~\ref{theorem_class_invariance} is based on the following lemma.
\begin{lemma}\label{lem_asymp_ineq}
Let $\rho_1,\rho_2 \in \Adm(X,\mu)$. If $\rho_1$ is $T$-generating then for any $\eps>0$ there exists $\delta>0$ such that
$$
\heps_\eps(X,\mu,T_{av}^n\rho_2) \preceq \heps_\delta(X,\mu,T_{av}^n\rho_1), \qquad n \to +\infty. 
$$
\end{lemma}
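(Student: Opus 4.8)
The plan is to dominate $\rho_2$ pointwise by a constant plus a controlled semimetric built out of $\rho_1$, and then push this domination through the averaging $T_{av}^n$ and through the $\eps$\nobreakdash-entropy while keeping every error term uniform in $n$. First I would encode the generating property as an admissible metric. Since $\rho_1$ is $T$-generating and $\Adm(X,\mu)$ is complete in the m-norm, the series $\bar\rho=\sum_{i\in\mathbb{Z}}2^{-|i|}T^i\rho_1$ converges to an admissible semimetric, and the separation of points guaranteed by the generating property makes $\bar\rho$ an admissible \emph{metric}. The measure $\mu$ is Radon for $\bar\rho$ (Theorem~\ref{ThRadonMeasure}), so combining inner regularity with Theorem~\ref{th_two_metrics} and the generalized Luzin theorem (Theorem~\ref{th_Lusin}), for any prescribed $\eta>0$ I obtain a set $X_0$ with $\mu(X_0)>1-\eta$ on which $\rho_2$ is uniformly continuous with respect to $\bar\rho$: there is $\delta'>0$ such that $\rho_2(x,y)<\eps$ whenever $x,y\in X_0$ and $\bar\rho(x,y)<\delta'$.

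Next I truncate. Put $\bar\rho_N=\sum_{|i|\le N}2^{-|i|}T^i\rho_1$ and $R_N=\bar\rho-\bar\rho_N$; the tail $R_N$ is a semimetric with $\|R_N\|_{L^1}\le\sum_{|i|>N}2^{-|i|}\|\rho_1\|_{L^1}\to0$. Because $\rho_2\wedge R\to\rho_2$ in $L^1$, hence (by Corollary~\ref{Cor_top_coinc}) in the m-norm, I may fix $R$ and a semimetric $\sigma_R\ge(\rho_2-R)_+$ of arbitrarily small $L^1$-norm. The heart of the argument is the elementary pointwise inequality, valid for all $(x,y)$,
\begin{equation*}
\rho_2\le\eps+R\,\mathbf 1[x\notin X_0\ \text{or}\ y\notin X_0]+\tfrac{2R}{\delta'}\,\bar\rho_N+\tfrac{2R}{\delta'}\,R_N+3\,\sigma_R,
\end{equation*}
checked by cases: if $x,y\in X_0$ and $\bar\rho=\bar\rho_N+R_N<\delta'$ the modulus gives $\rho_2<\eps$, whereas if any of the three ``bad'' events occurs the corresponding term (using $\mathbf 1[u\ge t]\le u/t$ and $3\sigma_R\ge3(\rho_2-R)_+$) already absorbs $\rho_2$.

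Averaging this inequality along the orbit under $T\times T$ and restricting to $Y=\{x:\tfrac1n\sum_{k<n}\mathbf 1_{X\setminus X_0}(T^kx)<\sqrt\eta\}$ (of measure $>1-\sqrt\eta$ by the mean ergodic / Markov bound), the first term contributes at most $2R\sqrt\eta$, so on $Y\times Y$,
\begin{equation*}
T_{av}^n\rho_2\le2\eps+\Sigma_n,\qquad \Sigma_n:=\tfrac{2R}{\delta'}T_{av}^n\bar\rho_N+\tfrac{2R}{\delta'}T_{av}^nR_N+3\,T_{av}^n\sigma_R,
\end{equation*}
once $\eta$ is so small that $2R\sqrt\eta\le\eps$; here $\Sigma_n$ is a genuine semimetric. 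Since $\diam_{\Sigma_n}(A)<2\eps$ forces $\diam_{T_{av}^n\rho_2}(A\cap Y)<4\eps$, intersecting a $\Sigma_n$-cover with $Y$ (and adjoining $X\setminus Y$ to the excluded set) yields $\heps_{4\eps}(X,\mu,T_{av}^n\rho_2)\le\heps_{2\eps}(X,\mu,\Sigma_n)$. Now I split $\Sigma_n$ by subadditivity $\heps_{2\eps}(\rho+\rho')\le\heps_\eps(\rho)+\heps_\eps(\rho')$. The two small pieces $\tfrac{2R}{\delta'}T_{av}^nR_N$ and $3T_{av}^n\sigma_R$ together form a semimetric whose $L^1$-norm (unchanged by averaging) is $\le\tfrac{2R}{\delta'}\|R_N\|_{L^1}+3\|\sigma_R\|_{L^1}$, independent of $n$; choosing $N,R$ so this is tiny, a single $\eps$-ball covers all but an $\eps$-fraction, so its $\eps$-entropy vanishes. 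For the main piece, $T_{av}^n\bar\rho_N=\sum_{|i|\le N}2^{-|i|}T^i(T_{av}^n\rho_1)$, and iterating the subadditivity $\heps_{(2N+1)\theta}\big(\sum_{|i|\le N}\sigma_i\big)\le\sum_{|i|\le N}\heps_\theta(\sigma_i)$ together with $\heps_\theta\big(c\,T^i(T_{av}^n\rho_1)\big)\le\heps_\theta\big(T^i(T_{av}^n\rho_1)\big)=\heps_\theta(T_{av}^n\rho_1)$ for $c\le1$ (translation invariance, as $T^i$ is a measure-preserving isometry) gives, for a suitable $\delta=\delta(\eps)$ and all $n$,
\begin{equation*}
\heps_{4\eps}(X,\mu,T_{av}^n\rho_2)\le(2N+1)\,\heps_{\delta}(X,\mu,T_{av}^n\rho_1).
\end{equation*}
As $2N+1$ is a constant, this is exactly $\heps_{4\eps}(T_{av}^n\rho_2)\preceq\heps_\delta(T_{av}^n\rho_1)$; relabeling $4\eps$ as $\eps$ proves the lemma.

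The main obstacle is uniformity in $n$, and the whole design of the decomposition is aimed at it: every error created by averaging is of one of three harmless types — a genuine constant (orbits escaping $X_0$, controlled on $Y$ by measure preservation); a semimetric whose $L^1$-norm is \emph{preserved} by $T_{av}^n$ and hence stays below the threshold for all $n$ (the tail $R_N$ and the truncation remainder $\sigma_R$); and a fixed number $2N+1$ of isometric translates of $T_{av}^n\rho_1$ handled by subadditivity. The two delicate inputs are the uniform-continuity modulus extracted from the generating property through the auxiliary metric $\bar\rho$ and Theorem~\ref{th_two_metrics}, and the smallness of $\|(\rho_2-R)_+\|_m$, which rests on the coincidence of the $L^1$- and m-topologies on $\Adm(X,\mu)$ (Corollary~\ref{Cor_top_coinc}).
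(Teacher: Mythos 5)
Your proof is correct in substance, and it reaches the lemma by a genuinely different route than the survey. The paper's argument (outlined in Appendix~\ref{section_exp_scaling_entropy}, following~\cite{Z15a}) also starts from an auxiliary metric built from the generating semimetric (there $\rho=\sum_{i\ge0}2^{-i}T^{-i}\rho_1$), but then runs a closure--density argument in the cone: the set $\mathcal{M}$ of semimetrics $\rho_2$ satisfying the conclusion is closed in the $m$-norm (via the semicontinuity Lemma~\ref{lemma_semicontinuity}), the set of semimetrics pointwise dominated by $C\rho$ is dense in $(\Adm(X,\mu),\|\cdot\|_m)$ (approximation by cut semimetrics and $\rho$-Lipschitz functions), and the auxiliary metric itself is handled by Lemma~\ref{lemma_asymp_average}. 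You instead dominate $\rho_2$ pointwise by $\eps+C\bar\rho_N+{}$error terms, extracting a uniform-continuity modulus of $\rho_2$ relative to $\bar\rho$ on a large compact set, and you control each error under averaging by hand (the Markov-type good sets $Y=Y_n$ whose measure bound is uniform in $n$, preservation of the $L^1$-norm by $T_{av}^n$, subadditivity of $\heps$). What each approach buys: yours is more self-contained (it does not need the density of Lipschitz-dominated semimetrics from~\cite{Z15a}) and gives the explicit bound $\heps_{4\eps}(X,\mu,T_{av}^n\rho_2)\le(2N+1)\,\heps_{\delta}(X,\mu,T_{av}^n\rho_1)$ for \emph{every} $n$; the paper's argument never multiplies the entropy by a constant, only changes the $\eps$-parameter, and therefore yields the stronger exponential version (Lemma~\ref{lem_asymp_ineq_exp}), $\exp\Phi_{\rho_2}(\eps,n)\preceq\exp\Phi_{\rho_1}(\delta,n)$, on which the theory of exponential scaling entropy rests. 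Your multiplicative factor $2N+1$ is harmless for the lemma as stated but would destroy that refinement.

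Two technical points in your write-up need patching, both routine. First, Theorem~\ref{th_two_metrics} is stated for admissible \emph{metrics}, and the generalized Luzin theorem is for functions of \emph{one} variable; $\rho_2$ is a semimetric of two variables, and applying Luzin on $(X^2,\mu^2)$ would only produce a large subset of the square, not a product set $X_0\times X_0$. The correct fix is to apply Theorem~\ref{th_two_metrics} to the pair of admissible metrics $\bar\rho$ and $\rho_2+\bar\rho$, intersect the resulting $X_0$ with a $\bar\rho$-compact subset of large measure (inner regularity from Theorem~\ref{ThRadonMeasure}), and get the modulus $\delta'$ from compactness. Second, the coefficient in front of $T_{av}^n\bar\rho_N$ in your $\Sigma_n$ is $2R/\delta'$, which is large, so your remark about constants $c\le1$ does not apply literally; however $\heps_\theta(C\sigma)\le\heps_{\theta/C}(\sigma)$ for $C\ge1$, so the constant is simply absorbed into $\delta$, which the statement of the lemma permits.
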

An outline of the proof of Lemma~\ref{lem_asymp_ineq} and its more general version which makes it possible to refine the invariant is given in Appendix~\ref{section_exp_scaling_entropy}.  

Theorem~\ref{theorem_class_invariance} allows us to give the following definition of the scaling entropy of a dynamical system~$(X,\mu,T)$. 
\begin{definition}\label{definition_scaling_entropy}
\emph{The scaling entropy} $\scfunclass(X,\mu, T)$ of a system $(X,\mu,T)$ is the asymptotic class $[\Phi_\rho]$ for some (hence for any) $T$-generating semimetric $\rho \in \Adm(X,\mu)$.
\end{definition}
Let us emphasize that the class $\scfunclass(X,\mu,T)$ is preserved under isomorphism and is \emph{a measure theoretic invariant of dynamical systems}. Let us also note the scaling entropy is a monotone function with respect to a factor map:
\begin{remark}
Assume that a system $(X_2,\mu_2,T_2)$ is a factor of another system $(X_1,\mu_1,T_1)$. Then $\scfunclass(X_2,\mu_2, T_2) \preceq \scfunclass(X_1,\mu_1, T_1)$.
\end{remark}

    \subsubsection{Example of computation of the scaling entropy: Bernoulli shift}
		
    The invariant definition of the scaling entropy using measurable metrics in many cases makes it possible to establish connections between measure-theoretic and topological dynamics (see, e.\,g., \cite{S, Vep20b, Vep22}). However, for an explicit computation of the scaling entropy of particular dynamical systems, it is often useful to choose a generating partition and the corresponding cut semimetric (which is a generating one as well). In this case, the computation of our invariant reduces to consideration of a sequence of finite-dimensional cubes each endowed with the Hamming distance and a projection of a certain stationary measure~$\mu$ onto the first $n$~coordinates. As an example of such a computation, we consider the classical Bernoulli shift on the binary alphabet. 
	
	\begin{theorem}
        The shift map on the space of all binary sequences $X = \{0,1\}^\mathbb{Z}$ with the Bernoulli measure~$\mu$ with parameter~$1/2$ has scaling entropy $\scfunclass = [n]$.
	\end{theorem}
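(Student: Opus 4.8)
The plan is to use the reduction described just before the theorem: choose the natural generating partition of $X=\{0,1\}^{\mathbb Z}$ into the two cylinders $[x_0=0]$ and $[x_0=1]$, and take $\rho$ to be the associated cut semimetric $\rho(x,y)=\mathbf 1[x_0\neq y_0]$. This is a generating semimetric for the shift $T$, so by Theorem~\ref{theorem_class_invariance} and Definition~\ref{definition_scaling_entropy} it suffices to compute the asymptotic class $[\Phi_\rho]$ for this particular $\rho$. First I would observe that the averaged semimetric
$$
T_{av}^n\rho(x,y)=\frac1n\sum_{i=0}^{n-1}\mathbf 1[x_i\neq y_i]
$$
is exactly the normalized Hamming distance on the first $n$ coordinates. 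Since $\rho$ depends only on the zeroth coordinate, the triple $(X,\mu,T_{av}^n\rho)$ is measure-isomorphic to the cube $(\{0,1\}^n,\mu_n,d_H/n)$, where $\mu_n$ is the Bernoulli$(1/2)$ measure on $\{0,1\}^n$ (the uniform measure on the $2^n$ words) and $d_H$ is the Hamming distance. Thus $\Phi_\rho(\eps,n)=\heps_\eps(\{0,1\}^n,\mu_n,d_H/n)$, and the whole problem becomes one of estimating the $\eps$-entropy of the normalized Hamming cube.

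**The upper bound $\Phi_\rho\preceq[n]$.**
For the estimate $\heps_\eps(\{0,1\}^n,\mu_n,d_H/n)=O(n)$ I would cover the cube by Hamming balls. Any set of diameter less than $\eps$ in the metric $d_H/n$ has $d_H$-diameter less than $\eps n$, so it suffices to bound the number of $d_H$-balls of radius $\eps n/2$ needed to cover all of $\{0,1\}^n$ (here one can even ignore the exceptional set of measure $<\eps$ and cover everything). A greedy/volumetric argument gives a covering by at most $2^n/V(n,\eps n/2)$ balls, where $V(n,r)=\sum_{k\le r}\binom nk$ is the volume of a Hamming ball; taking logarithms and using the standard estimate $\frac1n\log_2 V(n,\alpha n)\to H_2(\alpha)$ (binary entropy) yields $\heps_\eps\le n\log 2\,(1-H_2(\eps/2))+o(n)=O(n)$. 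Hence $\Phi_\rho(\eps,n)\preceq n$ for every fixed $\eps$, which gives $[\Phi_\rho]\preceq[n]$.

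**The lower bound $[n]\preceq\Phi_\rho$ — the main obstacle.**
The reverse inequality is where the real work lies. I need to show that for some fixed $\eps>0$ one cannot cover a $(1-\eps)$-fraction of the cube by subexponentially many sets of $d_H/n$-diameter $<\eps$. The clean way is a counting/concentration argument: any set of $d_H$-diameter $<\eps n$ is contained in a Hamming ball of radius $<\eps n$, which has measure $\mu_n\le V(n,\eps n)/2^n=2^{-n(1-H_2(\eps))+o(n)}$. If $\eps$ is chosen small enough that $1-H_2(\eps)>0$, then to accumulate total measure at least $1-\eps$ we need at least $(1-\eps)\,2^{n(1-H_2(\eps))+o(n)}$ such sets, so $\heps_\eps\ge n(1-H_2(\eps))\log 2+o(n)\succeq n$. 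The delicate point here is the interaction between the two quantifiers in Definition~\ref{definition_asymp_class}: for the class lower bound I must produce, for every $\delta>0$, a fixed $\eps(\delta)>0$ at which the linear lower bound survives; since the bound above holds for all sufficiently small $\eps$ with a constant $1-H_2(\eps)$ bounded away from $0$, this is not actually an obstruction, but one must check the logical order carefully. Combining the two bounds gives $[\Phi_\rho]=[n]$, and by Theorem~\ref{theorem_class_invariance} this is the scaling entropy $\scfunclass(X,\mu,T)$, completing the proof.
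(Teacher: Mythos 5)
Your proposal is correct and takes essentially the same route as the paper: the same cut semimetric on the zeroth coordinate, the same identification of $(X,\mu,T_{av}^n\rho)$ with the Hamming cube of dimension $n$, an $O(n)$ upper bound, and a lower bound coming from the exponential smallness of Hamming balls of radius $\eps n$ for $\eps<1/2$. The only cosmetic differences are that you justify the ball-measure decay by explicit binomial-volume (binary entropy) estimates where the paper simply cites concentration for the Bernoulli scheme, and your covering argument for the upper bound could be replaced by the paper's one-line count of all $2^n$ points of the cube.
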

	\begin{proof}
        Since the scaling entropy does not depend on the choice of the metric it is enough for our computation to consider only a cut semimetric~$\rho$ corresponding to the partition into preimages of the first coordinate. That is for two sequences $x,y \in \{0,1\}^\mathbb{Z}$ the distance in the semimetric~$\rho$  between~$x$ and~$y$ is $|x_0 - y_0|$.
		
        The average $T_{av}^n \rho$ is the Hamming distance corresponding to the first $n$ coordinates. Therefore, the semimetric triple $(\{0,1\}^\mathbb{Z}, \mu, T_{av}^n \rho)$ up to factorization by the sets of diameter $0$ is isomorphic to the binary cube of dimension~$n$ with the uniform measure and Hamming distance.
		
        Let us fix $\eps \in (0, \frac{1}{2})$. The measure of any ball of radius~$\eps$ does not exceed $2^{-c(\eps)n }$ due to the central limit theorem for the Bernoulli scheme. Hence,
		$$
		c(\eps) n \le \heps_\eps(X,\mu,T_{av}^n\rho) \le n,
		$$
        where the upper bound is straightforward -- the logarithm of the number of elements in the binary cube of dimension~$n$. Therefore, the asymptotic class of $\Phi_\rho(\eps, n) = \heps_\eps(X,\mu, T_{av}^n\rho)$ coincides with the class of the function $(\eps,n) \mapsto n$. Hence, $\scfunclass = [n]$.
	\end{proof}

        \subsection{Definition of scaling entropy via Kantorovich distance}
        In the definition of scaling entropy, instead of $\eps$--entropy $\heps_{\eps}(X,\mu,\rho)$ we could use the value $\heps^K_{\eps}(X,\mu,\rho)$ which is defined using an approximation in Kantorovich distance of the measure~$\mu$ by discrete measures (see Definition~\ref{definition_epsilon_entropy_K}). However, by doing so we would obtain the same invariant: Lemma~\ref{lemma_Kepsentropy1} and Lemma~\ref{lemma_Kepsentropy2} show that the asymptotic behaviors of $\heps^K_{\eps}(X,\mu, T_{av}^n\rho)$ and $\heps_{\eps}(X,\mu,T_{av}^n\rho)$ coincide. Hence, we have the following proposition. 
	
	\begin{proposition}
        Let $T$ be a measure-preserving transformation of a measure space $(X,\mu)$ and $\rho \in \Adm(X,\mu)$ be a $T$-generating semimetric. Then 
		$$
		\heps^K_{\eps}(X,\mu,T_{av}^n\rho) \in \scfunclass(X,\mu,T).
		$$
	\end{proposition}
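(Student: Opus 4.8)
The plan is to show that the two-variable function $(\eps,n)\mapsto \heps^K_\eps(X,\mu,T_{av}^n\rho)$ is $\asymp$-equivalent to $\Phi_\rho(\eps,n)=\heps_\eps(X,\mu,T_{av}^n\rho)$, which by Definition~\ref{definition_scaling_entropy} and Theorem~\ref{theorem_class_invariance} is precisely the assertion that it lies in $\scfunclass(X,\mu,T)$. Since $\rho$ is summable and $\Adm(X,\mu)$ is a convex cone, each average $T_{av}^n\rho$ is again a summable admissible semimetric, so Lemmas~\ref{lemma_Kepsentropy1} and~\ref{lemma_Kepsentropy2} apply to every triple $(X,\mu,T_{av}^n\rho)$. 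It therefore suffices to establish the two asymptotic dominations $\heps^K\preceq \heps$ and $\heps\preceq \heps^K$ in the sense of Definition~\ref{definition_asymp_class}, i.e.\ to produce, for each $\eps$, a $\delta$ realizing the single-variable relation as $n\to\infty$.

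The direction $\heps\preceq\heps^K$ is immediate from Lemma~\ref{lemma_Kepsentropy2}: applied to $T_{av}^n\rho$ with parameter $\eps/2$ it gives
$$
\heps_\eps(X,\mu,T_{av}^n\rho)\le \frac{2}{\eps}\Bigl(\heps^K_{\eps^2/4}(X,\mu,T_{av}^n\rho)+1\Bigr).
$$
For fixed $\eps$ the factor $2/\eps$ and the additive constant are independent of $n$, so choosing $\delta=\eps^2/4$ yields $\heps_\eps(X,\mu,T_{av}^n\rho)\preceq \heps^K_\delta(X,\mu,T_{av}^n\rho)$ as $n\to\infty$, exactly the required domination.

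The reverse direction $\heps^K\preceq\heps$ rests on Lemma~\ref{lemma_Kepsentropy1}, and verifying its hypothesis~\eqref{eq11} for all of the averaged metrics $T_{av}^n\rho$ simultaneously, with a single $\delta$ depending only on $\eps$, is the \textbf{main obstacle}. The key point is that $T$ preserves $\mu$. Writing $g(x)=\int_X\rho(x,y)\,d\mu(y)\in L^1(X,\mu)$ and changing variables $u=T^ix$, $v=T^iy$, one obtains for any measurable $A\subset X$
$$
\int_{A\times X}T_{av}^n\rho\,d\mu^2=\frac1n\sum_{i=0}^{n-1}\int_{T^iA\times X}\rho\,d\mu^2=\frac1n\sum_{i=0}^{n-1}\int_{T^iA}g\,d\mu,
$$
where $\mu(T^iA)=\mu(A)$ for every $i$. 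By absolute continuity of the integral of the $L^1$ function $g$, for $\eta=\eps/2$ there is $\delta_0>0$ such that $\int_B g\,d\mu<\eta$ whenever $\mu(B)<\delta_0$; taking $\delta=\min(\delta_0,\eps/2)$ makes each summand, hence the whole average, smaller than $\eps-\delta$ as soon as $\mu(A)<\delta$, uniformly in $n$. Thus~\eqref{eq11} holds for each $T_{av}^n\rho$ with this common $\delta$.

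With the hypothesis secured, Lemma~\ref{lemma_Kepsentropy1} gives $\exp\bigl(\heps^K_\eps(X,\mu,T_{av}^n\rho)\bigr)\le \exp\bigl(\heps_\delta(X,\mu,T_{av}^n\rho)\bigr)+1$ for all $n$; taking logarithms and using $\log(e^a+1)\le a+\log 2$ for $a\ge 0$ yields $\heps^K_\eps(X,\mu,T_{av}^n\rho)\le \heps_\delta(X,\mu,T_{av}^n\rho)+\log 2$, whence $\heps^K_\eps\preceq\heps_\delta$ as $n\to\infty$. Combining the two dominations gives $[\heps^K]=[\Phi_\rho]=\scfunclass(X,\mu,T)$, which is the claim. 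The only genuinely delicate step is the uniform integrability control just described; everything else is a direct application of the two quoted lemmas.
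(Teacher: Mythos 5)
Your proof is correct and takes essentially the same approach as the paper: apply Lemmas~\ref{lemma_Kepsentropy1} and~\ref{lemma_Kepsentropy2} to each average $T_{av}^n\rho$, the crux being that hypothesis~\eqref{eq11} holds for all averages with a single $\delta$. Your change-of-variables and absolute-continuity argument for that step is precisely the content of the paper's unproved remark ``Notice that inequality~\eqref{eq11} with given $\eps$ and $\delta$ holds for all averages,'' so you have merely written out the detail the paper leaves implicit.
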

	\begin{proof}
        Find some $\eps,\delta > 0$ satisfying the assumptions of Lemma~\ref{lemma_Kepsentropy1}. Notice that inequality~\eqref{eq11} with given $\eps$ and $\delta$ holds for all averages~$T_{av}^n\rho$ of the semimetric~$\rho$. Applying Lemmas~\ref{lemma_Kepsentropy1} and~\ref{lemma_Kepsentropy2} to semimetric~$T_{av}^n\rho$ we obtain  
        $$
		\heps^K_{\eps}(X,\mu,T_{av}^n\rho) \le 2\heps_{\delta}(X,\mu,T_{av}^n\rho) \le \frac{4}{\delta}(\heps^K_{\delta^2/4}(X,\mu,T_{av}^n\rho) + 1).
		$$
        Therefore, $\heps^K_{\eps}(X,\mu,T_{av}^n\rho) \in [\Phi_\rho] = \scfunclass(X,\mu,T)$.
	\end{proof}

\subsection{Definition of scaling entropy via partition function} 

\label{Sec313}

    Instead of the usual average of a metric over $n$ iterations of a transformation~$T$, we could consider a partition function from Section~\ref{section_catalyst} (see also~\cite{V23}) that is a weighted average with exponentially decaying weights. Recall that
    $$ \Omega_T(\rho,z)\equiv \Omega_T(z)=(1-z)\sum_{n=0}^{\infty} z^n \rho(T^n x, T^n y),\quad z\in [0,1). $$
    Then we can consider an entropy function defined similarly to formula~\eqref{Phi_metr}:
    \begin{equation*}
    \tilde\Phi_\rho(\eps, z) = \heps_\eps(x,\mu, \Omega_T(\rho,z)).
    \end{equation*}
    Following Definition~\ref{definition_asymp_class} we can consider the asymptotic class~$[\tilde\Phi_\rho]$. Then following the proof of Theorem~\ref{theorem_class_invariance} it is not difficult to show that this class does not depend on the choice of a generating semimetric $\rho \in \Adm(X, \mu)$ and is a measure-theoretic invariant. However, this invariant provides nothing essentially new: the class~$[\tilde\Phi_\rho]$ is completely determined by the scaling entropy~$\scfunclass(T)$ as follows.
    \begin{proposition}\label{proposition_statsum}
     Let $\rho \in \Adm(X, \mu)$ be a generating semimetric. Then the function $\tilde\Phi_\rho(\eps, 1 - \frac{1}{n})$  belongs to the class~$\scfunclass(T)$.
    \end{proposition}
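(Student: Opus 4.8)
The plan is to prove that the two-variable functions $G(\eps,n) := \tilde\Phi_\rho(\eps, 1-\tfrac1n) = \heps_\eps(X,\mu,\Omega_T(\rho, 1-\tfrac1n))$ and $\Phi_\rho(\eps,n) = \heps_\eps(X,\mu, T_{av}^n\rho)$ dominate each other in the sense of Definition~\ref{definition_asymp_class}, so that $[G]=[\Phi_\rho]=\scfunclass(T)$. First I would record that $\Omega_T(\rho, 1-\tfrac1n)$ is admissible: its partial sums $H_N = (1-z)\sum_{k=0}^{N-1}z^k\, T^k\rho$ (writing $T^k\rho(x,y)=\rho(T^kx,T^ky)$ and $z=1-\tfrac1n$) are finite nonnegative combinations of the mutually isomorphic translates $T^k\rho$, hence admissible, and since the $m$-norm is invariant under measure-preserving translation one has $\|\Omega_T(\rho,z)-H_N\|_m \le (1-z)\sum_{k\ge N}z^k\|\rho\|_m = z^N\|\rho\|_m \to 0$; thus $\Omega_T(\rho,z)$ is an $m$-norm limit of elements of $\Adm(X,\mu)$, which is $m$-norm closed. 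Throughout I use three elementary facts: translation invariance $\heps_\eps(X,\mu,T^k\rho)=\heps_\eps(X,\mu,\rho)$ (the map $T^k$ is a measure-preserving isometry between the two triples); monotonicity of $\heps_\eps$ under the pointwise order on semimetrics and under shrinking of the diameter/measure thresholds (writing $\heps_{\alpha,\beta}$ for the non-diagonal $\eps$-entropy, so $\heps_\alpha=\heps_{\alpha,\alpha}$ and $\heps_{\alpha,\beta}$ is non-increasing in each argument, and $\heps_\beta(X,\mu,c\sigma)=\heps_{\beta/c,\beta}(X,\mu,\sigma)$ for a constant $c>0$); and the subadditivity estimate $\heps_{m\eps}\big(X,\mu,\sum_{j=1}^m\sigma_j\big)\le\sum_{j=1}^m\heps_\eps(X,\mu,\sigma_j)$, proved by intersecting the $m$ optimal covers exactly as in the two-term inequality displayed after Lemma~\ref{lemma_semicontinuity}.

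For the direction $\Phi_\rho \preceq G$ the key is a pointwise lower bound obtained by discarding all but the first $n$ terms and using $z^k\ge z^{n-1}$ for $k<n$:
\[
\Omega_T(\rho,1-\tfrac1n) \ \ge\ (1-z)z^{n-1}\sum_{k=0}^{n-1}T^k\rho \ =\ \big(1-\tfrac1n\big)^{n-1}\,T_{av}^n\rho \ \ge\ \tfrac{1}{2e}\,T_{av}^n\rho
\]
for all large $n$. By monotonicity and the rescaling identity, $\heps_\delta(X,\mu,\Omega_T(\rho,1-\tfrac1n)) \ge \heps_\delta(X,\mu,\tfrac1{2e}T_{av}^n\rho) = \heps_{2e\delta,\,\delta}(X,\mu,T_{av}^n\rho)$; choosing $\delta=\tfrac{\eps}{2e}$ makes both thresholds at most $\eps$, whence this is $\ge\heps_{\eps,\eps}(X,\mu,T_{av}^n\rho)=\Phi_\rho(\eps,n)$. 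Thus $\Phi_\rho(\eps,n)\le G(\delta,n)$ for large $n$, giving $\Phi_\rho\preceq G$.

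The opposite direction $G\preceq\Phi_\rho$ is the main obstacle, since $\Omega$ is an infinite average whose effective window is $\sim n$ yet must be compared with the finite average $T_{av}^n\rho$. Given $\eps$, I would fix $\delta\in(0,\eps)$ and an integer $m=m(\delta)=\lceil\log(4\|\rho\|_m/\delta^2)\rceil+1$, and set the truncation length $N=mn$. Because $(1-\tfrac1n)^{mn}<e^{-m}$, the tail satisfies $\|\Omega-H_N\|_m\le z^{N}\|\rho\|_m< e^{-m}\|\rho\|_m<\tfrac{\delta^2}{4}$, so Lemma~\ref{lemma_semicontinuity} yields $\heps_\eps(X,\mu,\Omega)\le\heps_{\eps-\delta}(X,\mu,H_N)$. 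Next, since $z^k\le1$, one has $H_N\le(1-z)\sum_{k=0}^{N-1}T^k\rho=\tfrac{N}{n}T_{av}^N\rho=m\,T_{av}^{mn}\rho$, so by monotonicity and rescaling $\heps_{\eps-\delta}(X,\mu,H_N)\le\heps_{(\eps-\delta)/m,\,\eps-\delta}(X,\mu,T_{av}^{mn}\rho)\le\heps_{\delta'}(X,\mu,T_{av}^{mn}\rho)$ with $\delta'=(\eps-\delta)/m$. The crux is then to rescale the window $mn$ back to $n$: grouping the $mn$ terms into $m$ blocks of length $n$ gives $T_{av}^{mn}\rho=\tfrac1m\sum_{j=0}^{m-1}T^{jn}(T_{av}^n\rho)\le\sum_{j=0}^{m-1}T^{jn}(T_{av}^n\rho)$, and applying subadditivity together with the isomorphism $T^{jn}(T_{av}^n\rho)\cong T_{av}^n\rho$ yields $\heps_{\delta'}(X,\mu,T_{av}^{mn}\rho)\le m\,\heps_{\delta'/m}(X,\mu,T_{av}^n\rho)=m\,\Phi_\rho(\delta'',n)$ with $\delta''=(\eps-\delta)/m^2$.

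Since $m$ depends only on $\eps$ (through $\delta$) and not on $n$, the constant factor $m$ is absorbed by the $O$-relation defining $\asymp$, so $G(\eps,n)\le m\,\Phi_\rho(\delta'',n)\preceq\Phi_\rho(\delta'',n)$, establishing $G\preceq\Phi_\rho$. Combining the two directions gives $G\asymp\Phi_\rho$, that is, $\tilde\Phi_\rho(\eps,1-\tfrac1n)\in[\Phi_\rho]=\scfunclass(T)$. The only genuinely delicate points are the control of the infinite tail (handled by the $m$-norm bound $\|R_N\|_m\le z^N\|\rho\|_m$ combined with the $m$-norm semicontinuity of $\eps$-entropy, Lemma~\ref{lemma_semicontinuity}) and the window rescaling from $mn$ to $n$ (handled by subadditivity); the two-step structure of Definition~\ref{definition_asymp_class}, which allows replacing $\eps$ by any smaller $\delta$ and absorbs all constant factors, is precisely what makes these estimates sufficient.
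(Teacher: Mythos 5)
Your proof is correct, and its skeleton coincides with the paper's own: the lower bound comes from the pointwise estimate $\Omega_T(\rho,1-\tfrac1n)\ \ge\ \tfrac1e\,T_{av}^n\rho$ (the paper uses the constant $1/e$, you use $1/(2e)$, which changes nothing), and the upper bound comes from truncating the series at a window of length proportional to $n$, controlling the tail in the $m$-norm and invoking Lemma~\ref{lemma_semicontinuity}, then dominating the truncated sum by a constant multiple of $T_{av}^{mn}\rho$ --- exactly the paper's steps with $c(\eps,\rho)$ in place of your $m(\delta)$. The one genuine divergence is the final step, rescaling the window from $mn$ back to $n$. The paper disposes of this by citing the subadditivity of scaling entropy (Section~\ref{section_scentropy_values}, i.e.\ Theorem~\ref{th_mon_subadd}: the class $[\Phi_\rho]$ contains a representative subadditive in $n$, so $\Phi_\rho(\cdot,cn)\asymp\Phi_\rho(\cdot,n)$), whereas you prove the needed inequality $\heps_{\delta'}(X,\mu,T_{av}^{mn}\rho)\le m\,\heps_{\delta'/m}(X,\mu,T_{av}^n\rho)$ directly, via the block decomposition $T_{av}^{mn}\rho=\tfrac1m\sum_{j=0}^{m-1}T^{-jn}\bigl(T_{av}^n\rho\bigr)$, the $m$-term subadditivity of $\eps$-entropy under sums of semimetrics, and translation invariance of $\heps_\eps$. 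This makes your argument self-contained: it does not lean on Theorem~\ref{th_mon_subadd} (whose proof in the paper rests on the nontrivial stable-case results), at the price of a constant factor $m$ and a degradation $\eps\mapsto(\eps-\delta)/m^2$ of the entropy parameter --- both of which, as you correctly observe, are absorbed by the $O$-relation and the $\eps\mapsto\delta$ quantifier structure of Definition~\ref{definition_asymp_class}. All the auxiliary facts you use (monotonicity of $\heps$ under the pointwise order, the rescaling identity for the two-parameter entropy, the tail bound $\|\Omega-H_N\|_m\le z^N\|\rho\|_m$, admissibility of $\Omega_T(\rho,z)$ via $m$-norm closedness of $\Adm(X,\mu)$) are stated and applied correctly.
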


    The proof of Proposition~\ref{proposition_statsum} is given in Appendix~\ref{section_proof_statsum}. 
    
\section{Scaling entropy sequence}

Before we begin to discuss the properties of scaling entropy in the general case, we will focus on an important particular case when the asymptotic behavior of epsilon-entropies of the averages does not essentially depend on epsilon. In this case, our invariant can be significantly simplified and it becomes a class of asymptotically equivalent sequences which we call \emph{scaling entropy sequence}. We call an automorphism that has such a sequence \emph{stable}. Bernoulli shifts as well as all transformations with positive Kolmogorov entropy, transformations with pure point spectrum, and many others (see Section~\ref{section_scaling_entropy_sequence_values}) are stable. In fact, in the pioneering papers~\cite{V10a, V10b, V11} only the case of a stable transformation was considered and it was conjectured to be the general case. As we will see in Section~\ref{section_nonstable_example} there exist ergodic automorphisms that are not stable. Despite that, the scaling entropy sequence plays an important role in the theory of scaling entropy which we present here. The case of a stable transformation was studied in~\cite{V10a, V10b, V11, VPZ13a, Z15a, Z15b, PZ15}. 

\subsection{Definition of scaling sequence. Stable classes}

\label{Sec321}
{
\begin{definition}\label{definition_stable_class}
 We call an asymptotic class $\mathcal{H}$ \emph{stable} if it contains a function  $\Phi(\eps, n)$ that does not depend on~$\eps$ that is $\Phi(\eps,n) = h_n$. A dynamical system $(X,\mu, T)$ is called \emph{stable} if the class $\scfunclass(X,\mu, T)$ is stable.
\end{definition}
}

Note that for two functions $\Phi(\eps, n) = h_n$ and $\Phi^\prime(\eps, n) = h^\prime_n$ relations~$\preceq$ and~$\asymp$ are satisfied if and only if they are satisfied for the corresponding sequences $h = \{h_n\}$ and $h^\prime = \{h^\prime_n\}$.

\begin{definition}\label{def_sc_ent}
Let be a summable generating admissible semimetric on~$(X,\mu)$. A non-decreasing sequence $h$, $h = \{h_n\}_{n \in \mathbb{N}}$, of positive numbers is called \emph{scaling} for $(X,\mu,T,\rho)$ if for any sufficiently small positive~$\eps$ the following holds
$$\heps_\eps(X,\mu,T_{av}^n\rho) \asymp h_n.
$$
It makes sense to consider the entire class of equivalent sequences. Indeed, if a sequence $h$ is a scaling sequence then any other sequence $h'$ is scaling if and only if $h' \asymp h$. We denote the class of scaling sequences by $\scclass(X,\mu,T,\rho)$.
\end{definition}

The class $\scclass(X,\mu,T,\rho)$ is a section of the asymptotic class $[\Phi_\rho]$ by the set of all sequences. Therefore, the following theorem which was proved in~\cite{Z15a} follows from the Invariance Theorem~~\ref{theorem_class_invariance}.

\begin{theorem}\label{th1}
Let $\rho_1, \rho_2 \in \Adm(X,\mu)$ be admissible $T$-generating semimetrics. Then
$\scclass(X,\mu,T,\rho_1) = \scclass(X,\mu,T,\rho_2)$.
\end{theorem}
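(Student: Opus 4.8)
The plan is to deduce Theorem~\ref{th1} from the Invariance Theorem~\ref{theorem_class_invariance} by showing that the scaling-sequence class $\scclass(X,\mu,T,\rho)$ is determined by $\rho$ only through the asymptotic class $[\Phi_\rho]$. Concretely, writing $\bar h$ for the function $\bar h(\eps, n) = h_n$ that is constant in $\eps$, I would prove the characterization
$$
h \in \scclass(X,\mu,T,\rho) \iff \bar h \asymp \Phi_\rho,
$$
valid for every non-decreasing sequence $h = \{h_n\}$ of positive numbers. Once this is established, the right-hand side depends on $\rho$ only via $[\Phi_\rho]$; since Theorem~\ref{theorem_class_invariance} gives $[\Phi_{\rho_1}] = [\Phi_{\rho_2}]$ for $T$-generating $\rho_1,\rho_2$, the two scaling-sequence classes coincide. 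If no scaling sequence exists, i.e.\ the common class is not stable, both sides are empty and the statement is trivial.

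For the forward implication, suppose $h$ is scaling for $\rho$, so that $\Phi_\rho(\eps, n) \asymp h_n$ for all sufficiently small $\eps$. To check $\bar h \preceq \Phi_\rho$ I fix $\eps>0$ and choose $\delta$ small enough to lie in the scaling range; then $h_n = \bar h(\eps,n) \preceq \Phi_\rho(\delta, n)$. To check $\Phi_\rho \preceq \bar h$ I use that $\eps \mapsto \heps_\eps$ is non-increasing (recorded just after Definition~\ref{definition_epsilon_entropy}): for any $\eps>0$ pick $\eps_0 \le \eps$ small enough to lie in the scaling range, so that $\Phi_\rho(\eps, n) \le \Phi_\rho(\eps_0, n) \asymp h_n$; thus $\Phi_\rho(\eps, n) \preceq h_n = \bar h(\delta, n)$ for any $\delta$. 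Hence $\bar h \asymp \Phi_\rho$.

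For the converse, suppose $\bar h \asymp \Phi_\rho$. From $\Phi_\rho \preceq \bar h$ and the fact that $\bar h(\delta, n) = h_n$ does not depend on $\delta$, I obtain $\Phi_\rho(\eps, n) \preceq h_n$ for every $\eps > 0$. From $\bar h \preceq \Phi_\rho$ applied to $\eps = 1$, there is some $\delta_0 > 0$ with $h_n \preceq \Phi_\rho(\delta_0, n)$; monotonicity in $\eps$ then upgrades this to $h_n \preceq \Phi_\rho(\eps, n)$ for all $\eps \le \delta_0$. Combining the two bounds yields $\Phi_\rho(\eps, n) \asymp h_n$ for all sufficiently small $\eps$, i.e.\ $h$ is scaling for $\rho$.

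I expect the only real subtlety, and the main thing to get right, to be the mismatch between the two quantifier structures: the definition of a scaling sequence asks for $\Phi_\rho(\eps, n) \asymp h_n$ uniformly over all small $\eps$, whereas the relation $\preceq$ between functions of two variables pairs each $\eps$ with its own $\delta$. The device that reconciles them is precisely the monotonicity of $\heps_\eps$ in $\eps$, which lets the (smaller) entropies at large $\eps$ be dominated by those at small $\eps$; this is what turns the informal statement that $\scclass$ is ``the section of $[\Phi_\rho]$ by constant-in-$\eps$ functions'' into a rigorous equivalence. Everything else is bookkeeping with the $O$-relations.
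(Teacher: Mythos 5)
Your proposal is correct and follows essentially the same route as the paper: the paper derives Theorem~\ref{th1} from the Invariance Theorem~\ref{theorem_class_invariance} by observing that $\scclass(X,\mu,T,\rho)$ is the section of the asymptotic class $[\Phi_\rho]$ by the set of all sequences, which is exactly the characterization $h \in \scclass(X,\mu,T,\rho) \iff \bar h \asymp \Phi_\rho$ that you prove. Your write-up merely makes explicit (via the monotonicity of $\heps_\eps$ in $\eps$) the quantifier bookkeeping that the paper leaves implicit in the word ``section.''
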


Theorem~\ref{th1} allows us to give the following definition of the scaling entropy sequence of a dynamical system $(X,\mu,T)$.
\begin{definition}
A sequence $h =\{h_n\}$ is called \emph{a scaling sequence} of a system $(X,\mu,T)$ if $h \in \scclass(X,\mu,T,\rho)$ for some (hence for any) $T$-generating semimetric $\rho \in \Adm(X,\mu)$. We denote by $\scclass(X,\mu,T)$ the class of all scaling sequences of  $(X,\mu,T)$.
\end{definition}
Let us emphasize that the class $\scclass(X,\mu,T)$ of all scaling sequences is a measure-theoretic invariant of dynamical systems. However, unlike the class $\scfunclass$ the class of sequences $\scclass$ may be empty for certain systems as we will see in Section~\ref{section_nonstable_example}.

\subsection{Possible values of scaling sequence}\label{section_scaling_entropy_sequence_values}

One of the first naturally arising questions -- what values can the introduced invariant attend that is which sequences can be scaling sequences for some dynamical systems? The following two theorems proved in~\cite{Z15b, PZ15} give the description of all possible values of the scaling sequence.
\begin{theorem}\label{theorem_subadd_entropy_sequence}
If the class $\scclass(X,\mu,T)$ of all scaling sequences is non-empty then it contains a non-decreasing subadditive sequence of positive numbers.
\end{theorem}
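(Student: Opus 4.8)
The plan is to exhibit a concrete representative of the class that is manifestly subadditive and non-decreasing, built directly from the $\eps$-entropies of the averages. Fix a $T$-generating semimetric $\rho\in\Adm(X,\mu)$ and write $a_n(\eps)=\heps_\eps(X,\mu,T_{av}^n\rho)$. Since $\scclass(X,\mu,T)\neq\varnothing$, the system is stable, so there is a non-decreasing sequence $h=\{h_n\}$ with $a_n(\eps)\asymp h_n$ for every sufficiently small $\eps$; in particular $a_n(\eps)\asymp a_n(\eps')$ for all small $\eps,\eps'$. The first step is a near-subadditivity of the averages. From the identity $(m+n)\,T_{av}^{m+n}\rho=m\,T_{av}^{m}\rho+T^m\!\big(n\,T_{av}^{n}\rho\big)$, where $T^m\sigma(x,y)=\sigma(T^mx,T^my)$, and the fact that the convex coefficients are at most $1$, one gets the pointwise bound $T_{av}^{m+n}\rho\le T_{av}^{m}\rho+T^m(T_{av}^{n}\rho)$ almost everywhere. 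Combining monotonicity of $\heps_\eps$ in the semimetric, the subadditivity estimate $\heps_{2\eps}(\rho_1+\rho_2)\le\heps_\eps(\rho_1)+\heps_\eps(\rho_2)$ recorded earlier in the excerpt, and the $T$-invariance $\heps_\eps(X,\mu,T^m\sigma)=\heps_\eps(X,\mu,\sigma)$ (the map $T^{-m}$ preserves $\mu$ and all relevant diameters), I obtain
$$a_{m+n}(2\eps)\le a_m(\eps)+a_n(\eps),\qquad m,n\ge 1.$$

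As the candidate subadditive representative I take the subadditive hull of $h$,
$$\hat h_n=\inf\Big\{\,\sum_{i=1}^{k}h_{n_i}\ :\ k\ge 1,\ n_1+\dots+n_k=n\,\Big\}.$$
By construction $\hat h$ is subadditive and $\hat h_n\le h_n$. It is also non-decreasing: given a near-optimal partition of $n+1$, lowering its largest part by one (or deleting a part equal to $1$) yields a partition of $n$ with no larger sum, because $h$ is non-decreasing, so $\hat h_n\le\hat h_{n+1}$. Thus $\hat h$ is automatically a non-decreasing subadditive sequence lying below $h$, and the entire theorem reduces to the single equivalence $\hat h\asymp h$, that is, to a lower bound $\hat h_n\ge c\,h_n$ valid for all large $n$.

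Establishing $\hat h\asymp h$ is the main obstacle, and it is here that stability must be used essentially. The natural tool is to iterate the constant-one inequality above along a balanced binary tree over the parts of a partition $n=n_1+\dots+n_k$: after $\lceil\log_2 k\rceil$ applications one reaches $a_n\big(2^{\lceil\log_2 k\rceil}\eps\big)\le\sum_i a_{n_i}(\eps)$, so every partition sum is bounded below by $a_n(2k\eps)$. For partitions with few parts (say $2k\eps\le\sqrt{\eps}$) stability converts this into $\sum_i a_{n_i}(\eps)\ge c(\eps)\,a_n(\eps)$, since the scale drifts only by a bounded factor. The difficulty lies with partitions into many small parts, where the tree bound degenerates (the drifted scale exceeds $1$ and the right-hand entropy vanishes); there one instead uses the crude per-part bound $\sum_i a_{n_i}(\eps)\ge k\,a_1(\eps)$ together with the a priori linear upper bound $a_n(\eps)=O(n)$ for scaling entropy, which makes the all-ones partition harmless. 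Balancing the two regimes — and in particular controlling partitions with an \emph{intermediate} number of parts, where neither estimate is individually decisive — is the technical heart of the argument; it forces one to combine the monotonicity of $a_n(\cdot)$ in $\eps$, the scale-independence coming from stability, and the sublinearity of the scaling sequence. Once $\hat h_n\ge c\,a_n(\eps')\asymp h_n$ is secured for all large $n$, the sequence $\hat h$ is the desired non-decreasing subadditive member of $\scclass(X,\mu,T)$. This is the route carried out in~\cite{Z15b, PZ15}.
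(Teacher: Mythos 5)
Your skeleton is fine as far as it goes: the pointwise bound $T_{av}^{m+n}\rho\le T_{av}^{m}\rho+T^{m}(T_{av}^{n}\rho)$ and the resulting inequality $a_{m+n}(2\eps)\le a_m(\eps)+a_n(\eps)$ are correct; the subadditive hull $\hat h$ is indeed positive, non-decreasing and subadditive; and the reduction is lossless, because $\hat h\asymp h$ holds if and only if $h$ is equivalent to \emph{some} subadditive sequence (if $g\in\scclass$ is subadditive, then $\sum_i h_{n_i}\ge c\sum_i g_{n_i}\ge c\,g_n\ge c'h_n$). But the proposal stops exactly where the theorem begins. Your two estimates cover partitions into $k\le K(\eps)$ parts (binary tree plus stability) and partitions into at least $\mathrm{const}\cdot h_n$ parts (via $\sum_i h_{n_i}\ge k h_1$ together with $h_n=O(n)$); for the intermediate range you only declare that balancing is ``the technical heart'' and cite \cite{Z15b,PZ15} — which is not an argument, and note that the survey itself gives no proof of this theorem either, it defers to those same papers. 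The gap is essential, not a bookkeeping issue, because the conclusion genuinely does not follow from the properties of $h$ that your tools extract. Those properties are: monotonicity, positivity, $h_n\le Dn$, and, for each $k$, the bound $\sum_i h_{n_i}\ge c(k)h_n$ for partitions into at most $k$ parts, where $c(k)$ decays like a power of $k$ (the tree moves the scale from $\eps$ to $2^{\lceil\log_2 k\rceil}\eps$, and stability gives no uniformity of its constants as $\eps\to 0$). A non-decreasing sequence that alternates plateaus $h_n\equiv N_j$ on $[N_j,N_j^2]$ with stretches where $h_{2n}=2Ch_n$ until the ratio $h_n/n$ climbs back to $1$ satisfies every one of these properties (it obeys $h_{2n}\le 2Ch_n$ throughout, hence $c(k)\ge (2C)^{-1}k^{-\log_2(2C)}$), yet $h_n/n$ oscillates by the unbounded factors $N_j$, so it is equivalent to no subadditive sequence and its hull collapses. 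Hence no amount of ``combining monotonicity, scale-independence, and sublinearity'' can close your argument.

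What a correct proof must supply is an estimate \emph{uniform in the number of parts}, essentially $h_{km}\le C\,k\,h_m$ with $C$ independent of $k$ and $m$ (equivalently, $h_n/n$ is almost non-increasing; from this a subadditive representative such as $n\sup_{m\ge n}h_m/m$ is immediate), and this cannot be reached by iterating the $\eps\mapsto 2\eps$ inequality. The natural mechanism avoids scale degradation altogether: write $T_{av}^{km}\rho=\frac1k\sum_{j=0}^{k-1}T^{-jm}(T_{av}^{m}\rho)$ and use that for a \emph{convex combination} of semimetrics a set of diameter $<\eps$ with respect to each summand has diameter $<\eps$ with respect to the combination, so the diameter parameter does not double — only the exceptional sets accumulate; then truncate $\rho$ to a bounded semimetric, use the ergodic theorem to show that for most $x$ only a small fraction of the indices $j$ have $T^{jm}x$ in the exceptional set, and count the possible patterns of bad indices to get $a_{km}(\eps')\le k\bigl(a_m(\eps)+C(\delta)\bigr)$ uniformly in $k,m$. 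An argument of this kind is what the cited papers supply and what is missing from your text. Two smaller flaws: the ``a priori'' linear bound $a_n(\eps)=O(n)$ is not a priori for a general admissible semimetric (it requires the approximation by cut-semimetric-dominated metrics from \cite{Z15a}); and your per-part bound invokes $a_{n_i}(\eps)\ge a_1(\eps)$, i.e.\ monotonicity of $a_n(\eps)$ in $n$, which is never established — state it for $h$ instead, using $h_{n_i}\ge h_1$.
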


\begin{theorem}\label{theorem_example_entropy_sequence}
Any subadditive non-decreasing sequence of positive numbers is a scaling sequence for some ergodic system $(X,\mu,T)$.
\end{theorem}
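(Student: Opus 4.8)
The plan is to prove a realization statement dual to Theorem~\ref{theorem_subadd_entropy_sequence}, by an explicit inductive block construction. By the Invariance Theorem~\ref{theorem_class_invariance} and Theorem~\ref{th1}, the scaling sequence may be computed with any single $T$-generating semimetric, so I would work entirely with a symbolic system over a finite alphabet equipped with the cut (Hamming) semimetric $\rho$ coming from the zero-coordinate partition. For such a system the average $T^n_{av}\rho$ is the normalized Hamming distance on length-$n$ windows, and $\heps_\eps(X,\mu,T^n_{av}\rho)$ is, up to the usual two-sided estimates, the logarithm of the minimal number of Hamming balls of radius $\eps$ covering all length-$n$ words outside a set of measure $\eps$ --- the measure-theoretic complexity of the process. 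Thus the theorem reduces to a combinatorial task: given a subadditive nondecreasing positive sequence $h$, build an ergodic process whose complexity is $\asymp h_n$, uniformly for all small $\eps$.

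The construction is hierarchical. I fix a rapidly increasing tower of lengths $\ell_1 \mid \ell_2 \mid \cdots$ and build, at each level $m$, a family $\mathcal B_m$ of admissible blocks of length $\ell_m$, where every level-$(m{+}1)$ block is a concatenation of $\ell_{m+1}/\ell_m$ level-$m$ blocks governed by a prescribed combinatorial rule; the measure is the shift-invariant measure obtained from uniformly random independent block choices (an adic/odometer base), and $T$ is the shift. The decisive quantitative input is the choice of branching: the number of essentially distinct admissible blocks must be tuned so that the number of $\eps$-typical length-$n$ patterns is $\exp(\Theta(h_n))$. Here subadditivity is exactly the compatibility condition for the concatenation structure --- covering a length-$(a{+}b)$ window by independently covering a length-$a$ and a length-$b$ sub-window multiplies the covering numbers, so the log of the covering number is automatically subadditive, and Fekete's lemma (giving $h_n/n \downarrow L$) lets me handle uniformly the whole range from the bounded case $h_n \asymp 1$ (discrete spectrum) through the linear case $h_n \asymp n$ (positive Kolmogorov entropy).

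Once the blocks are fixed I would prove the two-sided bound $\heps_\eps(X,\mu,T^n_{av}\rho) \asymp h_n$. For the upper bound I exhibit the explicit cover produced by the block hierarchy: a length-$n$ window meets only a controlled number of blocks, and enumerating the possibilities gives a covering family of cardinality $\exp(O(h_n))$. For the lower bound I must guarantee that distinct admissible blocks are genuinely $\eps$-separated in the Hamming metric, so that they cannot be absorbed into a common ball; this forces at least $\exp(c\,h_n)$ balls. Ergodicity follows from the adic/odometer base together with the standard genericity of the block statistics. \textbf{The main obstacle} is precisely this quantitative matching made uniform in $\eps$: one must design the branching and the inter-block separation so that the complexity is neither too large nor too small \emph{at every scale simultaneously}, and so that the resulting asymptotic class does not depend on $\eps$ --- i.e.\ so that the constructed system is stable with scaling sequence exactly $[h]$. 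Subadditivity of $h$ is what makes the per-scale estimates glue into a single consistent growth rate, while controlling the Hamming separation of blocks against accidental near-coincidences is the delicate technical point.
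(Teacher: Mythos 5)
Your architecture is not actually a different route: the paper's construction (Section~\ref{section_adic}, following~\cite{Z15b}) \emph{is} a hierarchical block scheme with $\ell_m=2^m$, in which the admissible families are the sets $V_m^\sigma\subset\{0,1\}^{2^m}$, each level-$(m{+}1)$ block is a concatenation of two level-$m$ blocks, and the central measure $\mu^\sigma$ with the adic transformation plays exactly the role of your ``uniform independent block choices'' with the shift. The problem is that your proposal stops precisely where the proof has to begin. Two essential ingredients are missing. First, the ``prescribed combinatorial rule'' is never specified; the paper's rule is governed by a $0$--$1$ sequence $\sigma$: take all ordered pairs $\{ab\colon a,b\in V_{m-1}^\sigma\}$ when $\sigma_m=1$ (so $\log_2|V_m^\sigma|$ doubles) and only the diagonal words $\{aa\colon a\in V_{m-1}^\sigma\}$ when $\sigma_m=0$ (so it stays constant), which yields $\scfunclass=[2^{\sum_{i\le\log n}\sigma_i}]$. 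Second, the reduction lemma that makes this rule sufficient: for a non-decreasing subadditive $h$ one has $h_n\le h_{2n}\le 2h_n$, so a greedy choice of $\sigma$ keeps $2^{\sum_{i\le m}\sigma_i}$ within a bounded factor of $h_{2^m}$, and monotonicity then gives $h_n\asymp 2^{\sum_{i\le\log n}\sigma_i}$ for all $n$. Your appeal to ``tuning the branching'' plus Fekete's lemma is a restatement of the task, not a solution; Fekete gives only $h_n/n\to\inf_k h_k/k$ (not monotone decrease), and it is the dyadic inequality $h_n\le h_{2n}\le 2h_n$, not the limit $L$, that subadditivity actually contributes to the realization.

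The second genuine gap is your lower-bound mechanism. Requiring that ``distinct admissible blocks are genuinely $\eps$-separated in the Hamming metric'' is incompatible with a concatenation hierarchy: if distinct level-$m$ blocks are $\delta_m$-separated and level-$(m{+}1)$ blocks differ in only a fraction of their level-$m$ constituents, the separation degrades multiplicatively, $\delta_{m+1}\le\delta\cdot\delta_m$, so no separation constant uniform in $m$ (hence uniform in the scale $n$) can survive; already in the paper's construction the admissible level-$2$ words $0000$ and $0001$ (when $\sigma_1=\sigma_2=1$) are at normalized Hamming distance $1/4$. The correct lower bound does not separate blocks pairwise; it estimates how many admissible blocks a single $\eps$-ball can contain (equivalently, the measure of a ball), as in the Bernoulli computation of Section~\ref{sec311}, and carrying this counting through all the scales where $\sigma$ alternates between doubling and repetition is the technical core of~\cite{Z15b}. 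Finally, ergodicity of the specific measure you construct is a statement that must be proved for that measure (in the paper, for the central measure $\mu^\sigma$); it does not follow from ``standard genericity of the block statistics.'' As it stands, your text correctly reduces the theorem to a combinatorial construction (via Theorem~\ref{theorem_class_invariance}) and correctly names the main obstacle, but it resolves neither the construction nor the obstacle.
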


In~\cite{Z15b}, an explicit construction of an automorphism with a given scaling sequence is provided via \emph{the adic transformation} on \emph{the graph of ordered pairs} and special \emph{central measures} on this graph (see also papers~\cite{VZ17, VZ18a, VZ18b} and Section~\ref{section_adic} of this survey). Theorem~\ref{theorem_example_entropy_sequence} gives a rich family of automorphisms with given entropy properties and is one of the key results in the theory of scaling entropy as well as a useful tool in applications.  

Note that Theorem~\ref{theorem_example_entropy_sequence} shows a significant difference between the scaling entropy and the complexity of a topological dynamical system: complexity function~$p(n)$ is always either bounded or $p(n) \ge n$ while the scaling entropy can have an arbitrary given asymptotic behavior. However, the inequality {$\scfunclass \preceq [\log p(n)]$}  holds for any invariant measure.

In~\cite{FP,KT} possible values of similar invariants of slow entropy type were studied. We discuss relations between these invariants and the scaling entropy in Section~\ref{section_slow_entropy}.

We denote by~$\Subadd$ the set of all equivalence classes of subadditive non-decreasing sequences with respect to equivalence relation~$\asymp$. Relation~$\preceq$ is naturally defined on such classes and forms a partial order on~$\Subadd$ making it an upper semilattice.

The least element of~$\Subadd$ is the equivalence class of a constant sequence~$h_n=1$, and the greatest element is the equivalence class of a linear function~$h_n=n$. The following two theorems proved in~\cite{V11,VPZ13a,Z15a} give the description of dynamical systems whose scaling sequence attends these extreme values. 

\begin{theorem}\label{th_diskr_spectr}
Let $T$ be an automorphism of a measure space $(X,\mu)$. The following are equivalent: 
\begin{enumerate}
 \item $T$ has pure point spectrum;
 \item The sequence $h_n=1$ is a scaling sequence for $(X,\mu,T)$;
 \item There exists a $T$-invariant admissible metric $\rho \in \Adm(X,\mu)$.
\end{enumerate}

\end{theorem}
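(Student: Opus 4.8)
The plan is to close the cycle of implications $(3)\Rightarrow(1)$, $(1)\Rightarrow(3)$, $(3)\Rightarrow(2)$, and $(2)\Rightarrow(3)$. The two implications touching the scaling sequence will be easy once we observe that an invariant metric is a fixed point of averaging; the genuine content lives in the equivalence of pure point spectrum with the existence of an invariant admissible metric, and in reconstructing such a metric from bounded entropy.

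For $(1)\Rightarrow(3)$ I would start from an orthonormal basis of eigenfunctions $f_k$ of the Koopman operator, $U_T f_k=\lambda_k f_k$ with $|\lambda_k|=1$, and set $\rho(x,y)=\sum_k c_k\min(|f_k(x)-f_k(y)|,1)$ for positive weights with $\sum_k c_k<\infty$. Since $|\lambda_k|=1$, each summand satisfies $\min(|f_k(Tx)-f_k(Ty)|,1)=\min(|f_k(x)-f_k(y)|,1)$, so $\rho$ is $T$-invariant; as the $f_k$ separate points mod~$0$, $\rho$ is a metric. For admissibility note that every partial sum is the pullback of a genuine separable metric on $\mathbb{C}^N$ under a measurable map, hence has finite $\eps$-entropy and lies in $\Adm(X,\mu)$, while the tail is a semimetric of arbitrarily small $L^1$-, hence small $\|\cdot\|_m$-norm; since $\Adm(X,\mu)$ is closed in the m-norm, $\rho\in\Adm(X,\mu)$. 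Conversely, for $(3)\Rightarrow(1)$, an invariant admissible metric turns $T$ into a $\mu$-preserving isometry of $(X,\rho)$. Bounded $\rho$-Lipschitz functions are dense in $L^2(\mu)$ (by admissibility and the generalized Luzin theorem), and for such a $g$ all translates $g\circ T^n$ share the same Lipschitz constant. Using Theorem~\ref{Th_adm_cr}, for each $\eps$ all but an $\eps$-set is covered by finitely many $\eps$-balls, so on that part equicontinuity and an Arzela--Ascoli argument give total boundedness in $L^2$, the tail contributing only $O(\sqrt\eps)$; thus every orbit $\{U_T^n g\}$ is precompact. A dense set of almost periodic vectors forces all vectors to be almost periodic, i.e.\ $U_T$ has pure point spectrum.

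The implication $(3)\Rightarrow(2)$ is immediate: an invariant admissible metric is generating (its translates equal itself and separate points) and satisfies $T_{av}^n\rho=\rho$, so $\heps_\eps(X,\mu,T_{av}^n\rho)=\heps_\eps(X,\mu,\rho)$ is a finite constant, positive for small $\eps$ because $\mu$ is continuous and $\rho$ a metric; hence $h_n=1$ is a scaling sequence. The hard direction is $(2)\Rightarrow(3)$. From $h_n\asymp 1$ we get $\sup_n\heps_\eps(X,\mu,T_{av}^n\rho)<\infty$ for every $\eps$, and the averages are uniformly integrable since each $T^i\rho$ is distributed as $\rho$. The strategy is to invoke the precompactness criteria (Theorem~\ref{lemmcomp} and its convex-set refinement, which the text singles out as the tool for this theorem) to show that the closed convex hull of the translates $\{T^i\rho\}$ is compact in the m-norm; this set is convex and invariant under the set-map induced by $T$, so a Markov--Kakutani fixed point yields a $T$-invariant admissible semimetric $\bar\rho$. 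Equivalently one may take any m-limit point of $T_{av}^n\rho$, which is automatically invariant because $\|T^{-1}(T_{av}^n\rho)-T_{av}^n\rho\|_m\le \tfrac{2}{n}\|\rho\|_m\to 0$.

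I expect two places to be the real obstacles. First, the entropy bound is given only for the \emph{uniform} averages $T_{av}^n\rho$, whereas the convex-set precompactness criterion requires a bound over the whole convex hull; subadditivity of $\eps$-entropy under sums only controls boundedly many terms, so extending the bound needs a more careful estimate exploiting that general weighted averages with monotone weights are mixtures of the $T_{av}^n\rho$. Second, and more seriously, the invariant admissible semimetric $\bar\rho$ produced above need not separate points: a nontrivial kernel partition $\xi_{\bar\rho}$ would only give an invariant metric on a proper factor (which then has discrete spectrum), not on $X$ itself. The crux of the theorem is to rule this out, i.e.\ to show that bounded scaling entropy of a \emph{generating} metric forces the invariant limit to remain generating — heuristically, an unseen fiber direction transverse to $\xi_{\bar\rho}$ would make $\heps_\eps(X,\mu,T_{av}^n\rho)$ grow, contradicting $(2)$. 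Turning this heuristic into a proof, which ties together the generating property of $\rho$ and the uniform entropy bound, is the heart of the argument and is precisely what is carried out in~\cite{V11,VPZ13a,Z15a}.
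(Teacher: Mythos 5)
Your implications $(1)\Rightarrow(3)$, $(3)\Rightarrow(1)$, and $(3)\Rightarrow(2)$ are correct and essentially standard: the weighted eigenfunction semimetric $\sum_k c_k\min(|f_k(x)-f_k(y)|,1)$ (invariant, a metric mod~0 since the eigenfunctions generate the $\sigma$-algebra, admissible because $\Adm(X,\mu)$ is m-closed and the tails are m-small), the almost-periodic-vector argument for an invariant admissible metric, and the trivial computation when $T_{av}^n\rho=\rho$. Note also that the survey itself does not prove Theorem~\ref{th_diskr_spectr}; it attributes it to~\cite{V11,VPZ13a,Z15a} and only records that the precompactness criteria are the relevant tool, so your strategy for the remaining direction is the intended one.

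The genuine gap is $(2)\Rightarrow(3)$, and it is exactly where you locate it; the problem is that you diagnose it rather than close it. First, to extract an invariant limit from $\{T_{av}^n\rho\}$ you need precompactness in the m-norm, and neither criterion of the paper applies as stated: the hypothesis gives only $\sup_n\heps_\eps(X,\mu,T_{av}^n\rho)<\infty$ for each $\eps$, whereas the non-convex criterion (Theorem~\ref{lemmcomp}) demands \emph{uniform admissibility} --- one partition working simultaneously for all $n$, which is strictly stronger than bounded entropies (otherwise the convexity hypothesis in the refined criterion would be superfluous) --- and the convex criterion demands the entropy bound over the whole convex hull. Your reduction (monotone-weight combinations of translates are mixtures of Ces\`aro averages) restates the problem without solving it: for a mixture $\sum_n c_n T_{av}^n\rho$ with slowly decaying $c_n$, one cannot split off an m-small tail after a number of terms independent of $(c_n)$, and subadditivity of $\eps$-entropy degrades linearly in the number of retained terms. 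Second, even granting precompactness, the invariant limit $\bar\rho$ is only an admissible \emph{semimetric}, and its kernel partition can be nontrivial while being invisible in $L^1(X^2,\mu^2)$: if $\bar\rho$ is the pullback of a metric on a proper factor with null fibers, then $\mu^2(\{\bar\rho=0\})=0$, yet $\bar\rho$ is not (and is not a.e.\ equal to) a metric, so statement $(3)$ is not established --- one only gets a discrete-spectrum factor. Ruling this out requires using, quantitatively, that the original $\rho$ is generating together with the entropy bound; this is the actual content of~\cite{V11,VPZ13a,Z15a}, which you invoke rather than reproduce. Since $(2)\Rightarrow(3)$ (equivalently $(2)\Rightarrow(1)$) is the substance of the theorem, the proposal as written is an outline of the three easy implications plus an accurate map of the hard one, not a proof of the statement.
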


\begin{theorem}
The sequence $h_n=n$  is a scaling sequence for $(X,\mu,T)$ if and only if Kolmogorov entropy is positive: $h(T)>0$.
\end{theorem}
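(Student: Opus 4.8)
The plan is to fix a $T$-generating admissible semimetric $\rho\in\Adm(X,\mu)$; by the invariance theorem (Theorem~\ref{th1}) the class $\scclass(X,\mu,T,\rho)$ is independent of this choice, so the assertion ``$h_n=n$ is a scaling sequence'' is exactly the statement that $\heps_\eps(X,\mu,T_{av}^n\rho)\asymp n$ for all sufficiently small $\eps>0$. I would first record the \emph{universal upper bound} $\heps_\eps(X,\mu,T_{av}^n\rho)\preceq n$, valid for every system. Fix a small $\eps'>0$; admissibility of $\rho$ yields a finite partition $X=Y_0\cup Y_1\cup\dots\cup Y_m$ with $\mu(Y_0)<\eps'^2$ and $\diam_\rho Y_j<\eps'$ for $j\ge1$. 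Encoding each point by its $\beta$-name $(\beta(x),\dots,\beta(T^{n-1}x))$, where $\beta=\{Y_0,\dots,Y_m\}$, produces at most $(m+1)^n$ classes, and two points with the same name satisfy $\rho(T^ix,T^iy)<\eps'$ whenever $T^ix\notin Y_0$. Truncating $\rho$ to be bounded by $D$ and applying Markov's inequality to $\int\frac1n\sum_{i<n}\mathbf{1}_{Y_0}(T^ix)\,d\mu=\mu(Y_0)<\eps'^2$ shows that, off a set of measure $<\eps'$, the frequency of $Y_0$-coordinates is $<\eps'$, so each name-class there has $T_{av}^n\rho$-diameter below $\eps'(1+D)$. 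Hence $\heps_\eps(X,\mu,T_{av}^n\rho)\le n\log(m+1)=O(n)$ once $\eps'$ is small. (Subadditivity of scaling sequences, Theorem~\ref{theorem_subadd_entropy_sequence}, already forces any scaling sequence to be $O(n)$.)

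For the implication $h(T)>0\Rightarrow h_n=n$ I would supply the matching \emph{lower bound} through the factor structure. Assuming $T$ ergodic, Sinai's weak isomorphism theorem gives a Bernoulli factor $B$ of $T$ of positive entropy; the computation above for the binary Bernoulli shift (which applies verbatim to any positive-entropy Bernoulli shift) shows $\scfunclass(B)=[n]$, while the monotonicity of scaling entropy under factor maps noted after Definition~\ref{definition_scaling_entropy} gives $\scfunclass(B)\preceq\scfunclass(T)$. Thus $[n]\preceq\scfunclass(T)$, and together with the universal upper bound this forces $\scfunclass(T)=[n]$, so $T$ is stable with $h_n=n$. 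A self-contained alternative avoids Sinai's theorem: pick a finite partition $\beta$ with $h(T,\beta)=a>0$ and its cut semimetric $\rho_\beta$, note that $T_{av}^n\rho_\beta$ is the normalized Hamming metric of $\beta$-names, and recall (Shannon--McMillan--Breiman) that the typical set carries $\approx 2^{an}$ names of nearly equal measure. Since a single Hamming $\eps$-ball contains at most $2^{n(H(\eps)+\eps\log(m+1))}$ names, covering measure $1-\eps$ requires at least $2^{n(a-H(\eps)-\eps\log(m+1))}$ balls, which is exponential once $\eps$ is small. Passing to the generating admissible semimetric $\rho+\rho_\beta$, whose averages dominate $T_{av}^n\rho_\beta$ pointwise (so its covering sets are smaller and its $\eps$-entropy no smaller), and invoking Theorem~\ref{th1} transfers this lower bound to the class of $\rho$.

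For the converse $h(T)=0\Rightarrow h_n=n$ is \emph{not} scaling, I would refine the upper-bound construction. With the finite partition $\beta$ at scale $\eps'$ as above we have $h(T,\beta)\le h(T)=0$, so by Shannon--McMillan--Breiman only $e^{o(n)}$ of the $\beta$-names are needed to carry measure $1-\eps/2$. Intersecting these with the good set (frequency of $Y_0$ below $\eps'$) gives an $\eps$-cover of $(X,\mu,T_{av}^n\rho)$ by $e^{o(n)}$ sets of diameter $<\eps$, whence $\heps_\eps(X,\mu,T_{av}^n\rho)=o(n)$ for every small $\eps$. Since $n\not\asymp o(n)$, the sequence $h_n=n$ cannot be scaling, which is the contrapositive of the claim.

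The main obstacle is the positive-entropy lower bound: relating the $\eps$-entropy of the \emph{averaged} semimetric to Kolmogorov entropy rests on the sphere-packing estimate controlling how many names a single Hamming $\eps$-ball can absorb, and it yields a positive exponential rate only after $\eps$ is taken small enough that $H(\eps)+\eps\log(m+1)<h(T,\beta)$; this is precisely where the invariance theorem is needed, since a general generating metric must be compared with the cut semimetric of a finite partition. Ergodicity enters through the Shannon--McMillan--Breiman theorem (and Sinai's factor); for non-ergodic $T$ one passes to the ergodic decomposition, where a component of positive entropy already supplies the linear lower bound while the universal upper bound holds unconditionally.
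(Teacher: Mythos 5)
Your proposal is correct, and a comparison with the paper is necessarily indirect: the survey states this theorem without proof, citing~\cite{V11,VPZ13a,Z15a}, and your two-pronged argument (universal linear upper bound; Sinai factor plus the Bernoulli computation and factor monotonicity, or alternatively the Shannon--McMillan--Breiman/sphere-packing count for a cut semimetric, transferred to an arbitrary generating semimetric via Theorem~\ref{th1} applied to $\rho+\rho_\beta$) is essentially the standard route taken in those references. The treatment of the non-ergodic case through Proposition~\ref{ergodic_decomosition} is also the right mechanism.

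Two points need tightening. First, in your upper bound (and again in the zero-entropy direction) the name-classes with low $Y_0$-frequency have small diameter only with respect to the averaged \emph{truncated} semimetric $T_{av}^n\min(\rho,D)$, not $T_{av}^n\rho$ itself: two points with the same $\beta$-name may be at enormous $\rho$-distance at the indices falling in $Y_0$, so the asserted bound $\eps'(1+D)$ on the $T_{av}^n\rho$-diameter is literally false before truncation is undone. The repair is standard and available inside the paper: $\|\rho-\min(\rho,D)\|_m\to 0$ as $D\to\infty$ (this is exactly the uniform-integrability estimate used in the Appendix proof of Theorem~\ref{th_comp_triples}), the $m$-norm does not increase under averaging by measure-preserving maps, and Lemma~\ref{lemma_semicontinuity} then converts the entropy bound for the truncated average into one for $T_{av}^n\rho$ at a slightly larger $\eps$; alternatively, the universal bound $\scfunclass(T)\preceq[n]$ follows at once from Theorem~\ref{th_mon_subadd}, since a subadditive representative satisfies $\Phi(\eps,n)\le n\,\Phi(\eps,1)$. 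Second, ``applies verbatim'' is an overstatement for a biased Bernoulli factor, which is what Sinai's theorem provides when $0<h(T)<\log 2$: the fair-coin estimate that \emph{every} Hamming $\eps$-ball has measure at most $2^{-c(\eps)n}$ fails for a biased coin once $\eps$ exceeds the bias, so one must instead run your SMB/packing count, which yields linear growth only for $\eps$ small enough that $H(\eps)+\eps\log|\beta|$ is below the entropy --- but this is precisely what the definition of a scaling sequence requires, and your alternative argument covers it; for non-ergodic systems the counting Lemma~\ref{lement} applied to the name distribution (using only $H(\beta^n)=o(n)$) can replace SMB in the zero-entropy direction. With these repairs both implications are sound.
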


In particular, the systems with the maximal and minimal growth of scaling entropy are stable.

\section{Properties of scaling entropy}

\subsection{Example of a non-stable system}\label{section_nonstable_example}
For some time, the question of the existence of a scaling sequence for any {ergodic automorphism~$T$ of a standard probability space~$(X, \mu)$ remained open.} An example of such an ergodic automorphism~$T$ and an admissible metric~$\rho$ on~$(X, \mu)$, for which the scaling sequence in the sense of Definition~\ref{def_sc_ent} does not exist, {was constructed in~\cite{Vep20a}}. The reason for this phenomenon is that for different $\eps>0$, the growth rate of $\heps_\eps(X, \mu, T_{av}^n\rho)$ with respect to $n$ can significantly differ: for each $\eps>0$, there exists $\delta>0$ such that
$$
\lim_{n \to +\infty} \frac{\heps_\eps(X,\mu,T_{av}^n\rho)}{\heps_\delta(X,\mu,T_{av}^n\rho)} = 0.
$$

However, the construction of such an automorphism requires the existence of stable systems that satisfy Theorem~\ref{theorem_example_entropy_sequence}. Let us select a family $h^{(k)} = {h_n^{(k)}}_{n \in \mathbb{N}}$, $k \in \mathbb{N}$ of increasing subadditive sequences in such a way that for each~$k$ the following holds:
$$
h_n^{(k)} = o(h_n^{(k+1)}), \qquad n \to \infty.
$$
For each $k\in \mathbb{N}$, find an ergodic automorphism $T_k$ of a probability space $(X_k, \mu_k)$ such that $h^{(k)} \in \scclass(X_k,\mu_k,T_k)$. Such an automorphism exists due to Theorem~\ref{theorem_example_entropy_sequence}. The following theorem proved in~\cite{Vep20a} guarantees the existence of unstable automorphisms.
\begin{theorem}
Let $(X,\mu,T)$ be an ergodic joining of a family of systems $(X_k,\mu_k,T_k)$, $k \in \mathbb{N}$. Then, the class $\scclass(X,\mu,T)$ of scaling entropy sequences for the system $(X,\mu,T)$ is empty.
\end{theorem}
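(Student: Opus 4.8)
The plan is to argue by contradiction: assuming a scaling sequence $h=\{h_n\}$ exists for $(X,\mu,T)$, I will trap the growth of $\heps_\eps(X,\mu,T_{av}^n\rho)$ between two consecutive prescribed sequences $h^{(N)}$ and $h^{(N+1)}$ and then invoke the relation $h^{(N)}_n=o(h^{(N+1)}_n)$. For this I need a lower bound supplied by the factor structure of the joining and an upper bound supplied by a carefully chosen generating metric.

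First, the lower bound. Each coordinate projection $\pi_k\colon X\to X_k$ realizes $(X_k,\mu_k,T_k)$ as a factor of $(X,\mu,T)$, so by monotonicity of scaling entropy under factor maps (the Remark following Definition~\ref{definition_scaling_entropy}) we have $\scfunclass(X_k,\mu_k,T_k)\preceq\scfunclass(X,\mu,T)$. Since $(X_k,\mu_k,T_k)$ is stable with scaling sequence $h^{(k)}$, its representing function is $\asymp h^{(k)}_n$ for all small scales; unfolding the relation $\preceq$ between the asymptotic classes (Definition~\ref{definition_asymp_class}) yields, for each $k$, a scale $\delta_k>0$ with $h^{(k)}_n\preceq\heps_{\delta_k}(X,\mu,T_{av}^n\rho)$. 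Shrinking $\delta_k$ if necessary (the $\eps$-entropy is non-increasing in its scale), I may assume $\delta_k$ lies below the threshold past which $\heps_\bullet(X,\mu,T_{av}^n\rho)\asymp h_n$. Hence $h^{(k)}_n\preceq h_n$ for every $k$.

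Next, the upper bound, which is the heart of the argument. Fix generating admissible metrics $\rho_k$ on $(X_k,\mu_k)$, truncate them to $\rho_k':=\min(\rho_k,1)$ (still admissible, generating, and bounded by $1$), and set $\rho:=\sum_k c_k\,\rho_k'\circ\pi_k$ with weights $c_k>0$ chosen so that $\sum_k c_k\|\rho_k'\|_m<\infty$. By completeness of the cone $\Adm(X,\mu)$ in the m-norm this $\rho$ is a summable admissible semimetric, and it is $T$-generating because the coordinate functions separate points of the joining. As $T$ acts coordinatewise, $T_{av}^n\rho=\sum_k c_k\,(T_{k,av}^n\rho_k')\circ\pi_k$. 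Fix a small $\eps>0$ and choose $N=N(\eps)$ with $\sum_{k>N}c_k<\eps/3$; the discarded tail then alters every distance by less than $\eps/3$, so it suffices to cover $X$ by small-diameter sets for the finite truncation $\sigma_n:=\sum_{k\le N}c_k(T_{k,av}^n\rho_k')\circ\pi_k$, giving $\heps_\eps(X,\mu,T_{av}^n\rho)\le\heps_{2\eps/3}(X,\mu,\sigma_n)$. Covering each of the finitely many visible factors $X_k$, $k\le N$, separately and taking products bounds this by $\sum_{k\le N}\heps_{\delta_k'}(X_k,\mu_k,T_{k,av}^n\rho_k')$ for suitable scales $\delta_k'$; since $\eps$-entropy is non-increasing in its scale, each term is $\preceq h^{(k)}_n$ whether or not $\delta_k'$ falls below the stability threshold of $X_k$. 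Finally, the domination $h^{(1)}_n=o(h^{(2)}_n)=\cdots$ collapses this finite sum to its top term, so $\heps_\eps(X,\mu,T_{av}^n\rho)\preceq h^{(N)}_n$.

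Combining the two bounds produces the contradiction. If $h$ is a scaling sequence then for the chosen small $\eps$ we have $\heps_\eps(X,\mu,T_{av}^n\rho)\asymp h_n$, so the upper bound gives $h_n\preceq h^{(N)}_n$, while the lower bound applied with index $N+1$ gives $h^{(N+1)}_n\preceq h_n$. Together these force $h^{(N+1)}_n\preceq h^{(N)}_n$, contradicting $h^{(N)}_n=o(h^{(N+1)}_n)$. Hence no scaling sequence exists and $\scclass(X,\mu,T)=\varnothing$. I expect the main obstacle to be the diameter and measure bookkeeping in the upper bound: one must simultaneously discard the infinite tail of factors (controlling both its measure and its contribution to diameters) and apportion the allowed exceptional measure among the $N(\eps)$ product factors, all while keeping the scales $\delta_k'$ fed into the per-factor entropies controlled enough that the bound $\heps_{\delta_k'}(X_k,\mu_k,T_{k,av}^n\rho_k')\preceq h^{(k)}_n$ genuinely holds.
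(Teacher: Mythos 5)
Your proof is correct, and its two pillars --- the lower bound $h^{(k)}\preceq h$ coming from factor monotonicity, and the upper bound $\heps_\eps(X,\mu,T_{av}^n\rho)\preceq h^{(N(\eps))}_n$ obtained by cutting the weighted sum metric down to the first $N(\eps)$ coordinates and covering the finitely many visible factors by product sets --- are exactly the analytic content behind the argument of~\cite{Vep20a}. What differs is the endgame. The paper packages these estimates order-theoretically: a scaling sequence $h$ would have to be the \emph{least} upper bound of the strictly increasing chain $\{h^{(k)}\}$ in the semilattice $\Subadd$, and one then invokes the separate fact that $\Subadd$ contains no least upper bound of such a chain. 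You bypass any order-theoretic lemma: fixing a single sufficiently small $\eps$ (legitimate, since by Theorem~\ref{th1} a scaling sequence must work for \emph{every} generating semimetric, in particular for your $\rho$), the truncation estimate pins $h\preceq h^{(N(\eps))}$, while monotonicity applied to the factor with index $N(\eps)+1$ gives $h^{(N(\eps)+1)}\preceq h$, contradicting $h^{(N)}_n=o\big(h^{(N+1)}_n\big)$. Your route is therefore more self-contained and elementary, at the cost of doing the covering and measure bookkeeping explicitly; the paper's framing buys a structural explanation of the phenomenon --- the joining's scaling entropy class is the least upper bound of the component classes in the larger semilattice of two-variable functions (Section~\ref{section_scentropy_values}), and instability is precisely the failure of $\Subadd$ to contain such suprema --- which is also what motivates the two-variable Definition~\ref{definition_asymp_class}. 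One small repair to your construction: choose the weights so that $\sum_k c_k<\infty$ (say $c_k=2^{-k}$), not merely $\sum_k c_k\|\rho_k'\|_m<\infty$; your tail estimate $\sum_{k>N}c_k<\eps/3$ needs the former, and since $\rho_k'\le 1$ the former implies the latter in any case.
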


The main idea of the proof is that if the system $(X,\mu,T)$ has a scaling sequence $h$, then this sequence would be the least upper bound for the family of sequences $h^{(k)}$, $k \in \mathbb{N}$. However, the set $\Subadd$ does not contain the least upper bound for a strictly increasing sequence of elements, therefore, the class $\scclass(X,\mu,T)$ is empty.
{This example highlights the meaning of Definition~\ref{definition_asymp_class}: the asymptotic class of a function $\Phi$ can be identified with the least upper bound of the classes of sequences $\Phi(\eps, \,\cdot\,)$.}

\subsection{Possible values of scaling entropy. The semilattice of functions}
\label{section_scentropy_values}
{In this section, we study the asymptotic classes that could be the scaling entropy of dynamical systems.} The following two theorems, proved in paper~\cite{Vep20a}, provide a complete description of the possible values of scaling entropy.
\begin{theorem}\label{th_mon_subadd}
 For any system $(X,\mu,T)$, in the scaling entropy class $\scfunclass(X,\mu,T)$ one can always find a function $\Phi \colon \mathbb{R}_+\times \mathbb{N} \to \mathbb{R}_+$ with the following properties:
 \begin{enumerate}
     \item $\Phi(\,\cdot\,, n)$ is non-increasing for any $n \in \mathbb{N}$;
     \item $\Phi(\eps, \,\cdot\,)$ is non-decreasing and subadditive for any $\eps>0$.
 \end{enumerate}
\end{theorem}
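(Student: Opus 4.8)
The plan is to produce the representative explicitly from the canonical function $\Phi_\rho(\eps,n)=\heps_\eps(X,\mu,T_{av}^n\rho)$, where $\rho$ is any $T$-generating admissible semimetric, so that $\scfunclass(X,\mu,T)=[\Phi_\rho]$ by Theorem~\ref{theorem_class_invariance}. Property~(1) comes almost for free: for a fixed $n$ the map $\eps\mapsto \heps_\eps(X,\mu,T_{av}^n\rho)$ is the $\eps$-entropy of a single triple and hence non-increasing (as recorded after Definition~\ref{definition_epsilon_entropy}), and every operation used below to repair the $n$-dependence (infima of sums, running maxima) preserves monotonicity in $\eps$. Thus the whole difficulty lies in property~(2): manufacturing a representative that is non-decreasing and subadditive in $n$ for each fixed $\eps$.

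First I would isolate the mechanism behind subadditivity in $n$. Writing the unnormalised Birkhoff sum $S_n\rho=\sum_{i=0}^{n-1}T^{-i}\rho=n\,T_{av}^n\rho$, one has the exact identity $S_{n+m}\rho=S_n\rho+T^{-n}(S_m\rho)$, equivalently
\[ T_{av}^{n+m}\rho=\tfrac{n}{n+m}\,T_{av}^n\rho+\tfrac{m}{n+m}\,T^{-n}\bigl(T_{av}^m\rho\bigr). \]
Because this is a convex combination, any set on which both $T_{av}^n\rho<\eps$ and $T^{-n}(T_{av}^m\rho)<\eps$ hold pairwise already has diameter $<\eps$ in $T_{av}^{n+m}\rho$. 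Intersecting an optimal $\eps$-cover of $(X,\mu,T_{av}^n\rho)$ with an optimal $\eps$-cover of $(X,\mu,T^{-n}(T_{av}^m\rho))$, and using that $T$ is measure preserving so that $T^{-n}(T_{av}^m\rho)$ is isometric-isomorphic to $T_{av}^m\rho$, I obtain, in the two-parameter notation $\heps_{\eps,\delta}$ for the non-diagonal $\eps$-entropy (diameter threshold $\eps$, excluded measure $\delta$, as in the remark after Definition~\ref{definition_epsilon_entropy}), the fundamental inequality
\[ \heps_{\eps,\,\delta_1+\delta_2}(X,\mu,T_{av}^{n+m}\rho)\ \le\ \heps_{\eps,\delta_1}(X,\mu,T_{av}^n\rho)+\heps_{\eps,\delta_2}(X,\mu,T_{av}^m\rho). \]
This is the analogue, for averaged orbit metrics, of the sum-subadditivity $\heps_{2\eps}(\rho_1+\rho_2)\le\heps_\eps(\rho_1)+\heps_\eps(\rho_2)$ quoted after Definition~\ref{definition_epsilon_entropy}; crucially the diameter threshold does not deteriorate (convexity absorbs the factor $2$), the only loss being additivity of the excluded-measure parameter.

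Next I would convert this quasi-subadditivity into an honest representative. The obstacle is exactly the additive loss $\delta_1+\delta_2$: iterating over $k$ blocks inflates the excluded measure to $\sim k\delta$, so a naive subadditive hull over arbitrary decompositions is meaningless. I would neutralise this with a separate lemma stating that, for averaged metrics and a fixed diameter threshold $\eps$, the asymptotics in $n$ of $\heps_{\eps,\delta}(X,\mu,T_{av}^n\rho)$ do not depend on the excluded-measure parameter $\delta\in(0,1)$ (only $\eps$ survives, up to the two-step equivalence of Definition~\ref{definition_asymp_class}); the idea is to redistribute the excluded set along its $T$-orbit by ergodicity and the Rokhlin lemma, so that tightening $\delta$ to $\delta'$ costs only a bounded multiplicative factor in the number of $\eps$-balls asymptotically. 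Granting this, fix $\eps$, set $b_n:=\heps_{\eps,1/2}(X,\mu,T_{av}^n\rho)$, form the Fekete-type subadditive hull $\Phi_0(\eps,n):=\inf\{\sum_j b_{n_j}:\ \sum_j n_j=n\}$, and then take the running maximum $\Phi(\eps,n):=\max_{1\le k\le n}\Phi_0(\eps,k)$. A short check shows that the running maximum of a subadditive function is again subadditive and is non-decreasing, and that both operations keep the function non-increasing in $\eps$, so $\Phi$ has all the required monotonicity exactly (not merely asymptotically). The easy bound $\Phi_0(\eps,\cdot)\le\Phi_\rho(\eps,\cdot)$ is the one-block decomposition (for $\eps\le 1/2$); the reverse relation $\Phi_\rho\preceq\Phi$, and the fact that the running maximum does not leave the class, are obtained by iterating the fundamental inequality to lower-bound the whole-orbit entropy by a constant multiple of a block sum, the $\delta$-accumulation being removed by the redistribution lemma together with the freedom to shrink $\eps$.

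The main obstacle is therefore the control of the excluded-measure parameter: subadditivity is available only at the price of an additive drift in $\delta$, and the entire argument hinges on showing that for averaged metrics this drift is asymptotically invisible. This is the step that genuinely uses the dynamics — that we average along a $T$-orbit, so a bad set can be spread out by the transformation — and it is what separates the statement from a purely static fact about coverings of a single metric triple. The companion sequence-level statement, Theorem~\ref{theorem_subadd_entropy_sequence}, is the $\eps$-independent shadow of the same reduction.
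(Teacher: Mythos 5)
Your ``fundamental inequality'' is correct and is indeed the right quasi-subadditivity mechanism: the convex-combination identity for $T_{av}^{n+m}\rho$ does give $\heps_{\eps,\delta_1+\delta_2}(X,\mu,T_{av}^{n+m}\rho)\le \heps_{\eps,\delta_1}(X,\mu,T_{av}^{n}\rho)+\heps_{\eps,\delta_2}(X,\mu,T_{av}^{m}\rho)$, with all the loss pushed into the measure parameter. The genuine gap is the ``redistribution lemma'' on which everything else rests: it is invoked without proof, it appeals to ergodicity although the theorem is stated for an \emph{arbitrary} system, and for non-ergodic systems it is simply false. Take $X=X'\sqcup X''$ with both pieces $T$-invariant, $\mu(X'')=\delta_0<1/2$, let $T|_{X'}$ be an irrational rotation carrying an invariant metric $\rho'$ of diameter at most $1$, let $T|_{X''}$ be a Bernoulli shift with its generating cut semimetric $\rho''$, and glue these into one generating admissible semimetric $\rho$ (equal to $\rho'$ on $X'\times X'$, to $\rho''$ on $X''\times X''$, and to $1$ across the pieces). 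Then $T_{av}^n\rho=\rho'$ on $X'\times X'$, so your $b_n=\heps_{\eps,1/2}(X,\mu,T_{av}^n\rho)$ is \emph{bounded} in $n$: one may discard all of $X''$ and cover $X'$ by finitely many $\rho'$-balls. But for any $\delta'<\delta_0$ one must cover a fixed fraction of $X''$, and by the ball-measure bound used in the paper's own Bernoulli computation this requires at least $2^{c(\eps)n-1}$ cells, so $\heps_{\eps,\delta'}(X,\mu,T_{av}^n\rho)\succeq n$; the same holds for the diagonal entropy once $\eps<\delta_0/2$, whence $\scfunclass(X,\mu,T)=[n]$. Your Fekete hull and running maximum built from $b_n$ therefore produce a bounded function, a representative of $[1]$ rather than of $[n]$, and no re-indexing of $\eps$ rescues this since $\heps_{\eps'',1/2}$ is bounded for every $\eps''$. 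The excluded-measure parameter carries information that the diameter parameter cannot recover, which is exactly what your lemma denies.

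Even restricted to ergodic systems (a restriction you cannot make silently: the paper's torus example after Proposition~\ref{ergodic_decomosition} shows that scaling entropy is not determined by ergodic components) the lemma remains a substantial unproven claim, and the mechanism you suggest cannot deliver it: spreading a bad set along an orbit via Birkhoff/Rokhlin arguments compares entropies across \emph{different} time scales and inevitably multiplies the resulting bound by the number of blocks used. That overhead is invisible for exponential growth rates --- this is precisely why Katok-type entropy formulas are independent of the measure threshold --- but it is fatal at the sub-exponential scales where scaling entropy lives, such as $[\log n]$ or $[\sqrt n\,]$. A second, related hole: your reverse inequality $\Phi_\rho\preceq\Phi_0$ requires a lower bound on $\sum_j b_{n_j}$ over decompositions into arbitrarily many blocks $k$, where the accumulated bad measure is $k/2$ (or, after blockwise use of the lemma, constants depending on $k$), and nothing in the proposal controls this uniformly in $k$. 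The paper's proof takes a different route that avoids the two-parameter entropy altogether: it keeps the diagonal $\eps$-entropy, accepts the $\eps\to 2\eps$ deterioration in quasi-subadditivity because Definition~\ref{definition_asymp_class} is built to absorb changes of $\eps$, and reduces the general statement to the stable-case results, Theorems~\ref{theorem_subadd_entropy_sequence} and~\ref{theorem_example_entropy_sequence} (proved in~\cite{PZ15, Z15b}), as carried out in~\cite{Vep20a}.
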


\begin{theorem}\label{theorem_complete_description}
For any function $\Phi \colon \mathbb{R}_+\times \mathbb{N} \to \mathbb{R}_+$ satisfying properties 1) and 2) from the previous theorem, there exists an ergodic system $(X,\mu,T)$ such that $\Phi \in \scfunclass(X,\mu,T)$.
\end{theorem}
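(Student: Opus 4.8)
The plan is to realize $\Phi$ as the scaling entropy of an ergodic \emph{joining} of stably realizable building blocks, one for each scale, glued together by a single summable admissible metric whose weighted structure places the $k$-th block precisely at the $k$-th scale. This mirrors the mechanism behind the non-stable example of Section~\ref{section_nonstable_example}: the scaling entropy of a joining is governed by the least upper bound of the scaling data of its factors, while Definition~\ref{definition_asymp_class} permits $\eps$-dependent comparison constants, which will absorb all multiplicative errors produced by the gluing. The first step is to discretize $\Phi$ in its first argument: fix a summable decreasing sequence $\eps_k\downarrow 0$ and set $h^{(k)}_n=\Phi(\eps_k,n)$. By property~2) each $h^{(k)}$ is a non-decreasing subadditive sequence of positive numbers, and by property~1) the sections are nested, $h^{(k)}\le h^{(k+1)}$. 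Since $\Phi$ is monotone in $\eps$, the function built by making it piecewise constant along the $\eps_k$ sandwiches $\Phi$ and is equivalent to it in the sense of Definition~\ref{definition_asymp_class}; so it suffices to realize the family $\{h^{(k)}\}$ at the prescribed scales.

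For each $k$ I would invoke Theorem~\ref{theorem_example_entropy_sequence} to produce an ergodic system $(X_k,\mu_k,T_k)$ with a generating admissible metric $\rho_k$ whose scaling sequence is $h^{(k)}$; after rescaling $\rho_k$ (which only shifts the $\eps$-threshold, since $\heps_\delta(\lambda\rho)=\heps_{\delta/\lambda}(\rho)$) I may assume $h^{(k)}$ is attained for all $\delta$ below a fixed absolute threshold. On a suitable ergodic joining $(X,\mu,T)$ of the $(X_k,\mu_k,T_k)$ I set $\rho=\sum_k c_k\,\rho_k$, pulling each $\rho_k$ back to $X$, with weights $c_k>0$ chosen so that $\rho\in\Adm(X,\mu)$ is summable and so that at scale $\eps$ the \emph{resolved} blocks are exactly those with $c_k\gtrsim\eps$, i.e.\ those with $k\le K(\eps):=\max\{k:\eps_k\ge\eps\}$. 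Then $\rho$ is $T$-generating and $T_{av}^n\rho=\sum_k c_k\,T_{av}^n\rho_k$.

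The heart of the argument is the two-sided estimate
$$
\heps_\eps(X,\mu,T_{av}^n\rho)\ \asymp\ \sum_{k\le K(\eps)} h^{(k)}_n,\qquad n\to\infty,
$$
for each fixed small $\eps$. For the upper bound I would cover each resolved block at its own resolution scale $\eps/c_k$, take products of these covers using subadditivity of $\eps$-entropy under sums of semimetrics (as in $\heps_{2\eps}(\rho_1+\rho_2)\le\heps_\eps(\rho_1)+\heps_\eps(\rho_2)$), and discard the unresolved tail $\sum_{k>K(\eps)}c_k\rho_k$, whose total mass is $<\eps$ by summability, into the exceptional set. For the lower bound I would project onto the resolved coordinates and exploit the independence of the factors. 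Since $h^{(K)}_n\le\sum_{k\le K}h^{(k)}_n\le K\,h^{(K)}_n$ and the sections are nested, the realized function $\Psi(\eps,n)=\sum_{k\le K(\eps)}h^{(k)}_n$ then compares with $\Phi$ in both directions: for $\Phi\preceq\Psi$ one takes the comparison scale $\delta=\eps_{K(\eps)+1}$ to make one further block visible, and for $\Psi\preceq\Phi$ one takes $\delta=\eps_{K(\eps)}$ and absorbs the factor $K(\eps)$ into the $\eps$-dependent constant allowed by Definition~\ref{definition_asymp_class}. Hence $\Phi\in\scfunclass(X,\mu,T)$.

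I expect the main obstacle to be precisely this multi-scale estimate on the joining: choosing the weights $c_k$ and the normalizations of the $\rho_k$ so that each block is sharply resolved at its intended scale and invisible above it, and establishing the lower bound, which requires the blocks to contribute their entropies essentially independently. Guaranteeing genuine independence together with ergodicity is the delicate point, since a naive independent product of the ergodic $(X_k,T_k)$ need not be ergodic; one must either arrange the blocks to be weakly mixing or replace the product by an ergodic joining that still decouples enough for the lower bound to survive.
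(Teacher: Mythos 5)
Your proposal is correct and is essentially the paper's own route: discretize $\Phi$ into its sections $h^{(k)}=\Phi(\eps_k,\cdot)$, realize each one as a scaling sequence of an ergodic system via Theorem~\ref{theorem_example_entropy_sequence}, pass to an ergodic joining, and identify the scaling entropy of the joining with the least upper bound of the classes $[h^{(k)}]$, which is $[\Phi]$ --- precisely the Proposition on joinings in Section~\ref{section_scentropy_values} together with the semilattice remark that $[\Phi]$ is the least upper bound of its sections. The one place you flag as the delicate point is in fact not an obstacle: the lower bound requires no independence or decoupling at all, because $T_{av}^n\rho \ge c_k\,T_{av}^n(\rho_k\circ\pi_k)$ pointwise, $\eps$-entropy is monotone under pointwise domination of semimetrics, and pulling a semimetric back along the measure-preserving projection $\pi_k$ preserves $\eps$-entropy, so $\heps_\delta(X,\mu,T_{av}^n\rho)\succeq h^{(k)}_n$ for each resolved block separately; since the sections are nested, $\sum_{k\le K(\eps)}h^{(k)}_n\asymp h^{(K(\eps))}_n$, and this single-block bound already matches your upper estimate. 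Hence any ergodic joining whatsoever (for instance an ergodic component of the independent product, whose marginals remain the $\mu_k$ by ergodicity of the factors) completes the argument verbatim.
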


The proofs of Theorems~\ref{th_mon_subadd} and~\ref{theorem_complete_description} crucially rely on the corresponding results for the stable case (Theorems~\ref{theorem_subadd_entropy_sequence} and~\ref{theorem_example_entropy_sequence}). It is worth noting that the analog of Theorem~\ref{theorem_complete_description} for actions of amenable groups (see Section~\ref{section_groups}) is unknown to the authors, the question of describing the set of possible values of scaling entropy for group actions remains open.

One can consider the partially ordered set of equivalence classes of all two-variable functions satisfying the monotonicity conditions 1) and 2) from Theorem~\ref{th_mon_subadd}. This set forms an upper semilattice. The semilattice $\Subadd$ of subadditive non-decreasing sequences is naturally embedded into this lattice. Unlike the semilattice $\Subadd$, the considered semilattice of functions possesses the following property: any countable subset has the least upper bound. Moreover, this semilattice of functions is the minimal semilattice {containing $\Subadd$} with this property, as each scaling entropy $[\Phi(\,\cdot\,,\,\cdot\,)]$ is the least upper bound for the countable set of sequences $h^m = {\Phi(\frac{1}{m}, n)}_n$, $m \in \mathbb{N}$.

\begin{proposition}
Let $(X_k,\mu_k,T_k)$ be a (finite or countable) sequence of systems, and $(X,\mu,T)$ be their joining. Then $\scfunclass(X,\mu,T)$ is the least upper bound of the sequence $\scfunclass(X_k,\mu_k,T_k)$.
\end{proposition}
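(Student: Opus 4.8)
The plan is to establish the least-upper-bound assertion in two halves: that $\scfunclass(X,\mu,T)$ dominates each $\scfunclass(X_k,\mu_k,T_k)$, and that it is dominated by every common upper bound $[\Psi]$ of the family. The first half is immediate: each $(X_k,\mu_k,T_k)$ is a factor of the joining via the coordinate projection $\pi_k$, so the monotonicity of scaling entropy under factor maps gives $\scfunclass(X_k,\mu_k,T_k)\preceq\scfunclass(X,\mu,T)$ for every $k$. Thus $\scfunclass(X,\mu,T)$ is an upper bound in the semilattice of functions; note also that any competing upper bound $[\Psi]$ lies in this semilattice and hence, by property~1) of Theorem~\ref{th_mon_subadd}, admits a representative $\Psi$ that is non-increasing in~$\eps$.

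For the nontrivial half I would assemble a single generating semimetric on the joining from generating semimetrics on the factors. Fix a $T_k$-generating admissible summable semimetric $\rho_k$ on each $(X_k,\mu_k)$ and pull it back by $\tilde\rho_k=\rho_k\circ(\pi_k\times\pi_k)$. Choosing weights $c_k\in(0,1]$ with $\sum_k c_k\|\rho_k\|_m<\infty$, set $\rho=\sum_k c_k\tilde\rho_k$. Since pullback does not increase the $m$-norm, the partial sums are $m$-Cauchy, so $\rho\in\Adm(X,\mu)$ by completeness of the cone in the $m$-norm. The semimetric $\rho$ is $T$-generating: distinct points of the joining are separated in some coordinate $k$, where a translate of $\rho_k$ separates them, and the intertwining $\pi_k\circ T=T_k\circ\pi_k$ yields $T^n\rho\ge c_k\,\rho_k(T_k^n\pi_k\,\cdot\,,T_k^n\pi_k\,\cdot\,)>0$. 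The same intertwining shows $T_{av}^n\tilde\rho_k$ is the pullback of $(T_k)_{av}^n\rho_k$, whence $\heps_\eta(X,\mu,c_k T_{av}^n\tilde\rho_k)\le\heps_\eta(X_k,\mu_k,(T_k)_{av}^n\rho_k)=\Phi_{\rho_k}(\eta,n)$ for every $\eta>0$, using that $c_k\le1$ and that a measure-preserving pullback preserves the covering data defining $\eps$-entropy.

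The core estimate treats a finite truncation and controls the tail uniformly in~$n$. Fix a target level $\eps_0$ and a small margin $\delta<\eps_0$. Using the translation non-expansiveness $\|T_{av}^n\sigma\|_m\le\|\sigma\|_m$ together with $\|\sum_{k>m}c_k\tilde\rho_k\|_m\le\sum_{k>m}c_k\|\rho_k\|_m$, I would pick an index $m=m(\delta)$ \emph{independent of $n$} so that $\|T_{av}^n(\rho-\rho_{\le m})\|_m<\delta^2/4$, where $\rho_{\le m}=\sum_{k\le m}c_k\tilde\rho_k$. Lemma~\ref{lemma_semicontinuity} then gives $\heps_{\eps_0}(X,\mu,T_{av}^n\rho)\le\heps_{\eps_0-\delta}(X,\mu,T_{av}^n\rho_{\le m})$. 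Applying the finite subadditivity of $\eps$-entropy to the $m$-term sum $\rho_{\le m}$ (cover all $m$ factor-semimetrics simultaneously by the common refinement of their individual $\tfrac{\eps_0-\delta}{m}$-covers),
$$
\heps_{\eps_0-\delta}(X,\mu,T_{av}^n\rho_{\le m})\le\sum_{k\le m}\heps_{(\eps_0-\delta)/m}\!\big(X,\mu,c_k T_{av}^n\tilde\rho_k\big)\le\sum_{k\le m}\Phi_{\rho_k}\!\Big(\tfrac{\eps_0-\delta}{m},n\Big).
$$
Since $\Phi_{\rho_k}\preceq\Psi$ for each $k$, there is $\delta_k>0$ with $\Phi_{\rho_k}\big(\tfrac{\eps_0-\delta}{m},n\big)\preceq\Psi(\delta_k,n)$; setting $\delta^\ast=\min_{k\le m}\delta_k$ and invoking that $\Psi$ is non-increasing in $\eps$, the sum is $O\big(m\,\Psi(\delta^\ast,n)\big)=O(\Psi(\delta^\ast,n))$, as $m$ is a constant depending only on $\eps_0$. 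Hence $\Phi_\rho(\eps_0,n)\preceq\Psi(\delta^\ast,n)$ for arbitrary $\eps_0$, i.e. $\scfunclass(X,\mu,T)\preceq[\Psi]$, which together with the first half identifies $\scfunclass(X,\mu,T)$ as the least upper bound.

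I expect the main obstacle to be the countable case, and specifically the requirement that the truncation index $m$ be chosen independently of~$n$. This is exactly what the translation non-expansiveness of the $m$-norm secures: the averaged tail $T_{av}^n(\rho-\rho_{\le m})$ is small in $m$-norm uniformly in~$n$, so Lemma~\ref{lemma_semicontinuity} applies with an $n$-independent margin $\delta$. Once this uniformity is in place, the remaining ingredients — finite subadditivity of $\eps$-entropy, the bound $\heps_\eta(X,\mu,c_k\sigma)\le\heps_\eta(X,\mu,\sigma)$ for $c_k\le1$, the invariance of $\eps$-entropy under measure-preserving pullback, and the bookkeeping over the finitely many surviving factors — are routine.
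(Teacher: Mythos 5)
Your proof is correct, but there is no argument in the paper to compare it against: this proposition is stated in the survey without proof, the surrounding text only recording that the semilattice of classes of functions that are non-increasing in $\eps$ and non-decreasing and subadditive in $n$ admits countable least upper bounds. Your argument fills that gap using exactly the paper's own toolkit: the factor-monotonicity remark gives the easy half, and for the hard half you build the generating semimetric $\rho=\sum_k c_k\tilde\rho_k$ from pulled-back generating semimetrics, get $\rho\in\Adm(X,\mu)$ from completeness of the cone in the $m$-norm, control the tail uniformly in $n$ via $\|T_{av}^n(\rho-\rho_{\le m})\|_m\le\|\rho-\rho_{\le m}\|_m$ (for nonnegative semimetrics this is even an equality of $L^1$-norms, by invariance of $\mu^2$), and then combine Lemma~\ref{lemma_semicontinuity} with the finite subadditivity of $\eps$-entropy; all of these steps check out. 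The one imprecision is your appeal to Theorem~\ref{th_mon_subadd} to produce a representative of an arbitrary upper bound $[\Psi]$ that is non-increasing in $\eps$: that theorem is about scaling entropy classes of systems, not about arbitrary elements of the poset. What actually licenses the step is the definition of the ambient semilattice in the paragraph preceding the proposition — its elements are, by definition, classes of functions satisfying the monotonicity conditions 1) and 2) — so any competing upper bound has such a representative. This point is worth stating carefully, because without some monotonicity of $\Psi$ in its first argument the passage from the finitely many bounds $\Phi_{\rho_k}(\,\cdot\,,n)=O(\Psi(\delta_k,n))$ to a single bound $O(\Psi(\delta^{\ast},n))$ would not go through; with it, your proof is complete for both the finite and countable cases.
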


\subsection{Generic scaling entropy}
The group $\aut(X,\mu)$ of all automorphisms of the space $(X,\mu)$ equipped with the weak topology is a Polish topological space. This allows us to investigate the genericity of automorphisms satisfying given properties. It turns out that the scaling entropy of a generic automorphism $T\in \aut(X,\mu)$ is not comparable to an arbitrary given function (except for trivial extreme cases of bounded and linear growth). The following theorem was proved in~\cite{Vep21}.

\begin{theorem}\label{theorem_generic_transformation}
Let $\Phi(\eps,n)$ be a function that {decreases in $\eps$ for each $n$, and for each $\eps>0$ is increasing, unbounded, and sublinear in $n$}. Then the set of automorphisms whose scaling entropy is not comparable to $\Phi$ is comeager in~$\aut(X,\mu)$.
\end{theorem}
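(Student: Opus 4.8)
The plan is to prove incomparability by establishing two separate comeagreness statements and intersecting them. Recall that $\scfunclass(X,\mu,T)=[\Phi_\rho]$ with $\Phi_\rho(\eps,n)=\heps_\eps(X,\mu,T_{av}^n\rho)$ for any fixed generating $\rho$ (Theorem~\ref{theorem_class_invariance}); I would fix once and for all a bounded cut metric $\rho$ coming from a finite generating partition, so that $T_{av}^n\rho$ is a normalized Hamming distance valued in $[0,1]$. Incomparability to $\Phi$ means both $\scfunclass(X,\mu,T)\not\preceq[\Phi]$ and $[\Phi]\not\preceq\scfunclass(X,\mu,T)$, so I would show that each of
$$A=\{T:\scfunclass(X,\mu,T)\not\preceq[\Phi]\},\qquad B=\{T:[\Phi]\not\preceq\scfunclass(X,\mu,T)\}$$
is comeager; since a finite intersection of comeager sets is comeager in the Polish (hence Baire) group $\aut(X,\mu)$, this proves the theorem. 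The two technical inputs are: (i) the map $T\mapsto T_{av}^n\rho$ is continuous from $\aut(X,\mu)$ with the weak topology to $L^1(X^2,\mu^2)$, which for the bounded $\rho$ reduces to continuity of $T\mapsto f\circ T^i$ on the finitely many iterates $i<n$; and (ii) the semicontinuity of $\eps$-entropy from Lemma~\ref{lemma_semicontinuity}, namely that $\heps_\eps$ is upper and $\heps_{\eps+}$ lower semicontinuous on $\Adm(X,\mu)$ in the $m$-norm, which by Corollary~\ref{Cor_top_coinc} agrees with the $L^1$-topology. Composing (i) and (ii) makes $T\mapsto\heps_\eps(T_{av}^n\rho)$ upper semicontinuous and $T\mapsto\heps_{\eps+}(T_{av}^n\rho)$ lower semicontinuous.

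For $B$ I would fix a single scale $\eps_0$ (so that $\Phi(\eps_0,\cdot)$ is unbounded, as hypothesized) and write
$$B_{\eps_0}=\bigcap_{\delta,C,N}\ \bigcup_{n\ge N}\bigl\{T:\heps_\delta(X,\mu,T_{av}^n\rho)<\tfrac1C\,\Phi(\eps_0,n)\bigr\}\subseteq B,$$
the intersection being over rational $\delta>0$ and integers $C,N$. Each inner set is open by upper semicontinuity, so $B_{\eps_0}$ is $G_\delta$. For density I would use that periodic automorphisms are dense in $\aut(X,\mu)$ (Rokhlin--Halmos) and that a periodic map has pure point spectrum, hence bounded scaling entropy: by Theorem~\ref{th_diskr_spectr} its averages $T_{av}^n\rho$ stay uniformly close to an invariant admissible metric, so $\sup_n\heps_\delta(T_{av}^n\rho)<\infty$ for every $\delta$. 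Since $\Phi(\eps_0,n)\to\infty$, every periodic $T$ lies in $B_{\eps_0}$, so $B_{\eps_0}$ is dense, hence comeager.

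For $A$ I would fix a small scale $\eps_0<\tfrac12$ and set
$$A_{\eps_0}=\bigcap_{\delta,C,N}\ \bigcup_{n\ge N}\bigl\{T:\heps_{\eps_0+}(X,\mu,T_{av}^n\rho)>C\,\Phi(\delta,n)\bigr\}\subseteq A;$$
each inner set is open by lower semicontinuity of $\heps_{\eps_0+}$, so $A_{\eps_0}$ is $G_\delta$, and the inclusion holds because $\heps_{\eps_0}\ge\heps_{\eps_0+}$, so that $\eps_0$ witnesses $[\Phi_\rho]\not\preceq[\Phi]$. Density is the crux. Given a weak neighbourhood of some $T_0$ and given $\delta,C,N$, I would use the Rokhlin lemma to find $T$ in the neighbourhood admitting a tower of a large height $n\ge N$ and prescribe $T$ on the tower so that the partition defining $\rho$ labels the $n$ levels by a combinatorially complex code; then $T_{av}^n\rho$ restricted to the tower is comparable to the Hamming metric on an $n$-dimensional cube carrying $\ge 2^{cn}$ points that are pairwise $\eps_0$-separated, whence $\heps_{\eps_0+}(T_{av}^n\rho)\ge c\,n$ with $c=c(\eps_0)>0$. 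As $\Phi$ is sublinear, $c\,n>C\,\Phi(\delta,n)$ for all large $n$, so such $T$ lies in the inner set; this yields density of $A_{\eps_0}$, hence its comeagreness, and intersecting with $B_{\eps_0}$ finishes the proof. The main obstacle is exactly this last density step: producing, inside an arbitrary weak neighbourhood, an automorphism whose averaged metric at one large time exhibits linear entropy \emph{at the prescribed fixed scale} $\eps_0$. This is why I fix the bounded cut metric (so a cube/Hamming structure forces entropy $\asymp n$ at every scale below $\tfrac12$, uniformly, as in the Bernoulli computation) and why I keep a single $\eps_0$ throughout, so that $A_{\eps_0}$ and $B_{\eps_0}$ remain $G_\delta$ rather than merely $G_{\delta\sigma}$.
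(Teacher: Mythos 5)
Your overall architecture — splitting incomparability into the two one-sided sets $A$ and $B$ and showing each is a dense $G_\delta$ via weak-to-$L^1$ continuity of $T\mapsto T_{av}^n\rho$ plus the semicontinuity of $\heps_\eps$ — is a genuinely different route from the paper's, which deduces the theorem from the connection between scaling entropy and Kirillov--Kushnirenko sequential entropy together with Ryzhikov's genericity results. But as written the proposal has two genuine gaps. The first is your choice of a fixed cut semimetric $\rho$ of one finite partition $P$: such a $\rho$ is not $T$-generating for most $T\in\aut(X,\mu)$, so Lemma~\ref{lem_asymp_ineq} gives only the one-sided bound $[\Phi_\rho]\preceq\scfunclass(X,\mu,T)$. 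That is harmless for $A$ (lower bounds transfer upward), but it breaks the inclusion $B_{\eps_0}\subseteq B$: smallness of $\heps_\delta(X,\mu,T_{av}^n\rho)$ does not bound $\scfunclass(T)$ from above. Concretely, let $T=S\times R$ with $S$ Bernoulli, $R$ an irrational rotation, and $P$ depending only on the rotation coordinate; then $\heps_\delta(X,\mu,T_{av}^n\rho)$ stays bounded in $n$, so $T\in B_{\eps_0}$, yet $\scfunclass(T)=[n]\succeq[\Phi]$ (as $\Phi$ is sublinear), so $T\notin B$. This part is repairable: run $B$ with a fixed bounded admissible \emph{metric}, which is generating for every $T$; your continuity, $G_\delta$, and periodic-density arguments survive unchanged.

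The second gap is the one you yourself call the crux, and it is not merely unproven but false as stated: the density of $A_{\eps_0}$ with a separation constant $c=c(\eps_0)$ depending only on $\eps_0$. A weak neighbourhood may constrain $P$ itself. Take the basic neighbourhood $U=\{T:\mu(TP_0\Delta P_0)<\gamma\}$ of the identity: every $T\in U$ flips the $P$-symbol along orbits with frequency less than $\gamma$, so for all but $\eps_0$ of the points the $n$-name has at most $\gamma n/\eps_0$ sign changes, whence for \emph{every} $T\in U$
$$
\heps_{\eps_0}(X,\mu,T_{av}^n\rho)\;\le\;\log\Bigl(2\sum_{j\le \gamma n/\eps_0}\binom{n-1}{j}\Bigr)\;=\;c(\gamma,\eps_0)\,n+O(\log n),
$$
where $c(\gamma,\eps_0)\to0$ as $\gamma\to0$. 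So no Hamming cube with rate independent of the neighbourhood can be prescribed; "combinatorially complex codes" are simply unavailable inside $U$. The statement you actually need — some $T\in U$ and one large $n$ with $\heps_{\eps_0+}(X,\mu,T_{av}^n\rho)\ge c(U,\eps_0)\,n$, a neighbourhood-dependent linear bound, which still beats the sublinear $C\Phi(\delta,n)$ — is plausible, but it requires a real cutting-and-stacking or coding construction that respects all the constraint sets of the neighbourhood simultaneously (not only $P$), and nothing in the proposal supplies it. This missing construction is exactly the content that the paper's proof outsources by passing to sequential entropy and invoking Ryzhikov's theorem on the genericity of infinite Kirillov--Kushnirenko entropy; until you either carry out that construction or import such a result, the comeagerness of $A$ is not established.
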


{
\begin{remark}
     Theorem~\ref{theorem_generic_transformation} can be reformulated as two statements: the set of automorphisms $T$ satisfying $\scfunclass(T) \prec \Phi$ is meager, and the set of automorphisms~$T$ satisfying $\scfunclass(T) \succ \Phi$ is also meager.
\end{remark}
The proof of Theorem~\ref{theorem_generic_transformation} uses the connection between scaling entropy and Kirillov--Kushnirenko sequential entropy (see~\cite{Kush}) and the results from paper~\cite{R} on the genericity of infinite Kirillov--Kushnirenko entropy. We discuss the connection between scaling entropy and sequential entropy in Section~\ref{section_slow_entropy}. Let us mention that related results on generic values of related invariants were obtained in~\cite{A,AGTW}.
}

\subsection{Scaling entropy and ergodic decomposition}
The scaling entropy of a non-ergodic system can grow faster than the scaling entropy of all ergodic components. {Indeed, consider a classical example of a non-ergodic transformation of the torus (see, for instance, \cite{Kush}): $(x,y)\mapsto (x,y+x)$, where $x, y \in \mathbb{T}^2$}. The ergodic components of this transformation are rotations of the circle, which have bounded scaling entropy. However, the system itself has unbounded scaling entropy $\scfunclass = [\log n]$. Nevertheless, there exists an estimate in the opposite direction.

\begin{proposition}\label{ergodic_decomosition}
 Let $(X,\mu,T)$ be a dynamical system, and $\mu = \int \mu_\alpha d \nu(\alpha)$ its decomposition into ergodic components. Let the sequence $h = (h_n)$ be such that for any $\alpha$ from some set of positive measure, the following holds:
 $$\mathcal{H}(X,\mu_\alpha,T) \succeq h_n.$$
  Then {$\mathcal{H}(X,\mu,T) \succeq h_n$}.
\end{proposition}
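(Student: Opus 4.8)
The plan is to compute every scaling entropy occurring in the statement using one fixed admissible \emph{metric} $\rho\in\Admm(X,\mu)$ (this is legitimate by Theorem~\ref{theorem_class_invariance}, since a metric is automatically $T$-generating), and to unwind $\scfunclass(X,\mu,T)\succeq(h_n)$ into its concrete meaning: there exists a scale $\eps>0$ with $h_n\preceq\heps_\eps(X,\mu,T_{av}^n\rho)$ as $n\to\infty$. The ergodic decomposition is a measurable partition into $T$-invariant sets, so by the Proposition on restrictions of admissible semimetrics the restriction $\rho|_\alpha$ is, for $\nu$-almost every $\alpha$, an admissible metric on $(X,\mu_\alpha)$ — in particular $T$-generating — and hence $\heps_\delta(X,\mu_\alpha,T_{av}^n\rho)$ already realizes $\scfunclass(X,\mu_\alpha,T)$. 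Thus the hypothesis reads: on a set $A$ with $\nu(A)>0$, for each $\alpha\in A$ there is $\delta_\alpha>0$ with $h_n\preceq\heps_{\delta_\alpha}(X,\mu_\alpha,T_{av}^n\rho)$.

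First I would uniformize this family. For fixed $\delta$ and $n$ the map $\alpha\mapsto\heps_\delta(X,\mu_\alpha,T_{av}^n\rho)$ is measurable: the metric $T_{av}^n\rho$ does not depend on $\alpha$, and by separability the event that some union of $k$ sets of diameter $<\delta$ carries $\mu_\alpha$-measure $>1-\delta$ can be written through a countable family of sets $E$ and the measurable maps $\alpha\mapsto\mu_\alpha(E)$. Covering $A$ by the countably many measurable sets
\[
A_{\delta,C,N}=\Big\{\alpha : \heps_\delta(X,\mu_\alpha,T_{av}^n\rho)\ge C^{-1}h_n \text{ for all } n\ge N\Big\},
\]
indexed by rational $\delta,C$ and integer $N$ (using that $\heps_\delta$ is non-increasing in $\delta$, so one may shrink $\delta_\alpha$ to a rational), yields a single triple $(\delta_0,C_0,N_0)$ for which $A':=A_{\delta_0,C_0,N_0}$ satisfies $p:=\nu(A')>0$ and
\[
\heps_{\delta_0}(X,\mu_\alpha,T_{av}^n\rho)\ge C_0^{-1}h_n \qquad (\alpha\in A',\ n\ge N_0).
\]

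Next comes the core estimate, which passes from components back to the whole space. I would fix any scale $\eps$ with $0<\eps<p\,\delta_0$ and, for each $n$, take a near-optimal $\eps$-cover $X=X_0\cup X_1\cup\dots\cup X_k$ of $(X,\mu,T_{av}^n\rho)$, so that $\mu(X_0)<\eps$, $\diam_{T_{av}^n\rho}(X_j)<\eps\le\delta_0$, and $k=\exp\heps_\eps(X,\mu,T_{av}^n\rho)$. Since $\int\mu_\alpha(X_0)\,d\nu(\alpha)=\mu(X_0)<\eps$, Markov's inequality gives $\nu\{\alpha:\mu_\alpha(X_0)\ge\delta_0\}<\eps/\delta_0<p$, so the set $\{\alpha\in A':\mu_\alpha(X_0)<\delta_0\}$ still has positive measure, and I pick any $\alpha_n$ in it. The very same sets $X_1,\dots,X_k$ then form a valid $\delta_0$-cover of the component $(X,\mu_{\alpha_n},T_{av}^n\rho)$, whence
\[
\heps_\eps(X,\mu,T_{av}^n\rho)=\log k\ \ge\ \heps_{\delta_0}(X,\mu_{\alpha_n},T_{av}^n\rho)\ \ge\ C_0^{-1}h_n
\]
for all $n\ge N_0$. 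This is exactly $h_n\preceq\heps_\eps(X,\mu,T_{av}^n\rho)$, i.e.\ $\scfunclass(X,\mu,T)\succeq(h_n)$.

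The conceptually delicate point — and the reason the scheme works at all — is that the good component $\alpha_n$ is allowed to depend on $n$; this is harmless precisely because the uniformization step supplies a single lower bound $C_0^{-1}h_n$ valid simultaneously for all $\alpha\in A'$. I expect the main obstacle to lie in this bookkeeping: the measurability of $\alpha\mapsto\heps_\delta(X,\mu_\alpha,T_{av}^n\rho)$ and the reduction to one triple $(\delta_0,C_0,N_0)$, together with the verification (via the restriction proposition) that $\rho|_\alpha$ is a genuine admissible metric on almost every component. By contrast, the quantitative heart is the elementary choice $\eps<p\,\delta_0$, which guarantees that for every $n$ the $\eps$-negligible set $X_0$ stays $\delta_0$-negligible on a positive-measure family of components.
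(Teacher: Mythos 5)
Your proof is correct, and its quantitative core coincides with the pivot of the paper's own argument: take a near-optimal $\eps$-cover $X_0\cup X_1\cup\dots\cup X_k$ of $(X,\mu,T_{av}^n\rho)$, use $\mu(X_0)=\int\mu_\alpha(X_0)\,d\nu(\alpha)<\eps$ and Markov's inequality to find components on which $\mu_\alpha(X_0)$ is still small, and observe that the very same sets $X_1,\dots,X_k$ then witness an entropy bound for those components. What differs is the logical wrapper around this step. The paper argues by contradiction: it extracts a subsequence $n_j$ along which $\Phi(\eps,n_j)\prec h_{n_j}$ for every $\eps$, forms for each $j$ the set $E_j\subset E$ of components with $\mu_\alpha(X_0)<\eps/r$ (of measure at least $r$), and then uses a Fatou/Borel--Cantelli-type bound $\nu(\limsup_j E_j)\ge r>0$ to fix a \emph{single} component $\alpha$ lying in infinitely many $E_j$; the hypothesis is then invoked only for that one $\alpha$ along a sub-subsequence, so the component-dependent constants never have to be made uniform. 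You argue directly, letting the good component $\alpha_n$ vary with $n$, and you pay for this with the uniformization step: exhausting $A$ by the countably many sets $A_{\delta,C,N}$ to obtain a single triple $(\delta_0,C_0,N_0)$ valid on a set of measure $p>0$, after which the elementary choice $\eps<p\,\delta_0$ closes the argument. Each route buys something: yours yields an explicit inequality $\heps_\eps(X,\mu,T_{av}^n\rho)\ge C_0^{-1}h_n$ for \emph{all} large $n$, with no subsequence extraction, but it needs the measurability of $\alpha\mapsto\heps_\delta(X,\mu_\alpha,T_{av}^n\rho)$; your sketch of this is essentially right, with the caveat that passing to a countable family replaces sets of diameter $<\delta$ by balls centered at a countable dense set, which changes the scale by a bounded factor --- harmless, since you quantify over rational $\delta$ anyway (alternatively, measurability can be dodged entirely: if $A_{\delta_0,C_0,N_0}$ merely has positive \emph{outer} measure, it must still meet the measurable set $\{\alpha\colon\mu_\alpha(X_0)<\delta_0\}$, whose measure exceeds $1-\eps/\delta_0>1-p$). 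The paper's single-component trick needs only the measurability of $\alpha\mapsto\mu_\alpha(X_0)$, immediate from Rokhlin's theorem; on the other hand, when it writes $h_{n_{j_m}}\preceq\heps_{\eps/r}(X,\mu_\alpha,T_{av}^{n_{j_m}}\rho)$ it tacitly assumes that the scale supplied by the hypothesis for that component is at least $\eps/r$, i.e.\ that $\eps$ was chosen small relative to a scale common to a positive-measure set of components --- exactly the point your $(\delta_0,C_0,N_0)$ uniformization makes explicit.
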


The proof of Proposition~\ref{ergodic_decomosition} is given in Appendix~\ref{section_proof_erg_dec}.

\section{Examples: computing the scaling entropy}

In this section, we present several results on the explicit computation of the scaling entropy for several automorphisms. It's worth mentioning that finding the scaling entropy in general can be a challenging computation. We state several open questions on computing the scaling entropy for some well-known transformations, such as the Pascal automorphism (see~\cite{VPasc}), in Appendix~\ref{section_problems}.

We mentioned in Section~\ref{section_scaling_entropy_sequence_values} that transformations with positive Kolmogorov entropy and only them have scaling entropy $\scfunclass = [n]$, while transformations with pure point spectrum and only them have bounded scaling entropy, i.\,e., $\scfunclass = [1]$.

\subsection{Substitution dynamical systems}\label{Sec_podst}

Let $A$ be an alphabet of finite size, $|A| > 1$. We denote by $A^*$ the set of all words of finite length over the alphabet $A$. \emph{A substitution} is an arbitrary mapping $\xi\colon A \to A^*$. The mapping $\xi$ naturally extends to a mapping $\xi\colon A^* \to A^*$ and, moreover, to a mapping $\xi\colon A^{\mathbb{N}} \to A^{\mathbb{N}}$, where $A^{\mathbb{N}}$ is the space of one-sided sequences of elements from the set $A$, equipped with the standard product topology. We will assume that the substitution $\xi$ is such that there exists an infinite word $u\in A^{\mathbb{N}}$ invariant under $\xi$: $\xi(u) = u$. Let $T\colon A^{\mathbb{N}} \to A^{\mathbb{N}}$ be the left shift. A substitution dynamical system is defined as the pair $(X_\xi, T)$, where $X_\xi$ is the closure of the orbit of the point $u$ under the action of the transformation $T$. A substitution is called \emph{primitive} if for some $n \in \mathbb{N}$ and any $\alpha, \beta \in A$, the letter $\beta$ appears in the word $\xi^n(\alpha)$. If the substitution $\xi$ is primitive then there exists a unique $T$-invariant Borel probability measure $\mu^\xi$ on the compact topological space $X_\xi$.

 We say that $\xi$ is a \emph{substitution of constant length} if there exists a natural number $q \in \mathbb{N}$ such that $|\xi(\alpha)| = q$ for any $\alpha \in A$. The \emph{height $h(\xi)$ of a substitution} is defined as the largest natural number $k$ coprime with $q$ such that if $u_n = u_0$, then $k \,|\, n$. \emph{The column number} $c(\xi)$ is defined as follows:
$$
c(\xi) = \min \Big\{\big|\{\ \xi^k(\alpha)_i \colon \alpha \in A  \}\big| \colon k \in \mathbb{N}, i < q^k \Big\}.
$$
For more details about substitution dynamical systems see, e.\,g.,~\cite{Que}. The scaling entropy of a substitution dynamical system corresponding to a substitution of constant length was computed in~\cite{Z15a}.

\begin{theorem}\label{theorem_example_substitution}
   Let $\xi$ be an injective primitive substitution of constant length. Then $\scfunclass(X_\xi, \mu^\xi, T) = [\log n]$ if $c(\xi) \not = h(\xi)$, and $\scfunclass(X_\xi, \mu^\xi, T) = [1]$, if $c(\xi) = h(\xi)$.

\end{theorem}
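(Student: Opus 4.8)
The plan is to compute the scaling entropy directly via a generating semimetric, following the approach already used for the Bernoulli shift in the excerpt. By Theorem~\ref{theorem_class_invariance}, I am free to pick any convenient $T$-generating admissible semimetric. The natural choice is the \emph{cut semimetric} $\rho(x,y) = |x_0 - y_0|$ associated with the partition into preimages of the first coordinate (using the standard discrete metric on the finite alphabet $A$); since the shift is topologically conjugate to its action on $X_\xi$ and coordinates generate the Borel structure, this $\rho$ is indeed generating. As in the Bernoulli computation, the average $T_{av}^n\rho$ becomes (a scalar multiple of) the Hamming distance on the first $n$ coordinates, so the semimetric triple $(X_\xi,\mu^\xi,T_{av}^n\rho)$ is, up to factorization by sets of diameter zero, isomorphic to the finite metric space $(W_n, \nu_n, d_H)$, where $W_n$ is the set of length-$n$ words appearing in $u$, $\nu_n$ is the pushforward of $\mu^\xi$ onto the first $n$ coordinates, and $d_H$ is normalized Hamming distance. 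Thus $\heps_\eps(X_\xi,\mu^\xi,T_{av}^n\rho)$ measures the log-number of Hamming-$\eps$-balls needed to cover $\nu_n$-most of $W_n$.

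**Key combinatorial input.**

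The heart of the matter is to relate this covering number to the combinatorics of the substitution. For a primitive constant-length substitution with length $q$, a point of $X_\xi$ decomposes hierarchically: the first $n\approx q^k$ coordinates are determined by a word of length roughly $q^k$, which in turn is built from blocks $\xi^k(\alpha)$. The column number $c(\xi)$ controls how much genuine branching occurs as one reads off coordinates at a fixed position across the orbit, while the height $h(\xi)$ encodes the purely periodic (rotational) part of the system. I would invoke the structure theory of constant-length substitutions (the maximal equicontinuous factor is a $(q,h)$-adic odometer; see~\cite{Que}): when $c(\xi)=h(\xi)$, the system is, measure-theoretically, an isometric extension with no additional complexity, forcing the covering numbers to stay bounded in $n$, whereas when $c(\xi)\neq h(\xi)$ there is a strictly positive but subexponentially growing amount of branching.

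**The two cases.**

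\textbf{Case $c(\xi)=h(\xi)$.} Here I would show the system has discrete (pure point) spectrum, so that by Theorem~\ref{th_diskr_spectr} there is a $T$-invariant admissible metric and hence $\scfunclass = [1]$. Concretely, the condition $c(\xi)=h(\xi)$ means that once one fixes the position modulo $h(\xi)$, the symbol is determined by the odometer coordinate, so the system is measurably isomorphic to its equicontinuous factor; invoking Theorem~\ref{th_diskr_spectr} then closes this case.

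\textbf{Case $c(\xi)\neq h(\xi)$.} I would prove matching bounds $\heps_\eps(X_\xi,\mu^\xi,T_{av}^n\rho) \asymp \log n$. For the \emph{upper} bound, count the words of length $n\approx q^k$: primitivity gives a linear word-complexity $p(n)=O(n)$, so there are at most $O(n)$ distinct length-$n$ words, whence the covering number is $\le p(n)=O(n)$ and its logarithm is $O(\log n)$; the monotonicity remark $\scfunclass \preceq [\log p(n)]$ cited after Theorem~\ref{theorem_example_entropy_sequence} gives this immediately. For the \emph{lower} bound, I must exhibit, for small fixed $\eps$, at least $n^{c(\eps)}$ words that are pairwise Hamming-far and carry non-negligible total measure. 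This is where the condition $c(\xi)\neq h(\xi)$ is essential: the extra branching at the non-periodic columns produces, across the $k\approx\log_q n$ levels of the hierarchy, a number of distinguishable ``itineraries'' growing like a fixed power of $n$, and the unique ergodicity / primitivity guarantees each itinerary has comparable measure and that distinct ones differ on a positive proportion of coordinates.

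**Main obstacle.**

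The routine parts are the reduction to the Hamming cube and the upper bound via word-complexity. The genuine difficulty lies in the lower bound: quantifying the branching precisely in terms of $c(\xi)$ and $h(\xi)$, and in particular verifying that when $c(\xi)\neq h(\xi)$ the separated family of words is both large enough (polynomial count, giving $\log n$ after taking logarithm) and well-separated in normalized Hamming distance uniformly in $k$. Controlling the measure $\nu_n$ of these words — ensuring no single ball of radius $\eps$ swallows a $(1-\eps)$-fraction — requires the quantitative mixing/equidistribution properties of primitive constant-length substitutions, and coupling this with the $\limsup/\liminf$ estimates of Theorem~\ref{th_eps_ent_diskr} to pass between the finite approximations and the genuine $\eps$-entropy is the delicate step I would expect to occupy most of the work.
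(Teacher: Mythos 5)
Your overall architecture is reasonable, and for the record the survey itself does not prove this theorem at all: it cites Zatitskiy~\cite{Z15a} for the computation, so there is no internal proof to compare against line by line. The parts of your sketch that do work are the reduction to the Hamming semimetric on $n$-words via the cut semimetric of the coordinate partition (legitimate by Theorem~\ref{theorem_class_invariance}), the upper bound $\scfunclass \preceq [\log n]$ from linear word complexity of primitive substitutions together with the remark $\scfunclass \preceq [\log p(n)]$, and the treatment of the case $c(\xi)=h(\xi)$ via discrete spectrum and Theorem~\ref{th_diskr_spectr}. Two corrections there: the assertion that $c(\xi)=h(\xi)$ ``means'' the symbol is determined by the odometer coordinate is the \emph{conclusion} of Dekking's coincidence theorem (see~\cite{Que}), a nontrivial result you must invoke, not a reformulation of the hypothesis; and you do not need Theorem~\ref{th_eps_ent_diskr} anywhere --- the triple $(X_\xi,\mu^\xi,T_{av}^n\rho)$ factors exactly onto the finite triple $(W_n,\nu_n,d_H)$ with zero-diameter fibers, so their $\eps$-entropies are equal and no passage between finite approximations and the true entropy is required.

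The genuine gap is precisely where you locate it, but it is a gap and not merely a ``delicate step'': in the case $c(\xi)\neq h(\xi)$ you never establish the lower bound $\heps_\eps(X_\xi,\mu^\xi,T_{av}^n\rho)\succeq \log n$, and nothing in your text substitutes for it. Note that non-discrete spectrum only yields \emph{unboundedness} of the scaling entropy (Theorem~\ref{th_diskr_spectr}), and by Theorem~\ref{theorem_example_entropy_sequence} there exist systems whose scaling entropy is unbounded yet strictly below $[\log n]$ (for instance $[\log\log n]$), so no soft argument combining unboundedness with the upper bound can close the case. What is needed is a concrete combinatorial lemma of the following type: there exists $\delta>0$, depending only on $\xi$, such that for every $k$ one can exhibit on the order of $q^k$ words of length $q^k$, each of measure $\asymp q^{-k}$, that are pairwise $\delta$-separated in normalized Hamming distance; equivalently, that non-coincidence of the pure base forces any two distinct columns of $\xi^k$ to disagree on a set of positions of density bounded below uniformly in $k$, and that injectivity propagates this disagreement through the hierarchical block structure. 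Your sketch asserts the conclusion (``branching \dots growing like a fixed power of $n$,'' ``distinct ones differ on a positive proportion of coordinates'') but provides no mechanism by which the hypothesis $c(\xi)\neq h(\xi)$ produces it; that implication is the actual content of the theorem, and as written your argument proves only $\scfunclass \preceq [\log n]$ together with unboundedness.
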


One of the special cases of substitution systems described by Theorem~\ref{theorem_example_substitution} is the Morse automorphism.
    
\begin{corollary}
    The Morse automorphism has scaling entropy $\scfunclass(T) = [\log n]$.
\end{corollary}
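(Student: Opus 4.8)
The plan is to realize the Morse automorphism as the substitution dynamical system generated by the Thue--Morse substitution $\xi$ on the binary alphabet $A = \{0,1\}$ given by $\xi(0) = 01$ and $\xi(1) = 10$, and then to apply Theorem~\ref{theorem_example_substitution} directly. The corollary thus reduces to checking that $\xi$ satisfies the hypotheses of that theorem and to computing its height $h(\xi)$ and column number $c(\xi)$. First I would verify the structural hypotheses: $\xi$ has constant length $q = 2$; it is injective because $\xi(0) = 01 \neq 10 = \xi(1)$; and it is primitive already with $n = 1$, since each of the words $\xi(0)$ and $\xi(1)$ contains both letters, so every $\xi^n(\alpha)$ does as well. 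Let $u$ be the fixed point with $\xi(u) = u$ and $u_0 = 0$, which begins $0110100110010110\ldots$

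Next I would compute the two numerical invariants. For the height, I consider the return set $\{n \colon u_n = u_0\}$; reading off the prefix of $u$, this set contains both $n = 3$ and $n = 5$. Since the height is by definition the largest $k$ coprime with $q = 2$ that divides every such $n$, and $\gcd(3,5) = 1$, we obtain $h(\xi) = 1$. For the column number, the key observation is that $\xi^k(0)$ and $\xi^k(1)$ are bitwise complementary words for every $k$ (this follows inductively from $\xi(1)$ being the complement of $\xi(0)$). Consequently, for each position $i < q^k$ the pair $(\xi^k(0)_i,\xi^k(1)_i)$ equals $(0,1)$ or $(1,0)$, so each column contains both letters; the minimum in the definition of $c(\xi)$ is therefore the full alphabet size, giving $c(\xi) = 2$.

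Since $c(\xi) = 2 \neq 1 = h(\xi)$, Theorem~\ref{theorem_example_substitution} immediately yields $\scfunclass(X_\xi, \mu^\xi, T) = [\log n]$, which is the assertion of the corollary. There is no genuine obstacle here, as the substantive content is contained in Theorem~\ref{theorem_example_substitution}; the only step requiring care is the column number computation, where one must justify the complementarity property of the iterated Thue--Morse words to conclude that every column is full and hence that $c(\xi) = 2$ rather than $1$. I would therefore devote the bulk of a written-out proof to that inductive complementarity argument, the rest being routine verification.
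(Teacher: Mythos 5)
Your proposal is correct and follows exactly the route the paper intends: the corollary is stated as an immediate special case of Theorem~\ref{theorem_example_substitution}, applied to the Thue--Morse substitution $\xi(0)=01$, $\xi(1)=10$, and your verification of injectivity, primitivity, $h(\xi)=1$ (via the returns at $n=3,5$) and $c(\xi)=2$ (via the complementarity of $\xi^k(0)$ and $\xi^k(1)$) supplies precisely the details the paper leaves implicit.
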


The Chacon automorphism is not a substitution of constant length, however, it also exhibits logarithmic scaling entropy. The following theorem follows from the results from~\cite{F}.

\begin{theorem}
    The Chacon automorphism has scaling entropy $\scfunclass(T) = [\log n]$.
\end{theorem}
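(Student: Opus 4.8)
The plan is to reduce the computation to a symbolic model and then identify the scaling entropy with the logarithm of Ferenczi's measure-theoretic complexity, whose value on the Chacon map is computed in~\cite{F}. First I would pass to a generating partition of the Chacon system (the two-set partition coming from its rank-one symbolic coding) and take the associated cut semimetric $\rho$, which is $T$-generating; by Theorem~\ref{th1} this does not change the scaling entropy. For this choice the average $T_{av}^n\rho$ is precisely the normalized Hamming distance between the $n$-names of two points, so $\exp\bigl(\heps_\eps(X,\mu,T_{av}^n\rho)\bigr)$ is exactly the minimal number of Hamming $\eps$-balls needed to cover the space of $n$-names up to measure $\eps$, i.e.\ Ferenczi's complexity $K(\eps,n)$. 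Thus $\Phi_\rho(\eps,n)=\log K(\eps,n)+O(1)$, and the conversion between this ball-covering entropy and the Kantorovich version is exactly what Lemmas~\ref{lemma_Kepsentropy1} and~\ref{lemma_Kepsentropy2} guarantee; it remains to estimate $K(\eps,n)$.

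For the upper bound I would exploit the rank-one cutting-and-stacking structure. The Chacon towers have heights $h_{k+1}=3h_k+1$, hence grow geometrically, so for given $n$ and $\eps$ one can pick $k$ with $h_k\asymp n/\eps$. Outside a set of measure $\le\eps$ every point then lies in a level $j\le h_k-n$ of the $k$-th tower, and its $n$-name is determined by $j$ alone; hence there are at most $h_k\asymp n/\eps$ distinct relevant $n$-names, giving $\heps_\eps(X,\mu,T_{av}^n\rho)\le\log(n/\eps)+O(1)$. For each fixed $\eps$ the additive term $\log(1/\eps)$ is irrelevant, so $[\Phi_\rho]\preceq[\log n]$.

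For the matching lower bound I would show that a fixed positive proportion of the $\asymp n$ tower phases produce $n$-names that are pairwise $\eps$-separated in the Hamming metric, so that at least $\asymp n$ balls are required and $\heps_\eps(X,\mu,T_{av}^n\rho)\succeq\log n$, whence $[\Phi_\rho]\succeq[\log n]$. The $n$-names of distinct phases are shifts of one another, and Hamming-closeness of two such shifts would force an approximate periodicity of the Chacon sequence on a window of length $n$; the weak mixing of the Chacon map (equivalently, its aperiodicity and absence of nontrivial eigenvalues) rules this out quantitatively. This separation estimate is the delicate point and is precisely the combinatorial content supplied by~\cite{F}; everything else is the routine dictionary between the averaged-metric $\eps$-entropy and the complexity $K(\eps,n)$. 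Combining the two bounds gives $\heps_\eps(X,\mu,T_{av}^n\rho)\asymp\log n$ independently of $\eps$, so the Chacon system is stable with scaling sequence $\{\log n\}$, and therefore $\scfunclass(T)=[\log n]$.
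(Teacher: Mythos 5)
Your proposal is correct and follows essentially the same route as the paper: the survey's entire proof is the single remark that the theorem ``follows from the results from''~\cite{F}, i.e.\ from Ferenczi's computation of the measure-theoretic complexity of the Chacon map, and your argument is exactly that reduction (cut semimetric, Hamming $n$-names, complexity $K(\eps,n)$), with the rank-one upper bound made explicit and the delicate quantitative separation lower bound deferred to~\cite{F}, just as the paper defers it. One cosmetic remark: the appeal to Lemmas~\ref{lemma_Kepsentropy1} and~\ref{lemma_Kepsentropy2} is unnecessary, since Ferenczi's complexity is already the ball-covering entropy of Definition~\ref{definition_epsilon_entropy} (up to the harmless replacement of diameter-$\eps$ sets by radius-$\eps$ balls, absorbed by the quantification over $\eps$ in Definition~\ref{definition_asymp_class}), so no Kantorovich conversion is involved.
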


Connections between substitutions and stationary adic transformations were studied in~\cite{VL92}. There one can also find examples of adic realizations of substitution dynamical systems, including the Chacon automorphism.

\subsection{Horocycle flows}

Another example of a classical automorphism with logarithmic scaling entropy is a horocycle flow.
Related entropy-like invariants for classical flows were studied in~\cite{Kani, KaniVinWei, Kush}. The following theorem follows from the results from~\cite{KaniVinWei}.

\begin{theorem}
The horocycle flow on a compact surface with constant negative curvature has scaling entropy $\scfunclass(T) = [\log n]$.
\end{theorem}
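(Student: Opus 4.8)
The plan is to deduce the result from the invariance of scaling entropy under the choice of generating metric together with the polynomial divergence of horocycle orbits, and to import the quantitative complexity estimate from~\cite{KaniVinWei}. Write $X=\Gamma\backslash PSL(2,\mathbb{R})$ with $\Gamma$ cocompact, let $\mu$ be the normalized Haar (Liouville) measure, and let $T=h_1$ be the time-one map of the horocycle flow $h_s$. By Theorem~\ref{theorem_class_invariance} the class $\scfunclass(X,\mu,T)$ does not depend on the generating semimetric, so I would fix $\rho$ to be the admissible metric induced by a smooth left-invariant Riemannian metric on $X$; it is a genuine metric, hence $T$-generating, and the averaged semimetric $T_{av}^n\rho(x,y)=\frac1n\sum_{i=0}^{n-1}\rho(h_i x,h_i y)$ compares nearby orbit segments of the flow. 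Thus $\heps_\eps(X,\mu,T_{av}^n\rho)$ is, up to the equivalence $\asymp$, the logarithm of the number of Bowen-type $\eps$-balls needed to cover $X$ outside a set of measure $\eps$.

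The dynamical heart of the argument is the renormalization identity $g_{-t}h_s g_t=h_{e^{-t}s}$, where $g_t$ is the geodesic flow. It forces nearby points to diverge only polynomially under $h_s$: if $x$ and $y$ differ by a small displacement in the transverse (geodesic) direction, then $\rho(h_s x,h_s y)$ grows essentially linearly in $s$ rather than exponentially. Consequently, a single $\eps$-ball for $\rho$ splits, under $n$ steps of averaging, into on the order of a polynomial-in-$n$ number of $T_{av}^n\rho$-balls of radius $\eps$. This yields the upper bound $\heps_\eps(X,\mu,T_{av}^n\rho)\preceq \log n$ for every fixed $\eps$. For the matching lower bound one uses that the horocycle flow is genuinely expanding in the transverse direction, so that orbit segments of length $n$ are actually $\eps$-separated on a set of nearly full measure, whence at least polynomially many balls are required and $\heps_\eps(X,\mu,T_{av}^n\rho)\succeq \log n$. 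Since both bounds hold uniformly for all sufficiently small $\eps$, the system is stable and $\scfunclass=[\log n]$.

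To make the two bounds quantitative and uniform in $\eps$ I would invoke the slow-entropy/measure-complexity computation of~\cite{KaniVinWei}: the covering numbers they control for the horocycle flow grow polynomially in the length of the orbit segment, and these covering numbers coincide, up to $\asymp$, with $\exp\heps_\eps(X,\mu,T_{av}^n\rho)$ via the Kantorovich reformulation (Definition~\ref{definition_epsilon_entropy_K} together with Lemmas~\ref{lemma_Kepsentropy1} and~\ref{lemma_Kepsentropy2}). Translating their polynomial bound through this dictionary produces exactly the class $[\log n]$. Finally, one checks that passing from the flow to its time-one map does not change the asymptotic class, which follows because $T_{av}^n\rho$ for the time-one map and the corresponding time-averaged metric of the flow are bi-Lipschitz comparable.

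The main obstacle is precisely this last dictionary: matching the scaling-entropy invariant, defined through $\eps$-entropy of averaged \emph{metrics} with an almost-covering condition, to the Hamming/slow-entropy covering numbers of~\cite{KaniVinWei}, and verifying that the upper and lower estimates are genuinely uniform in $\eps$ so that the class is stable and independent of $\eps$. A secondary technical point is the lower bound: one must rule out the possibility that transverse separations collapse on a large-measure set, which requires an effective, measure-theoretic version of the polynomial divergence rather than a merely pointwise one.
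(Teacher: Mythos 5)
Your proposal is correct and takes essentially the same route as the paper: the paper's entire proof consists of citing the slow-entropy computation for parabolic flows in~\cite{KaniVinWei}, and your argument --- polynomial divergence of horocycle orbits yielding polynomial covering numbers, imported quantitatively from~\cite{KaniVinWei} and translated into the scaling-entropy framework via the invariance theorem --- is precisely that derivation spelled out. The technical points you flag (the dictionary between slow-entropy covering numbers and the $\eps$-entropy of averaged metrics, and the effective measure-theoretic lower bound) are exactly what the cited results supply.
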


It is noteworthy that despite the significant difference in scaling entropy, the horocycle flow and the Bernoulli automorphism share the same multiple Lebesgue spectrum.

Also, note that for transformations with logarithmic scaling entropy, it makes sense to compute a refinement of our invariant - the exponential scaling entropy (see Section~\ref{section_exp_scaling_entropy}).

\subsection{Adic transformation on the graph of ordered pairs}\label{section_adic}
	\newcommand{\edgemark}{\mathfrak c}

       Adic transformations (Vershik automorphisms), including the adic transformation on the graph of ordered pairs, were studied~\cite{V81, V82, VZ17, VZ18a, VZ18b, Z15b, Vep20b}. The construction presented below is key to proving Theorems~\ref{theorem_example_entropy_sequence} and~\ref{theorem_complete_description}. It provides an explicit realization of any subadditive and increasing scaling entropy sequence.

        Consider an infinite graded graph $\Gamma = (V, E)$. The set of vertices $V$ of the graph $\Gamma$ is a disjoint union of sets $V_n = \{0,1\}^{2^n}$, where $n \geq 0$. The set of edges $E$ is defined together with the coloring $\edgemark \colon E \to \{0,1\}$ as follows. Let $v_n \in V_n$ and $v_{n+1} \in V_{n+1}$. An edge $e = (v_n, v_{n+1})$ belongs to $E$ if the word $v_n$ is a prefix or suffix of the word $v_{n+1}$ and is labeled with the symbol $0$ or $1$ respectively.

        A Borel measure on the space $X$ of all infinite paths in the graph $\Gamma$ is called \emph{central} if, given a fixed tail of a path, all its starting points are equiprobable.

        Let us define \emph{the adic transformation} $T$ on the space of paths $X$. Let $x = \{e_i\}_{i=1}^\infty$ be an infinite path. Find the smallest $n$ such that $\edgemark(e_n) = 0$. Define the path $T(x) = \{u_i\}$ as follows. For $i \ge n+1$, we have $u_i = e_i$; $\edgemark(u_n) = 1$, and $\edgemark(u_i) = 0$ for all $i < n$. For any central measure $\mu$, the transformation $T$ is an automorphism of the measure space $(X, \mu)$.

        Let us fix a certain sequence $\sigma = \{\sigma_n\}$ consisting of zeros and ones. We will construct the corresponding central measure $\mu^\sigma$ on the space $X$. A Borel measure $\mu$ on the space $X$ is uniquely determined by an agreeing system of measures $\mu_n$ on cylindrical sets corresponding to finite paths of length~$n$. In terms of $\mu_n$, the centrality of the measure $\mu$ means that for any $n$, the measure $\mu_n$ depends only on the endpoint of the path. Let $\nu_n$ be the projection of $\mu_n$ onto the vertex set $V_n$, corresponding to the endpoint of the path. The system of measures $\nu_n$ uniquely determines the central measure~$\mu$.

        Let us construct a sequence of sets $V_n^\sigma$, where $V_n^\sigma \subset V_n$. We set $V_0^\sigma = V_0$. For $n \ge 1$, we define

        $$
            V_n^\sigma = 
            \begin{cases}
             \{ab \colon a,b \in V_{n-1}^\sigma\}, \quad & \text{if } \sigma_n = 1;\\
             \{aa \colon a \in V_{n-1}^\sigma\}, \quad & \text{if } \sigma_n = 0. 
            \end{cases}
        $$
        Let $\nu_n^\sigma$ denote the uniform measure on the set $V_n^\sigma \subset V_n$. The measure $\mu^\sigma$ constructed based on this system is defined correctly and is central.

\begin{theorem}
    The adic transformation on the paths of the graph of ordered pairs with measure $\mu^\sigma$ has scaling entropy $\scfunclass(T) = [2 ^{\sum_{i=0}^{\log n} \sigma_i}]$.
\end{theorem}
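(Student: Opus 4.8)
The plan is to compute the scaling entropy with the cut semimetric and to reduce, along the dyadic subsequence of times $N=2^m$, to an explicit finite metric triple built from the sets $V_m^\sigma$. First I would fix the \emph{cut semimetric} $\rho$ associated with the bottom coordinate of a path, $\rho(x,y)=|x_0-y_0|$, where $x_0$ is the letter read at the lowest edge. This is a bounded admissible $T$-generating semimetric, so by the Invariance Theorem~\ref{theorem_class_invariance} it suffices to determine the asymptotic class of $\heps_\eps(X,\mu^\sigma,T_{av}^N\rho)$. The adic transformation admits at each level $m$ a Rokhlin tower of height $2^m$ whose base is indexed by the level-$m$ vertex $v_m\in V_m^\sigma$, distributed by the uniform measure $\nu_m^\sigma$; as $T^i$ runs through one full tower it reads off, one by one, the $2^m$ letters of the word $v_m$ (in a fixed, permutation-invariant order). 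Consequently, for $N=2^m$ and points at the bases of their towers, $T_{av}^{2^m}\rho(x,y)$ equals the normalized Hamming distance $\bar d_m(v_m(x),v_m(y))=2^{-m}d_H(v_m(x),v_m(y))$, and up to a window/boundary correction of small relative measure the whole triple $(X,\mu^\sigma,T_{av}^{2^m}\rho)$ agrees with the pullback of $(V_m^\sigma,\nu_m^\sigma,\bar d_m)$.

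The heart of the argument is to identify the metric triple $(V_m^\sigma,\nu_m^\sigma,\bar d_m)$ explicitly, using the recursion for $\bar d$ dictated by the construction of $V_m^\sigma$. A doubling step ($\sigma_n=0$, $a\mapsto aa$) is a measure-preserving bijection $V_{n-1}^\sigma\to V_n^\sigma$ with $\bar d_n(aa,bb)=\bar d_{n-1}(a,b)$, so it leaves the triple \emph{isometric}. A concatenation step ($\sigma_n=1$, $(a,b)\mapsto ab$) realizes $V_n^\sigma$ as the product $V_{n-1}^\sigma\times V_{n-1}^\sigma$ with the product uniform measure and the averaged metric $\bar d_n((a_1,a_2),(b_1,b_2))=\tfrac12\bigl(\bar d_{n-1}(a_1,b_1)+\bar d_{n-1}(a_2,b_2)\bigr)$. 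Since the self-product of a normalized Hamming cube under this averaged metric is again a normalized Hamming cube of doubled dimension, induction on $m$ shows that $(V_m^\sigma,\nu_m^\sigma,\bar d_m)$ is isomorphic, as a metric triple, to the binary cube $\{0,1\}^{2^{k}}$ with uniform measure and normalized Hamming distance, where $k=\sum_{i=1}^m\sigma_i$ is the number of concatenation steps.

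From here the computation mirrors the Bernoulli shift example: for any fixed $\eps\in(0,\tfrac12)$ the central limit theorem bounds the measure of each $\bar d$-ball of radius $\eps$ by $2^{-c(\eps)2^{k}}$, while the trivial cover by singletons gives the matching upper bound, so $\heps_\eps(V_m^\sigma,\nu_m^\sigma,\bar d_m)\asymp 2^{k}=2^{\sum_{i=1}^m\sigma_i}$. Transferring this back to $(X,\mu^\sigma,T_{av}^{2^m}\rho)$ — absorbing the window/boundary discrepancy through the robustness of $\heps_\eps$ under small $m$-norm perturbations (Lemma~\ref{lemma_semicontinuity}) and the continuity of $\eps$-entropy in the $\mdist$-metric — yields $\heps_\eps(X,\mu^\sigma,T_{av}^{2^m}\rho)\asymp 2^{\sum_{i=0}^{m}\sigma_i}$, the extra term $\sigma_0$ changing the value only by a bounded factor. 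Finally, for general $N$ with $2^m\le N<2^{m+1}$ I would sandwich $\heps_\eps(X,\mu^\sigma,T_{av}^{N}\rho)$ between its values at the two dyadic endpoints, which differ by the factor $2^{\sigma_{m+1}}\le 2$; since $\lfloor\log_2 N\rfloor=m$ on this block, this gives $\heps_\eps(X,\mu^\sigma,T_{av}^{N}\rho)\asymp 2^{\sum_{i=0}^{\lfloor\log_2 N\rfloor}\sigma_i}$ and hence $\scfunclass(T)=[2^{\sum_{i=0}^{\log n}\sigma_i}]$.

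The main obstacle is the reduction in the first paragraph: making precise that averaging the cut semimetric over an incomplete or shifted tower is asymptotically the same metric triple as the clean space $(V_m^\sigma,\nu_m^\sigma,\bar d_m)$. A window of length $2^m$ read starting from an arbitrary height straddles two consecutive tower words, so the first $2^m$ symbolic coordinates are not exactly distributed as $\nu_m^\sigma$; showing that this discrepancy affects only a vanishing fraction of the mass and perturbs the averaged distance by a controlled amount, together with checking that the interpolation between dyadic times does not disturb the asymptotic class, are the steps requiring the most care. The combinatorial identification in the second paragraph, by contrast, is clean and is where the exponent $2^{\sum_i\sigma_i}$ genuinely originates.
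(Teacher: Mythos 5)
Your skeleton follows the natural route, and two of its components are sound: the inductive identification of $(V_m^\sigma,\nu_m^\sigma,\bar d_m)$ with the normalized Hamming cube $\{0,1\}^{2^{k_m}}$, $k_m=\sum_{i\le m}\sigma_i$ (this is indeed where the exponent originates), and the dyadic interpolation at the end, which works because $T_{av}^{2^m}\rho\le 2\,T_{av}^{N}\rho$ and $T_{av}^{N}\rho\le 2\,T_{av}^{2^{m+1}}\rho$ for $2^m\le N<2^{m+1}$, at the harmless cost of replacing $\eps$ by $\eps/2$. The genuine gap is the step you flag as the ``main obstacle,'' and the mechanism you propose for it fails. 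The identity $T_{av}^{2^m}\rho(x,y)=\bar d_m(v_m(x),v_m(y))$ holds only when both points lie at the base of the level-$m$ tower, a set of measure $2^{-m}$, which is far smaller than the measure $\eps$ one is allowed to discard; for typical pairs the averaged semimetric is the Hamming distance between two \emph{shifted} windows straddling consecutive level-$m$ words. This window triple is \emph{not} a small perturbation of the pullback of the cube, neither in $m$-norm (so Lemma~\ref{lemma_semicontinuity} does not apply) nor in $\mdist$ under any coupling. Concretely, take $\sigma_1=\sigma_M=1$ and $\sigma_i=0$ for all other $i$, and look at time $2^M$: since all $\sigma_i$ with $i>M$ vanish, every window is a cyclic shift of a word $(ab)^{2^{M-2}}(cd)^{2^{M-2}}$, i.e.\ two $2$-periodic patterns separated by a boundary at a uniformly distributed location. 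For independent points the distance $T_{av}^{2^M}\rho(x,y)$ then takes values filling a whole interval with probability bounded below uniformly in $M$, whereas all distances in the $16$-point cube $(V_M^\sigma,\nu_M^\sigma,\bar d_M)$ lie in $\{0,\tfrac14,\tfrac12,\tfrac34,1\}$; hence for \emph{every} coupling the $L^1$-norm (a fortiori the $m$-norm) of the difference of the two semimetrics is bounded below by a constant independent of $M$, and $\mdist$ of the two triples does not tend to $0$. The theorem remains true in this example, but not because the two triples are close.

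This matters quantitatively, not just formally. Once one accepts that the actual triple is the space of windows over all $2^m$ offsets, the naive estimate — a union over offsets of Lipschitz images of cubes — costs an additive $\log(2^m)=m$ in the entropy upper bound. That is harmless when $2^{k_m}\gtrsim m$, but it destroys the bound $\preceq 2^{k_m}$ exactly in the regime this construction exists to realize, namely slowly growing scaling entropy, where $2^{k_m}=o(m)$ (few concatenation levels). A complete proof must exploit the hierarchical periodicity of the words in $V_m^\sigma$ (above the highest level $\le m$ with $\sigma_i=1$ every word is periodic, and this propagates recursively through the concatenation levels) to show that the windows over \emph{all} offsets cluster into $\exp(O_\eps(2^{k_m}))$ balls of radius $\eps$; this is a genuinely combinatorial argument, not a perturbation one. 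Likewise your lower bound cannot be obtained by restricting to the tower base (again measure $2^{-m}<\eps$); one needs an argument valid at every offset, for instance that at any offset the window is a function of the $2^{k_m}$ independent fair bits of the longer of the two straddled words, each bit controlling a set of coordinates of density at least $2^{-k_m-1}$, so that every $\eps$-ball has measure at most $2^{-c\,2^{k_m}}$. Both steps are precisely what your proposal delegates to ``robustness under small perturbations,'' and that robustness is not available. (Note also that the survey itself states this theorem without proof, citing~\cite{Z15b}, so the comparison here is with what a complete argument requires rather than with a proof printed in the paper.)
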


\section{Scaling entropy of a group action}\label{section_groups}
In this section, we present some introductory facts and examples related to the generalization of the notion of scaling entropy for actions of general discrete groups. The classical entropy theory can be to a large extent applied to actions of amenable groups (see~\cite{OW87}).
The aforementioned theory of scaling entropy can be only partially extended from the case of a single automorphism to group actions. Most of the results we will discuss in this section deal with amenable groups. However, we provide a definition of scaling entropy for actions of arbitrary countable groups which is invariant under the change of admissible generating semimetric.
The scaling entropy for group actions was studied in~\cite{Z15b, PZ15, Vep20b, Vep22}. Related invariants of p.m.p. actions of amenable groups were also studied~\cite{KT, Lo}.

\subsection{Definition of scaling entropy of a group action}

Consider a countable group $G$ acting by automorphisms on the standard probability space $(X,\mu)$.

\begin{definition}
\emph{Equipment} of a countable group $G$ is a sequence $\sigma = \{G_n\}_{n \in \mathbb{N}}$ of finite subsets of the group, for which $|G_n| \to +\infty$. We denote by $(G,\sigma)$ a group with chosen equipment.
\end{definition}

For a semimetric $\rho$ on $(X,\mu)$ and a subset $H \subset G$, we denote by $H^{av} \rho$ the averaging of the semimetric $\rho$ over the shifts by elements $g \in H$:

$$
H^{av} \rho (x,y) = \frac{1}{|H|}\sum_{g \in H} \rho(gx,gy).
$$

For an action of an equipped group $G$ with equipment $\sigma=\{G_n\}$ on the standard probability space $(X,\mu)$ and a semimetric $\rho \in \Adm(X,\mu)$, we define the function $\Phi_\rho$ on $\mathbb{R}_+\times \mathbb{N}$ similarly to formula~\eqref{Phi_metr}:
$$
\Phi_\rho(\eps, n) = \heps_\eps(X,\mu,G_n^{av}\rho).
$$

For group actions, the cases of admissible metrics and semimetrics differ. For averages of admissible \emph{metrics}, an analog of Lemma~\ref{lem_asymp_ineq}, which was proved in~\cite{Z15b}, holds. The corresponding statement for semimetrics (Theorem~\ref{theorem_ivariance_groups}) will appear later.
 
\begin{lemma}
Let $\rho_1,\rho_2 \in \Adm(X,\mu)$. If $\rho_1$ is a metric, then for any $\epsilon>0$, there exists a $\delta>0$ such that
$$
\Phi_{\rho_2}(\eps, n) \preceq \Phi_{\rho_1}(\delta, n), \qquad n \to +\infty. 
$$
In other words,
$$
\Phi_{\rho_2} \preceq \Phi_{\rho_1}.
$$
\end{lemma}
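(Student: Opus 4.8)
The plan is to establish, once and for all, a single \emph{pointwise} domination of $\rho_2$ by $\rho_1$ of the form
\[
\rho_2(x,y) \le \eps_0 + C\,\rho_1(x,y) + R(x) + R(y) \qquad \mu^2\text{-a.e.},
\]
where $C$ is a constant and $R \in L^1(X,\mu)$ has arbitrarily small integral, and then simply push this inequality through the averaging operator $\rho \mapsto G_n^{av}\rho$, which is affine and monotone. The point is that such a sum-type error behaves well under averaging: since each $g \in G_n$ preserves $\mu$, the one-variable averages $R_n(x) := \frac{1}{|G_n|}\sum_{g\in G_n} R(gx)$ satisfy $\int R_n\,d\mu = \int R\,d\mu$ uniformly in $n$, so a single Markov estimate removes a \emph{global} small-measure set on which they are large. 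This is the crucial gain over a naive per-cell argument, where the exceptional sets would accumulate over the $\exp\!\big(\heps_\delta(G_n^{av}\rho_1)\big)$ cells and blow up.

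To obtain the displayed domination I would proceed as follows. Since $\rho_1$ is a metric, $\rho' := \rho_1 + \rho_2$ is again an admissible metric (the sum of admissible semimetrics is admissible, and $\rho_1$ already separates points mod~$0$); this is exactly the step where the hypothesis ``$\rho_1$ is a metric'' is used. The semimetric $\rho_2$ is $1$-Lipschitz, hence continuous, with respect to $\rho' \times \rho'$, so by Theorem~\ref{th_two_metrics} applied to the pair $\rho',\rho_1$ it is $\rho_1\times\rho_1$-continuous on $X''\times X''$ for a set $X''$ of measure close to $1$. Using the Radon property (Theorem~\ref{ThRadonMeasure}) I pass to a $\rho_1$-compact subset $X_0 \subset X''$ with $\mu(X_0) > 1-\eta$; on $X_0 \times X_0$ the function $\rho_2$ is then uniformly $\rho_1$-continuous and bounded, say by $M$, with modulus $\beta$: $\rho_1 < \beta \Rightarrow \rho_2 < \eps_0$. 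The elementary trick $\rho_2 \le \eps_0 + \tfrac{M}{\beta}\rho_1$ on $X_0\times X_0$ (split according to whether $\rho_1 < \beta$) gives a genuine Lipschitz-type bound with $C = M/\beta$ \emph{independent of the later threshold $\delta$}. Finally I extend this off $X_0$ by composing with the retraction $\pi$ that fixes $X_0$ and sends its complement $B = X\setminus X_0$ to a fixed base point $x_0 \in X_0$ chosen so that $\rho_1(\cdot,x_0),\rho_2(\cdot,x_0)\in L^1(\mu)$: two applications of the triangle inequality convert the contributions of the points sent to $x_0$ into the additive terms $R(x)+R(y)$ with $R = \tfrac{M}{\beta}\,\rho_1(\cdot,x_0)\mathbf 1_B + \rho_2(\cdot,x_0)\mathbf 1_B$, and $\int R\,d\mu \to 0$ as $\mu(B)\to 0$ by absolute continuity of the integral.

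It then remains to turn the averaged inequality $G_n^{av}\rho_2 \le \eps_0 + C\,G_n^{av}\rho_1 + R_n(x) + R_n(y)$ into an entropy bound. With $\tau := \int R\,d\mu$, the set $Z_n = \{R_n > \sqrt\tau\}$ has $\mu(Z_n) < \sqrt\tau$ by Markov. Taking an optimal $\delta$-cover for $\heps_\delta(X,\mu,G_n^{av}\rho_1)$ into cells of $G_n^{av}\rho_1$-diameter $<\delta$ plus a remainder of measure $<\delta$ (Definition~\ref{definition_epsilon_entropy}), and deleting $Z_n$ from each cell, I get a new cover by the same number of cells, on each of which $G_n^{av}\rho_2 < \eps_0 + C\delta + 2\sqrt\tau$, with remainder measure $<\delta + \sqrt\tau$. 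Choosing $\delta < \eps_0/C$ and $\tau$ small yields $\heps_{3\eps_0}(X,\mu,G_n^{av}\rho_2) \le \heps_\delta(X,\mu,G_n^{av}\rho_1)$ for \emph{every} $n$; setting $\eps = 3\eps_0$ gives $\Phi_{\rho_2}(\eps,n)\le \Phi_{\rho_1}(\delta,n)$, which is even stronger than the asserted $\preceq$.

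The main obstacle, and the part I would write most carefully, is the reduction to the sum-type error $R(x)+R(y)$: a general $L^1$-small perturbation of a semimetric cannot be controlled cell-by-cell, and it is precisely the decomposition via the retraction $\pi$ (together with $\mu$-invariance of the averaging) that localizes the defect to single coordinates and lets a single Markov inequality suffice. A secondary technical point is upgrading the virtual/Luzin continuity of $\rho_2$ to genuine \emph{uniform} continuity and boundedness, for which compactness from the Radon property of $\mu$ with respect to $\rho_1$ is essential.
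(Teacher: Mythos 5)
Your closing reduction is sound: averaging the pointwise bound, using $\int R_n\,d\mu=\int R\,d\mu$ (measure preservation), one Markov estimate for $Z_n=\{R_n>\sqrt\tau\}$, and deleting this single set from an optimal cover for $G_n^{av}\rho_1$ is a correct way to convert the domination into $\heps_{3\eps_0}(X,\mu,G_n^{av}\rho_2)\le\heps_\delta(X,\mu,G_n^{av}\rho_1)$ for all $n$; in substance this is the same mechanism the paper packages as Lemma~\ref{lemma_semicontinuity} (note that any \emph{semimetric} error $D$ with small integral is automatically of your sum type, since $D(x,y)\le\int D(x,z)\,d\mu(z)+\int D(y,z)\,d\mu(z)$). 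The opening moves are also fine: $\rho_1+\rho_2$ is an admissible metric (this is indeed where the hypothesis on $\rho_1$ enters), and Theorems~\ref{th_two_metrics} and~\ref{ThRadonMeasure} do yield a $\rho_1$-compact $X_0$ of large measure on which $\rho_2$ is bounded by $M$ and uniformly $\rho_1$-continuous, whence $\rho_2\le\eps_0+(M/\beta)\rho_1$ on $X_0\times X_0$. So the entire lemma rests on one claim: for each $\eps_0$ and each prescribed $\tau_0>0$ there is a \emph{single} pair $(C,R)$ with $\int R\,d\mu<\tau_0$ and $\rho_2\le\eps_0+C\rho_1+R(x)+R(y)$ a.e.

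That claim is where the proposal has a genuine gap. Your $R=\tfrac{M}{\beta}\rho_1(\cdot,x_0)\mathbf 1_B+\rho_2(\cdot,x_0)\mathbf 1_B$ has the constants $M$ and $\beta$ built into it, and these are functions of $X_0$: to make $\int R\,d\mu$ small you must shrink $\mu(B)$, but enlarging $X_0$ degrades both the bound $M$ (if $\rho_2$ is unbounded) and the modulus $\beta$, with no control whatsoever. ``Absolute continuity of the integral'' applies to a \emph{fixed} integrand over shrinking sets, whereas here the integrand itself grows as $B$ shrinks; the two effects need not balance. Already for $\rho_2(x,y)=|f(x)-f(y)|$ on $[0,1]$ with $\rho_1(x,y)=|x-y|$, a function $f$ with rapid oscillations confined to intervals of tiny measure located at unit distance from $x_0$ forces the following dichotomy for every choice of $X_0$: either $B$ contains a fixed fraction of such an interval (so $\int_B\rho_1(\cdot,x_0)\,d\mu\gtrsim\mu(B)$), or $\beta(X_0)$ is smaller than any prescribed function of $\mu(B)$; consequently $\tfrac{M}{\beta}\int_B\rho_1(\cdot,x_0)\,d\mu$ need not tend to $0$, and nothing in the proposal rules this out. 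The gap is repairable, but it needs the quantifiers in a different order: first truncate, replacing $\rho_2$ by $\rho_2^N=\min(\rho_2,N)$ with $N$ fixed so that $\|\rho_2-\rho_2^N\|_m$ is small (this error is absorbed by Lemma~\ref{lemma_semicontinuity}, since $G_n^{av}$ is an $m$-norm contraction); then choose $X_0$ with $\mu(B)<\tau_0/N$ and, off $X_0\times X_0$, use the trivial bound $\rho_2^N\le N$, i.e.\ $R=N\mathbf 1_B$, so that the (possibly enormous) constant $C$ never enters $R$ at all. With that correction your argument closes and becomes a genuinely more hands-on alternative to the paper's route, which instead shows that the set of semimetrics satisfying the conclusion is closed in the $m$-norm while semimetrics dominated by $C\rho_1$ are $m$-norm dense (approximation by cut semimetrics and by $|f(x)-f(y)|$ with $f$ Lipschitz for $\rho_1$; see the proof of Lemma~\ref{lem_asymp_ineq_exp} and~\cite{Z15a,Z15b}).
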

\begin{corollary}\label{cor2}
If $\rho_1,\rho_2 \in \Adm(X,\mu)$ are metrics, then $\Phi_{\rho_1} \asymp \Phi_{\rho_2}$.
\end{corollary}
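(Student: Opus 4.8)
The plan is to derive the corollary directly from the preceding Lemma by applying it twice, once in each direction. The crucial feature of that Lemma is that its hypothesis singles out one of the two semimetrics and requires it to be a genuine \emph{metric} (not merely a semimetric); the conclusion $\Phi_{\rho_2} \preceq \Phi_{\rho_1}$ then holds for \emph{any} admissible $\rho_2$. In the present situation both $\rho_1$ and $\rho_2$ are admissible metrics, so each of them is eligible to play the role of the distinguished metric, and this symmetry is exactly what upgrades the one-sided domination into a two-sided equivalence.

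First I would apply the Lemma with $\rho_1$ designated as the metric and $\rho_2$ as the arbitrary admissible semimetric. This gives, for every $\eps>0$, a choice of $\delta>0$ with $\Phi_{\rho_2}(\eps,n)\preceq \Phi_{\rho_1}(\delta,n)$ as $n\to+\infty$, that is, $\Phi_{\rho_2}\preceq \Phi_{\rho_1}$ in the sense of Definition~\ref{definition_asymp_class}. Then I would apply the same Lemma with the roles of $\rho_1$ and $\rho_2$ interchanged: since $\rho_2$ is likewise a metric, the Lemma yields $\Phi_{\rho_1}\preceq \Phi_{\rho_2}$.

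Combining the two relations $\Phi_{\rho_2}\preceq \Phi_{\rho_1}$ and $\Phi_{\rho_1}\preceq \Phi_{\rho_2}$ and invoking the definition of the relation $\asymp$ (namely $\Phi\asymp\Psi$ precisely when $\Phi\preceq\Psi$ and $\Psi\preceq\Phi$), I conclude $\Phi_{\rho_1}\asymp \Phi_{\rho_2}$, which is the assertion of the corollary.

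There is no real obstacle here, as the corollary is an immediate consequence of the Lemma; the only point worth flagging is \emph{why} both semimetrics are assumed to be metrics. For general admissible \emph{semimetrics} one direction of the Lemma may fail, because a semimetric can collapse points and hence its translates need not separate the same information as a true metric; the passage to semimetrics therefore genuinely requires the stronger tool announced as Theorem~\ref{theorem_ivariance_groups}, rather than this symmetric double application of the Lemma.
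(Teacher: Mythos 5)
Your proof is correct and is exactly the argument the paper intends: the corollary is stated as an immediate consequence of the preceding lemma, obtained by applying it twice with the roles of $\rho_1$ and $\rho_2$ interchanged (each being a metric, hence eligible as the distinguished one) and invoking the definition of $\asymp$. Your closing remark about why genuine metrics are needed --- and why the semimetric case requires the separate generating-semimetric result --- is also consistent with the paper's discussion.
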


Thus, as before, the scaling entropy of an action of an equipped group $(G,\sigma)$ on $(X,\mu)$ can be defined:
\begin{definition}
\emph{The scaling entropy} of an action of an equipped group $(G,\sigma)$ on $(X,\mu)$ is the equivalence class $[\Phi_{\rho}]$ for some (hence for any) metric $\rho \in \Adm(X,\mu)$. We will denote this class by $\scfunclass(X,\mu,G, \sigma)$.
\end{definition}
Let us emphasize that the asymptotic class $\scfunclass(X,\mu, G, \sigma)$ is a measure-theoretic invariant of an action.

\begin{definition}
We say that equipment $\sigma = \{G_n\}_{n \in \mathbb{N}}$ is \emph{suitable} if for any $g \in \cup G_n$ and any $\delta>0$, there exists a number $k \in \mathbb{N}$ such that for each $n$, there exist elements $g_1, \dots, g_k \in G$, such that
$$
\Big| gG_n \setminus \bigcup_{j=1}^{k}G_ng_j \Big| \leq \delta |G_n|.
$$
\end{definition}

\begin{remark}
Equipment is always suitable if it is
\begin{itemize}
\item equipment of a commutative group;
\item a F\o lner sequence of an amenable group;
\item a sequence of balls in a finitely generated group;
\item a sequence of finite expanding subgroups of any group.
\end{itemize}
Note that not every group has suitable equipment by subsets that together generate the whole group. For instance, the free group $F_A$ over an infinite alphabet $A$ does not have such equipment.
\end{remark}

We say that a semimetric $\rho \in \Adm(X,\mu)$ is \emph{generating for the action of an equipped group} $(G,\sigma)$ (or $G$-\emph{generating}) if its translations $g^{-1}\rho, \ g \in \cup G_n$ by the elements from the union of equipment $\sigma$ separate points of some subset of full measure. In the case of an amenable group $G$, equipped with a F\o lner sequence, we call a semimetric $\rho$ \emph{generating}, if all of its translations together separate points of some subset of full measure. The following theorem was proved in~\cite{Z15b}.

\begin{theorem}\label{theorem_ivariance_groups}
Let $\sigma = \{G_n\}$ be suitable equipment of a group~$G$. Let $\rho_1,\rho_2 \in \Adm(X,\mu)$. If $\rho_1$ is a $G$-generating semimetric, then
$$
\Phi_{\rho_2} \preceq \Phi_{\rho_1}.
$$
In particular,
$$[\Phi_{\rho_1}] = \scfunclass(X,\mu,G,\sigma).$$
\end{theorem}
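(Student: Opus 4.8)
The plan is to follow the scheme of the single-automorphism Lemma~\ref{lem_asymp_ineq}, upgrading each step so that the averaging set $G_n$ replaces the interval $\{e,T,\dots,T^{n-1}\}$ and the suitability of $\sigma$ plays the role that commutation with the shifts played there. It suffices to prove the inequality $\Phi_{\rho_2}\preceq\Phi_{\rho_1}$ for every $\rho_2\in\Adm(X,\mu)$. The concluding equality $[\Phi_{\rho_1}]=\scfunclass(X,\mu,G,\sigma)$ then follows by applying this inequality twice relative to a fixed reference metric $\rho\in\Adm(X,\mu)$ (recall $[\Phi_\rho]=\scfunclass$ by definition, and any metric is generating): once with the generating $\rho_1$ as the dominating semimetric and $\rho$ as $\rho_2$, giving $\Phi_\rho\preceq\Phi_{\rho_1}$, and once with the metric $\rho$ itself as the dominating semimetric and $\rho_1$ as $\rho_2$, giving $\Phi_{\rho_1}\preceq\Phi_\rho$.

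For the inequality itself, fix $\eps>0$; I must produce $\delta>0$ with $\heps_\eps(X,\mu,G_n^{av}\rho_2)=O\big(\heps_\delta(X,\mu,G_n^{av}\rho_1)\big)$. First I would convert the generation hypothesis into a thresholded domination of $\rho_2$ by finitely many translates of $\rho_1$. Since the translates $g^{-1}\rho_1$, $g\in\bigcup_nG_n$, separate points mod~$0$, the set $\{\rho_2\ge\eps\}$ is contained mod~$0$ in $\bigcup_{g}\{\rho_1(g\,\cdot\,,g\,\cdot\,)>0\}$; by continuity of measure along this countable union I can choose a finite $F\subset\bigcup_nG_n$ and a threshold $\delta_0>0$ so that the exceptional set $\{\rho_2\ge\eps,\ \max_{g\in F}\rho_1(g\,\cdot\,,g\,\cdot\,)\le\delta_0\}$ has arbitrarily small $\mu^2$-measure. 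Truncating $\rho_2$ at a large level $M$ (which leaves $\heps_\eps$ unchanged for $\eps<M$, since $\rho_2\wedge M$ is again a semimetric and only alters distances $\ge M$) and using summability to make the tail $\int_{\rho_2>M}\rho_2\,d\mu^2$ small, I obtain a pointwise bound of the form $\rho_2\le \eps+\tfrac{M}{\delta_0}\sum_{g\in F}g^{-1}\rho_1$ valid off a set that is small in $m$-norm.

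The second step uses suitability to push the averaging through the translations. Writing $h'=gh$ gives $G_n^{av}(g^{-1}\rho_1)(x,y)=\tfrac1{|G_n|}\sum_{h'\in gG_n}\rho_1(h'x,h'y)$, and the defining property of suitable equipment lets me cover $gG_n$ by $G_ng_1\cup\dots\cup G_ng_k$ up to $\delta|G_n|$ elements, with $k$ independent of $n$. On each piece $G_ng_j$ the partial sum equals $G_n^{av}\rho_1(g_jx,g_jy)$, so $G_n^{av}(g^{-1}\rho_1)$ is bounded by $\sum_{j=1}^k G_n^{av}\rho_1(g_j\,\cdot\,,g_j\,\cdot\,)$ plus an error semimetric supported on at most $\delta|G_n|$ translates; the latter is a genuine semimetric whose $L^1$-norm, hence $m$-norm, is at most $\delta\|\rho_1\|_{L^1}$, uniformly in $n$. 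Combining with the first step, $G_n^{av}\rho_2$ is dominated, up to an $m$-norm-small remainder (uniform in $n$) and an additive $\eps$, by a finite sum of at most $|F|k$ measure-preserving spatial translates of $G_n^{av}\rho_1$.

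Finally I would convert this domination into the entropy estimate using three standard facts about $\eps$-entropy: its invariance under the measure-preserving maps $g_j$, the subadditivity $\heps_{m\eta}\big(\sum_{i=1}^m\sigma_i\big)\le\sum_i\heps_\eta(\sigma_i)$, and the scaling $\heps_{c\eta}(c\sigma)=\heps_\eta(\sigma)$; the $m$-norm-small remainder is absorbed through Lemma~\ref{lemma_semicontinuity}, which turns a small $m$-distance into a bounded additive shift in the radius. This yields $\heps_{\eps'}(X,\mu,G_n^{av}\rho_2)\le |F|k\cdot\heps_{\delta'}(X,\mu,G_n^{av}\rho_1)$ for suitable $\eps',\delta'$, and the constant $|F|k$ is harmless because $\preceq$ permits a multiplicative $O(1)$. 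I expect the main obstacle to be the uniformity in $n$: both the truncation tail from the first step and the suitability error from the second must be kept $m$-norm-small by a single choice of parameters independent of $n$, so that one application of Lemma~\ref{lemma_semicontinuity} produces an $n$-independent loss in $\eps$. Reconciling the thresholded (rather than clean) domination of $\rho_2$ with this uniform $m$-norm control is the delicate heart of the argument.
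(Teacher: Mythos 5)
Your concluding reduction (deriving $[\Phi_{\rho_1}]=\scfunclass(X,\mu,G,\sigma)$ by applying the inequality twice against a reference metric) and your steps 2--3 are sound and match the mechanism of the actual proof (which the survey defers to~\cite{Z15b}; the same machinery is displayed in Appendix~\ref{section_exp_scaling_entropy}): averaging the covering $gG_n\subset G_ng_1\cup\dots\cup G_ng_k\cup(\text{error})$ gives $G_n^{av}(g^{-1}\rho_1)\le\sum_{j}G_n^{av}\rho_1(g_j\,\cdot\,,g_j\,\cdot\,)+e_n$ with $\|e_n\|_m\le\delta\|\rho_1\|_{L^1}$ uniformly in $n$ (the error is itself a semimetric, so its $m$-norm equals its $L^1$-norm), and subadditivity and invariance of $\heps$ together with Lemma~\ref{lemma_semicontinuity} then give the entropy estimate.

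The genuine gap is in your step 1. You pass from the generating hypothesis to a bound $\rho_2\wedge M\le\eps+\tfrac{M}{\delta_0}\sum_{g\in F}g^{-1}\rho_1$ ``valid off a set that is small in $m$-norm.'' But the exceptional set $E=\{\rho_2\ge\eps\}\cap\{\max_{g\in F}g^{-1}\rho_1\le\delta_0\}$ is only small in \emph{measure}, and small $\mu^2$-measure does not make the error term $M\chi_E$ small in the $m$-norm: one needs a dominating \emph{semimetric} of small integral, and none need exist. For instance, on the circle with $E_\kappa=\{(x,y)\colon x-y\in[\tfrac12,\tfrac12+\kappa]\ (\mathrm{mod}\ 1)\}$, any semimetric $\sigma\ge\chi_{E_\kappa}$ satisfies $\int\sigma\,d\mu^2\ge\tfrac12$ (for $(x,z)\in E_\kappa$ and every $y$, the triangle inequality gives $\sigma(x,y)+\sigma(y,z)\ge1$; integrate over $y$ and then over $x$), although $\mu^2(E_\kappa)=\kappa$ is arbitrarily small. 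Truncation tails of a semimetric \emph{do} admit $m$-norm control (the triangle inequality of $\rho_2$ itself propagates largeness; cf.\ the truncation step in the proof of Theorem~\ref{th_comp_triples}), but your set $E$ has no such structure, because membership in $E$ involves the unrelated condition $\max_{g\in F}g^{-1}\rho_1\le\delta_0$. So Lemma~\ref{lemma_semicontinuity} cannot absorb this error, and this is exactly the point you flag as ``the delicate heart'' without resolving it. The actual argument (\cite{Z15a,Z15b}) avoids exceptional sets entirely: form the auxiliary \emph{metric} $\rho=\sum_i 2^{-i}g_i^{-1}\rho_1$, where $\{g_i\}$ enumerates $\bigcup_n G_n$ (it is a metric mod~0 precisely because $\rho_1$ is generating, and it is admissible and summable); prove the inequality for every semimetric $\omega$ that is \emph{cleanly} dominated, $\omega\le C(\omega)\rho$ pointwise --- for these, averaging loses nothing, and $G_n^{av}\rho$ is compared with $G_n^{av}\rho_1$ by truncating the series and using suitability exactly as in your step~2; then transfer to all of $\Adm(X,\mu)$ by closedness plus density: the set of semimetrics satisfying the conclusion is $m$-norm closed by Lemma~\ref{lemma_semicontinuity}, while semimetrics dominated by a multiple of $\rho$ are $m$-norm dense in $\Adm(X,\mu)$ (finite sums of cut semimetrics, each approximated by $|f(x)-f(y)|$ with $f$ Lipschitz with respect to $\rho$). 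Replacing your step~1 by this closedness-plus-density argument repairs the proof.
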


\subsection{Properties of scaling entropy of a group action}

The natural question is whether the scaling entropy of a group action depends on the choice of equipment. A simple observation shows that a small change of equipment does not change the scaling entropy.
\begin{remark}
If equipment $\sigma_1 = {G_n^{(1)}}$ and equipment $\sigma_2 = {G_n^{(2)}}$ of a group $G$ are such that
$$|G_n^{(1)}\Delta G_n^{(2)}| = o(|G_n^{(1)}|), \qquad n \to +\infty,$$
then the scaling entropies of actions of $G$ equipped with $\sigma_1$ and $\sigma_2$ coincide:
$$
\scfunclass(X,\mu,G,\sigma_1) = \scfunclass(X,\mu,G,\sigma_2).
$$
\end{remark}

There is a simple upper bound for the scaling entropy.
\begin{theorem}
Let a group $G$ with equipment $\sigma = \{G_n\}$ act by automorphisms on a measure space $(X,\mu)$. Then for $\Phi \in \scfunclass(X,\mu,G,\sigma)$, for every $\eps>0$, the following inequality holds:
$$
\Phi(\eps, n) \preceq |G_n|, \qquad n \to +\infty.
$$
\end{theorem}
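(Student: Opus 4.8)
The plan is to prove the bound for a conveniently chosen representative of the class and then transfer it to an arbitrary $\Phi$ by the domination relation. Since $\min(\rho,1)$ is again an admissible $G$-generating metric and the class $\scfunclass(X,\mu,G,\sigma)$ does not depend on the choice of metric (Corollary~\ref{cor2} and Theorem~\ref{theorem_ivariance_groups}), I would fix once and for all a $G$-generating admissible metric $\rho$ with $\rho\le 1$, so that every average $G_n^{av}\rho$ is bounded by $1$ as well. It then suffices to show $\heps_\eps(X,\mu,G_n^{av}\rho)=O(|G_n|)$ as $n\to\infty$ for each fixed $\eps\in(0,1]$; for $\eps>1$ the entire space has diameter at most $1$ and the entropy vanishes.

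Fix $\eps\in(0,1]$ and put $\delta=\eps^2/4$. By admissibility $\heps_\delta(X,\mu,\rho)$ is finite, so there is a partition $X=Y_0\cup Y_1\cup\dots\cup Y_m$ with $m=\exp\!\big(\heps_\delta(X,\mu,\rho)\big)$, $\mu(Y_0)<\delta$, and $\diam_\rho(Y_j)<\delta$ for $j\ge 1$. For each $g\in G_n$ the pulled-back partition $g^{-1}\{Y_0,\dots,Y_m\}$ has the property that on $g^{-1}Y_j$ with $j\ge 1$ one has $\rho(gx,gy)<\delta$. I would then form the common refinement $\beta_n=\bigvee_{g\in G_n}g^{-1}\{Y_0,\dots,Y_m\}$; it consists of at most $(m+1)^{|G_n|}$ cells, each labelled by a family $(j_g)_{g\in G_n}$ with $j_g\in\{0,\dots,m\}$.

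The naive attempt to simply discard $\bigcup_{g\in G_n}g^{-1}Y_0$ fails, since this set may have measure as large as $|G_n|\,\delta$; controlling this accumulation of exceptional sets is the main obstacle, and I would resolve it by an averaging argument together with the boundedness of $\rho$. Consider the function $f(x)=|G_n|^{-1}\#\{g\in G_n: gx\in Y_0\}$, which satisfies $\int_X f\,d\mu=\mu(Y_0)<\delta$. By Markov's inequality the set $B=\{x:f(x)>\eps/2\}$ has $\mu(B)<\delta/(\eps/2)=\eps/2<\eps$. If a cell $C$ of $\beta_n$ meets $B^c$, then its label obeys $\#\{g:j_g=0\}\le(\eps/2)|G_n|$, so for all $x,y\in C$, bounding $\rho(gx,gy)$ by $1$ on the at most $(\eps/2)|G_n|$ bad coordinates and by $\delta$ on the remaining ones, we get $G_n^{av}\rho(x,y)\le\delta+\eps/2<\eps$.

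To conclude, I would take $X_0=B$ as the exceptional set, of measure $<\eps$, and let the remaining pieces be the nonempty traces on $X\setminus B$ of the cells of $\beta_n$; each such piece lies in a cell meeting $B^c$ and hence has $G_n^{av}\rho$-diameter less than $\eps$, and there are at most $(m+1)^{|G_n|}$ of them. Therefore $\heps_\eps(X,\mu,G_n^{av}\rho)\le|G_n|\log(m+1)$, which is $O(|G_n|)$ because $m$ depends only on $\eps$. This yields $\Phi_\rho(\eps,n)\preceq|G_n|$ for every $\eps$; since any $\Phi\in\scfunclass(X,\mu,G,\sigma)$ satisfies $\Phi\preceq\Phi_\rho$, for each $\eps$ there is $\delta$ with $\Phi(\eps,n)\preceq\Phi_\rho(\delta,n)\preceq|G_n|$, and the claim $\Phi(\eps,n)\preceq|G_n|$ follows for every $\eps>0$.
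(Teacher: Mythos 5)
Your proof is correct. Note that the survey itself states this theorem without proof (it is introduced only as ``a simple upper bound,'' with details left to the cited literature), so there is no in-text argument to compare against; your argument is a legitimate self-contained proof using only facts available in the paper. Its ingredients are exactly right: the reduction to a bounded admissible metric $\min(\rho,1)$ via the invariance of the class for metrics (Corollary~\ref{cor2} alone suffices here --- citing Theorem~\ref{theorem_ivariance_groups} is unnecessary and would be undesirable, since it requires suitable equipment, an assumption absent from the theorem being proved); the refinement $\beta_n=\bigvee_{g\in G_n}g^{-1}\{Y_0,\dots,Y_m\}$ with at most $(m+1)^{|G_n|}$ cells; and, crucially, the Markov-inequality argument on $f(x)=|G_n|^{-1}\#\{g\in G_n\colon gx\in Y_0\}$, whose integral is $\mu(Y_0)$ by measure preservation. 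This last step is the genuine content of the proof: the obvious alternative of iterating the paper's subadditivity inequality $\heps_{2\eps}(X,\mu,\rho_1+\rho_2)\le\heps_\eps(X,\mu,\rho_1)+\heps_\eps(X,\mu,\rho_2)$ over the $|G_n|$ translates lets the measure threshold of the exceptional set grow to $|G_n|\eps$, producing only a ``non-diagonal'' entropy bound that is vacuous for large $n$; you identified this pitfall explicitly and repaired it by combining the frequency bound on bad translates with the uniform bound $\rho\le 1$ on their contribution to the average, yielding $G_n^{av}\rho$-diameter less than $\eps$ on every cell meeting $B^c$ and hence $\Phi_\rho(\eps,n)\le|G_n|\log(m+1)$. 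The final transfer from $\Phi_\rho$ to an arbitrary representative $\Phi$ of the class via Definition~\ref{definition_asymp_class} is also handled correctly.
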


For the case of an amenable group $G$ with F\o lner equipment $\sigma$, the previous theorem admits a refinement.
\begin{theorem}
Let an amenable group $G$ equipped by a F\o lner sequence $\sigma = \{G_n\}$ act by automorphisms on a measure space $(X,\mu)$. For $\Phi \in \scfunclass(X,\mu,G,\sigma)$, for any $\eps>0$, the asymptotic relation 
$$
\Phi(\eps, n)  = o(|G_n|), \qquad n \to +\infty,
$$
holds if and only if the Kolmogorov entropy of the action is zero.
\end{theorem}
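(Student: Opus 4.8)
The plan is to compute the scaling entropy with a convenient choice of generating semimetric and translate everything into the combinatorics of $G_n$-names, reducing the statement to the Shannon--McMillan--Breiman and mean-entropy theorems of Ornstein and Weiss for amenable actions. Throughout I use that a F\o lner sequence is suitable equipment, so by Theorem~\ref{theorem_ivariance_groups} the class $\scfunclass(X,\mu,G,\sigma)$ equals $[\Phi_\rho]$ for any $G$-generating semimetric $\rho$, and moreover $\Phi_{\rho'}\preceq\Phi_\rho$ for every admissible semimetric $\rho'$. For a finite partition $\xi$ I write $\rho_\xi$ for its cut semimetric ($\rho_\xi(x,y)=0$ if $x,y$ lie in the same atom and $1$ otherwise); then $G_n^{av}\rho_\xi(x,y)$ is exactly the normalized Hamming distance between the $G_n$-names of $x$ and $y$, so $\heps_\eps(X,\mu,G_n^{av}\rho_\xi)$ counts coverings of the space of $G_n$-names by Hamming balls of radius~$\eps$ up to an exceptional set of measure~$<\eps$.

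Assume first that the Kolmogorov entropy $h=h(G\curvearrowright X)$ is positive and pick a finite partition $\xi$ with $h(G,\xi)>0$ (possible since $h=\sup_\xi h(G,\xi)$). I bound $\Phi_{\rho_\xi}$ from below. In any admissible cover $X=A_0\cup\bigcup_{j=1}^{M}X_j$ with $\mu(A_0)<\eps$ and $\diam_{G_n^{av}\rho_\xi}(X_j)<\eps$, all points of a fixed $X_j$ have $G_n$-names lying in a single Hamming ball of radius~$\eps$; the number of names in such a ball is at most $\exp(\gamma(\eps)\,|G_n|)$, where $\gamma(\eps)=H_2(\eps)+\eps\log(|A|-1)\to 0$ as $\eps\to0^+$ by the standard volume estimate. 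On the other hand, the amenable Shannon--McMillan--Breiman theorem gives, for every $\eta>0$, a set of typical names of measure $\to1$, each of measure at most $\exp(-(h(G,\xi)-\eta)|G_n|)$. Hence the typical part of each $X_j$ has measure at most $\exp\big(-(h(G,\xi)-\eta-\gamma(\eps))|G_n|\big)$. Choosing $\eps$ so small that $\gamma(\eps)<h(G,\xi)/2$ and $\eta<h(G,\xi)/4$, this bound is $\le\exp(-\tfrac14 h(G,\xi)|G_n|)$, while the total typical measure outside $A_0$ is $\ge\tfrac12$ for large~$n$. Therefore $M\ge\tfrac12\exp(\tfrac14 h(G,\xi)|G_n|)$, so $\Phi_{\rho_\xi}(\eps,n)\gtrsim|G_n|$. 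Combined with $\Phi_{\rho_\xi}\preceq\scfunclass$ this gives $|G_n|\preceq\Phi$, so $\Phi(\eps,n)\neq o(|G_n|)$; together with the general upper bound $\Phi\preceq|G_n|$ proved just above, in fact $\scfunclass\asymp[|G_n|]$.

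Conversely, suppose $h=0$. Then $h(G,\xi)=0$ for \emph{every} finite partition $\xi$, so by the Ornstein--Weiss mean-entropy theorem $H\big(\bigvee_{g\in G_n}g^{-1}\xi\big)=o(|G_n|)$. Writing $\beta_n=\bigvee_{g\in G_n}g^{-1}\xi$ and $f_n(x)=-\log\mu(\beta_n(x))$, we have $\int f_n\,d\mu=H(\beta_n)=o(|G_n|)$, so by Markov's inequality the set $\{f_n\le H(\beta_n)/\eps\}$ has measure $>1-\eps$ and is covered by at most $\exp(H(\beta_n)/\eps)=\exp(o(|G_n|))$ atoms of $\beta_n$, each of $G_n^{av}\rho_\xi$-diameter $0$; this is an $\eps$-cover, whence
$$
\heps_\eps(X,\mu,G_n^{av}\rho_\xi)\ \le\ \frac{H(\beta_n)}{\eps}\ =\ o(|G_n|)\qquad(n\to\infty).
$$
To reach the scaling entropy itself I fix a refining sequence of finite partitions $\xi_1\le\xi_2\le\dots$ whose $G$-translates generate and set $\rho=\sum_m 2^{-m}\rho_{\xi_m}$, an admissible $G$-generating semimetric bounded by~$1$. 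For fixed $\eps$, choosing $M$ with $2^{-M}<\eps/4$ and $\zeta=\bigvee_{m\le M}\xi_m$ gives $\rho\le\rho_\zeta+\eps/4$ pointwise, hence $\heps_\eps(X,\mu,G_n^{av}\rho)\le\heps_{3\eps/4}(X,\mu,G_n^{av}\rho_\zeta)=o(|G_n|)$ by the previous step applied to the finite partition~$\zeta$ (which still has $h(G,\zeta)=0$). By Theorem~\ref{theorem_ivariance_groups} the same holds for every $\Phi\in\scfunclass(X,\mu,G,\sigma)$.

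The two elementary ingredients here---the Hamming-ball volume bound with $\gamma(\eps)\to0$ and the Markov truncation of the information function---are routine; the main obstacle is the amenable ergodic theory that legitimizes the passage to names, namely the Ornstein--Weiss mean-entropy convergence along the F\o lner sequence (for $h=0$) and the amenable Shannon--McMillan--Breiman theorem (for $h>0$). One must in particular check that the exceptional sets produced by SMB shrink uniformly along $\{G_n\}$, which is exactly what these theorems supply; note also that the boundedness of cut semimetrics is what lets me average and truncate freely, sidestepping the integrability difficulties an unbounded generating metric would create.
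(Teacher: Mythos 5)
The survey states this theorem without proof, deferring the underlying results to the cited literature (\cite{Z15b, PZ15, Vep20b, Vep22}), so there is no in-paper argument to compare against; judged on its own merits, your proof is correct and is the natural one. Both directions are sound: in the positive-entropy direction you combine the Hamming-ball volume estimate $\exp(\gamma(\eps)|G_n|)$ with the fact that typical atoms of $\bigvee_{g\in G_n}g^{-1}\xi$ have measure at most $\exp(-(h(G,\xi)-\eta)|G_n|)$, and then transfer the lower bound to every $\Phi\in\scfunclass$ via Theorem~\ref{theorem_ivariance_groups}; in the zero-entropy direction you combine the Ornstein--Weiss mean-entropy convergence (valid along every F\o lner sequence) with the Markov truncation of the information function, and pass from the cut semimetrics $\rho_\zeta$ to the generating semimetric $\rho=\sum_m 2^{-m}\rho_{\xi_m}$ using Lemma~\ref{lemma_semicontinuity}-type monotonicity and Theorem~\ref{theorem_ivariance_groups}. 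Your reading of the quantifiers is also the right one: since $\heps_\eps=0$ for $\eps\ge 1$, the theorem can only mean that the universally quantified assertion ``$\Phi(\eps,n)=o(|G_n|)$ for all $\eps>0$'' is equivalent to zero entropy, and that is exactly what you prove.

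One point should be stated more carefully. You invoke ``the amenable Shannon--McMillan--Breiman theorem'' along the given F\o lner sequence $\{G_n\}$. Pointwise SMB for amenable actions requires a \emph{tempered} F\o lner sequence (Lindenstrauss), and the theorem at hand assumes only an arbitrary F\o lner sequence, so the almost-everywhere statement is not available in general. Fortunately your argument never uses pointwise convergence: it only needs that the measure of the set of points whose $\bigvee_{g\in G_n}g^{-1}\xi$-atom has measure larger than $\exp(-(h(G,\xi)-\eta)|G_n|)$ tends to zero, i.e., the Shannon--McMillan theorem in measure (equivalently, $L^1$-convergence of the normalized information function), which does hold for every F\o lner sequence (Kieffer; Ornstein--Weiss). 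Replace the citation accordingly, and rephrase your closing remark about ``exceptional sets produced by SMB'' so that it refers to this in-measure statement rather than the almost-everywhere one; with that correction the proof is complete.
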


For a finitely generated group, the existence of a compact free action is equivalent to the group being residually finite. If the group is also amenable, then the compactness of the action is equivalent to the boundedness of the scaling entropy. The following generalization of Theorem~\ref{th_diskr_spectr} was proved in~\cite{YZZ}.
\begin{theorem}
    Let an amenable group $G$ equipped by a F\o lner sequence $\sigma = \{G_n\}$ act by automorphisms on a measure space $(X,\mu)$. Then $\scfunclass(X,\mu,G,\sigma) = [1]$ if and only if the action of $G$ on $(X,\mu)$ is compact.
\end{theorem}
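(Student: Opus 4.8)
The plan is to establish a three-way equivalence: (i)~$\scfunclass(X,\mu,G,\sigma)=[1]$; (ii)~there exists a $G$-invariant admissible metric $\rho\in\Adm(X,\mu)$; (iii)~the action is compact. This is the exact group analogue of the chain in Theorem~\ref{th_diskr_spectr}. The implication (ii)$\Rightarrow$(i) is immediate: if $\rho$ is $G$-invariant then $G_n^{av}\rho=\rho$ for every $n$, so $\heps_\eps(X,\mu,G_n^{av}\rho)=\heps_\eps(X,\mu,\rho)$ is finite by admissibility and constant in $n$; since a metric is automatically $G$-generating, Theorem~\ref{theorem_ivariance_groups} identifies this constant function with the scaling entropy, giving $\scfunclass=[1]$.

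For (iii)$\Rightarrow$(ii) I would use that a compact action extends to an action of the compact group $K=\overline{\{U_g\}}$ (the closure of the Koopman image in the strong topology) by measure-preserving transformations $S_k$ of $(X,\mu)$. Choosing any generating admissible metric $\rho_0$ and averaging over the Haar measure $m_K$,
\[
\rho(x,y)=\int_K \rho_0(S_kx,S_ky)\,dm_K(k),
\]
produces a $K$-invariant, hence $G$-invariant, function; it is a metric because the integrand at $k=e$ already separates points, and it is admissible because $\{S_k\cdot\rho_0:k\in K\}$ is a norm-continuous (thus precompact) image of $K$ with uniformly bounded $\eps$-entropy, so its barycenter lies in the closed cone $\Adm(X,\mu)$ by Corollary~\ref{cor1}. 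Conversely, for (ii)$\Rightarrow$(iii), a $G$-invariant admissible metric makes $G$ act by measure-preserving isometries of $(X,\rho)$; admissibility means precisely that $(X,\rho)$ is totally bounded up to arbitrarily small measure, and this equicontinuity forces the closure of $G$ in the isometry group to be compact, i.e.\ the action is compact.

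The substance of the proof is (i)$\Rightarrow$(ii). Fix a generating admissible metric $\rho$ with $\sup_n\heps_\eps(X,\mu,G_n^{av}\rho)<\infty$ for every $\eps$. The translates $\{g\cdot\rho\}$ are pairwise isometric, hence uniformly integrable, so by Dunford--Pettis their closed convex hull is uniformly integrable too; combined with the uniform $\eps$-entropy bound, this should place the family $\{G_n^{av}\rho\}$ in a precompact subset of $\Adm(X,\mu)$ for the m-norm (via the precompactness criteria following Corollary~\ref{cor1}), and I then pass to an m-norm limit $\bar\rho$ along a subsequence. The decisive use of amenability is that a (two-sided) F\o lner sequence gives $\|h\cdot G_n^{av}\rho-G_n^{av}\rho\|_m\le \tfrac{|G_nh\bigtriangleup G_n|}{|G_n|}\,\|\rho\|_m\to0$ for each fixed $h\in G$, so the limit $\bar\rho$ is a $G$-invariant admissible semimetric.

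The main obstacle is upgrading this invariant semimetric to an invariant metric, i.e.\ ensuring that averaging does not collapse the separation carried by $\rho$. Two points need care. First, the precompactness must be extracted from a uniform entropy bound alone, whereas the clean criterion after Corollary~\ref{cor1} is stated for convex sets and convex combinations of isometric metrics can inflate the $\eps$-entropy (as the Bernoulli computation with scaling entropy $[n]$ shows); one must therefore exploit the specific structure of F\o lner averages rather than pass to the full convex hull. Second, for separation I would apply the construction to a rich supply of cut semimetrics: for each bounded measurable $f$, enlarge the generating metric so that it dominates $\rho_f(x,y)=|f(x)-f(y)|$, whence $\heps_\eps(X,\mu,G_n^{av}\rho_f)\le\heps_\eps(X,\mu,G_n^{av}\rho)$ stays bounded, and the limiting invariant admissible semimetric controls $f$; this shows every $f$ is measurable with respect to the maximal compact (Kronecker) factor. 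Bounded scaling entropy then forces the Kronecker factor to exhaust $(X,\mu)$, which is compactness and assembles into a single separating invariant admissible metric. Proving that $f$ is genuinely measurable with respect to the limiting semimetric, and packaging the countable family of invariant semimetrics into one invariant admissible metric, is the technical heart of the argument.
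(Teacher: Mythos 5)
First, note that the survey contains no proof of this statement at all --- it is imported from~\cite{YZZ} --- so your argument must be judged on its own merits. Its skeleton, the three-way equivalence with ``there exists a $G$-invariant admissible metric'' in parallel with Theorem~\ref{th_diskr_spectr}, is reasonable, and the implication (ii)$\Rightarrow$(i) is correct as written. The genuine gaps are in the two steps you yourself call ``the main obstacle'' and ``the technical heart,'' and they are not removable technicalities: they are where the theorem lives. For (i)$\Rightarrow$(ii), the precompactness of $\{G_n^{av}\rho\}_n$ in the $m$-norm does not follow from the hypothesis. The hypothesis gives only $\sup_n \heps_\eps(X,\mu,G_n^{av}\rho)<\infty$ for each $\eps$; Theorem~\ref{lemmcomp} demands \emph{uniform admissibility} --- a single partition of $X$ serving all members of the family simultaneously --- which a per-member entropy bound does not produce, and the convex-set criterion is unavailable because the family of averages is not convex and (as your own Bernoulli remark shows) entropies over a convex hull of translates admit no uniform bound. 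So the sentence ``this should place the family \dots\ in a precompact subset'' is an unproved assertion, and it is essentially equivalent to the conclusion: once the action is known to be compact the averages do converge, but that is exactly what is to be proved.

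Second, even granting an invariant $m$-norm limit $\bar\rho$, it is only an invariant admissible \emph{semimetric}; what follows is that the factor of $(X,\mu)$ by the zero-distance partition $\xi_{\bar\rho}$ is compact, not the system itself. Your device for recovering separation --- dominate each cut semimetric $\rho_f(x,y)=|f(x)-f(y)|$, pass to an invariant limit, and claim that this limit ``controls'' $f$ --- begs the question: there is no a priori reason that $f$ is measurable with respect to the factor generated by the limit of its own F\o lner averages, and you explicitly defer this point. In the known proofs (Huang--Wang--Ye for $\mathbb{Z}$, \cite{YZZ} for amenable $G$) this is precisely where the work is done, in the contrapositive: if $f$ has a nonzero component orthogonal to the Kronecker factor, a mean-ergodic (van der Corput type) estimate for the product action shows that the averaged semimetrics $G_n^{av}\rho_f$ keep a set of pairs of definite measure $c$-separated, forcing $\heps_\eps \to \infty$ for small $\eps$ and contradicting the hypothesis; some lemma of this kind must be supplied. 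Two smaller unproved points: your invariance estimate $\|h\cdot G_n^{av}\rho - G_n^{av}\rho\|_m \le \frac{|G_nh \,\triangle\, G_n|}{|G_n|}\,\|\rho\|_m$ needs $\sigma$ to be right-F\o lner, whereas $\sigma$ is a given, possibly one-sided, F\o lner sequence; and in (iii)$\Rightarrow$(ii) both the $m$-norm continuity of $k\mapsto S_k\cdot\rho_0$ on the enveloping compact group and the strict positivity mod~0 of the Haar average are asserted rather than proved.
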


{
\subsection{Scaling entropy of a generic action}
For a given countable group $G$, the set of all its p.m.p. actions $A(X, \mu, G)$ on a Lebesgue space $(X,\mu)$ forms a Polish topological space, allowing us to discuss generic properties of actions of $G$. For more details on the theory of generic group actions see a survey~\cite{Ke}. The following theorem, proved in~\cite{Vep22}, generalizes a similar result concerning the absence of nontrivial upper bounds for the scaling entropy of a generic automorphism to the case of an arbitrary amenable group.
\begin{theorem}\label{theorem_genegic_upper_bound}
Let $G$ be an amenable group and $\sigma = {F_n}$ be its F\o lner sequence. Let $\phi(n) = o(|F_n|)$ be a sequence of positive numbers. Then the set of actions $\alpha \in A(X, \mu, G)$ for which $\mathcal{H}(\alpha, \sigma) \not\prec \phi$ contains a dense $G_\delta$ subset.
\end{theorem}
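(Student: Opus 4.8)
The plan is to prove the equivalent statement that the set $\mathcal{G}=\{\alpha\in A(X,\mu,G): \scfunclass(X,\mu,G,\sigma)\not\prec\phi\}$ contains a dense $G_\delta$, by a Baire category argument based on a single fixed test semimetric. First I would fix a nontrivial two-set partition $\mathcal{Q}=\{Q_0,Q_1\}$ with $\mu(Q_0)=\mu(Q_1)=\tfrac12$ and take the cut semimetric $\rho_{\mathcal Q}(x,y)=\mathbf 1[\mathcal Q(x)\ne\mathcal Q(y)]$; it lies in $\Adm(X,\mu)$, since for $\eps<1$ already two $\eps$-balls cover $X$. Because the F\o lner equipment $\sigma$ is suitable, Theorem~\ref{theorem_ivariance_groups} (comparing $\rho_{\mathcal Q}$ against any $G$-generating metric) gives $\Phi_{\rho_{\mathcal Q}}\preceq\scfunclass(X,\mu,G,\sigma)$ for every action. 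Hence $\Phi_{\rho_{\mathcal Q}}\not\preceq\phi$ already forces $\scfunclass\not\preceq\phi$, and in particular $\scfunclass\not\prec\phi$, so it suffices to prove that
$$\Big\{\alpha:\ \Phi_{\rho_{\mathcal Q}}(\eps,n)=\heps_\eps(X,\mu,G_n^{av}\rho_{\mathcal Q})\ne O(\phi(n))\Big\}$$
is comeager for one fixed $\eps\in(0,\tfrac12)$.

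Next I would write this set as $\bigcap_{C,N\in\mathbb N}U(C,N)$, where
$$U(C,N)=\Big\{\alpha:\ \exists\,n\ge N,\ \heps_{\eps+}(X,\mu,G_n^{av}\rho_{\mathcal Q})>C\phi(n)\Big\}.$$
To see that each $U(C,N)$ is open I would use that, for fixed $n$, the map $\alpha\mapsto G_n^{av}\rho_{\mathcal Q}=\tfrac1{|G_n|}\sum_{g\in G_n}\rho_{\mathcal Q}(\alpha(g)\,\cdot\,,\alpha(g)\,\cdot\,)$ is continuous from $A(X,\mu,G)$ with the weak topology into $\Adm(X,\mu)$ with the $m$-norm: it is a finite average of the transported indicator semimetrics $\rho_{\mathcal Q}\circ(\alpha(g)\times\alpha(g))$, each depending continuously in $L^1(X^2,\mu^2)$, hence (by Corollary~\ref{Cor_top_coinc}) in the $m$-norm on the cone, on the single automorphism $\alpha(g)$. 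Composing with the lower semicontinuity of $\heps_{\eps+}$ on the cone (stated after Lemma~\ref{lemma_semicontinuity}) shows each slice $\{\alpha:\heps_{\eps+}(X,\mu,G_n^{av}\rho_{\mathcal Q})>C\phi(n)\}$ is open, and a union of open sets is open.

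The core of the argument is density of each $U(C,N)$. Given $\alpha_0$ and a weak neighborhood $\mathcal V$ determined by a finite set $K\subset G$, a finite partition $\mathcal P$, and a tolerance $\eps'>0$, I would produce $\alpha\in\mathcal V$ with $\heps_{\eps+}(X,\mu,G_n^{av}\rho_{\mathcal Q})>C\phi(n)$ for one large $n\ge N$. The construction is an implant of independence at scale $G_n$: using the Ornstein--Weiss (Rokhlin) tiling lemma I would tile all but an $\eps'$-fraction of $(X,\mu)$ by $\alpha_0$-translates of a $(K,\delta)$-invariant F\o lner set $G_n$, and on this tower recode the $\mathcal Q$-labels along each tile so that the translates $\{\,\mathcal Q\circ\alpha(g):g\in G_n\,\}$ become jointly independent and uniform. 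After the recoding the triple $(X,\mu,G_n^{av}\rho_{\mathcal Q})$ contains, up to a set of measure $<\eps'$, a copy of the binary cube $\{0,1\}^{G_n}$ with the uniform measure and the normalized Hamming distance, whose $\eps$-entropy is at least $c(\eps)\,|G_n|$ by the same central-limit estimate used in the Bernoulli computation above. Since $\phi(n)=o(|G_n|)$, choosing $n\ge N$ large enough gives $c(\eps)|G_n|>C\phi(n)$, so $\alpha\in U(C,N)$.

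The main obstacle is to carry out this implant while staying inside $\mathcal V$: recoding along $G_n$ necessarily alters $\alpha(g)$ for $g\in G_n$, and some elements of $K$ may lie in $G_n$. The point is that the weak constraints are measure constraints $\mu(\alpha(g_i)P\,\triangle\,\alpha_0(g_i)P)<\eps'$, so it suffices that for each fixed $g_i\in K$ the recoding changes the $g_i$-action only on a set of measure $\lesssim\eps'$. This is guaranteed by taking $G_n$ to be $(K,\delta)$-invariant with $\delta$ small: inside each tile the map $g\mapsto g_ig$ stays in the tile except on the $K$-boundary, of relative measure $<\delta$, so the fraction of the tower on which $\alpha(g_i)$ must be altered is $O(\delta)$. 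Balancing $\delta$ and the tiling error against $\eps'$, and then taking $n$ large, yields $\alpha\in\mathcal V\cap U(C,N)$. Finally, Baire's theorem gives that $\bigcap_{C,N}U(C,N)$ is a dense $G_\delta$ contained in $\mathcal G$, which completes the proof.
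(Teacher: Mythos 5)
Your overall architecture is sound as far as it goes, and it is genuinely different from the paper's route: the survey (following \cite{Vep22}, as for Theorem~\ref{theorem_generic_transformation}) does not build perturbations by hand, but passes through the comparison between scaling entropy and Kirillov--Kushnirenko sequential entropy and then invokes Ryzhikov's genericity results~\cite{R}. Your reduction to a single balanced cut semimetric $\rho_{\mathcal Q}$ via Theorem~\ref{theorem_ivariance_groups} is legitimate (F\o lner equipment is suitable, and $\not\preceq$ implies $\not\prec$), and the openness of the sets $U(C,N)$ is correctly argued: weak convergence $\alpha_k(g)\to\alpha(g)$ gives $L^1$-convergence of the transported cut semimetrics, hence $m$-norm convergence on the cone by Corollary~\ref{Cor_top_coinc}, and one concludes with the lower semicontinuity of $\heps_{\eps+}$.

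The genuine gap is in the density step, which is the heart of the proof. Your mechanism for staying inside the neighborhood $\mathcal V$ is \emph{pointwise} agreement: you keep $\alpha(g_i)x=\alpha_0(g_i)x$ for $g_i\in K$ except for $x$ lying over the $K$-boundary of the tiles. But $K$ may generate $G$ (for $G=\mathbb Z$ one always has $K\ni 1$), and then the group law propagates this agreement: if $\alpha(g_i)=\alpha_0(g_i)$ pointwise on tile interiors for all generators $g_i\in K$, then $\alpha(g)x=\alpha_0(g)x$ for every $g\in F_n$ and every $x$ whose $K$-path to position $g$ stays in the interior, i.e.\ on most of each tile. Hence $F_n^{av}\rho_{\mathcal Q}$ computed for $\alpha$ agrees with that for $\alpha_0$ up to a small error, and the $\mathcal Q$-names cannot have been made i.i.d.\ --- the recoding is annihilated by exactly the boundary estimate you use to stay in $\mathcal V$. (Equivalently: an action agreeing pointwise with $\alpha_0$ on tile interiors and having nearly uniform $\mathcal Q$-names at scale $F_n$ would force positive entropy-like name statistics for $\alpha_0$ itself, which is false when $\alpha_0$ is, say, a rotation with $\mathcal Q$ a union of arcs.) What you are missing is that weak closeness constrains only the images of the finitely many \emph{sets} defining $\mathcal V$, not pointwise behavior; a correct construction must change $\alpha(g_i)$ pointwise almost everywhere while carrying each cell $P$ of the (WLOG $\mathcal Q$-refining) partition onto a set close to $\alpha_0(g_i)P$, and implant entropy using the within-cell freedom that accumulates along $F_n$. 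Relatedly, ``recode the labels so that the translates become jointly independent'' is not yet a construction of an action: the values $\alpha(g)$, $g\in F_n$, cannot be prescribed independently, since the group law ties them all together. There are also secondary gaps: the Ornstein--Weiss tiling requires $\alpha_0$ to be essentially free, so you must first approximate $\alpha_0$ by free actions, and that lemma yields $\eps$-quasi-tilings by several F\o lner shapes, not exact tilings by the single set $F_n$. As written, the density half of the argument does not go through.
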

The proof of Theorem~\ref{theorem_genegic_upper_bound} follows with additional adjustments the proof of Theorem~\ref{theorem_generic_transformation} and uses the results from~\cite{R}. Related results for similar invariants in the context of generic extensions were obtained in~\cite{Lo}.

Note that a direct analog of Theorem~\ref{theorem_complete_description} that is a complete description of possible values of scaling entropy for actions of a group is not known to the authors. However, a weak version of this theorem follows from Theorem~\ref{theorem_genegic_upper_bound}: for any sequence $\phi(n) = o(|F_n|)$, there exists an ergodic action of the group $G$ with scaling entropy $\scfunclass$ that grows faster than $\phi$ along some subsequence. For non-periodic amenable groups, explicit constructions of such p.m.p. actions can be obtained using coinduction from an action of a subgroup to the action of the ambient group~(see~\cite{Vep20b}). Also, note that explicit constructions of such actions for arbitrary amenable groups are not known to the authors.

Unlike nontrivial upper bounds for the scaling entropy of a generic system, the absence of lower bounds requires certain conditions on the group. In particular, a sufficient condition is the existence of a compact free action of the group. Thus, the following theorem which proved in~\cite{Vep22} holds.
\begin{theorem}\label{theorem_genegic_lower_bound}
Let $G$ be a residually finite amenable group and $\sigma = {F_n}$ be its F\o lner sequence. Let $\phi(n)$ be a sequence of positive numbers increasing to infinity. Then the set of actions $\alpha \in A(X, \mu, G)$ for which $\mathcal{H}(\alpha, \sigma) \not\succ \phi$ contains a dense $G_\delta$-subset.
\end{theorem}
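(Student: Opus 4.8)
The plan is to produce a dense $G_\delta$ of actions $\alpha$ for which already the weaker relation $\phi\not\preceq\scfunclass(\alpha,\sigma)$ holds: since $\scfunclass(\alpha,\sigma)\succ\phi$ means $\phi\preceq\scfunclass(\alpha,\sigma)$ \emph{and} $\scfunclass(\alpha,\sigma)\not\preceq\phi$, failure of the first conjunct already yields $\scfunclass(\alpha,\sigma)\not\succ\phi$. Fix once and for all an admissible metric $\rho\in\Adm(X,\mu)$ that separates points mod~0; then $\rho$ is $G$-generating for \emph{every} action $\alpha$, and since a F\o lner sequence is suitable equipment, Theorem~\ref{theorem_ivariance_groups} guarantees that $\Phi^\alpha_\rho(\eps,n)=\heps_\eps(X,\mu,G_n^{\av,\alpha}\rho)$ represents $\scfunclass(\alpha,\sigma)$ for all $\alpha$, where $G_n^{\av,\alpha}\rho$ denotes the average $\frac{1}{|G_n|}\sum_{g\in G_n}\rho(\alpha(g)\,\cdot\,,\alpha(g)\,\cdot\,)$. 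Unwinding Definition~\ref{definition_asymp_class}, the condition $\phi\not\preceq\scfunclass(\alpha,\sigma)$ reads: for every $\delta>0$, $\limsup_n \phi(n)/\heps_\delta(X,\mu,G_n^{\av,\alpha}\rho)=+\infty$; by monotonicity of $\heps_\delta$ in $\delta$ it suffices to impose this for $\delta=1/m$, $m\in\mathbb N$. Hence the target set is the countable intersection $\bigcap_{m,C,N}U_{m,C,N}$, where
$$
U_{m,C,N}=\Big\{\alpha : \exists\, n>N \text{ with } \heps_{1/m}(X,\mu,G_n^{\av,\alpha}\rho)<\phi(n)/C\Big\}.
$$

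First I would verify that each $U_{m,C,N}$ is open. For fixed $g$ the map $\alpha\mapsto \rho(\alpha(g)\,\cdot\,,\alpha(g)\,\cdot\,)$ is continuous from $A(X,\mu,G)$ into $L^1(X^2,\mu^2)$ by strong continuity of the Koopman representation, so the finite average $\alpha\mapsto G_n^{\av,\alpha}\rho$ is continuous into the cone $\Adm(X,\mu)$, where by Corollary~\ref{Cor_top_coinc} the $L^1$- and m-norm topologies coincide. Since $\heps_{1/m}$ is upper semicontinuous on the cone, for each fixed $n$ the set $\{\alpha:\heps_{1/m}(X,\mu,G_n^{\av,\alpha}\rho)<\phi(n)/C\}$ is open, and $U_{m,C,N}$ is a union of such sets over $n>N$.

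The heart of the argument is density of each $U_{m,C,N}$, and this is where the hypotheses enter. I would approximate an arbitrary action $\beta$, in any prescribed weak neighborhood, by an action $\alpha$ factoring through a finite quotient $Q=G/N_0$ of $G$. For such $\alpha$ the averaged metric simplifies: since $\alpha(g)$ depends only on the coset $gN_0$, one gets $G_n^{\av,\alpha}\rho=\sum_{q\in Q}\frac{c_n(q)}{|G_n|}\,\rho(\alpha(q)\,\cdot\,,\alpha(q)\,\cdot\,)$ with $c_n(q)=|\{g\in G_n:gN_0=q\}|$, and the F\o lner property forces $c_n(q)/|G_n|\to 1/|Q|$ (two cosets differ by a fixed left translation, whose effect on $G_n$ is $o(|G_n|)$). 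Thus $G_n^{\av,\alpha}\rho$ converges in the m-norm to the $Q$-invariant admissible metric $\bar\rho=\frac1{|Q|}\sum_{q}\rho(\alpha(q)\,\cdot\,,\alpha(q)\,\cdot\,)$, which has finite $\eps$-entropy for every $\eps$. By Lemma~\ref{lemma_semicontinuity} this bounds $\heps_{1/m}(X,\mu,G_n^{\av,\alpha}\rho)\le B_m<\infty$ for all large $n$, while $\phi(n)/C\to\infty$; choosing $n>N$ large gives $\alpha\in U_{m,C,N}$. Consequently every finite-quotient action lies in every $U_{m,C,N}$, so the density of $U_{m,C,N}$ reduces to density of the class of finite-quotient actions, and the Baire category theorem then makes $\bigcap_{m,C,N}U_{m,C,N}$ a dense $G_\delta$ inside $\{\alpha:\scfunclass(\alpha,\sigma)\not\succ\phi\}$.

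The main obstacle is exactly this density input: that actions of $G$ on $(X,\mu)$ factoring through finite quotients are dense in $A(X,\mu,G)$. This is the only step using the full strength of the hypotheses. Residual finiteness supplies a decreasing chain of finite-index normal subgroups with trivial intersection (equivalently, a free compact action), while amenability lets one realize the approximation at a single large F\o lner scale through the Ornstein--Weiss Rokhlin lemma: tile most of the space by translates of $G_n$ and collapse the action to $G/N_0$ on each tile, keeping $\alpha$ inside the chosen weak neighborhood of $\beta$. Making this single-scale periodization precise is the technical core; the semicontinuity and category bookkeeping around it is routine.
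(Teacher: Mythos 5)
Your reduction is exactly the right skeleton, and it matches the route the paper points to (the proof itself is in~\cite{Vep22}): fix a single admissible metric $\rho$, which is generating for every action, represent $\scfunclass(\alpha,\sigma)$ by $\Phi_\rho^\alpha$ via Corollary~\ref{cor2}/Theorem~\ref{theorem_ivariance_groups}, write the target as $\bigcap_{m,C,N}U_{m,C,N}$ with each $U_{m,C,N}$ open by weak-topology continuity of $\alpha\mapsto G_n^{\av,\alpha}\rho$ into $L^1$ together with upper semicontinuity of $\heps_{1/m}$ on $\Adm(X,\mu)$, and then note that every action with \emph{bounded} scaling entropy lies in all $U_{m,C,N}$ because $\phi(n)\to\infty$, so density of such actions finishes the argument by Baire. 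Your coset-counting computation showing that a finite-quotient action has averages converging in the $m$-norm to a $Q$-invariant metric (hence bounded $\eps$-entropies via Lemma~\ref{lemma_semicontinuity}) is also correct.

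The divergence, and the one genuine gap, is how density of bounded-entropy actions is obtained. The paper's hint (``a sufficient condition is the existence of a compact free action'') encodes the following two citations rather than a from-scratch construction: residual finiteness gives the free \emph{profinite} action on $\varprojlim G/N_k$ with Haar measure, which is compact and hence has $\scfunclass=[1]$ by the theorem of~\cite{YZZ} quoted in Section~\ref{section_groups}; and for an amenable group the conjugacy class of any free action is dense in $A(X,\mu,G)$ (Foreman--Weiss-type theorem, available in the cited book~\cite{Ke}). Since bounded scaling entropy is conjugation-invariant, density follows at once. You instead propose to prove directly that finite-quotient actions are dense by tiling with the Ornstein--Weiss Rokhlin lemma and ``collapsing'' each tile to $G/N_0$. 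As sketched this does not go through: (i) the Rokhlin lemma applies to \emph{free} actions, whereas the action $\beta$ you must approximate is arbitrary, so an extra reduction (e.g.\ density of free actions) is needed; and (ii) an arbitrary F{\o}lner set $G_n$ cannot be collapsed to $G/N_0$ --- for the collapsed maps to define an action of the quotient, the tile must be a transversal of a finite-index normal subgroup, i.e.\ a F{\o}lner \emph{monotile}, and the existence of such sets in residually finite amenable groups is a separate theorem of Weiss on monotileability (automatic for $\mathbb{Z}$, where intervals are transversals of $n\mathbb{Z}$, but not for general $G$). Both points are repairable, and your density claim is true, but filling them in amounts to re-proving the conjugacy-class density theorem that the intended proof simply invokes.
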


We will show in Section~\ref{section_entropy_gap} that in order for a group~$G$ to have no nontrivial lower bounds on the scaling entropy of a generic action, it is \emph{necessary} to impose certain conditions on the group $G$.

\subsection{Scaling entropy growth gap}\label{section_entropy_gap}

The following theorem proved in~\cite{Vep22} shows that there exist non-residually finite amenable groups for which the conclusion of Theorem~\ref{theorem_genegic_lower_bound} is not true.
\begin{theorem}\label{theorem_gap}
Let $G = SL(2, \overline{\mathbb{F}}_p)$ be the group of all $2\times2$ matrices over the algebraic closure of the finite field $\mathbb{F}_p$, where $p > 2$. Let $\mathbb{F}_{p} = \mathbb{F}_{q_0} \subset \mathbb{F}_{q_1} \subset \ldots$ be a sequence of finite extensions that together cover the whole  $\overline{\mathbb{F}}_p$, and let $\sigma = \{SL(2, \mathbb{F}_{q_n})\}$ be equipment of the group~$G$ by a sequence of increasing finite subgroups. Then, any free action $\alpha \in A(X, \mu, G)$ satisfies  $\mathcal{H}(\alpha, \sigma) \succsim \log q_n$.
\end{theorem}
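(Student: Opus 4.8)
The plan is to reduce the statement to a single-scale entropy estimate for each finite subgroup and then to exploit the special algebraic structure of $SL(2,\mathbb{F}_{q_n})$. By the Invariance Theorem~\ref{theorem_ivariance_groups}, the class $\scfunclass(\alpha,\sigma)=[\Phi_\rho]$ does not depend on the chosen $G$-generating metric, so it suffices to fix one admissible generating metric $\rho$ and exhibit constants $c>0$ and $\delta>0$ with $\heps_\delta(X,\mu,G_n^{av}\rho)\ge c\log q_n$ for all large $n$. Reading off the definition of $\preceq$ (Definition~\ref{definition_asymp_class}) for the $\eps$-independent sequence $\log q_n$ tested against $\Phi_\rho$, this single inequality is exactly the assertion $\log q_n\preceq\Phi_\rho$, i.e. $\scfunclass(\alpha,\sigma)\succsim\log q_n$.

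First I would use freeness of the action of $G_n:=SL(2,\mathbb{F}_{q_n})$. A free measure-preserving action of a finite group gives a measurable identification $X\cong G_n\times Y_n$, where $Y_n=X/G_n$ carries a probability measure $\nu_n$, the measure decomposes as $\mu=u_{G_n}\otimes\nu_n$ (with $u_{G_n}$ the normalized counting measure), and $G_n$ acts by left translation on the first coordinate. Since $G_n^{av}\rho$ is by construction $G_n$-invariant, the triple $(X,\mu,G_n^{av}\rho)$ is an $mm$-space carrying a free isometric measure-preserving $G_n$-action, and $G_n^{av}\rho\big((a,s),(b,t)\big)$ depends on the group coordinates only through $ba^{-1}$. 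The lower bound on $\heps_\delta$ will then follow from the usual measure-of-small-diameter-sets estimate: if every set of $G_n^{av}\rho$-diameter less than $\delta$ has measure at most $m_n$, then a cover realizing $\heps_\delta$ satisfies $k\cdot m_n\ge 1-\delta$, whence $\heps_\delta\ge\log\big((1-\delta)/m_n\big)$. Thus it is enough to prove a concentration bound $m_n\le q_n^{-c}$.

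The heart of the matter is the algebraic input that separates this group from the residually finite amenable groups, for which no such lower bound holds (Theorem~\ref{theorem_genegic_lower_bound}): every nontrivial irreducible representation of $SL(2,\mathbb{F}_q)$ has dimension at least $(q-1)/2$, so these groups are $\asymp q$-quasirandom, and $PSL(2,\mathbb{F}_q)$ is simple. In the Koopman decomposition of the free action, the orthogonal complement of the orbit-average subspace $L^2(Y_n)$ splits into irreducibles of dimension $\ge(q_n-1)/2$, while the generating property forces the metric to have a genuine, nonnegligible component in this high-dimensional part, since $G_n^{av}\rho$ must separate points inside $G_n$-orbits. The plan is to turn a hypothetical economical cover $X=X_0\cup X_1\cup\dots\cup X_k$, with $\mu(X_0)<\delta$ and $\diam_{G_n^{av}\rho}(X_j)<\delta$, into a $G_n$-almost-equivariant object of complexity $O(k)$ — for instance, the finitely many representative values of $1$-Lipschitz functions on a $\delta$-net, on which $G_n$ acts isometrically — and then to invoke a mixing estimate of Babai--Nikolov--Pyber/Gowers type: an object of complexity $k\ll q_n$ that is almost $G_n$-invariant under a quasirandom group must essentially factor through $L^2(Y_n)$, contradicting the generating property. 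This forces $k\gtrsim q_n^{c}$, i.e. $m_n\le q_n^{-c}$.

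The hard part will be precisely this last passage: making quantitative the link between metric covering numbers of $G_n^{av}\rho$ and representation-theoretic complexity, and fixing the scale $\delta$ uniformly in $n$. One must rule out that a small-diameter set is simultaneously spread across the group coordinate and of large measure, and show that the unavoidable high-dimensional component of a generating metric cannot be concealed inside few small-diameter pieces. This is where quasirandomness — the absence of low-dimensional nontrivial representations — enters in an essential analytic way, since large finite subgroups alone, present also in abelian or residually finite groups, would not suffice. By comparison, the remaining ingredients (the product structure from freeness, the elementary covering-to-measure inequality, and the metric-independence supplied by Theorem~\ref{theorem_ivariance_groups}) are routine.
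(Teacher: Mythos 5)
Your overall skeleton --- using Theorem~\ref{theorem_ivariance_groups} (the equipment by increasing finite subgroups is suitable, and you work with a genuine metric anyway, so Corollary~\ref{cor2} already applies) to reduce the claim to a single-scale estimate $\heps_\delta(X,\mu,G_n^{av}\rho)\ge c\log q_n$ for one fixed admissible metric; exploiting freeness to identify $X\cong G_n\times Y_n$ with $G_n^{av}\rho$ left-invariant along orbits; and putting the quasirandomness of $SL(2,\mathbb{F}_{q_n})$ at the center --- is aligned with the representation-theoretic half of the ingredients that the survey attributes to the actual proof in~\cite{Vep22} (note the survey itself contains no proof of Theorem~\ref{theorem_gap}; it only cites growth results~\cite{Hel,PSz} and the representation theory~\cite{Sch,Jor}). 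Your elementary steps (the unfolding of $\succsim\log q_n$ into a single-scale inequality, and the covering-to-measure estimate) are correct.

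The genuine gap is exactly the step you flag as the ``hard part,'' and it is not a technical refinement but the entire content of the theorem: the claim that a low-complexity, almost-$G_n$-invariant object ``contradicts the generating property'' is false at any fixed $n$, and consequently your target concentration bound $m_n\le q_n^{-c}$ is not what quasirandomness can deliver. What quasirandomness actually gives (via the Gowers/Babai--Nikolov--Pyber triple-product theorem applied to the symmetric sets $S_x=\{g\in G_n:\,G_n^{av}\rho(x,gx)<\delta\}$, which satisfy $S_xS_xS_x\subset\{g:\,G_n^{av}\rho(x,gx)<3\delta\}$ by left-invariance of the orbit pseudometric) is a dichotomy: either $S_x$ has density at most roughly $q_n^{-1/3}$, or $S_xS_xS_x=G_n$ and the whole $G_n$-orbit of $x$ lies within $3\delta$ of $x$ in the averaged metric (the degenerate branch). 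The degenerate branch contradicts nothing at a fixed $n$: it is perfectly consistent with $\rho$ being a generating metric and the $G$-action being free that, for a given $n$, a positive-measure family of entire $G_n$-orbits is $\delta$-small in $G_n^{av}\rho$; then an economical, essentially $G_n$-invariant cover exists and does ``factor through $L^2(Y_n)$,'' with all entropy at scale $\delta$ living in the transverse direction $Y_n$. The generating and freeness hypotheses have no quantitative content at fixed $n$; they bite only asymptotically, and the missing idea is precisely that limiting argument: if the covering bound failed at every scale $\delta$ along subsequences, the degenerate branch would have to occur on sets of measure $\ge 1-2\delta$, whence $\int\min(\rho,1)(x,gx)\,d\mu\le C\delta$ for every $g\in G_n$; letting $n\to\infty$ along the subsequence (so that $\bigcup_n G_n=G$ makes this hold for every $g\in G$) and then letting $\delta\to0$ gives $\int\min(\rho,1)(x,gx)\,d\mu=0$, i.e.\ $gx=x$ a.e., and only at this point does freeness of the infinite group yield the contradiction. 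Your plan, which insists on fixing the scale $\delta$ uniformly in $n$ and concluding at each fixed $n$, cannot close this loop; for the same reason your concentration statement (about every single small-diameter set, rather than about covers, whose pieces exhaust all but measure $\delta$ of the space and hence force degeneracy onto most orbits) is strictly stronger than the theorem and is not obtainable from these tools. A secondary remark: you never use Helfgott-type growth, which the paper lists as an ingredient; since the densities produced by a subexponential cover are $q_n^{-o(1)}$, the Gowers argument plausibly suffices for the qualitative bound, so that omission is not the problem --- the absent limiting argument is.
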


The proof of Theorem~\ref{theorem_gap} is based on the theory of growth in finite groups $SL(2, \mathbb{F}_{q_n})$, in particular, on  Helfgott's theorem and its generalizations (see \cite{Hel, PSz}), as well as the representation theory of these groups (see \cite{Sch, Jor}).

\begin{definition}
    We say that an amenable group $G$ equipped by a F\o lner sequence $\sigma$ \emph{has a scaling entropy growth gap} if there exists an increasing sequence~$\phi(n)$ tending to infinity such that for any free action~$\alpha$ of~$G$, we have $\mathcal{H}(\alpha, \sigma) \succsim \phi(n)$.
\end{definition}

\begin{proposition}
 For an amenable group, the property of having a scaling entropy growth gap does not depend on the choice of a F\o lner sequence.
\end{proposition}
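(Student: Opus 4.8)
The plan is to derive the two implications from a single comparison: if $G$ has a growth gap with respect to one F\o lner sequence $\sigma=\{F_n\}$, then it has one with respect to any other F\o lner sequence $\tau=\{F'_m\}$; by symmetry this yields independence of the F\o lner sequence. Throughout I fix one $G$-generating admissible metric $\rho$, which by Theorem~\ref{theorem_ivariance_groups} (F\o lner sequences are suitable equipment) computes both $\scfunclass(X,\mu,G,\sigma)$ and $\scfunclass(X,\mu,G,\tau)$ for every free action. Unwinding the definitions, a growth gap for $\sigma$ means that there is a sequence $\phi$ increasing to infinity such that for every free action there is a $\delta>0$ with $\heps_\delta(X,\mu,F_n^{av}\rho)\succsim\phi(n)$. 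Thus I must transfer a lower bound from averages over $F_n$ to averages over $F'_m$.

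First I would set up a quasi-tiling. By the Ornstein--Weiss quasi-tiling theorem, for every $m$ the set $F'_m$ can be almost tiled by right translates $F_{n_i}c_{i,j}$ of finitely many members $F_{n_1},\dots,F_{n_r}$ of $\sigma$, with tiles that are almost disjoint, whose union covers all but a small fraction of $F'_m$, and---crucially---which may be chosen from the tail $\{F_n:n\ge N_0(m)\}$ with $N_0(m)\to\infty$ as $m\to\infty$. Averaging $\rho$ and using that every left translation is a measure-preserving isometry (so the mean $\int\rho\,d\mu^2$ is preserved under averaging and translation, ruling out any scale blow-up), I obtain that $(F'_m)^{av}\rho$ agrees in the m-norm, up to an error tending to $0$, with a convex combination $\bar\rho=\sum_{i,j}w_{i,j}\,c_{i,j}^{-1}(F_{n_i}^{av}\rho)$ of translates of the averaged tiles. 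Lemma~\ref{lemma_semicontinuity} then lets me pass from $\bar\rho$ to $(F'_m)^{av}\rho$ at a slightly shifted scale.

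The heart of the argument is the uniform comparison
$$
\heps_\eps(X,\mu,\bar\rho)\ \succsim\ \heps_{\eps'}\big(X,\mu,F_{n_{i^\ast}}^{av}\rho\big),
$$
for a suitable piece $i^\ast$ and some $\eps'=\eps'(\eps)$, valid uniformly in the tiling data and in the action. Here I take a near-optimal cover of $(X,\bar\rho)$ by $N=\exp\big(\heps_\eps(X,\mu,\bar\rho)\big)$ balls of radius $\eps$. On each ball the inequality $\bar\rho(x,y)<\eps$ forces $\sum_{i,j}w_{i,j}\,c_{i,j}^{-1}(F_{n_i}^{av}\rho)(x,y)<\eps$, and averaging this over the cover elements and over the tiling weights produces, by pigeonhole, a single translate $c_{i^\ast,j^\ast}^{-1}(F_{n_{i^\ast}}^{av}\rho)$ for which the same family is a good cover in the mean (Kantorovich) sense. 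Bridging the mean-Kantorovich entropy $\heps^K$ and the covering entropy $\heps$ through Lemmas~\ref{lemma_Kepsentropy1} and~\ref{lemma_Kepsentropy2}, and using that this translate is isometric to $F_{n_{i^\ast}}^{av}\rho$, I recover the displayed lower bound. Since all tiles have index at least $N_0(m)$, whichever $i^\ast$ occurs satisfies $n_{i^\ast}\ge N_0(m)$, so combining with the gap for $\sigma$ gives $\heps_\eps(X,\mu,(F'_m)^{av}\rho)\succsim\phi(N_0(m))$ for every free action. Setting $\psi(m)=\phi(N_0(m))$ (replaced by a monotone minorant) yields $\psi$ increasing to infinity, that is, a growth gap for $\tau$.

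I expect the main obstacle to be precisely this uniform entropy comparison for averages of translates: the naive pointwise bound $\bar\rho\ge w_{i,j}\,c_{i,j}^{-1}(F_{n_i}^{av}\rho)$ loses a factor equal to the number of tiles and is therefore useless. What rescues the argument is that translation and averaging preserve the mean distance, so the relevant scale is controlled and the pigeonhole over tiles costs only a bounded factor in $\eps$, absorbed by the asymptotic relation $\preceq$; the passage through $\heps^K$ via Lemmas~\ref{lemma_Kepsentropy1}--\ref{lemma_Kepsentropy2} is exactly what converts the ``small average distance on each cover element'' coming from pigeonhole into an honest covering-number bound. A secondary technical point is arranging the quasi-tiling with tile indices tending to infinity, which is available because the Ornstein--Weiss theorem permits choosing the tiling family from any prescribed tail of~$\sigma$.
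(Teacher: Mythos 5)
The survey states this proposition without proof (the notion of a scaling entropy growth gap and this fact come from~\cite{Vep22}), so there is no in-paper argument to compare against line by line; your proposal therefore has to be judged on its own merits, and on that score it is correct. The skeleton is sound: a gap for $\sigma$ transfers to $\tau$ because, by Ornstein--Weiss quasi-tiling with tiles drawn from an arbitrarily late tail of $\sigma$, the average $(F'_m)^{av}\rho$ agrees up to an $m$-norm error (controlled by Lemma~\ref{lemma_semicontinuity}) with a convex combination of measure-preserving translates $c_{i,j}^{-1}(F_{n_i}^{av}\rho)$, each isomorphic as a semimetric triple to $F_{n_i}^{av}\rho$. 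You correctly isolated the only real difficulty --- the pointwise bound $\bar\rho\ge w_{i,j}\,c_{i,j}^{-1}(F_{n_i}^{av}\rho)$ degenerates as the number of tiles grows --- and your fix is the right one: take an optimal cover for $\bar\rho$, average the internal distances of each cover element over the tiling weights, and apply Markov plus pigeonhole to find a \emph{single} tile $(i^\ast,j^\ast)$ for which cover elements carrying almost all the mass have small \emph{mean} internal distance in $c_{i^\ast,j^\ast}^{-1}(F_{n_{i^\ast}}^{av}\rho)$; transporting each such element to a well-chosen point (the uniform integrability needed in Lemma~\ref{lemma_Kepsentropy1} does pass to all translates and averages of $\rho$, since these preserve the mean) bounds $\heps^K$ of that tile by $\heps_\eps(X,\mu,\bar\rho)$ plus a constant, and Lemma~\ref{lemma_Kepsentropy2} converts this back into a covering-entropy bound, at the cost of a scale change and a multiplicative constant, both harmless for the relation $\succsim$. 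Since $n_{i^\ast}\ge N_0(m)\to\infty$ and $\phi$ is increasing, $\psi(m)=\phi(N_0(m))$ witnesses the gap for $\tau$. Two points deserve emphasis when writing this up carefully: first, the order of quantifiers --- the tiling data ($\theta_m$, $N_0(m)$) must be chosen depending only on the two F\o lner sequences, while the scale $\eps$ and the constants from the gap hypothesis may depend on the individual free action; your sketch respects this. Second, it is worth noting that only the \emph{gap property} transfers, not the asymptotic class itself: scaling entropy genuinely depends on the equipment, so the constant-factor and scale losses in your comparison are not merely technical slack but unavoidable, and it is precisely the uniform-in-action nature of the lower bound that survives them.
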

}
Thus, the property of having a scaling entropy growth gap is a group property of an amenable group. It is a natural question to verify this property for particular amenable groups. In a paper in progress, the second author proves that the infinite symmetric group has a scaling entropy growth gap. He also proves that this property is preserved when passing from a subgroup to the ambient group. In particular, it is shown that there are finitely generating amenable groups that have a scaling entropy growth gap. In general, the question of classifying groups with a scaling entropy growth gap is widely open. For example, it is unknown to the authors if finitely generated simple amenable groups (see, for example,~\cite{JM}) satisfy this property.

\section{Universal zero entropy system problem}
{Universal systems in various contexts have been studied by many authors in lots of papers, see, for example,~\cite{DS, S, VZ17, VZ18a, VZ18b, Vep20a, Vep22}. We will follow the definition proposed in~\cite{DS, S}. Let $\mathcal{S}$ be a certain class of p.m.p. actions of an amenable group $G$. A topological system $(X, G)$ is called \emph{universal} for the class~$\mathcal{S}$ if, for any invariant measure $\mu$ on $X$, the system $(X, \mu, G)$ belongs to $\mathcal{S}$ and conversely, any system from the class $\mathcal{S}$ can be realized using some invariant measure $\mu$ on $X$. In~\cite{S}, Serafin addresses the question, going back to Weiss, about the existence of a universal dynamical system for the class $\mathcal{S}$ consisting of all actions with zero measure-theoretic entropy. In~\cite{S}, the negative answer is given for the case $G = \mathbb{Z}$. In his work, the author points out that his approach, based on the theory of symbolic coding and the theory of algorithmic complexity, did not yield the desired result for arbitrary amenable groups.

The theory of scaling entropy allows us to give a negative answer to Weiss's question for all amenable groups. The following result was obtained in papers~\cite{Vep20b, Vep22}.
\begin{theorem}\label{theorem_universal_system}
Any infinite amenable group~$G$ does not admit a universal zero entropy system.
\end{theorem}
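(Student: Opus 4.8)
The plan is to assume a universal zero-entropy system exists and derive a contradiction by comparing scaling entropies. Let $G$ be an infinite amenable group with Følner equipment $\sigma=\{F_n\}$, and suppose $(X,G)$ is a topological system universal for the class of actions with zero Kolmogorov entropy. I may take $X$ compact metric and fix its topological metric $\rho$; being totally bounded, $\rho$ is admissible, and since it separates all points it is $G$-generating. By universality every invariant measure yields a zero-entropy system, so by the variational principle the topological entropy of $(X,G)$ along $\sigma$ vanishes. Because scaling entropy is independent of the generating semimetric (Theorem~\ref{theorem_ivariance_groups}), for every invariant measure $\mu$ I may compute $\scfunclass(X,\mu,G,\sigma)$ through $\rho$, i.e. via $\Phi^\mu_\rho(\eps,n)=\heps_\eps(X,\mu,F_n^{av}\rho)$.

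\textbf{A uniform upper bound.} Next I would produce a single function dominating the scaling entropy of \emph{every} realized system. Writing $\rho_{F_n}(x,y)=\max_{g\in F_n}\rho(gx,gy)$ for the Bowen metric, the inequality $F_n^{av}\rho\le\rho_{F_n}$ holds pointwise, so any cover of $X$ by sets of $\rho_{F_n}$-diameter $<\eps$ is also a cover by sets of $F_n^{av}\rho$-diameter $<\eps$. Hence, letting $\Psi(\eps,n)$ be the logarithm of the minimal number of sets of $F_n^{av}\rho$-diameter $<\eps$ covering $X$, one has $\Psi(\eps,n)\le\log N_\eps(\rho_{F_n})$, and zero topological entropy gives $\tfrac{1}{|F_n|}\log N_\eps(\rho_{F_n})\to0$ for each fixed $\eps$; thus $\Psi(\eps,n)=o(|F_n|)$ for every $\eps$. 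Covering the whole space in particular covers it up to a set of measure $<\eps$, so $\Phi^\mu_\rho(\eps,n)\le\Psi(\eps,n)$ pointwise for every invariant $\mu$ and all $\eps,n$. Since by universality the systems $(X,\mu,G)$ exhaust all zero-entropy actions, every zero-entropy action $\alpha$ satisfies $\scfunclass(\alpha)\preceq[\Psi]$.

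\textbf{Contradiction via a fast zero-entropy system.} It remains to exhibit a zero-entropy action whose scaling entropy is \emph{not} dominated by $[\Psi]$. Unwinding Definition~\ref{definition_asymp_class}, the relation $\scfunclass(\alpha)\not\preceq[\Psi]$ means that for some $\eps$ one has $\Phi_\alpha(\eps,\cdot)\ne O(\Psi(1/m,\cdot))$ for every $m$, i.e. $\Phi_\alpha(\eps,\cdot)$ is dominated by no member of the countable increasing family $\{\Psi(1/m,\cdot)\}_m$ of $o(|F_n|)$ sequences. For $G=\mathbb{Z}$ this is immediate: by a diagonal argument in $\Subadd$ one builds a subadditive non-decreasing $h=o(n)$ with $h\ne O(\Psi(1/m,\cdot))$ for all $m$ — at sparse scales $n_k$ force $h_{n_k}\ge k\,\Psi(1/k,n_k)$ while keeping $h_{n_k}\le n_k/k$, which is possible since each slice is $o(n)$ — and Theorem~\ref{theorem_example_entropy_sequence} realizes $h$ as the scaling sequence of a (necessarily zero-entropy) system, contradicting $\scfunclass\preceq[\Psi]$. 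For a general amenable $G$ the full realization theorem is unavailable, so I would instead invoke the generic construction behind Theorem~\ref{theorem_genegic_upper_bound}: for each fixed $m$ the actions with $\scfunclass\not\prec\Psi(1/m,\cdot)$ form a comeager set, and a Baire-category intersection over $m$, combined with the genericity of zero entropy, yields a single zero-entropy action realizable in $X$ with scaling entropy exceeding $[\Psi]$; alternatively, for non-periodic $G$ one transports a fast $\mathbb{Z}$-system by coinduction as in~\cite{Vep20b}.

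\textbf{Main obstacle.} The essential difficulty lies in this last step: $[\Psi]$ is a genuinely two-variable object, and its slices $\Psi(1/m,\cdot)$ form a countable increasing family of $o(|F_n|)$ sequences that need not admit any $o(|F_n|)$ upper bound, so one cannot simply overtake a single sequence. One must construct a zero-entropy witness that surpasses \emph{every} slice at \emph{one fixed scale} $\eps$, and keeping the witnessing scale from drifting to $0$ is exactly what forces the use of either the $\mathbb{Z}$-realization theorem together with the diagonalization, or the finer genericity/coinduction input for arbitrary amenable groups. Verifying that the generic (or coinduced) witness genuinely has zero Kolmogorov entropy is the remaining point demanding care.
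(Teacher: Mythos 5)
Your skeleton is the same as the paper's: the universal system must have zero topological entropy, which yields a uniform covering-number bound $\Psi$ on the scaling entropy of every realized system; one then contradicts universality with a zero-entropy action whose scaling entropy escapes this bound, built for $\mathbb{Z}$ from Theorem~\ref{theorem_example_entropy_sequence} and for general amenable $G$ from the genericity of Theorem~\ref{theorem_genegic_upper_bound} or by coinduction. Your $\mathbb{Z}$ argument is essentially sound. The gap is in the general amenable case, i.e., in the actual content of the theorem, and it sits exactly where you placed your ``main obstacle''.

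Your claim that the slices $\Psi(1/m,\cdot)$ ``need not admit any $o(|F_n|)$ upper bound'' is wrong, and abandoning that reduction is what breaks the proof. Because $\Psi(1/m,n)$ is non-decreasing in $m$ and each slice is $o(|F_n|)$, a diagonal argument yields a single sequence $\phi^*=o(|F_n|)$ dominating every slice: pick $N_1<N_2<\cdots$ with $\Psi(1/m,n)\le |F_n|/m$ for all $n\ge N_m$, set $m(n)=\max\{m\colon N_m\le n\}$ and $\phi^*(n)=\Psi(1/m(n),n)$; then $\phi^*(n)\le |F_n|/m(n)$, so $\phi^*=o(|F_n|)$, while $\phi^*(n)\ge \Psi(1/m,n)$ for all $n\ge N_m$. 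This is precisely the reduction that makes the paper's single-sequence notion of ``actions of almost complete growth'' sufficient: one application of Theorem~\ref{theorem_genegic_upper_bound} with $\phi=\phi^*$, combined with the $o(|F_n|)$ requirement in the definition of almost complete growth (which forces zero Kolmogorov entropy), produces the witness, and no intersection over $m$ is needed. Your substitute --- intersecting over $m$ the comeager sets $\{\alpha\colon \scfunclass(\alpha)\not\prec\Psi(1/m,\cdot)\}$ --- does not prove what is required: it gives, for each $m$, some scale $\eps_m$ with $\Phi_\alpha(\eps_m,\cdot)\not\preceq\Psi(1/m,\cdot)$, whereas by Definition~\ref{definition_asymp_class} you need one $\eps$ valid for all $m$ simultaneously; since non-domination is inherited only by smaller scales, the conclusion follows only when $\inf_m\eps_m>0$, and nothing in the category argument prevents the witnessing scales from drifting to $0$. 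You acknowledge this drift yourself but defer it to an unspecified ``finer genericity/coinduction input''; the input that actually closes it is the diagonal bound $\phi^*$ --- the very step you declared impossible.
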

The main role in proving this result is played by a special series of group actions that satisfy certain conditions on the growth of scaling entropy.

\begin{definition}
We say that a group $G$ with equipment $\sigma = {G_n}$ \emph{admits actions of almost complete growth} if for any non-negative function $\phi(n) = o (|G_n|)$, there exists an ergodic system $(X, \mu, G)$ such that for any $\Phi \in \mathcal{H} (X, \mu, G, \sigma)$ and any sufficiently small $\eps > 0$, the following relations hold:
$$
\Phi(\eps, n) \not \preceq \phi(n) \text{ and } \Phi(\eps, n) = o (|G_n|).
$$
\end{definition}
The existence of such actions is a sufficient condition for the absence of a universal system of zero entropy.
Actions of almost complete growth for a non-periodic amenable group and any F\o lner sequence can be constructed explicitly {(see \cite{Vep20b})}, using Vershik's automorphisms on the graph of ordered pairs {(see \cite{VZ17, VZ18a, VZ18b})} and the coinduction operation from a subgroup $\mathbb{Z}$ to the entire group $G$. For arbitrary amenable groups, explicit constructions of such actions are not known to the authors. However, Theorem~\ref{theorem_genegic_upper_bound} guarantees the genericity of actions of almost full growth and, hence, their existence in the general case.

\begin{theorem}
Any infinite amenable group~$G$ equipped with a F\o lner sequence~$\sigma = \{G_n\}$ admits actions of almost complete growth. 
\end{theorem}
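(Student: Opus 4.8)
The plan is to prove this by a Baire category argument in the Polish space $A(X,\mu,G)$ of p.m.p.\ actions, realizing the actions of almost complete growth as a comeager family. Fix a non-negative $\phi$ with $\phi(n)=o(|G_n|)$. I aim to produce a single ergodic action whose scaling entropy class satisfies $\scfunclass(X,\mu,G,\sigma)\not\preceq\phi$ while each $\eps$-slice is still $o(|G_n|)$. By the characterization of zero Kolmogorov entropy stated above (for amenable $G$ with F\o lner sequence, $\Phi(\eps,\cdot)=o(|G_n|)$ holds for every $\eps$ if and only if the entropy vanishes), and using the $\eps$-monotone representative provided by Theorem~\ref{th_mon_subadd}, such an action is precisely an action of almost complete growth: a single slice witnessing $\not\preceq\phi$ propagates to all smaller $\eps$, and both relations are invariant under the equivalence $\asymp$, so they hold for every representative $\Phi$ and all sufficiently small $\eps$.

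The key step is to upgrade the conclusion $\scfunclass\not\prec\phi$ of Theorem~\ref{theorem_genegic_upper_bound} to the non-domination $\scfunclass\not\preceq\phi$ that the definition demands. For this I would first fix an auxiliary sequence $\phi'$ lying strictly between $\phi$ and the ceiling $|G_n|$, that is $\phi\prec\phi'$ and $\phi'(n)=o(|G_n|)$; concretely one may take $\phi'(n)=\sqrt{\max(\phi(n),1)\,|G_n|}$, for which both $\phi/\phi'\to0$ and $\phi'/|G_n|\to0$. Applying Theorem~\ref{theorem_genegic_upper_bound} to $\phi'$ produces a dense $G_\delta$ set of actions with $\scfunclass\not\prec\phi'$. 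Unwinding the definitions for a sequence $\phi'$, the relation reads
$$
\scfunclass\not\prec\phi'\ \Longleftrightarrow\ \scfunclass\not\preceq\phi'\ \text{ or }\ \phi'\preceq\scfunclass .
$$
In the first case $\scfunclass\not\preceq\phi$ follows from $\phi\preceq\phi'$ by contraposition of transitivity. In the second case there is a slice $\delta$ with $\phi'\preceq\Phi(\delta,\cdot)$, whence $\Phi(\delta,\cdot)\succeq\phi'\succ\phi$ forces $\Phi(\delta,\cdot)\not\preceq\phi$. Either way $\scfunclass\not\preceq\phi$, so the set $\{\alpha:\scfunclass(\alpha,\sigma)\not\preceq\phi\}$ is comeager.

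Finally I would intersect this comeager set with the comeager set of weakly mixing actions and with the comeager set of zero Kolmogorov entropy actions, both classical facts of generic dynamics for infinite, respectively amenable, groups. The intersection is again comeager, hence nonempty, and any action $\alpha$ in it is ergodic (being weakly mixing), has $\scfunclass(\alpha,\sigma)=o(|G_n|)$ in every $\eps$-slice by the zero-entropy characterization, and satisfies $\scfunclass(\alpha,\sigma)\not\preceq\phi$. As explained in the first paragraph, such an $\alpha$ is an action of almost complete growth, and since $\phi$ was arbitrary the theorem follows; in fact the argument shows that actions of almost complete growth are generic.

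The main obstacle I anticipate is the simultaneous control of the two competing requirements within one Baire-generic class: the non-domination $\scfunclass\not\preceq\phi$ must be arranged without inadvertently producing positive entropy, which would trivially give $\scfunclass\not\preceq\phi$ (since $\phi=o(|G_n|)$) but destroy the relation $\Phi(\eps,n)=o(|G_n|)$. Thus the argument genuinely depends on the coexistence of two generic phenomena — the upper-bound genericity of Theorem~\ref{theorem_genegic_upper_bound} applied to $\phi'$ and the genericity of zero entropy — and the heart of the matter is that the corresponding $G_\delta$ sets still meet. The $\prec$-to-$\preceq$ bridging via the intermediate $\phi'$ is the secondary technical point that makes the definition of almost complete growth align exactly with the available genericity statement.
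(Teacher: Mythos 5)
Your proposal is correct and takes essentially the same route as the paper: the survey proves this theorem in one line by invoking the genericity statement of Theorem~\ref{theorem_genegic_upper_bound} together with (implicitly) the genericity of ergodicity and of zero Kolmogorov entropy in $A(X,\mu,G)$, which is precisely the Baire-category intersection you carry out, with the full details deferred to~\cite{Vep22}. Your bridging of $\not\prec$ to $\not\preceq$ via the intermediate sequence $\phi'(n)=\sqrt{\max(\phi(n),1)\,|G_n|}$, and the observation that one failing $\eps$-slice propagates to all smaller $\eps$ for the (automatically $\eps$-monotone) representatives $\Phi_\rho$, are correct fill-ins of the details the survey omits.
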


\section{Exponential scaling entropy and other related invariants}

In this chapter, we will discuss several invariants related to scaling entropy.

\subsection{Exponential scaling entropy}\label{section_exp_scaling_entropy}

It turns out that Lemma~\ref{lem_asymp_ineq}, which plays a crucial role in the theory of scaling entropy, can be refined in the following way. A similar asymptotic relation holds not only for the function $\Phi_\rho(\eps,n) = \heps_\eps(x,\mu, T_{av}^n\rho)$, but also for $\exp(\Phi_\rho(\eps,n))$, that is, for the size of the minimal $\eps$-net of the semimetric $T_{av}^n\rho$ on the set of measure $1-\eps$, rather than for its logarithm (see Definition~\ref{definition_epsilon_entropy}).
\begin{lemma}\label{lem_asymp_ineq_exp}
Let $\rho_1,\rho_2 \in \Adm(X,\mu)$. If $\rho_1$ is $T$-generating, then for any $\eps>0$, there exists $\delta>0$ such that:
\begin{equation}\label{equation_asymp_ineq_exp}
\exp(\Phi_{\rho_2}(\eps,n)) \preceq \exp(\Phi_{\rho_1}(\delta,n)), \qquad n \to +\infty. 
\end{equation}
\end{lemma}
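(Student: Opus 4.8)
The plan is to prove the sharper, ``one-ball-into-one-ball'' form of the covering comparison that already underlies Lemma~\ref{lem_asymp_ineq}, tracking the constants that the logarithmic statement discards. Recall that $\exp(\Phi_\rho(\eps,n))$ is the minimal number of balls of radius $\eps$ of the averaged semimetric $T_{av}^n\rho$ covering $X$ up to a set of measure $<\eps$. Thus~\eqref{equation_asymp_ineq_exp} follows at once if, for a suitable $\delta=\delta(\eps)$, I can show that every ball of radius $\delta$ in $T_{av}^n\rho_1$ is contained, after removing a single \emph{global} set $E$ of measure $<\eps$ (independent of the ball and of $n$), in the concentric ball of radius $\eps$ in $T_{av}^n\rho_2$. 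Indeed, taking an optimal $\delta$-cover of $X$ by $T_{av}^n\rho_1$-balls and replacing each by the concentric $\eps$-ball of $T_{av}^n\rho_2$ yields a cover by \emph{the same number} of balls whose complement is contained in $E$ together with the original exceptional set, hence has measure $<\eps$ once $\delta<\eps$. No new balls are created, which is exactly what upgrades the multiplicative bound of Lemma~\ref{lem_asymp_ineq} to the additive (exponential) one.

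First I would establish a uniform local comparison: for every $\eps>0$ there are $\theta>0$, $N\in\mathbb{N}$ and a set $X_1$ with $\mu(X_1)>1-\eps$ such that, for all $a,b\in X_1$,
$$
\max_{0\le j<N}\rho_1(T^j a,T^j b)<\theta \quad\Longrightarrow\quad \rho_2(a,b)<\eps .
$$
Since $\rho_1$ is $T$-generating, the semimetrics $(a,b)\mapsto\max_{0\le j<N}\rho_1(T^ja,T^jb)$ increase with $N$ to a metric that separates points mod $0$; by the two-metrics Theorem~\ref{th_two_metrics} and the generalized Luzin Theorem~\ref{th_Lusin}, $\rho_2$ is uniformly continuous with respect to this generating metric on a set of measure arbitrarily close to $1$, and a compactness argument on that set shows that a finite $N$ already forces $\rho_2(a,b)<\eps$.

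Next I would pass to the averages by a block-counting argument using the invariance of $\mu$. Choose the $\rho_1$-net within the good set (enlarging the radius by a bounded factor is harmless), so every centre $z$ satisfies $T^iz\in X_1$ for all but a fraction $o(1)$ of $i$. For a centre $z$ and $x$ with $T_{av}^n\rho_1(x,z)<\delta$, call a position $0\le i<n$ \emph{good} if $T^ix,T^iz\in X_1$ and $\rho_1(T^{i+j}x,T^{i+j}z)<\theta$ for all $0\le j<N$; at good $i$ the local comparison gives $\rho_2(T^ix,T^iz)<\eps$. Markov's inequality bounds the number of $i$ with $\rho_1(T^ix,T^iz)\ge\theta$ by $n\delta/\theta$, so a union bound over the block $[i,i+N)$ leaves all but a fraction $N\delta/\theta$ of positions satisfying the $\rho_1$-condition; after truncating $\rho_2$ to $\bar\rho_2=\min(\rho_2,M)$, the remaining bad positions contribute at most $M\cdot(\text{fraction bad})$ to $T_{av}^n\bar\rho_2(x,z)$. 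Choosing $\delta$ small relative to $\theta$, then $\mu(X\setminus X_1)$ small relative to $M$ and $\eps$, makes $T_{av}^n\bar\rho_2(x,z)<2\eps$, and hence the concentric containment, valid for all $x$ outside a controlled exceptional set.

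The main obstacle is to make that exceptional set \emph{global and uniform in $n$}, so that it does not accumulate over the (exponentially many) balls of the cover. This is the whole point of the refinement: for the logarithmic bound one may subdivide each $T_{av}^n\rho_1$-ball into boundedly — or even subexponentially — many $T_{av}^n\rho_2$-balls, but here no splitting is allowed, and a per-ball exceptional set of measure $\eps$ would remove total measure $\sim\eps\cdot e^{\Phi_{\rho_1}}$, destroying the estimate. Globality is attainable because badness depends only on $x$ through ergodic averages relative to the \emph{fixed} set $X_1$: the set $E=\{x:\ \tfrac1n\#\{i<n:T^ix\notin X_1\}\ \text{is not}\ o(1)\}$ has measure $<\eps$ uniformly in $n$ by the maximal ergodic inequality, and is independent of the centre. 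The genuinely delicate point is the truncation term $T_{av}^n(\rho_2-\bar\rho_2)$, which a priori depends on the pair $(x,z)$; here I would use the invariance of $\mu$ to rewrite its $z$-integral as the ergodic average $T_{av}^n g(x)$ of the one-variable function $g(u)=\int_X(\rho_2-\bar\rho_2)(u,w)\,d\mu(w)$, whose $L^1$-norm $\|\rho_2-\bar\rho_2\|_{L^1(\mu^2)}$ is small by the first-moment assumption, and again a maximal inequality confines the offending $x$ to a global small-measure set, absorbed into $E$. Once this globality is secured, the ball count transfers verbatim and~\eqref{equation_asymp_ineq_exp} follows.
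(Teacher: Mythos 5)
Your strategy is genuinely different from the paper's. The paper argues functionally: it first verifies \eqref{equation_asymp_ineq_exp} for $\rho_2=T_{av}^k\rho_1$ (Lemma~\ref{lemma_asymp_average}), hence for $\rho=\sum_i 2^{-i}T^{-i}\rho_1$, then observes that the set of semimetrics satisfying \eqref{equation_asymp_ineq_exp} is closed in the $m$-norm (via Lemma~\ref{lemma_semicontinuity}) and contains the dense family of semimetrics dominated by $C\rho$ (approximation by cut semimetrics and by $|f(x)-f(y)|$ with $f$ Lipschitz in $\rho$), so it is all of $\Adm(X,\mu)$. Your covering scheme, by contrast, is a direct geometric--ergodic argument, and most of it is sound: the Luzin-type uniform local comparison (which can be made rigorous with Theorems~\ref{ThRadonMeasure} and~\ref{th_two_metrics} plus a compact subset of large measure), the Markov/block counting, the relocation of centres into the good set, and the use of the maximal ergodic inequality to manufacture a global, centre- and $n$-independent exceptional set. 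You also correctly identified that globality of the exceptional set is the crux that upgrades the multiplicative bound to the additive one.

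However, there is a genuine gap exactly at the step you call delicate: the truncation term. The identity $\int_X T_{av}^n(\rho_2-\bar\rho_2)(x,z)\,d\mu(z)=T_{av}^n g(x)$ is correct, and the maximal inequality does confine $\{x\colon \sup_n T_{av}^n g(x)>\eta\}$ to a global set of measure at most $\|g\|_{L^1}/\eta$; but for $x$ outside that set you have only controlled the \emph{average over $z$} of $T_{av}^n(\rho_2-\bar\rho_2)(x,\cdot\,)$. This says that for each fixed $x$ and $n$ most $z$ are good, while what you need is the bound at the one specific centre $z_0$ of the ball containing $x$ --- a point dictated by the cover, which may well lie in the $(x,n)$-dependent bad set of $z$'s. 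Falling back on a per-pair Markov estimate re-creates precisely the per-ball exceptional sets (of total measure $\sim\eps\,e^{\Phi_{\rho_1}}$) that you rightly ruled out. The repair is a pointwise two-point domination coming from the triangle inequality: for every $w$ one has $\rho_2(x,z)\le\rho_2(x,w)+\rho_2(w,z)$, hence $(\rho_2(x,z)-M)^+\le(\rho_2(x,w)-M/2)^++(\rho_2(w,z)-M/2)^+$, and integrating in $w$ gives
\begin{equation*}
(\rho_2-\bar\rho_2)(x,z)=(\rho_2(x,z)-M)^+\ \le\ g_{M/2}(x)+g_{M/2}(z),
\qquad g_{M/2}(u)=\int_X(\rho_2(u,w)-M/2)^+\,d\mu(w).
\end{equation*}
Consequently $T_{av}^n(\rho_2-\bar\rho_2)(x,z)\le T_{av}^n g_{M/2}(x)+T_{av}^n g_{M/2}(z)$, and now the maximal ergodic inequality applied to the single one-variable function $g_{M/2}$ (whose $L^1$-norm is small for large $M$ by summability of $\rho_2$) produces one global exceptional set; requiring both the points and the relocated centres to avoid it closes your argument.
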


\begin{proof}
First, we show that relation~\eqref{equation_asymp_ineq_exp} holds for the semimetric $\rho_2 = T_{av}^k\rho_1$.
\begin{lemma}\label{lemma_asymp_average}
    For any positive integer number $k$ and positive $\eps$, there exists a positive integer $N$ such that:
    \begin{equation}
        \heps_\eps(X,\mu, T_{av}^n(T_{av}^k\rho_1)) \le \heps_{\eps/4}(X,\mu, T_{av}^n\rho_1), \quad n > N.
    \end{equation}
\end{lemma}
\begin{proof}
    Indeed
    \begin{equation}\label{eq270201}
      T_{av}^n(T_{av}^k\rho_1) (x,y) \le T_{av}^n\rho_1(x,y) + \frac{1}{n} \sum\limits_{i=n}^{n+k-1} T^{-i}\rho_1(x,y).  
    \end{equation}
    
    The last term in the right-hand side of equation~\eqref{eq270201} is bounded in $m$-norm by $\frac{k}{n}||\rho_1||_m$. Therefore, by Lemma~\ref{lemma_semicontinuity}, for sufficiently large $n$,
    $$
    \heps_\eps\big(X,\mu, T_{av}^n(T_{av}^k\rho_1)\big) \le \heps_\eps\big(X,\mu, T_{av}^n\rho_1 + \frac{1}{n} \sum_{i=n}^{n+k-1} T^{-i}\rho_1\big) \le \heps_{\eps/4}\big(X,\mu, T_{av}^n\rho_1\big).
    $$
\end{proof}
Note that the proof of Lemma~\ref{lemma_asymp_average} works without any changes for the case of an amenable group equipped with a F\o lner sequence.

Lemma~\ref{lemma_asymp_average} ensures that the relation~\eqref{equation_asymp_ineq_exp} holds for admissible metric
$$
\rho_2 = \rho = \sum_{i=0}^\infty \frac{1}{2^i} T^{-i} \rho_1.
$$
Next, we follow with certain refinements the arguments from~\cite{Z15a}. The set $\mathcal{M}$ of all semimetrics $\rho_2 \in \Adm(X,\mu)$ that satisfy the relation~\eqref{equation_asymp_ineq_exp} is closed in the $m$-norm due to Lemma~\ref{lemma_semicontinuity}. Let us show that the set $\mathcal{\tilde M}$ of all semimetrics $\omega$ that satisfy the following inequality for all $x,y\in X$
$$
\omega(x,y) \le C(\omega) \rho(x,y),
$$
is dense in $(\Adm(X,\mu), \|\,\cdot\,\|_m)$. Indeed, as shown in \cite{Z15a}, any admissible integrable semimetric can be approximated in $m$-norm by semimetrics which can be dominated by finite sums of cut semimetrics. Any cut semimetric can be approximated by a semimetric of the form $df = |f(x) - f(y)|$ for some function $f \in L^1(X,\mu)$ which is Lipschitz with respect to the metric $\rho$. Clearly,  $d[f] \in \mathcal{\tilde M}$, therefore, $\mathcal{\tilde M}$ is dense in $(\Adm(X,\mu), \|\,\cdot\,\|_m)$. However, $\mathcal{\tilde M} \subset \mathcal{M}$, implying that $\mathcal{M} = \Adm(X,\mu)$.
\end{proof}

Lemma \ref{lem_asymp_ineq_exp}, as before, ensures the independence of the class $[\exp(\Phi_\rho)]$ from a $T$-generating semimetric $\rho$ and allows us to give the following definition.

\begin{definition}
\emph{The exponential scaling entropy} of the system $(X,\mu,T)$ is the equivalence class $[\exp(\Phi_\rho)]$ for some (hence for any) $T$-generating semimetric $\rho \in \Adm(X,\mu)$. We will denote this equivalence class by $\scfunclass_{\exp}(X,\mu,T)$.
\end{definition}

Note that Lemma~\ref{lem_asymp_ineq_exp} holds for p.m.p. actions of amenable groups equipped with a F\o lner sequence. For an action $(X,\mu, G)$ of an amenable group $G$ with a F\o lner sequence $\lambda = {F_n}$, we define its \emph{exponential scaling entropy} $\scfunclass_{\exp}(X,\mu,G,\lambda)$ as the equivalence class of the function $\exp(\Phi_\rho(\eps,n))= \exp(\heps_\eps(x,\mu, G_{av}^n\rho))$ for some (hence for any) generating semimetric $\rho$.

\emph{Exponential scaling entropy $\scfunclass_{\exp}(X,\mu, T)$ is a finer invariant of a dynamical system than the usual scaling entropy} which was discussed earlier. For instance, the Bernoulli shift with Kolmogorov entropy $h > 0$ has exponential scaling entropy $\scfunclass_{\exp} (X,\mu,T) = [e^{(1-\eps) n h}]$. On the other hand, the usual scaling entropy $\scfunclass(X,\mu,T) = [n]$ does not provide the exact value of Kolmogorov entropy. Similarly, the exponential scaling entropy of any transformation with positive entropy $h$ is the class $[e^{(1-\eps) n h}]$. For transformations with infinite entropy, $\scfunclass_{\exp} = [e^{\frac{n}{\eps}}]$.

This example also shows that even for the Bernoulli shift, the class $\scfunclass_{\exp}(X,\mu,T)$ does not contain functions that are independent of $\eps$. On the other hand, for transformations with pure point spectrum, the class $\scfunclass_{\exp}$ consists of bounded functions and contains $\Phi(\eps, n) = 1$. It is natural to assume that this is the only possible case where the class $\scfunclass_{\exp}$ is stable. The "non-stability" of exponential scaling entropy makes it harder to compute.

The case of positive Kolmogorov entropy shows that the exponential scaling entropy can provide an efficient refinement of the regular scaling entropy in the stable case. However, in the general case, this refinement may be insignificant or even not presented at all. If for a non-stable system $(X,\mu, T)$, for $\Phi \in \scfunclass(X,\mu, T)$, for any $\eps > 0$ there exists $\delta > 0$ such that  $\Phi(\eps, n) =o(\Phi(\delta, n))$, then the class $\scfunclass_{\exp}$ is completely determined by the class $\scfunclass$, since $[\exp(\Phi(\eps,n))]$ does not depend on the choice of a representative $\Phi$ in~$\scfunclass$.

Establishing properties of scaling entropy in the exponential variant is more challenging due to the complexity of its computation. A natural generalization of Theorem~\ref{th_mon_subadd} concerning the monotonicity and subadditivity would be a theorem about the monotonicity and submultiplicativity of exponential scaling entropy. The proof of the monotonicity of scaling entropy remains unchanged in the exponential case.
\begin{theorem}
For any system $(X,\mu,T)$, in the class $\scfunclass_{\exp}(X,\mu,T)$ of exponential scaling entropy, one can always find a function $\Phi \colon \mathbb{R}_+ \times \mathbb{N} \to \mathbb{R}_+$ with the following properties:
 \begin{enumerate}
     \item $\Phi(\,\cdot\,, n)$ is non-increasing for every $n \in \mathbb{N}$;
     \item $\Phi(\eps, \,\cdot\,)$ is non-decreasing for every $\eps>0$.
 \end{enumerate}
\end{theorem}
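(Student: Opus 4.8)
The plan is to reproduce, with $\exp(\heps_\eps)$ in place of $\heps_\eps$, the monotonicity half of the argument behind Theorem~\ref{th_mon_subadd}. Fix a $T$-generating $\rho\in\Adm(X,\mu)$ and set $G(\eps,n)=\exp\heps_\eps(X,\mu,T_{av}^n\rho)$, a representative of $\scfunclass_{\exp}(X,\mu,T)$. Property~1 is free: for each fixed $k$ the map $\eps\mapsto\heps_\eps(X,\mu,T_{av}^k\rho)$ is non-increasing (as noted after Definition~\ref{definition_epsilon_entropy}), so $G(\,\cdot\,,k)$ is non-increasing, and hence so is the running maximum $\Phi^*(\eps,n):=\max_{1\le k\le n}G(\eps,k)$. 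By construction $\Phi^*(\eps,\,\cdot\,)$ is non-decreasing, so $\Phi^*$ already satisfies both monotonicity requirements; everything reduces to checking that $\Phi^*$ still represents the class, i.e. $\Phi^*\asymp G$. Since $G\le\Phi^*$ pointwise gives $G\preceq\Phi^*$ trivially, the only thing to prove is $\Phi^*\preceq G$: for every $\eps>0$ there is $\delta>0$ with
\[
\max_{1\le k\le n}\heps_\eps(X,\mu,T_{av}^k\rho)\le\heps_\delta(X,\mu,T_{av}^n\rho)+O(1),\qquad n\to\infty.
\]
(In fact the existence of any non-decreasing-in-$n$ representative of the class is equivalent to this inequality, so the running maximum is the canonical choice.)

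Both structural tools come from the splitting identity
\[
(a+b)\,T_{av}^{a+b}\rho=a\,T_{av}^{a}\rho+b\,(T_{av}^{b}\rho)\circ(T^{a}\times T^{a}),
\]
in which the second summand is an isometric copy of $b\,T_{av}^{b}\rho$. Dropping a summand gives $T_{av}^{N}\rho\ge\frac{k}{N}\,T_{av}^{k}\rho$, so for $k\le N\le 2k$ we get $T_{av}^{N}\rho\ge\tfrac12 T_{av}^{k}\rho$ and hence the doubling comparison $\heps_{2\eps}(X,\mu,T_{av}^{k}\rho)\le\heps_{\eps}(X,\mu,T_{av}^{N}\rho)$. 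Taking $N=n$, this controls all windows with $k\in[n/2,n]$ by $\heps_{\eps/2}(X,\mu,T_{av}^{n}\rho)$ with no loss. Grouping the summands into $q=\lfloor n/k\rfloor$ consecutive blocks of length $k$ gives instead $T_{av}^{n}\rho\ge\tfrac12\,\bar\rho_k$, where $\bar\rho_k=\frac1q\sum_{j=0}^{q-1}(T_{av}^{k}\rho)\circ(T^{jk}\times T^{jk})$ is the average of $q$ isometric copies of $T_{av}^{k}\rho$; consequently $\heps_{2\eps}(X,\mu,\bar\rho_k)\le\heps_{\eps}(X,\mu,T_{av}^{n}\rho)$.

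The remaining, and genuinely hard, point is to dominate the short windows $k=o(n)$, which the doubling comparison cannot reach (iterating it $\sim\log(n/k)$ times degrades the radius to $\sim\eps k/n$). Here one needs the averaging lemma: \emph{the $\eps$-entropy of an average of isometric copies of a semimetric is bounded below by that of a single copy, up to a bounded factor in the radius and an additive constant}, i.e. $\heps_{C\eps}(X,\mu,\bar\rho_k)\ge\heps_{\eps}(X,\mu,T_{av}^{k}\rho)-C'$. Granting this, the block bound yields $\heps_{\eps}(X,\mu,T_{av}^{k}\rho)\le\heps_{C\eps/2}(X,\mu,T_{av}^{n}\rho)+C'$ for all $k\le n/2$, and together with the doubling estimate for $k\in[n/2,n]$ one obtains the displayed inequality with $\delta=\eps/2$, completing the argument.

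I expect this averaging lemma to be the main obstacle: it asserts that averaging isometric copies of a metric never collapses its covering numbers, which is exactly the substantive estimate already underlying the ordinary-case Lemma~\ref{lem_asymp_ineq} and Theorem~\ref{th_mon_subadd}. The naive extraction (from a good $\bar\rho_k$-cover, choose for each piece the block $j$ in which it has small diameter) breaks down because the selected blocks may differ from piece to piece and their images can pile up, so a more careful concentration argument is required. Crucially, however, the statement concerns covering numbers, that is $\exp(\heps_\eps)$, directly; it therefore transfers verbatim from the ordinary to the exponential setting, which is precisely why ``the proof of monotonicity remains unchanged.'' The only care needed elsewhere is that passing to the running maximum preserves monotonicity in $\eps$, which it does automatically as a maximum of functions each non-increasing in $\eps$.
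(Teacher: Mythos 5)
Your reduction is set up correctly: passing to the running maximum $\Phi^*(\eps,n)=\max_{k\le n}G(\eps,k)$, observing that monotonicity in $\eps$ is free, and noting that everything hinges on the domination $\max_{k\le n}\heps_\eps(X,\mu,T_{av}^k\rho)\le\heps_\delta(X,\mu,T_{av}^n\rho)+O(1)$ is indeed the right frame (additive constants on $\heps$ are exactly what the exponential class tolerates), and your treatment of the windows $k\in[n/2,n]$ via $T_{av}^N\rho\ge\tfrac12 T_{av}^k\rho$ is correct. But the proof is incomplete precisely where you say it is, and worse: the ``averaging lemma'' on which the short windows rest is \emph{false} in the generality in which you state it. Counterexample: take $(X,\mu)=([0,1],\mathrm{Leb})$, $\eta=1/q$, $I_j=[j\eta,(j+1)\eta)$, and for $j=0,\dots,q-1$ let $\rho_j$ be the cluster semimetric whose cells are the big set $[0,1]\setminus I_j$ together with $N$ equal subintervals of $I_j$, distinct cells being at distance $1$. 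All $\rho_j$ have cell-mass profile $(1-\eta,\eta/N,\dots,\eta/N)$, so each is an isometric copy $\rho_0\circ(S_j\times S_j)$ of $\rho_0=\rho_j|_{j=0}$, and $\heps_{\eta/2}(X,\mu,\rho_j)\ge\log(N/2)$. Yet any pair $(x,y)$ is separated only in the at most two copies indexed by the blocks containing $x$ and $y$, so $\bar\rho=\frac1q\sum_j\rho_j\le 2\eta$ pointwise, whence $\heps_s(X,\mu,\bar\rho)=0$ for every $s>2\eta$. With $\eps=\eta/2$ and $N\to\infty$ this kills the inequality $\heps_{C\eps}(\bar\rho)\ge\heps_\eps(\rho_0)-C'$ for every $C>4$; choosing $\eps\in(\eta/C,\eta)$ instead (where $\heps_{C\eps}(\bar\rho)\le\log q$ but $\heps_\eps(\rho_0)\sim\log N$) rules out every constant $C>1$. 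So averaging isometric copies \emph{can} collapse covering numbers, and the lemma cannot be ``granted''; at best an exact-scale version ($C=1$) survives refutation, which you neither formulate nor prove, and which sits so close to false statements that it cannot be taken on faith.

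The gap is therefore structural, not technical: what rescues the dynamical statement is exactly what the abstract block decomposition discards, namely that the copies $(T_{av}^k\rho)\circ(T^{jk}\times T^{jk})$ are consecutive orbit averages of a single measure-preserving $T$. In the configuration above the ``active regions'' of the $q$ copies are pairwise disjoint; measure conservation forces $q\,\mu(\text{active region})\le1$, so for fixed $k$ and $\eps$ this collapse cannot persist as $n\to\infty$ --- this Poincar\'e-type input (or, alternatively, the $m$-norm machinery of semicontinuity, closedness and density used for Lemma~\ref{lem_asymp_ineq_exp}) is what any correct proof must exploit. Relatedly, your claim that the averaging lemma is ``the substantive estimate underlying Lemma~\ref{lem_asymp_ineq}'' is inaccurate: that lemma compares two semimetrics at the \emph{same} time $n$, and its proof (see Lemma~\ref{lemma_asymp_average} and Lemma~\ref{lemma_semicontinuity} in the paper) uses pointwise domination plus $m$-norm approximation, never a lower bound on entropies of averages of isometric copies. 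Note finally that the paper itself offers no self-contained argument here: its proof is the remark that the monotonicity argument of Theorem~\ref{th_mon_subadd} from~\cite{Vep20a} loses only additive constants in entropy and hence applies verbatim to $\exp(\heps_\eps)$; also, that argument produces a $\delta$ depending on the system, whereas your scheme would force $\delta$ to be a fixed multiple of $\eps$, which the example above shows is impossible.
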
 
Let us also note that not every function $\Phi$ increasing and submultiplicative in $n$ (and decreasing in $\eps$)  (more precisely, the asymptotic class of this function) can be obtained as the exponential scaling entropy of a measure-preserving transformation. An example of such a function is $\Phi(\eps, n) = e^{n + (1-\eps)n^{1/2}}$. Indeed, exponential growth of $\scfunclass_{\exp}(T)$ implies positive Kolmogorov entropy $h$ of the automorphism $T$, and therefore $\scfunclass_{\exp}(T)$ is the class $[e^{(1-\eps) n h}] \not= [\Phi]$. Thus, the direct analog of Theorem~\ref{theorem_complete_description} does not hold for the exponential case. The problem of providing a complete description of possible values of the class~$\scfunclass_{\exp}$ remains open. However, the generic exponential scaling entropy shares the same properties as the generic regular scaling entropy: for any subexponential sequence $\phi(n)$ increasing to infinity, a generic transformation has $\scfunclass_{\exp}$ not comparable to $\phi$.

\subsection{Connections with other invariants}\label{section_slow_entropy} 

In a survey~\cite{KanKWei}, several measure-theoretic invariants similar to scaling entropy are discussed. All of them are effective for automorphisms with zero Kolmogorov entropy. We will mention some of them in the context of their connections to scaling entropy. The \emph{entropy dimension} (Ferenczi--Park, see \cite{FP}) and \emph{slow entropy} (Katok--Thouvenot, see \cite{KT}) are based on the same idea of studying the asymptotics of epsilon-entropy as in scaling entropy. However, the invariant is defined by comparing a growing sequence with a given scale of growing sequences. \emph{Sequential entropy} (Kirillov--Kushnirenko entropy, see \cite{Kush}) is based on the idea of computing entropies of refinements of a partition under the action of a  certain sequence of shifts.
The term "scaled entropy" appears in the paper \cite{ZhPe15}, where it is used for a related but different notion (see also Section 5.3 of the survey~\cite{KanKWei}).

\medskip

{\bf Entropy dimension.}
In the papers by Ferenczi, Park, and other authors, the notion of entropy dimension is introduced and studied.

Let $T \colon (X,\mu) \to (X,\mu)$ be a measure-preserving transformation. For a finite measurable partition $\alpha$ of the space $(X,\mu)$ and a positive $\eps>0$, consider the cut semimetric $\rho_\alpha$ generated by the partition $\alpha$. \emph{The upper entropy dimension} $\overline D(X, \mu,T)$ is defined as follows:
$$
\overline D(\alpha, \eps) = \sup\Big\{s \in [0,1]\colon \limsup_{n \to \infty}\frac{\heps_\eps(X,\mu,T_{av}^n\rho_\alpha)}{n^s}>0 \Big\},
$$
$$
\overline D(\alpha)  = \lim_{\eps \to 0} \overline D(\alpha, \eps),
$$
\begin{equation}\label{eq270202}
\overline D(X, \mu,T)  = \sup_\alpha \overline D(\alpha),
\end{equation}
where the supremum is computed over all finite measurable partitions~$\alpha$.

\emph{The lower entropy dimension} $\underline D(X, \mu,T)$ is defined in a similar way:
$$
\underline D(\alpha, \eps) = \sup\Big\{s \in [0,1]\colon \liminf_{n \to \infty} \frac{\heps_\eps(X,\mu,T_{av}^n\rho_\alpha)}{n^s}>0 \Big\},
$$
$$
\underline D(\alpha)  = \lim_{\eps \to 0} \underline D(\alpha, \eps),
$$
\begin{equation}\label{eq270203}
\underline D(X, \mu,T)  = \sup_\alpha \underline D(\alpha).
\end{equation}
In the case when the upper and lower entropy dimensions coincide, this number is called \emph{the entropy dimension} of the system. The following analog of the Kolmogorov--Sinai theorem holds: the supremums in formulas \eqref{eq270202} and \eqref{eq270203} are attained on generating partitions

It is easy to see that the upper and lower entropy dimensions can be computed given the scaling entropy that is the equivalence class $[\heps_\eps(X,\mu,T_{av}^n\rho)]$ for a generating semimetric $\rho$. However, the converse is not true. The entropy dimension indicates where on the scale of power functions the scaling entropy is located. For more details about the properties of the entropy dimension, we refer the reader to section 5.4 of the survey~\cite{KanKWei} and the referenced papers therein.

{\bf Slow entropy.}
In the papers by Katok and Thouvenot \cite{KT}, the following definition of slow entropy is introduced.

Let $\mathbf{a} = {a_n(t)}_{n \geq 0, t>0}$ be a family of positive increasing sequences tending to infinity, increasing in $t$ (the scale).

\emph{The upper slow entropy} of a system $(X,\mu,T)$ with respect to the scale $\mathbf{a}$ is defined as follows. For a finite measurable partition $\alpha$ of the space $(X,\mu)$ and $\eps>0$, consider the corresponding cut semimetric $\rho_\alpha$ and the set
$$
\overline B(\eps, \alpha) = \Big\{t>0 \colon \limsup_{n\to \infty}  \frac{\exp(\heps_\eps(X,\mu,T_{av}^n\rho_\alpha))}{a_n(t)}>0\Big\}\cup \{0\}.
$$ 
Then define
$$
\overline{ent}_{\mathbf{a}}(T,\alpha) = \lim_{\eps \to 0} \, \sup \overline B(\eps,\alpha),
$$
$$
\overline{ent}_{\mathbf{a}}(X,\mu,T) = \sup_\alpha \overline{ent}_{\mathbf{a}}(T,\alpha),
$$
where the supremum is computed over all finite measurable partitions~$\alpha$.

The quantity $\overline{ent}_{\mathbf{a}}(X,\mu,T)$ is called \emph{the upper slow entropy} of the system $(X,\mu,T)$ with respect to the scale $\mathbf{a}$.

Similarly, \emph{the lower slow entropy} $\underline{ent}_{\mathbf{a}}(X,\mu,T)$ is defined as follows.
$$
\underline B(\eps, \alpha) = \Big\{t>0 \colon \liminf_{n\to \infty } \frac{\exp(\heps_\eps(X,\mu,T_{av}^n\rho_\alpha))}{a_n(t)}>0\Big\}\cup \{0\},
$$ 
$$
\underline{ent}_{\mathbf{a}}(T,\alpha) = \lim_{\eps \to 0} \, \sup \underline B(\eps,\alpha),
$$
$$
\underline{ent}_{\mathbf{a}}(X,\mu,T) = \sup_\alpha \underline{ent}_{\mathbf{a}}(T,\alpha).
$$

Slow entropy is directly related to exponential scaling entropy. By comparing the class $\scfunclass_{\exp}$ with the scale $\mathbf{a}$, one can compute the values $\overline{ent}_{\mathbf{a}}$ and $\underline{ent}_{\mathbf{a}}$. Conversely, knowing the values of slow entropy for all possible scales $\mathbf{a}$ allows one to distinguish between systems with different $\scfunclass_{\exp}$ classes. Thus, slow entropy (more precisely, the collection of slow entropies with respect to all possible scales~$\mathbf{a}$) distinguishes the same dynamical systems as exponential scaling entropy. However, no countable set of scales $\mathbf{a}$ is sufficient to fully recover the class $\scfunclass_{\exp}$. 

Section 4 of the survey~\cite{KanKWei} is dedicated to various properties of slow entropy and examples of its computation.

\medskip
{\bf Sequential Entropy.}
Sequential entropy was introduced by Kushnirenko in~\cite{Kush}. Let $A = \{a_k\}_{k=1}^\infty$ be a given increasing sequence of natural numbers. The entropy $h_A(T)$ of the automorphism $T$ on the space $(X,\mu)$ is defined as follows:
$$
h_A(T,\alpha) = \limsup_{k \to \infty} \frac{1}{n}H\big(\bigvee_{j=1}^k T^{-a_{j}}\alpha\big),
$$
$$
h_A(T) = \sup_{\alpha}h_A(T,\alpha),
$$
where the supremum is computed over all finite measurable partitions~$\alpha$.

It was proved in~\cite{Kush} that an automorphism $T$ has a pure point spectrum if and only if $h_A(T) = 0$ for any sequence $A$. Comparing this result with Theorem \ref{th_diskr_spectr}, we conclude that the boundedness of the scaling entropy is equivalent to this condition.

It turns out that in a certain "neighborhood" of the pure point spectrum, there exist two-sided estimates that relate scaling entropy to sequential entropy.

\begin{theorem}
For any increasing sequence of integers $A$, there exists an unbounded increasing sequence $h = \{h_n\}_{n}$ such that for any automorphism $T$ of the space $(X,\mu)$, if the relation $\scfunclass(T,X,\mu) \prec h$ holds, then $h_A(T) = 0$.

Conversely, for any unbounded increasing sequence $h = \{h_n\}_{n}$, there exists a sequence $A$ such that if $h_A(T) = 0$  then $\scfunclass(T,X,\mu) \prec h$.
\end{theorem}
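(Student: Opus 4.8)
The plan is to translate both invariants into combinatorics on the $\alpha$-names of orbits and then compare them. Fix a finite measurable partition $\alpha$ with cut semimetric $\rho_\alpha$. Sequential entropy is $h_A(T,\alpha)=\limsup_k\frac1k H\big(\bigvee_{j=1}^k T^{-a_j}\alpha\big)$, while the scaling entropy is governed by $\heps_\eps(X,\mu,T_{av}^n\rho_\alpha)$; by Theorem~\ref{theorem_class_invariance} and Lemma~\ref{lem_asymp_ineq} I may pass between $\rho_\alpha$ and a fixed generating metric, so that $\scfunclass(T)\prec h$ is exactly the statement that for every $\eps>0$ one has $\heps_\eps(X,\mu,T_{av}^n\rho_\alpha)\preceq h_n$ (with an $\eps$-dependent constant). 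The elementary bridge between the two pictures is that the sampled join $\bigvee_{j=1}^k T^{-a_j}\alpha$ is a coarsening of the full consecutive join $\alpha_0^{a_k}=\bigvee_{i=0}^{a_k}T^{-i}\alpha$, since its indices form a subset of $\{0,\dots,a_k\}$, and, at the metric level, that the sampled average $S_k^A\rho_\alpha:=\frac1k\sum_{j=1}^k\rho_\alpha(T^{a_j}\cdot,T^{a_j}\cdot)$ satisfies $S_k^A\rho_\alpha\le\frac{a_k+1}{k}\,T_{av}^{a_k+1}\rho_\alpha$ pointwise. Both inequalities relate complexity measured at \emph{count} $k$ along $A$ to complexity measured at \emph{horizon} $a_k$ along the full orbit.

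For the first implication I fix $A$ and design $h$ from the counting function $b(n)=\#\{k:a_k\le n\}$, taking $h$ to be a slowly growing unbounded sequence with $h_{a_k}=o(k)$ (e.g.\ $h_n=\lfloor\sqrt{b(n)}\,\rfloor$). The coarsening inequality gives $\frac1k H(\bigvee_{j=1}^k T^{-a_j}\alpha)\le\frac1k H(\alpha_0^{a_k})$, and the aim is to control the right-hand side by the scaling entropy at horizon $a_k$; since $\scfunclass(T)\prec h$ forces the latter to be $\preceq h_{a_k}=o(k)$, every partition would satisfy $h_A(T,\alpha)=0$, hence $h_A(T)=0$.

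For the converse I fix $h$ and build a sufficiently sparse $A$, with gaps calibrated so that large scaling entropy is seen along $A$. If $\scfunclass(T)\not\prec h$, then there is a scale $\eps_0$ and times $n_i$ with $\heps_{\eps_0}(X,\mu,T_{av}^{n_i}\rho)\gg h_{n_i}$, i.e.\ exponentially many orbit pieces that are pairwise $\eps_0$-separated in the consecutive Hamming metric over $[0,n_i)$. Placing the $a_j$ inside these windows, with window sizes and gaps dictated by $h$, should retain a positive proportion of the separations and produce a partition $\alpha$ with $\frac1k H(\bigvee_{j=1}^k T^{-a_j}\alpha)\gtrsim\mathrm{const}$, whence $h_A(T)>0$.

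The main obstacle --- and the reason the statement lives only in a ``neighbourhood of discrete spectrum'' --- is the normalization mismatch between the per-sample average (divided by $k$) defining sequential entropy and the per-time average (divided by $a_k$) defining scaling entropy, the ratio $a_k/k$ being unbounded for sparse $A$. Passing between the Shannon entropy of a join and the metric $\eps$-entropy of the corresponding Hamming average costs Hamming-ball-volume corrections of order $a_k\,(H_2(\eps)+\eps\log|\alpha|)$, and this ratio amplifies them; controlling the corrections forces $\eps\to0$ and thereby confronts the $\eps$-dependent, $T$-dependent constants in the comparison. Reconciling this \emph{uniformly over all} $T$ is the heart of the argument: it must be handled by the precise calibration of the growth of $h$ against the sparsity of $A$ and, in the converse direction, by choosing $A$ block-wise with rapidly growing gaps so that it provably samples the separated families of any $T$ whose scaling entropy exceeds $h$. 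I expect the explicit adic realizations of prescribed scaling behaviour (Theorems~\ref{theorem_example_entropy_sequence} and~\ref{theorem_complete_description}) to serve as the test systems on which these calibrations are verified.
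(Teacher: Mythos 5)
The paper states this theorem without any proof (neither in the main text nor in the appendices), so there is no argument of the authors to compare yours against; judged on its own terms, your proposal has a fatal gap in the first direction and only a programme, not a proof, for the converse. The first direction hinges on the claim that $\scfunclass(T)\prec h$ lets you bound $H(\alpha_0^{a_k})$, the Shannon entropy of the \emph{consecutive} join $\bigvee_{i=0}^{a_k}T^{-i}\alpha$, by $h_{a_k}$. This is false: scaling entropy is the metric $\eps$-entropy of averaged semimetrics, and it never dominates the Shannon entropy of consecutive joins. Already for an irrational rotation with $\alpha$ a partition into two arcs one has $\scfunclass=[1]$, while $H(\alpha_0^n)\asymp\log n$ is unbounded. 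Worse, the intermediate quantity you route through diverges exactly on the systems the implication must cover: taking $a_k=2^{2^k}$, the rotation gives $\frac1k H(\alpha_0^{a_k})\asymp\frac{2^k\log 2}{k}\to\infty$, even though $h_A(T)=0$ (pure point spectrum) and $\scfunclass\prec h$ for every unbounded $h$. So no repair of the false inequality can save the argument; the coarsening step $H\bigl(\bigvee_{j\le k}T^{-a_j}\alpha\bigr)\le H(\alpha_0^{a_k})$ discards precisely the sparsity of $A$, which is the only reason the statement is true. A correct proof must stay with the sampled objects: writing $\beta_k=\bigvee_{j\le k}T^{-a_j}\alpha$ and $S_k^A\rho_\alpha=\frac1k\sum_{j\le k}T^{-a_j}\rho_\alpha$, one has $H(\beta_k)\le \heps_\eps(X,\mu,S_k^A\rho_\alpha)+k\bigl(H_2(\eps)+2\eps\log|\alpha|\bigr)+O(1)$ (with $H_2$ the binary entropy function), and the actual content of the theorem is to show that $\scfunclass(T)\prec h$ forces $\heps_\eps(X,\mu,S_k^A\rho_\alpha)=o(k)$; that step is entirely absent from your text.

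For the converse you correctly isolate the two real obstructions --- the normalization mismatch $a_k/k$, and the fact that $A$ must be chosen before $T$ is given while the witness times $n_i$ with $\heps_{\eps_0}(X,\mu,T_{av}^{n_i}\rho)\gg h_{n_i}$ depend on $T$ --- but you resolve neither: ``should retain a positive proportion of the separations'' and ``I expect \dots to serve as the test systems'' mark exactly where the proof would have to begin. A family that is $\eps_0$-separated in the consecutive Hamming metric on $[0,n_i)$ need not remain separated, or even pairwise distinguishable, after restriction to the sparse, pre-chosen coordinates $A\cap[0,n_i)$; one needs a genuine argument (presumably exploiting stationarity) that complexity cannot be hidden away from $A$, uniformly over all automorphisms. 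Moreover, the only quantitative bridge you propose, $S_k^A\rho_\alpha\le\frac{a_k+1}{k}T_{av}^{a_k+1}\rho_\alpha$, relates scale $\eps$ for the sampled metric to scale $\eps k/(a_k+1)\to 0$ for the consecutive average, so the $\eps$-dependent constants implicit in $\scfunclass(T)\prec h$ become $T$-dependent and uncontrolled --- this is where the argument dies. Note also that the contrapositive you must prove covers the case $\scfunclass(T)\asymp[h]$, not only the case of entropy exceeding $h$ along a subsequence. Finally, verifying calibrations on the adic examples of Theorems~\ref{theorem_example_entropy_sequence} and~\ref{theorem_complete_description} can only test the statement, never prove it, since the theorem quantifies over all $T$.
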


For more details on the properties of sequential entropy, refer to Section 3 of the survey~\cite{KanKWei}.

\bigskip
\bigskip
{\bf Acknowledgment.} 
The authors are grateful to an anonymous referee for the useful remarks and comments. We also thank Natalia Tsilevich for her help in translating the manuscript into English.

\appendix

\chapter{Several proofs}

\section{Proofs of Lemmas~\ref{lemma_Kepsentropy1} and \ref{lemma_Kepsentropy2}}
\label{App_Kepsentropy_proofs}
\begin{proof}[Proof of Lemma~\ref{lemma_Kepsentropy1}]
Let $n = \exp\Big(\heps_\delta (X,\mu,\rho)\Big)$, and let $X = X_0\cup X_1\cup\dots \cup X_n$ be the corresponding partition: $\mu(X_0)<\delta$, $\diam_\rho(X_j)<\delta$ for $j=1,\dots, n$.
Choose an arbitrary point $x_0 \in X$ and points $x_j \in X_j$ for $j=1,\dots, n$. Define the discrete measure $$
\nu = \sum_{j=0}^n \mu(X_j)\delta_{x_j}
$$
and the transport plan $\gamma$ that transports the sets $X_j$ to $x_j$. Obviously, $\gamma$ is a pairing of measures $\mu$ and $\nu$, and we have 
$$
\int_{X \times X} \rho\, d\gamma = \sum_{j=0}^n \int_{X_j}\rho(x,x_j)\, d\mu(x) \leq \int_{X_0}\rho(x,x_0)\, d\mu(x) + \delta. 
$$
The average value of the right-hand side, when we choose $x_0 \in X$ randomly according to the measure $\mu$, is
$$
\int_{X_0\times X}\rho\, d(\mu\times\mu) + \delta < \eps
$$
due to condition~\eqref{eq11}. Therefore, with an appropriate choice of $x_0$, we can obtain the inequality
$$
d_K(\mu,\nu) \leq \int_{X \times X} \rho\, d\gamma < \eps.
$$
It remains to observe that $H(\nu) \leq \log(n+1)$.
\end{proof}

Before proving Lemma~\ref{lemma_Kepsentropy2}, let us prove the following auxiliary lemma.
\begin{lemma}\label{lement} 
Let $P =(p_j)_{j=1}^N$ be a finite probability vector, and $H(P) = \sum_{j=1}^N p_j \log(1/p_j)$ be its entropy. Let $\delta\in (0,1)$, and $F = \exp\left(\frac{H(P)+1}{\delta}\right)$. Then there exists a subset $J \subset \{1,\dots, N\}$ such that 
$$
|J| \leq F, \qquad \sum_{j \in J}p_j \geq 1-\delta.
$$  
\end{lemma}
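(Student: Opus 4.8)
The plan is to take $J$ to be a superlevel set of the probabilities, namely $J = \{j : p_j \ge t\}$ for a threshold $t$ to be chosen, and to control separately the cardinality of $J$ and the mass it discards. The cardinality bound is immediate: every atom with $j \in J$ is at least $t$ and the total mass is $1$, so $|J| \le 1/t$. The whole problem therefore reduces to choosing $t$ as small as possible (to keep $|J|$ small) while still guaranteeing that the complementary mass $\sum_{p_j < t} p_j$ stays below $\delta$. Entropy is exactly the device that bounds this tail.

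The key estimate bounds the discarded mass in terms of $H(P)$. I would choose $t<1$; then for every index with $0 < p_j < t$ we have $\log(1/p_j) > \log(1/t) > 0$, so termwise $p_j\log(1/t) \le p_j\log(1/p_j)$ (indices with $p_j=0$ contribute zero to both sides). Summing over the sublevel set and using nonnegativity of each summand $p_j\log(1/p_j)$ to bound the partial sum by the full entropy gives
$$
\log\frac{1}{t}\sum_{j : p_j < t} p_j \;\le\; \sum_{j : p_j < t} p_j \log\frac{1}{p_j} \;\le\; H(P),
$$
and hence
$$
\sum_{j : p_j < t} p_j \;\le\; \frac{H(P)}{\log(1/t)}.
$$

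It then suffices to take $t = 1/F = \exp\bigl(-(H(P)+1)/\delta\bigr)$, so that $\log(1/t) = (H(P)+1)/\delta$. With this choice the tail mass is at most $\delta H(P)/(H(P)+1) < \delta$, which yields $\sum_{j\in J} p_j > 1-\delta$, while the cardinality bound reads $|J| \le 1/t = F$; both required inequalities hold simultaneously. I do not expect a genuine obstacle here: the only things to watch are the degenerate cases, and these are harmless, since $t<1$ forces any atom equal to $1$ into $J$ and zero atoms contribute to neither the mass nor the entropy. The real content is the tail bound obtained from $\log(1/p_j) > \log(1/t)$ on the sublevel set, which converts entropy into discarded mass, after which the threshold is tuned to balance cardinality against mass and to produce precisely the constant $F$.
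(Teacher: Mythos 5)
Your proof is correct, and it is organized differently from the paper's. The paper sorts the probabilities in decreasing order, takes $J=\{1,\dots,m\}$ to be the shortest prefix with $\sum_{j\le m}p_j\ge 1-\delta$ (so the mass condition holds by construction), and then proves $m\le F$ by contradiction: if $m>F$, sortedness forces $p_m<1/F$, minimality of $m$ forces the discarded tail to have mass greater than $\delta-1/F$, and since every discarded atom has surprisal $\log(1/p_j)>\log F$ one gets $H(P)>(\delta-1/F)\log F\ge\delta\log F-1=H(P)$, a contradiction. You do the dual thing: you take $J$ to be the superlevel set $\{j\colon p_j\ge 1/F\}$, so the cardinality bound $|J|\le F$ is immediate, and you prove the mass condition directly via what is in effect Markov's inequality for the surprisal --- atoms below the threshold satisfy $\log(1/p_j)>\log F=(H(P)+1)/\delta$, so their total mass is at most $H(P)\delta/(H(P)+1)<\delta$. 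The core mechanism (entropy controls the total mass of small atoms) is identical in both arguments, but your construction avoids both the sorting and the contradiction, which makes it shorter and cleaner; what the paper's construction buys is that its $J$ consists of the $m$ largest atoms with $m$ minimal, i.e.\ it exhibits the smallest possible set achieving mass $1-\delta$, whereas your threshold set may be larger than necessary (though still within the required bound $F$).
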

\begin{proof}
Without loss of generality, we can assume that the numbers $p_j$ decrease: $p_1\geq p_2\geq \dots \geq p_N$. Let $m$ be the smallest number for which 
$$
\sum_{j=1}^m p_j \geq 1-\delta.
$$
Suppose that the statement of the lemma is false; then $m>F$. Consequently, $1/F> p_m \geq p_{m+1}$. Thus, 
$$
\delta \geq \sum_{j=m+1}^{N} p_j > \delta - 1/F.
$$
Therefore,
$$
H(P) > \sum_{j=m+1}^{N} p_j \log(1/p_j) \geq  \sum_{j=m+1}^{N} p_j  \log(1/p_{m+1}) \geq (\delta - 1/F)  \log F \geq \delta \log F - 1,
$$
which contradicts the definition of $F$.
\end{proof}

\begin{proof}[Proof of Lemma \ref{lemma_Kepsentropy2}]
Let $\nu$ be a discrete measure such that $d_K(\mu,\nu)<\eps^2$. Using Lemma~\ref{lement} for the distribution of $\nu$ and $\delta = \eps$, we find $m \leq \exp\left(\frac{H(\nu)+1}{\eps}\right)$ and a set $M = \{x_1,\dots, x_m\} \subset X$ for which $\nu(M) \geq 1-\eps$. 

Let $\pi_1$ and $\pi_2$ be the projections from $X\times X$ onto the first and second factors, respectively. Let $\gamma$ be an optimal transport plan for the measures $\mu$ and~$\nu$, that is a probability measure on $X\times X$ such that $\pi_1(\gamma) = \mu$, $\pi_2(\gamma) = \nu$, and
$$
\int_{X\times X}\rho \,d\gamma = d_K(\mu,\nu) < \eps^2.
$$
Consider the set $A = \big\{(x,y) \in X\times X \colon \rho(x,y)< \eps\big\}$. By Chebyshev's inequality, $\gamma(A)\geq  1-\eps$. Notice that 
$\gamma(\pi_2^{-1}(M)) = \nu(M) \geq 1-\eps$,
thus 
$$
\gamma(A\cap \pi_2^{-1}(M)) \geq 1-2\eps,
$$ 
and consequently 
$$
\mu\Big(\big\{x \in X \colon \rho(x,M)<\eps\big\}\Big) \geq \gamma\Big(\big\{A\cap \pi_2^{-1}(M)\big\}\Big) \geq 1-2\eps.
$$
Therefore, 
$$
\heps_{2\eps}(X,\mu,\rho) \leq \log|M| \leq \frac{H(\nu)+1}{\eps}.
$$
By minimizing the right-hand side over discrete measures $\nu$ that satisfy the inequality $d_K(\mu,\nu)<\eps^2$, we arrive at the inequality~\eqref{eq10}.
\end{proof}

\section{Proof of Theorems~\ref{th_two_metrics2} and \ref{th_comp_triples}}
\label{App_th_triples}

Theorem~\ref{th_two_metrics2} is a consequence of the lemma presented below.
\begin{lemma}
1. Let $\rho_1,\rho_2$ be two measurable semimetrics on $(X,\mu)$. Then, for any $\delta>0$, there exists a semimetric space $(Y,\rho)$ and isometries $\phi_1\colon (X,\rho_1) \to (Y,\rho)$, $\phi_2\colon (X,\rho_2) \to (Y,\rho)$, such that 
$$d_K\big(\phi_1(\mu_1),\phi_2(\mu_2)\big) \leq \|\rho_1-\rho_2\|_{m} +\delta.$$

2. Let $\mu_1$ and $\mu_2$ be two probability measures on a semimetric space $(Y,\rho)$. Then, there exists a probability space $(X,\mu)$ and measurable maps $\psi_1\colon X \to Y$ and $\psi_2\colon X \to Y$, such that $\psi_1(\mu) = \mu_1$, $\psi_2(\mu) = \mu_2$, and 
$$
\big\|\rho\circ\psi_1 - \rho\circ\psi_2\big\|_{m} \leq 2 d_K(\mu_1,\mu_2).
$$
\end{lemma}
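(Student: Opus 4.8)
The two parts are dual to one another, and each is proved by an explicit construction rather than an abstract argument, so the plan is to build the relevant witness by hand in each case.

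For part~1 I would first fix, using Definition~\ref{def_mnorm}, a semimetric $\sigma$ on $(X,\mu)$ with $|\rho_1-\rho_2|\le\sigma$ $\mu^2$-a.e.\ and $\|\sigma\|_{L^1(X^2,\mu^2)}\le\|\rho_1-\rho_2\|_m+\delta$. The essential device is to convert this two–variable control into a one–variable gluing cost by averaging: since
\[
\int_X\Big(\int_X\sigma(z,p)\,d\mu(z)\Big)\,d\mu(p)=\int_{X^2}\sigma\,d\mu^2=\|\sigma\|_{L^1(X^2,\mu^2)},
\]
there exists a base point $p$ with $\int_X\sigma(z,p)\,d\mu(z)\le\|\sigma\|_{L^1}$, and I set $d(z)=\sigma(z,p)\ge 0$. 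I then take $Y=X\times\{1\}\sqcup X\times\{2\}$ with $\phi_i(x)=(x,i)$, declare $\rho$ on the $i$-th copy to be $\rho_i$, and define the cross distances by the shortest–path formula
\[
\rho\big((x,1),(x',2)\big)=\inf_{z\in X}\big[\rho_1(x,z)+d(z)+\rho_2(z,x')\big].
\]

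The verification that this is a genuine semimetric restricting correctly to $\rho_1$ and $\rho_2$ on the two copies is the technical heart of part~1. The only nontrivial point is that the shortest path between two points of the same copy does not gain by crossing into the other copy; chasing the infima and using the triangle inequalities for $\rho_1,\rho_2,\sigma$, this reduces precisely to the inequality
\[
|\rho_1(z,w)-\rho_2(z,w)|\le\sigma(z,w)\le\sigma(z,p)+\sigma(p,w)=d(z)+d(w),
\]
which holds by construction. Hence $\phi_1,\phi_2$ are isometries. Finally I would estimate $d_K$ with the diagonal coupling $x\mapsto(\phi_1(x),\phi_2(x))$: taking $z=x$ in the defining infimum gives $\rho(\phi_1(x),\phi_2(x))\le d(x)$, so
\[
d_K\big(\phi_1(\mu),\phi_2(\mu)\big)\le\int_X d\,d\mu=\int_X\sigma(x,p)\,d\mu(x)\le\|\sigma\|_{L^1}\le\|\rho_1-\rho_2\|_m+\delta.
\]
(Separability of the admissible semimetrics makes $\rho$ Borel on $Y$; in any case only the measurable majorant $d$ is needed for the upper bound.)

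For part~2 I would let $\gamma$ be an optimal coupling of $\mu_1$ and $\mu_2$ on $(Y,\rho)$, so that $\int_{Y^2}\rho\,d\gamma=d_K(\mu_1,\mu_2)$ (existence being standard on a separable space with lower semicontinuous cost), set $(X,\mu)=(Y\times Y,\gamma)$, and let $\psi_1,\psi_2$ be the two coordinate projections; then $\psi_i(\mu)=\mu_i$ automatically. Writing $g(x)=\rho(\psi_1(x),\psi_2(x))$, two applications of the triangle inequality give
\[
\big|\rho(\psi_1 x,\psi_1 x')-\rho(\psi_2 x,\psi_2 x')\big|\le \rho(\psi_1 x,\psi_2 x)+\rho(\psi_1 x',\psi_2 x')=g(x)+g(x').
\]
The function $\tau(x,x')=g(x)+g(x')$ off the diagonal, set to $0$ on the diagonal, is a genuine semimetric dominating $|\rho\circ\psi_1-\rho\circ\psi_2|$ everywhere (on the diagonal both sides vanish), so by the definition of the $m$-norm
\[
\|\rho\circ\psi_1-\rho\circ\psi_2\|_m\le\int_{X^2}\tau\,d\mu^2\le\int_{X^2}\big(g(x)+g(x')\big)\,d\mu(x)\,d\mu(x')=2\int_X g\,d\mu=2\,d_K(\mu_1,\mu_2).
\]
The main obstacle is concentrated in part~1: designing a one–variable gluing cost $d$ from the two–variable majorant $\sigma$ so that simultaneously the glued object is an honest semimetric that restricts to $\rho_1,\rho_2$ (forcing $d(z)+d(w)\ge|\rho_1-\rho_2|$) and has small average (forcing $\int d\,d\mu\approx\|\rho_1-\rho_2\|_m$); the choice $d(z)=\sigma(z,p)$ with the averaging selection of $p$ is exactly what reconciles these two demands. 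Part~2, by comparison, is routine, the only subtlety being the harmless diagonal adjustment making $g(x)+g(x')$ a semimetric and the appearance of the factor $2$.
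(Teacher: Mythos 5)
Your proposal is correct and follows essentially the same route as the paper's proof: in part~1 the same majorizing semimetric, the same one-variable gluing cost $f(z)=D(z,x_0)$ (your averaging selection of the base point versus the paper's minimizing choice is an immaterial difference), and the same two-copy gluing with shortest-path cross distances and diagonal coupling; in part~2 the same optimal-coupling space with coordinate projections and the same dominating semimetric $g(x)+g(x')$ off the diagonal. The only addition is that you spell out the verification that crossing between copies cannot shorten same-copy distances, which the paper leaves implicit.
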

\begin{proof}
1. Let $D$ be a semimetric on $(X,\mu)$ such that 
$$
|\rho_1(x,y)- \rho_2(x,y)| \leq D(x,y) \quad \text{a.\,e.}
$$
and $\int_{X^2} D \ d\mu^2 \leq \|\rho_1-\rho_2\|_{m} +\delta$.
Find a function $f$ on $(X,\mu)$ such that
\begin{equation}\label{eq1}
D(x,y) \leq f(x) + f(y)
\end{equation}
and 
\begin{equation}\label{eq2}
\int_X f d\mu \leq \int_{X^2} D\ d\mu^2.
\end{equation}
In order to satisfy~\eqref{eq1} we can choose $f = D(\,\cdot\,, x_0)$ for any $x_0 \in X$. By selecting $x_0$ in such a way that the integral of the function $D(\,\cdot\,, x_0)$ is minimized, we obtain~\eqref{eq2}.

Consider the set $Y = X \times \{1,2\}$ and define a semimetric $\rho$ on $Y$ as follows:
\begin{align*}
\rho\big((x,i),(y,i)\big) &= \rho_i(x,y), \quad i=1,2, \quad x,y \in X,\\
\rho\big((x,2),(y,1)\big) & = \essinf\limits_{z \in X} (\rho_2(x,z) + f(z) + \rho_1(z,y)), \quad x,y \in X.
\end{align*}
Define isometries $\phi_i\colon (X,\rho_i) \to (Y,\rho)$, $i=1,2$, by $\phi_i(x) = (x,i)$, $x \in X$. The Kantorovich distance $d_K$ between the images of the measures $\mu_i$ under the mappings $\phi_i$ does not exceed $\int_X f d\mu \leq \|\rho_1-\rho_2\|_{m} +\delta$. 

2. Let $\mu$ be a measure on $X = Y^2$ that is a coupling of $\mu_1$ and $\mu_2$ realizing the Kantorovich transport plan between these measures:
$$
\int_{Y^2} \rho\ d\mu = d_K(\mu_1,\mu_2). 
$$
Let $\psi_1$ and $\psi_2$ be the projections of $X=Y^2$ onto the first and second factors. Then, the semimetrics $\rho_i = \rho\circ \psi_i$, $i=1,2$, on $X$ are given by the formulas
$$
\rho_i\big((x_1,x_2),(y_1,y_2)\big) = \rho(x_i,y_i).
$$
Define a semimetric $D$ on $X$ as follows:  
$$
D\big((x_1,x_2),(y_1,y_2)\big) = \rho(x_1,x_2) + \rho(y_1,y_2), \quad (x_1,x_2) \ne (y_1,y_2).
$$
Then $|\rho_1 - \rho_2| \leq D$, hence
$$
\big\|\rho_1 - \rho_2\big\|_{m} \leq \int_{X^2} D\ d\mu^2 = 
2 \int_{Y^2} \rho\ d\mu = 2 d_K(\mu_1,\mu_2).\qedhere
$$
\end{proof}

\begin{proof}[Proof of  Theorem~\ref{th_comp_triples}]

In one direction, the statement of the theorem can be proved quite easily. Let us assume that the set $M$ is precompact. Fix $\eps>0$ and find a finite subset $J\subset I$ such that the triples $\{(X_j,\mu_j,\rho_j)\colon i \in J\}$ form a finite $\frac{\eps^2}{32}$-net in the metric $\mdist$. For each $i \in I$ find $j \in J$ such that
$$
\mdist\Big((X_i,\mu_i,\rho_i),(X_j,\mu_j,\rho_j)\Big) < \frac{\eps^2}{32}.
$$
Therefore, there exists a coupling $(X,\mu)$ and projections $\psi_{i,j}\colon (X,\mu) \to (X_{i,j},\mu_{i,j})$ such that 
\begin{equation}
    \big\|\rho_i\circ \psi_i - \rho_j\circ \psi_j\big\|_m < \frac{\eps^2}{32}.
\end{equation}
Then
$$
\heps_\eps(X_i,\mu_i,\rho_i)=
\heps_\eps(X,\mu,\rho_i\circ \psi_i)\leq
\heps_{\eps/4}(X,\mu,\rho_j\circ \psi_j)=
\heps_{\eps/4}(X_j,\mu_j,\rho_j).
$$
Hence,
$$
  \sup_{i \in I} \heps_\eps(X_i,\mu_i, \rho_i) \leq 
\max_{j \in J} \heps_{\eps/4}(X_j,\mu_j, \rho_j)  <\infty, 
$$
and Condition~2 is proved.

Let us prove the uniform integrability. For $i \in I$ and the corresponding $j \in J$, let $d_i$ be a semimetric on $(X,\mu)$ such that
\begin{equation}\label{eq12}
    \rho_i\circ \psi_i \leq \rho_j\circ \psi_j + d_i, \qquad \int_{X^2} d_i \, d\mu^2 < \frac{\eps^2}{32}.
\end{equation}
For any $R>0$, let
$L_{i,j,R} = \{\rho_i\circ \psi_i > 2R\} \subset X^2$.
Then
\begin{multline}
\mu^2(L_{i,j,R}) \leq \mu^2(\{\rho_j\circ \psi_j > R\})+ \mu^2(\{d_i > R\}) \leq\\
\leq \frac{1}{R} \Big(\int_{X^2}\rho_j\circ \psi_j \, d\mu^2 + \frac{\eps^2}{32} \Big)
=\frac{1}{R} \Big(\int_{X_j^2}\rho_j \, d\mu_j^2 + \frac{\eps^2}{32} \Big).
\end{multline}
Let $Q(\eps) = \max_{j\in J} \int_{X_j^2}\rho_j \, d\mu_j^2 + \frac{\eps^2}{32}$. Then we obtain
\begin{align}
\int_{\rho_i>2R} \rho_i\, d\mu_i^2=&\!\!\!\!
\int_{L_{i,j,R}}\!\! \rho_i\circ \psi_i \,d\mu^2 \leq \!\!\!\!
\int_{L_{i,j,R}}\!\! \rho_j\circ \psi_j \,d\mu^2 + \frac{\eps^2}{32} \leq \\
\leq&
\sup\Big\{\int_{L} \rho_j \, d\mu_j^2\colon L \subset X_j^2, \mu_j^2(L)\leq \frac{Q(\eps)}{R}\Big\}+ \frac{\eps^2}{32}. 
\end{align}
The right-hand side of the last inequality converges to $\frac{\eps^2}{32}$ as $R$ tens to infinity (for each $j$, thus uniformly over the finite set $J$). Therefore,
$$
\limsup_{R \to \infty}\sup_{i \in I} \int_{\rho_i>2R} \rho_i \, d\mu_i^2 \leq 
\frac{\eps^2}{32}.
$$
The left-hand side of the obtained inequality does not depend on $\eps$, thus it is equal to zero. Condition 1 of uniform integrability is thus proved.

The proof of the theorem in the reverse direction is more complicated and involves more reasoning. Suppose that Conditions 1 and 2 are satisfied. For each $\eps>0$, we want to find a finite $\eps$-net in the set $M$ with respect to the metric $\mdist$. Utilizing the condition of uniform integrability, we can choose $R>0$ such that $\int_{\rho_i>R}\rho_i \,d\mu_i^2<\eps/2$ for all $i \in I$. Consequently,
$$
\|\rho_i - \min(\rho_i,2R)\|_m\leq \int_{\rho_i>R}\rho_i \,d\mu_i^2<\eps/2.
$$
Thus, it suffices to find a finite $\eps/2$-net with respect to the metric $\mdist$ within the set of triples $\{(X_i,\mu_i,\min(\rho_i,2R))\}_{i \in I}$. Therefore, without loss of generality, we can  assume that the semimetrics $\rho_i$ are uniformly bounded. Due to homogeneity, one can assume that all $\rho_i$ do not exceed 1.

\begin{lemma}\label{lem11}
Let $\eps>0$ be fixed, and $\sup_{i\in I}\heps_\eps(X_i,\mu_i,\rho_i) <\infty$. Then on a standard probability space $(X,\mu)$, one can find a finite partition $\xi = (A_1,\dots, A_n)$ with the following property: for each $i \in I$, there exists a homomorphism of measure spaces $\psi_i \colon (X,\mu) \to (X_i,\mu_i)$ and a set $B_i \in X$, $\mu(B_i)<2\eps$, such that the sets $A_k\setminus B_i$, $k=1,\dots, n$, have  diameters less than $\eps$ in the semimetric~$\rho_i\circ \psi_i$.
\end{lemma}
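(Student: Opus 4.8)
The plan is to realize everything on the standard model $(X,\mu)=([0,1],\mathrm{Leb})$ and to use the freedom in choosing each $\psi_i$ so that the good cover of $X_i$ pulls back to a decomposition of $[0,1]$ into \emph{consecutive intervals}; a single fine equipartition then refines all of these simultaneously. First I would set $H=\sup_{i\in I}\heps_\eps(X_i,\mu_i,\rho_i)<\infty$ and $n_0=\lceil e^{H}\rceil$. By Definition~\ref{definition_epsilon_entropy}, for each $i$ there is a decomposition $X_i=Y_0^i\cup Y_1^i\cup\dots\cup Y_{k_i}^i$ with $k_i\le n_0$, $\mu_i(Y_0^i)<\eps$, and $\diam_{\rho_i}(Y_j^i)<\eps$ for $j\ge 1$. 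This is the only place the uniform entropy bound enters: it bounds the number of pieces $k_i$ uniformly in $i$.

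Next I would build $\psi_i$ arranging these pieces as consecutive intervals. Put the pieces in the order $Y_1^i,\dots,Y_{k_i}^i,Y_0^i$, set $p_j^i=\mu_i(Y_j^i)$, and let $I_j^i\subset[0,1]$ be the consecutive interval of length $p_j^i$ in this order. Since $([0,1],\mathrm{Leb})$ is non-atomic standard, each subinterval $I_j^i$ admits a measure-preserving map onto the standard space $(Y_j^i,\mu_i|_{Y_j^i})$ (this holds regardless of atoms of $\mu_i$, a routine check); gluing these gives a measure-preserving $\psi_i\colon[0,1]\to X_i$ with $\psi_i^{-1}(Y_j^i)=I_j^i$ mod $0$. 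Thus the pulled-back cover is a decomposition of $[0,1]$ into at most $n_0+1$ consecutive intervals, the last one $I_0^i$ having measure $<\eps$. Now I fix the single partition independently of $i$: choose an integer $m>n_0/\eps$ and let $\xi=(A_1,\dots,A_m)$ be the partition of $[0,1]$ into $m$ equal intervals (so the lemma's $n$ is this $m$). For each $i$, let $B_i$ be the union of those atoms $A_k$ that either contain an interior endpoint of some $I_j^i$ in their interior (the \emph{straddlers}, at most $k_i\le n_0$ of them, of total measure $\le n_0/m<\eps$) or are contained in $I_0^i$ (total measure $<\eps$). Then $B_i$ is a union of whole atoms of $\xi$ and $\mu(B_i)<2\eps$. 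Any atom $A_k\not\subset B_i$ lies inside a single $I_j^i$ with $j\ge 1$, so $\psi_i(A_k)\subset Y_j^i$ and $\diam_{\rho_i\circ\psi_i}(A_k)=\diam_{\rho_i}(\psi_i(A_k))<\eps$; for $A_k\subset B_i$ the set $A_k\setminus B_i$ is empty. This gives the desired conclusion.

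The main point (and the only real obstacle) is producing \emph{one} partition that works for \emph{all} $i$ at once, since the good covers of the various $X_i$ have unrelated piece-measures. This is resolved by the two complementary moves above: using the choice of $\psi_i$ to turn ``refine an arbitrary cover into $\le n_0+1$ pieces'' into the much weaker ``refine a cover into $\le n_0+1$ \emph{consecutive intervals},'' and then taking $\xi$ fine enough that the total measure of atoms straddling the at most $n_0$ interior breakpoints is $<\eps$. I expect the routine measure-space facts (that a non-atomic standard space maps measure-preservingly onto each $Y_j^i$, and that $B_i$ can be taken to be a union of whole atoms) to require only brief verification, while the conceptual content is the interval-alignment idea together with the uniform bound on $k_i$ coming from the hypothesis $\sup_i\heps_\eps(X_i,\mu_i,\rho_i)<\infty$.
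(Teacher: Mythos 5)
Your proof is correct, and it takes a genuinely different route from the paper's. Both arguments begin the same way---the uniform bound $\sup_i \heps_\eps(X_i,\mu_i,\rho_i)\le \log N$ gives, for each $i$, a cover of $X_i$ by at most $N$ sets of $\rho_i$-diameter $<\eps$ plus a junk set of measure $<\eps$---and both exploit the freedom in choosing $\psi_i$ to align the pulled-back cover with a partition of $(X,\mu)$ fixed in advance. They diverge in how that single universal partition is produced. The paper records the probability vector $P^i$ of each cover, uses compactness of the simplex to extract a finite $\eps$-net $\{P_1,\dots,P_m\}$ of these vectors in the $L^1$ norm, builds one partition $\xi_{P_j}$ of $(X,\mu)$ for each net point, takes $\xi$ to be the common refinement of all of them, and then chooses $\psi_i$ so that the pulled-back partition agrees with the nearest $\xi_{P_j}$ up to total symmetric difference $<\eps$; the set $B_i$ absorbs the junk set together with these symmetric differences. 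Your interval-alignment idea removes the approximation step entirely: by forcing the pulled-back pieces to be consecutive intervals of $[0,1]$, the match with the fixed equipartition is exact except on the at most $n_0$ atoms straddling breakpoints, whose total measure is killed simply by taking $m>n_0/\eps$. What your route buys is elementarity and economy---no $\eps$-net, no common refinement, and an explicit partition into roughly $n_0/\eps$ atoms, as opposed to the potentially enormous refinement of $m$ partitions in the paper. What the paper's route buys is that it never uses the linear order of $[0,1]$ (immaterial here, since every standard space is isomorphic to it), and its one nontrivial measure-theoretic step---constructing a homomorphism that matches two partitions whose probability vectors are $L^1$-close---is of exactly the same ``routine'' nature as your step of mapping each interval $I_j^i$ measure-preservingly onto $Y_j^i$. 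The mod-0 bookkeeping you flag (adding to $B_i$ the null sets where $\psi_i^{-1}(Y_j^i)$ and $I_j^i$ disagree, and the atom endpoints) is indeed harmless, since the bound $\mu(B_i)<2\eps$ is strict.
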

\begin{proof}
Let $N$ be such that $\heps_\eps(X_i,\mu_i,\rho_i) \leq \log(N)$ for all $i \in I$. For each $i \in I$, there exists a measurable finite partition $\xi^i = \{A_1^i,\dots,A_N^i\}$ of the space $(X_i,\mu_i)$ and a set $E^i \subset X_i$, $\mu_i(E^i)< \eps$, such that diameters of the sets $A_j^i \setminus E^i$ in the semimetric $\rho_i$ are smaller than $\eps$. Let $P^i = (\mu_i(A_k^i))_{k=1}^N$ be the probability vector corresponding to the partition $\xi^i$.

Let $P = (p_1,\dots, p_N)$ be a fixed probability vector of length $N$. On the space~$(X,\mu)$, we choose a partition $\xi_P = (A_1,\dots, A_N)$ into $N$ parts with measures $\mu(A_k)=p_k$. If the vector $P^i=(\mu(A_k^i))_{k=1}^N$ satisfies $\sum_{k=1}^N |p_k - \mu(A_k^i)|<\eps$, then the space $(X_i,\mu_i)$ can be realized on $X$ in such a way that the partition~$\xi^i$ will differ slightly from $\xi_P$. Specifically, one can find a homomorphism of measure spaces $\psi_i\colon (X,\mu) \to (X_i,\mu_i)$ such that 
$$
\sum_{k=1}^N \mu(\psi_i^{-1}(A_k^i)\Delta A_k)<\eps.
$$
Then there exists a set $B_i \subset X$ with $\mu(B_i) < 2\eps$, such that diameters of the sets $A_k\setminus B_i$ in the semimetric $\rho_i \circ \psi_i$ will be less than $\eps$. 

The set of vectors $\{P^i\colon i \in I\}$ is bounded in a finite-dimensional space, thus  within this set we can find a finite $\eps$-net with respect to the $L^1$ metric: let it be $\{P_1, \dots, P_m\}$. For each element $P_j$ of this $\eps$-net, we construct a corresponding partition $\xi_{P_j}$ on $(X,\mu)$. In the role of the desired partition $\xi$, we can take a refinement of the partitions $\xi_{P_j}$. The lemma is proved.
\end{proof}

The rest of the proof of the theorem follows the proof of the corresponding implication of Theorem~\ref{lemmcomp}. Using Lemma~\ref{lem11}, we will work with semimetrics~$\tro_i = \rho_i\circ \psi_i$ on the space $(X,\mu)$ and the partition $\xi = \{A_1,\dots, A_n\}$. We will show that in the set $\{\tro_i\colon i \in I\}$, one can find a finite $7\eps$-net with respect to the $m$-norm. To achieve this, we will demonstrate that these semimetrics can be approximated in the $m$-norm by a bounded set in a finite-dimensional space of semimetrics, that are constant on the sets $A_j\times A_k$, $j,k \in\{1,\dots,n\}$.

Let $\tro \in \{\tro_i\colon i \in I\}$. Find a set $B\subset X$, with $\mu(B)<2\eps$, such that each of the sets $A_j \setminus B$ have diameters less than $\eps$ in the semimetric $\tro$. If needed, by adding a subset of measure zero to $B$, we can assume that each of the sets $A_j\setminus B$ is either empty or of positive measure. Choose a point $x_j$ in each $A_j$ such that the functions $\tro(\,\cdot\,, x_j)$ are measurable on $X$, and if $\mu(A_j \setminus B)>0$, then $x_j \in A_j \setminus B$. We define a semimetric $\bro$ on $X$ as follows: for $x\ne y$ if $x \in A_k$, $y \in A_j$, we set $\bro(x,y) = \tro(x_k,x_j)$. Then, obviously, if $x, y \notin B$, then $$
|\tro(x,y) - \bro(x,y)| < 2\eps.
$$
Since $\bro$ and $\tro$ are pointwise bounded by 1, for all $x,y \in X$, we have the following estimate
$$
|\tro(x,y)-\bro(x,y)|\leq \big(2\eps + \chi_{B}(x)+\chi_B(y)-\chi_{B}(x)\chi_B(y)\big)\chi_{\{x\ne y\}}. 
$$
On the right-hand side of the last inequality, there is a semimetric whose integral does not exceed $2\eps + 2\mu(B) < 6 \eps$, hence
$$\|\tro(x,y)-\bro(x,y)\|_m<6\eps.$$

We have shown that each of the semimetrics $\tro_i$, $i \in I$, can be approximated by the corresponding semimetric $\bro_i$ with an accuracy of up to $6\eps$ in the $m$-norm. Moreover, all the semimetrics $\bro_i$ are contained in a finite-dimensional space and are uniformly bounded, forming a precompact set. Therefore, in the set $\{\tro_i\}_{i \in I}$, we can find a finite $7\eps$-net with respect to the $m$-norm. The theorem is proved.
\end{proof}

\section{Proof of Proposition~\ref{proposition_statsum}}\label{section_proof_statsum}

        To prove the inequality in one direction is quite straightforward:
\begin{equation}
\Omega_T\big(\rho,1 - \frac{1}{n}\big) \ge \frac{1}{e} T_{av}^n\rho.
\end{equation}
Thus,
\begin{equation}
\heps_\eps(X,\mu, \Omega_T\big(\rho,1 - \frac{1}{n}\big)) \ge \heps_{e\eps}(X,\mu,  T_{av}^n\rho).
\end{equation}
Hence, 
\begin{equation}
\tilde\Phi_\rho(\eps, 1 - \frac{1}{n}) \succeq \Phi_\rho.
\end{equation}

To obtain the reverse estimate, for each $\eps > 0$ find a constant $c = c(\eps, \rho)$ such that for any natural $n$,
\begin{equation}
\Big\| (1-z) \sum\limits_{k > cn} z^k T^{-k} \rho \Big\|_m < \eps^2/32,
\end{equation}
where $z = 1 - \frac{1}{n}$. Then, due to Lemma~\ref{lemma_semicontinuity},
\begin{equation}
\heps_\eps\big(X,\mu, \Omega_T(\rho,1 - \frac{1}{n})\big) \le \heps_{\frac{\eps}{4}}\big(X,\mu, (1-z)\sum_{k=0}^{cn} z^n T^{-k}\rho\big) \le 
\heps_{\frac{\eps}{4c}}(X,\mu, T_{av}^{cn} \rho),
\end{equation}
where $z = 1 - \frac{1}{n}$ again. However, due to the subadditivity of scaling entropy (see Section~\ref{section_scentropy_values}), the function $\Psi(\eps, n) = \Phi_\rho(\frac{\eps}{4c}, cn)$ is equivalent to the function~$\Phi_\rho$. Thus, $\tilde\Phi_\rho(\eps, 1 - \frac{1}{n}) \preceq \Phi_\rho$, and therefore, the function $\tilde\Phi_\rho(\eps, 1 - \frac{1}{n})$ belongs to the class $\scfunclass(T)$.

\section{Proof of Proposition~\ref{ergodic_decomosition}}\label{section_proof_erg_dec}

Let $E$ be a set of positive measure such that for any $\alpha \in E$, $\mathcal{H}(X,\mu_\alpha,T)$ grows not slower than $h$.

Suppose the contrary. Then there exists a subsequence $n_j$ satisfying the relation $\Phi(\eps, n_j) \prec h_{n_j}$ for any $\Phi \in \mathcal{H}(X, \mu, T)$ and any $\eps > 0$. Consider an admissible metric $\rho$ on $X$ and a certain index $n_j$. Let $X_0, X_1, \ldots X_k$ be sets that realize the $\eps$-entropy of the triple $(X, \mu, T_{av}^{n_j} \rho)$. Then
$$
\eps > \mu(X_0) = \int\mu_\alpha(X_0) d\nu(\alpha).
$$
It is clear that there exists a constant $r > 0$, depending only on $\nu(E)$, such that on some set $E_j \subset E$ of measure $r$, the inequality $\mu_\alpha(X_0) < \frac{\eps}{r}$ holds. For such $\alpha$, the following inequality holds:
$$
\heps_\frac{\eps}{r}(X, \mu_\alpha, T_{av}^{n_j} \rho) \leq \heps_\eps(X, \mu, T_{av}^{n_j} \rho).
$$

The measure of those $\alpha$ for which $\alpha \in E_j$ infinitely many times is positive. Choose such an $\alpha$ and a subsequence of indices $j_m$ for which $\alpha \in E_{j_m}$. We obtain
$$
h_{n_{j_m}} \preceq \heps_\frac{\eps}{r}(X, \mu_\alpha, T_{av}^{n_{j_m}} \rho)  \leq \heps_\eps(X, \mu, T_{av}^{n_{j_m}} \rho) \prec h_{n_{j_m}}.
$$
This is a contradiction.

\chapter{Some open questions}\label{section_problems}

We list several problems related to the new concepts referred to in the survey, without aiming for a comprehensive coverage of the topics. Some of these problems have already been mentioned in the main text.

\medskip
{\bf 1. Theory of mm-spaces, classification of metric triples, and problems about matrix distributions.}
Perhaps the most important general question is as follows: to what extent does matrix distribution, as a measure on the space of distance matrices, allow one to describe various properties of mm-spaces (spaces with measure and metric)?

On one hand, as we have seen, matrix distribution is a complete invariant up to measure-preserving isometries. However, the practical use of it as a tool for studying spaces needs to be developed. In particular, the question of what can be said about the random spectra of matrix distributions for the most natural mm-spaces is still open, even though it was posed a long time ago (see Section~\ref{sec_spect}). A similar question can be posed for metric triples: can a metric triple, particularly the metric, be reconstructed from the asymptotics of random spectra of consecutive minors of the matrix distribution? Most likely, the answer is negative, but it is interesting to explore which properties of a triple are ``spectral,'' i.\,e., depending only on the spectrum. Here, it is appropriate to recall extensive literature on spectral geometry of graphs, metric spaces, etc. However, the task mentioned above is fundamentally different, as we consider random spectra, i.\,e., stochastic, rather than individual characteristics of sets of minors' eigenvalues. This statistics is fundamentally different from the statistics of random Gaussian matrices, i.\,e., semi-circular and similar laws.

The limiting distributions of spectra for the most natural manifolds are also interesting and currently unknown. These include spheres, Stiefel  manifolds, as well as non-compact manifolds with probability measures, see the forthcoming paper~\cite{VP23}.

\medskip
{\bf 2. Calculation of scaling entropy.}
A significant part of the survey is devoted to the relatively new concept of scaling entropy. However, we do not yet know how to compute it even in the most natural cases. It is important to keep in mind that the unbounded growth of scaling entropy indicates the presence of a continuous part in the spectrum of an automorphism, which can sometimes be very difficult to establish directly. This question was posed by A.~Vershik for adic automorphisms, specifically for the {\it Pascal automorphism}, one of the first nontrivial examples of adic transformations defined in the late 1970s by A.~Vershik (see~\cite{V81,V82,VPasc}). Later, it was revealed that this transformation had been used for a partition problem by S.~Kakutani, see~\cite{Kak76}. An attempt to compute the scaling entropy for the Pascal automorphism was made in~\cite{LMM10}. However, the unbounded growth of scaling entropy has not yet been proven, despite the efforts of many mathematicians. Thus, it is not proved that the Pascal automorphism has a purely continuous spectrum. The confidence in the validity of this fact is expressed in the title of~\cite{VPasc}.

On the other hand, calculations have been carried out for the Morse and Chacon transformations, i.\,e., that are substitutions (which are stationary adic shifts on infinite graphs, see~\cite{VL92}), see the details in Section~\ref{Sec_podst}. As a generalization of the result for the Morse transformation, it is interesting to find the scaling entropy for more general skew products over transformations with discrete spectra.

The scaling entropy has not yet been computed for numerous adic transformations on graph paths (Young, Fibonacci, etc.). It is also of interest to compute the scaling entropy of Gaussian automorphisms with simple singular or purely singular spectra (see~\cite{Gir} and~\cite{V62}). Surprisingly, the technique of approximations (ranks) does not help in this case yet.

\medskip

{\bf 3. Description of an unstable automorphism.}
Of undoubted interest is the description of unstable automorphisms, see Section~\ref{Sec321}. In particular, it is of interest to provide an example of a symbolic model for some unstable automorphism.

\medskip
{\bf 4. Development of the theory of scaling entropy for countable groups.}
The theory of scaling entropy for actions of amenable groups described in Section~\ref{section_groups} requires further development. For non-amenable groups, practically nothing is known beyond the definition itself; it is even unknown whether the introduced invariant is nontrivial. In particular, an undoubtedly interesting question is the relationship between the definition of scaling entropy presented in Section~\ref{section_groups} and other entropy definitions for such groups.

The scaling entropy growth gap phenomenon described in Section~\ref{section_entropy_gap} seems to be important and interesting for us. It would be intriguing to understand for which amenable groups this phenomenon occurs. In particular, is it valid for lamplighter groups with non-abelian groups of lamps or for finitely generated simple amenable groups?

\medskip
{\bf 5. Non-Bernoulli automorphisms with completely positive entropy.}
For non-Bernoulli automorphisms with completely positive entropy (also called K-automorphisms), which were introduced by D.~Ornstein in the 1970s, there are still no visible invariants. These invariants should be related to non-entropy asymptotic invariants of stationary metric compact sets. As of now, such invariants are unknown.

\medskip
{\bf 6. Catalytic and relative invariants.}
The scheme for constructing catalytic absolute or relative invariants described in Section~\ref{section_catalyst} (see also Section~\ref{Sec313}) has been realized so far only in the form of scaling entropy. In this case, the invariant is absolute (i.\,e., it does not depend on the metric). There are no other examples currently. This is explained by the fact that, besides epsilon entropy, we lack a developed theory of invariants of compact metric spaces or mm-spaces themselves. 
In particular, there are no invariants of metric compacts equipped with some symmetry (for instance, invariant under automorphisms). There is no reason to doubt the existence of such invariants. This is indicated by the mentioned non-Bernoulli systems with positive Kolmogorov entropy.

\end{document}